\numberwithin{equation}{section}
\def\varep{\varepsilon}
\def\N{\mathbb{N}}
\def\R{\mathbb{R}}
\newcommand{\be}{\begin{equation}}
\newcommand{\ee}{\end{equation}}
\newcommand{\baa}{\begin{array}}
\newcommand{\eaa}{\end{array}}
\newtheorem{theorem}{Theorem}[section]
\newtheorem{lemma}[theorem]{Lemma}
\newtheorem{proposition}[theorem]{Proposition}
\newtheorem{definition}[theorem]{Definition}
\newtheorem{remark}[theorem]{Remark}
\title{\bf{Propagation and blocking in a two-patch reaction-diffusion model}}
\author{Fran\c cois Hamel\thanks{Aix-Marseille Univ, CNRS, I2M, Marseille, France (\texttt{francois.hamel@univ-amu.fr}, corresponding author). This work has received funding from Excellence Initiative of Aix-Marseille Universit\'e~-~A*MIDEX, a French ``Investissements d'Avenir'' programme, and from the ANR RESISTE (ANR-18-CE45-0019) project.}, \  Frithjof Lutscher\thanks{University of Ottawa, Department of Mathematics and Statistics \& Department of Biology, Ottawa, Canada (\texttt{flutsche@uottawa.ca}).} \ and \
Mingmin Zhang\thanks{Aix-Marseille Univ, CNRS, I2M, Marseille, France, and School of Mathematical Sciences, University of Science and Technology of China, Hefei, Anhui 230026, China (\texttt{mingmin.zhang.math@gmail.com}). M. Zhang acknowledges the China Scholarship Council for the two-year financial support during her study at Aix-Marseille Universit\'{e}.} \\
}
\date{}
\begin{document}
	
\maketitle
\begin{abstract} 
This paper is concerned with propagation phenomena for the solutions of the Cauchy problem associated with a two-patch one-dimensional reaction-diffusion model. It is assumed that each patch has a relatively well-defined structure which is considered as homogeneous. A coupling interface condition between the two patches is involved. We first study the spreading properties of solutions in the case when the per capita growth rate in each patch is maximal at low densities, a configuration which we call the KPP-KPP case, and which  turns out to have some analogies with the homogeneous KPP equation in the whole line. Then, in the KPP-bistable case, we provide various conditions under which  the solutions show different dynamics in the bistable patch, that is, blocking, virtual blocking (propagation with speed zero), or spreading with positive speed.  Moreover, when propagation occurs with positive speed, a global stability result is proved. Finally, the analysis in the KPP-bistable frame is extended to the bistable-bistable case.
\vskip 2mm
\noindent{\small{\it  AMS Subject Classifications}:  35B40; 35C07; 35K57.}
\vskip 2mm
\noindent{\small{\it Keywords}: Patchy landscapes; Spreading speeds; Blocking; Propagation; Reaction-diffusion equations.}
\vskip 2mm
\noindent{\small{\it{The manuscript has no associated data.}}}
\end{abstract}

\tableofcontents


\section{Introduction}

Propagation and propagation failure are two fundamental phenomena of great importance to many fields of science. For example, signal propagation in nerve cells occurs when the medium is homogeneous but can fail when inhomogeneities are present, such as a change in cross-sectional area, junctions to several other cells, or localized regions of reduced excitability \cite{P1981,LK2000}. The mathematical framework of choice for modeling such phenomena are reaction-diffusion equations. In the simplest case, space is one-dimensional and inhomogeneities are represented as spatial changes in diffusivity or reaction terms at a single location, within a bounded region, or at periodically repeating locations. Our work here is inspired by the ecological dynamics of invasive species. When such species spread across a landscape, they encounter different habitat types, and their movement behavior as well as population dynamics may change according to landscape type. Our work is based on recent progress in modeling individual movement behaviors around interfaces where the landscape type changes \cite{ML2013} and continues the rigorous analysis of propagation phenomena in such models \cite{HLZ, SKW2015}. 

Specifically, we consider a one-dimensional infinite landscape comprised of two semi-infinite patches. We denote $(-\infty,0)$ as patch 1 and $(0,+\infty)$ as patch 2. The interface that separates the two patches occurs at $x=0$. Our model consists of a reaction-diffusion equation for the species' density on each patch and conditions that match the density and flux across the interface. We assume that each patch is homogeneous but the two patches may differ, so that the diffusion coefficients and the reaction terms (i.e.~net population growth rates) may differ.  Whereas most existing models for propagation and propagation failure assume that the population dynamics outside of a bounded region are identical, we are explicitly interested in the case where the dynamics differ, qualitatively and quantitatively, between the two patches. Hence, on each patch, the population density $\widetilde{u}=\widetilde{u}(t,x)$ satisfies an equation of the form 
$$\widetilde u_t = d_i \widetilde u_{xx} + \widetilde f_i(\widetilde u),$$
where $i=1,2$, depending on patch type. Since we want the interface to be neutral with respect to reaction dynamics (i.e.~no individuals are born or die from crossing the interface), the density flux is continuous at the interface, i.e., $d_1 \widetilde u_x(t,0^-)=d_2 \widetilde u_x(t,0^+)$. Continuity of the flux implies mass conservation in the absence 
of reaction terms. Individuals at the interface may show a preference for one or the other patch type. We denote this preference by $\alpha\in(0,1)$, where $\alpha>0.5$ indicates a preference for patch 1 and $\alpha<0.5$ for patch 2. Then the population density may be discontinuous at the interface with
$$(1-\alpha) d_1 \widetilde u(t,0^-) = \alpha d_2 \widetilde u(t,0^+).$$
Please see \cite{ML2013} for a detailed derivation of this condition from a random walk and a thorough discussion of the biological implications. (A second case exists where both diffusion constants appear under square roots \cite{ML2013}; the theory developed below applies to that case as well.)

The discontinuity of the density at $x=0$ creates some difficulties in the analysis of propagation phenomena in our equations. It turns out to be much easier to scale the equations (by setting $u(t,x)=\widetilde u(t,x)$ in patch~1, $u(t,x)=k\widetilde u(t,x)$ in patch~2 with $k=\frac{\alpha}{1-\alpha}\,\frac{d_2}{d_1}$, $f_1=\widetilde{f}_1$ and $f_2(s)=k\widetilde f_2(s/k)$) so that the density is continuous; see \cite{HLZ} for details. Hence, in the present paper, we study the following equivalent two-patch problem:
\begin{align}
\label{1.1}
\begin{cases}
u_t=d_1 u_{xx}+f_1( u),~~&t>0,\ x< 0,\\
u_t=d_2  u_{xx}+f_2( u),~~&t>0,\ x> 0,\cr
u(t,0^-)= u(t,0^+),~~&t>0,\cr
u_x(t,0^-)=\sigma u_x(t,0^+),~~&t> 0.
\end{cases}
\end{align}
Here, the density is continuous across the interface but its derivative is not. The diffusion constants are assumed positive. Parameter $\sigma=(1-\alpha)/\alpha>0$ is related to $\alpha$, the probability that an individual at the interface chooses to move to patch 1. Please see Section~\ref{sec:biology} for more biological background and some interpretation of our results. Throughout this work, we assume that the functions $f_i$ $(i=1,2)$ are of class $C^1(\R)$ and that
\be\label{hypf}
\exists\,K_i>0,\ \ f_i(0)=f_i(K_i)=0\ \hbox{ and }\ f_i\le0\hbox{ in }[K_i,+\infty).
\ee
Our analysis and results will depend on a few characteristic properties of the functions $f_i$. We distinguish between the Fisher-KPP type and the bistable type. We give precise definitions of these properties below in (\ref{hypkpp}) and (\ref{hypbistable}), respectively. 

In~\cite{HLZ}, we analyzed in full detail the well-posedness problem for a related patch model in a one-dimensional spatially periodic habitat  and also the spatial dynamics of the solution for the Cauchy problem under certain hypotheses on the reaction terms. Our goal of the present paper is to study spreading properties and propagation vs.~blocking phenomena for the solutions of this two-patch model for various combinations of the reaction terms. Specifically, we investigate:
\begin{enumerate}
\item the asymptotic spreading properties of the solutions to the Cauchy problem~\eqref{1.1} with compactly supported initial data when both reaction terms are of KPP type;
\item conditions for the solutions to the Cauchy problem~\eqref{1.1} with compactly supported initial data to be blocked or to propagate with positive or zero speed when one reaction term is of KPP type and the other of bistable type; we also study the stability of a traveling wave in the bistable patch;
\item the asymptotic dynamics when both reaction terms are of bistable type.
\end{enumerate}

Previous work on action potentials in nerve cells obtained some propagation and stability results when the reaction terms in both patches are identical and of bistable type and when the derivative is continuous at the interface, i.e., $\sigma=1$ \cite{P1981}. We also mention recent work  on a bistable equation in multiple (three or more) disjoint half-lines with a junction \cite{JM2019}: the existence of entire (defined for all times $t\in\R$) solutions is proved and blocking phenomena of entire solutions caused by the emergence of certain stationary solutions are investigated.
  
Before we state our main results, we summarize some relevant results on the classical homogeneous reaction-diffusion equation 
\begin{equation}
\label{1.2}
u_t=u_{xx}+f(u),~~t>0,~x\in\mathbb{R},
\end{equation}
where $f$ is a $C^1(\R)$ function satisfying $f(0)=f(1)=0$. This equation has been extensively studied in the mathematical, physical and biological literature since the pioneering works of Fisher \cite{F1937} and Kolmogorov, Petrovskii and Piskunov \cite{KPP1937} on population genetics. We say that $f$ is of Fisher-KPP type (or simply KPP type) if
\be\label{hypkpp}
f(0)=f(1)=0\ \hbox{ and }\ 0<f(s)\le f'(0)s\ \hbox{ for all }\ s\in (0,1).
\ee
If $f$ in (\ref{1.2}) is of KPP type,~\eqref{1.2} admits traveling front solutions $u(t,x)=\varphi_c(x\cdot e-ct)$ with $\varphi_c:\R\to(0,1)$ and $\varphi_c(-\infty)=1$, $\varphi_c(+\infty)=0$, if and only if $c\ge c^*=2\sqrt{f'(0)}$, where $e=\pm 1$ denotes the direction of propagation and $c$ is the speed. For each $c\ge c^*$, $\varphi_c$ satisfies 
\be\label{varphic1}
\varphi_c''+c\varphi_c'+f(\varphi_c)=0~\text{in}~\mathbb{R},~~\varphi_c'<0~\text{in}~\mathbb{R},~~\varphi_c(-\infty)=1,~~\varphi_c(+\infty)=0,
\ee
and it is unique up to shifts. Moreover, there holds
\begin{align}\label{phiexp}
\varphi_c(s)\mathop{\sim}_{s\to+\infty}\begin{cases}
Ae^{-\lambda_c s} & \hbox{if }c>c^*,\\
A^*s e^{-\lambda_c s} & \hbox{if }c=c^*,\\
\end{cases}
\end{align} 
where $A,\,A^*$ are positive constants and the decay rate $\lambda_c>0$ is obtained from the linearized equation $u_t=u_{xx}+f'(0)u$ and is given by $\lambda_c=\big(c-\sqrt{c^2-4f'(0)}\big)/2$. It was proved in~\cite{B1983,HNRR2013,L1985,U1978} that the front with minimal speed $c^*$ attracts, in some sense, the solutions of the Cauchy problem~\eqref{1.2} associated with nonnegative bounded nontrivial compactly supported initial data $u_0=u(0,\cdot)$. Furthermore, Aronson and Weinberger~\cite{AW2} proved that if $0\le u\le 1$ is the solution to the Cauchy problem~\eqref{1.2} with a  nontrivial compactly supported initial datum $0\le u_0 \le 1$, then $\sup_{\mathbb{R}\backslash(-ct,ct)}u(t,\cdot)\to 0$ as $t\to+\infty$  for every $c>c^*$, and $\inf_{[-ct,ct]} u(t,\cdot)\to 1$ as $t\to+\infty$ for every $c\in[0,c^*)$. We refer to these results as spreading properties. The minimal speed of traveling fronts, $c^*$, can therefore also be thought of as the asymptotic spreading speed.

In contrast, in the bistable case, defined as
\be\label{hypbistable}
f(0)\!=\!f(\theta)\!=\!f(1)\!=\!0\hbox{ for some $\theta\in(0,1)$, $f'(0)<0$, $f'(1)<0$, $f<0$ in $(0,\theta)$, $f>0$ in $(\theta,1)$},
\ee
equation~\eqref{1.2} has traveling front solutions $u(t,x)=\phi(x\cdot e-ct)$, where $\phi:\R\to(0,1)$, $\phi(-\infty)=1$, $\phi(+\infty)=0$, and $e=\pm 1$ is the direction of propagation, for a unique propagation speed $c\in\mathbb{R}$, depending only on $f$. Furthermore, the sign of $c$ equals the sign of $\int_0^1 f(s) \mathrm{d}s$ \cite{AW2,FM1977}. The profile $\phi$ satisfies~\eqref{varphic1} (with $\phi$ instead of $\varphi_c$) and is unique up to shifts. It is known that
$$\left\{\begin{aligned}
a_0 e^{-\alpha s}\le \phi(s)\le a_1 e^{-\alpha s},~~s\ge 0,\\
b_0 e^{\beta s}\le 1-\phi(s)\le b_1 e^{\beta s},~~s\le0,
\end{aligned}\right.$$
where $a_0$, $a_1$, $b_0$ and $b_1$ are some positive constants, $\alpha$ and $\beta$ are given by $\alpha=(c+\sqrt{c^2-4f'(0)})/2>0$ and $\beta=(-c+\sqrt{c^2-4f'(1)})/2>0$~\cite{FM1977}. Fronts in the bistable case are globally stable in the sense that any solution of the Cauchy pro\-blem~\eqref{1.2} with an initial datum $0\le u_0\le1$ satisfying $\liminf_{x\to-\infty}u_0(x)\!>\!\theta\!>\!\limsup_{x\to+\infty} u_0(x)$ converges to the unique bistable traveling front $\phi(x\!-\!ct\!+\!\xi)$ uniformly in~$x\in\mathbb{R}$ as $t\to+\infty$, where~$\xi$ is a real number depending only on $u_0$ and $f$ \cite{FM1977}. Stationary solutions~$u:\R\to[0,1]$ of equation~\eqref{1.2} in the bistable case~\eqref{hypbistable} are either: (a)~constant solutions (zeros of~$f$, that is, $0$, $\theta$ or~$1$); or (b)~periodic non-constant solutions; or (c)~symmetrically decreasing solutions, namely, for some $x_0\in\mathbb{R}$, $u(x)=u(2x_0-x)$ in $\R$, $u'<0$ in $(x_0,+\infty)$ and~$u(\pm\infty)=0$; or (d)~symmetrically increasing solutions, namely, for some $x_0\in\mathbb{R}$, $u(x)=u(2x_0-x)$ in~$\R$, $u'>0$ in $(x_0,+\infty)$, and $u(\pm\infty)=1$; or (e)~strictly decreasing or increasing solutions converging to~$0$ and~$1$ at~$\pm\infty$ \cite{FM1977}. Case (c) (respectively case~(d), respectively case~(e)) occurs if and only if $\int_0^1f(s)\mathrm{d}s>0$ (respectively $\int_0^1f(s)\mathrm{d}s<0$, respectively~$\int_0^1f(s)\mathrm{d}s=0$). Notice that, in the KPP case~\eqref{hypkpp}, the only stationary solutions $u:\R\to[0,1]$ of~\eqref{1.2} are the constants~$0$ and~$1$.

Much work has been devoted to extinction, blocking, and propagation results for the one-dimensional homogeneous equation~\eqref{1.2},  where extinction, blocking and propagation are understood as follows:
\begin{itemize}
\item \textit{extinction}: $u(t,x)\to 0$ as $t\to+\infty$ uniformly in $x\in\mathbb{R}$;
\item \textit{blocking $($say, in the right direction$)$}: $u(t,x)\to0$ as $x\to+\infty$ uniformly in $t\ge0$;
\item \textit{propagation}: $u(t,x)\to 1$ as $t\to+\infty$ locally uniformly in $x\in\mathbb{R}$.
\end{itemize}	
 Kanel' \cite{K1964} considered the combustion nonlinearity (i.e., $f=0$ in $[0,\theta]\cup\{1\}$ and $f>0$ in $(\theta,1)$ for some $0<\theta<1$) and showed that, for the particular family of initial data being characteristic functions of intervals (namely, $u_0=\chi_{[-L,L]}$, with $L>0$), there exist $0<L_0\le L_1$ such that extinction occurs for $L<L_0$, while propagation occurs for $L>L_1$. This result was then extended by Aronson and Weinberger~\cite{AW1} to the bistable case~\eqref{hypbistable} with $\int_0^1 f(s) \mathrm{d}s>0$ (so-called bistable unbalanced case). Zlato\v{s}~\cite{Z2006} improved these results in both cases by showing that $L_0=L_1$. Du and Matano \cite{DM2010} generalized this sharp transition result for  a wider class of one-parameter families of initial data. Moreover, they showed that the solutions to the Cauchy problem~\eqref{1.2} with nonnegative bounded and compactly supported initial data always converge to a stationary solution of~\eqref{1.2} as $t\to+\infty$ locally uniformly in $x\in\mathbb{R}$, and this limit turns out to be either a constant  or a symmetrically decreasing stationary solution of~\eqref{1.2}. Whether such a {\it sharp} criterion for extinction vs.~propagation holds in our patch model~\eqref{1.1} is a delicate issue, since there is no translation invariance due to the interface conditions at $x=0$ and since the reaction terms and diffusion coefficients may differ in general. This question will be left for future work. We however provide in the present paper for the patch problem~\eqref{1.1} a list of sufficient conditions for extinction, blocking and/or propagation with KPP and/or bistable dynamic in the two patches.

To see the difficulties in our patchy setting, let us briefly recall the standard methods used for the one-dimensional reaction-diffusion equation~\eqref{1.2}. For the investigation of the Cauchy problem~\eqref{1.2} with compactly supported initial data, reflection techniques can be effectively used to prove, among other things, the monotonicity of the solution $u(t,\cdot)$ outside any interval containing the initial support \cite{DM2010, DP2015, Z2006}. Properties of the solutions to the parabolic equation~\eqref{1.2} can also be connected with certain structures in the phase plane portrait of the ODE $u''+f(u)=0$. However, this is no longer the case for the patch model~\eqref{1.1}. Our proofs rest on comparison and PDE arguments. For instance, by estimating the behavior, for large $|x|$ and/or $t$, of the solution $u(t,x)$ of the Cauchy problem~\eqref{1.1} with compactly supported initial data and then by comparing it with the standard traveling fronts, we can retrieve the classical spreading results \cite{AW2,FM1977} in a sense (see Theorems~\ref{thm2.3},~\ref{thm_propagation-1} and~\ref{thm_propagation-1'} below). Besides, in the KPP-bistable case (i.e.,~the case where $f_1$ is KPP and $f_2$ is bistable), we provide some  sufficient conditions  under which either blocking or propagation occurs in the bistable patch. At first glance, one may anticipate  similar dynamics or features at large times for the solutions of the Cauchy problem~\eqref{1.1} as for the solutions of the scalar homogeneous equation~\eqref{1.2} in each patch, possibly with some nuances. However, that turns out to be not exactly true. We prove that the propagation phenomena in the KPP-bistable case can be remarkably different from what happens for the homogeneous bistable equation. We especially show a ``virtual blocking'' phenomenon, i.e., the solution indeed does propagate, but with speed zero. This unusual phenomenon reveals that the effect of the KPP patch on the bistable patch cannot be neglected and that~\eqref{1.1} is truly a coupled system of the reaction-diffusion equations. 


\section{Definitions and main results}\label{Sec 2}

Throughout the paper, we set
$$I_1=(-\infty,0)\ \hbox{ and }\ I_2=(0,+\infty).$$

By a solution to the Cauchy problem~\eqref{1.1} associated with a continuous bounded initial datum $u_0$, we mean a classical solution in the following sense \cite{HLZ}.

\begin{definition}\label{def1}
For $T\in(0,+\infty]$, we say that a continuous function $u:[0,T)\times\R\to\R$ is a classical solution of the Cauchy problem~\eqref{1.1} in $[0,T)\times\R$ with an initial datum~$u_0$, if $u(0,\cdot)=u_0$ in~$\R$, if~$u|_{(0,T)\times\overline{I_i}}\in C^{1;2}_{t;x}\big((0,T)\times \overline{I_i}\big)$ $($$i=1,2$$)$, and if all identities in~\eqref{1.1} are satisfied pointwise for $0<t<T$.
\end{definition}

Similarly, by a classical stationary solution of~\eqref{1.1}, we mean a continuous function $U: \mathbb{R}\to\mathbb{R}$ such that $U|_{\overline{I_i}}\in C^2(\overline{I_i})$ ($i=1,2$) and all identities in~\eqref{1.1} are satisfied pointwise, but without any dependence on $t$.

We also define super- and  subsolutions  as follows.
	
\begin{definition}\label{def2}
For $T\in(0,+\infty]$, we say that a continuous function $\overline{u}:[0,T)\times\R\to\mathbb{R}$, which is assumed to be bounded in $[0,T_0]\times\R$ for every $T_0\in(0,T)$, is a supersolution of~\eqref{1.1} in~$[0,T)\times\R$, if~$\overline{u}|_{(0,T)\times\overline{I_i}}\in  C^{1;2}_{t;x}((0,T)\times\overline{I_i})$ $($$i=1,2$$)$, if $\overline{u}_t(t,x)\ge d_i\overline{u}_{xx}(t,x)+f_i(\overline{u}(t,x))$ for all $i=1,2$, $0<t<T$ and $x\in I_i$, and if
$$\overline{u}_x(t,0^-)\ge \sigma \overline{u}_x(t,0^+)~~\text{for all}~t\in(0,T).$$
A subsolution is defined in a similar way with all the inequality signs above reversed.
\end{definition}


\subsection{Existence and comparison results for the Cauchy problem associated with~\eqref{1.1}}

\begin{proposition}
\label{pro1}
For any nonnegative bounded continuous function $u_0:\R\to\R$, there is a unique nonnegative bounded classical solution~$u$ of~\eqref{1.1} in~$[0,+\infty)\times\R$ with initial datum $u_0$ such that, for any $\tau>0$ and $A>0$, 
\begin{align*}
\Vert u|_{[\tau,+\infty)\times[-A,0]}\Vert_{C^{1,\gamma;2,\gamma}_{t;x}([\tau,+\infty)\times[-A,0])}+\Vert u|_{[\tau,+\infty)\times[0,A]}\Vert_{C^{1,\gamma;2,\gamma}_{t;x}([\tau,+\infty)\times[0,A])}\le C,
\end{align*}
with a positive constant $C$ depending on $\tau$, $A$, $d_{1,2}$, $f_{1,2}$, $\sigma$ and $\Vert u_0 \Vert_{L^\infty(\mathbb{R})}$, and with a universal positive constant $\gamma\in(0,1)$. Moreover, $u(t,x)>0$ for all $(t,x)\in(0,+\infty)\times\R$ if $u_0\not\equiv0$ in $\R$. Lastly, the solutions depend monotonically and continuously on the initial data, in the sense that if $u_0\le v_0$ then the corresponding solutions satisfy $u\le v$ in~$[0,+\infty)\times\R$, and for any $T\in(0,+\infty)$ the map~$u_0\mapsto u$ is continuous from $C^+(\R)\cap L^\infty(\R)$ to~$C([0,T]\times\R)\cap L^\infty([0,T]\times\R)$ equipped with the sup norms, where~$C^+(\R)$ denotes the set of nonnegative continuous functions in $\R$.
\end{proposition}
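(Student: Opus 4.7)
The plan is to follow the approach developed for the related patch model in~\cite{HLZ}, which extends to the present two half-line setting with only minor adjustments dictated by the unbounded domain. First I would reduce to a globally Lipschitz nonlinearity. Since $f_i(K_i)=0$ and $f_i\le 0$ on $[K_i,+\infty)$ by~\eqref{hypf}, the constant $M:=\max(K_1,K_2,\|u_0\|_{L^\infty(\R)})$ is a supersolution of~\eqref{1.1} and $0$ is a subsolution, so any nonnegative classical solution must lie in~$[0,M]$. I can therefore modify each $f_i$ outside $[-1,M+1]$ to make it globally Lipschitz without affecting the solution; this simultaneously provides the required $L^\infty$ bound.

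Second, I would construct the linear transmission semigroup associated with
\begin{equation*}
Lu=d_i u_{xx}\ \text{on}\ I_i,\qquad u(0^-)=u(0^+),\qquad u_x(0^-)=\sigma u_x(0^+).
\end{equation*}
A convenient realisation is to view $-L$ as a nonnegative self-adjoint operator on the weighted space $L^2(I_1,d_1^{-1}\,dx)\oplus L^2(I_2,\sigma d_2^{-1}\,dx)$, the weights being chosen precisely so that the jump condition becomes the natural boundary condition of the quadratic form $\int d_1|u'|^2 d_1^{-1}+\int d_2|u'|^2\sigma d_2^{-1}$. Standard analytic semigroup theory then delivers a heat kernel $G(t,x,y)$ satisfying Gaussian pointwise upper bounds; equivalently $G$ can be written explicitly by reflecting the two half-line Gaussians at the interface with coupling coefficients depending on $\sigma$ and $d_1/d_2$. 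A Duhamel formula and a Banach fixed point in $C([0,T]\times\R)$, with $T$ small depending only on the Lipschitz constant of the truncated $f_i$, yield the local-in-time mild solution; the a priori bound from the first step extends it globally in time.

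Third, the Schauder estimates up to $x=0$ follow by a bootstrap. Once $u$ is known to be continuous and bounded, I view it on each side as a solution of the scalar equation $u_t=d_i u_{xx}+f_i(u)$ on the closed half-line $\overline{I_i}$ with Dirichlet datum $u(t,0)$, a known H\"older continuous trace, and I apply classical parabolic Schauder estimates on the strips $[\tau,+\infty)\times[-A,0]$ and $[\tau,+\infty)\times[0,A]$; the interface flux identity is inherited from the mild formulation. Strong positivity follows from the parabolic strong maximum principle on each open half-line together with a Hopf argument at the interface: if one had $u(t_0,0)=0$ for some $t_0>0$ while $u_0\not\equiv0$, the Hopf lemma applied on each side would yield $u_x(t_0,0^-)>0$ and $u_x(t_0,0^+)<0$, contradicting $u_x(t_0,0^-)=\sigma u_x(t_0,0^+)$ since $\sigma>0$. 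The monotonicity $u_0\le v_0\Rightarrow u\le v$ and the continuous dependence on initial data come from applying the same weak maximum principle to the difference $w=v-u$, which solves a linear transmission problem with bounded coefficients $b_i(t,x)=\int_0^1 f_i'\bigl(\theta v+(1-\theta)u\bigr)\,d\theta$, after multiplication by $e^{-\lambda t}$ for some $\lambda$ large enough.

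The main obstacle is the rigorous construction of the transmission heat semigroup itself, since textbook parabolic theory does not directly cover the non-$C^1$ jump condition $u_x(0^-)=\sigma u_x(0^+)$ when $\sigma\neq 1$; one really needs either the self-adjoint realisation in the weighted $L^2$ space or the explicit reflected heat kernel. Once this linear framework is in place, the remaining steps (fixed point, Schauder regularity, positivity, comparison, continuous dependence) are routine adaptations of classical arguments, very close to those already carried out in~\cite{HLZ} for the periodic patch model.
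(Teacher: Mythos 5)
Your proposal is correct in substance but follows a genuinely different route from the paper. The paper does not build the transmission heat semigroup on the whole line at all: it exhausts $\R$ by the bounded intervals $[-n,n]$, solves the truncated problems with Dirichlet data at $\pm n$ and cut-off initial data $\delta_n u_0$ using the bounded-domain well-posedness already established in~\cite{HLZ}, observes that the comparison principle makes $(u_n)_n$ nondecreasing in $n$, and passes to the limit using the uniform interior-up-to-interface Schauder bounds of~\cite[Section~3.2]{HLZ}; uniqueness, monotonicity and continuous dependence then all come from the comparison principle of Proposition~\ref{prop 1.3}. Your approach instead constructs the linear semigroup directly on $\R$ via the self-adjoint realisation in $L^2(I_1,d_1^{-1}dx)\oplus L^2(I_2,\sigma d_2^{-1}dx)$ (the weight computation is right: the boundary term in the integration by parts is $v(0)\bigl(d_2w_2-\sigma d_1w_1\bigr)u_x(0^+)$, which vanishes for your choice), then runs Duhamel and a fixed point. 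What the paper's route buys is that the delicate interface analysis is confined to bounded domains where it was already done, and the monotone limit is painless; what your route buys is a more self-contained and quantitative construction (explicit kernel, Gaussian bounds) at the cost of having to verify carefully that the mild solution satisfies the flux condition pointwise for $t>0$ — you correctly flag the kernel construction as the main obstacle, and that verification is the other place where real work hides. Two small points: in your Hopf argument the signs are reversed — since $u(t_0,0)=0$ would be a \emph{minimum} of $u\ge0$, the Hopf lemma gives $u_x(t_0,0^-)<0$ on $I_1$ and $u_x(t_0,0^+)>0$ on $I_2$ (not the other way around), but the contradiction with $u_x(t_0,0^-)=\sigma u_x(t_0,0^+)$, $\sigma>0$, is unaffected; and you should state explicitly that uniqueness follows by applying your comparison step twice, since the proposition asserts it.
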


The existence in Proposition~\ref{pro1} can be proved by following the proof of~\cite[Theorem~2.2]{HLZ}. Namely, we can introduce a sequence of continuous cut-off functions $(\delta_n)_{n\ge1}$ such that $0\le\delta_n\le1$ in $\R$, $\delta_n=1$ in $[-n+1,n-1]$ and $\delta_n=0$ in $\R\setminus(-n,n)$. As in~\cite[Section~3.1, Theorem~3.2]{HLZ}, for each integer~$n\ge1$, there is a unique continuous function $u_n:[0,+\infty)\times[-n,n]\to\R$ such that $u_n|_{(0,+\infty)\times[-n,0]}\in C^{1;2}_{t;x}((0,+\infty)\times[-n,0])$, $u_n|_{(0,+\infty)\times[0,n]}\in C^{1;2}_{t;x}((0,+\infty)\times[0,n])$, and
$$\left\{\baa{ll}
(u_n)_t=d_1(u_n)_{xx}+f_1(u_n), & t>0,\ x\in[-n,0),\vspace{3pt}\\
(u_n)_t=d_2(u_n)_{xx}+f_2(u_n), & t>0,\ x\in(0,n],\vspace{3pt}\\
(u_n)_x(t,0^-)=\sigma(u_n)_x(t,0^+), & t>0,\vspace{3pt}\\
u_n(t,\pm n)=0, & t\ge0,\vspace{3pt}\\
u_n(0,x)=\delta_n(x)u_0(x), & x\in[-n,n].\eaa\right.$$
Furthermore, $0\le u_n(t,x)\le\max(K_1,K_2,\|u_0\|_{L^\infty(\R)})$ for all $(t,x)\in[0,+\infty)\times[-n,n]$, with $K_{1,2}$ as in~\eqref{hypf}. A comparison principle holds for the above truncated problem and, for each $(t,x)\in[0,+\infty)\times\R$, the sequence $(u_n(t,x))_{n\ge\max(1,|x|)}$ is nondecreasing. Next, as in~\cite[Section~3.2]{HLZ}, the following properties hold: 1)~there is $\gamma>0$ such that, for every $A>0$ and $\tau>0$, the sequences $(u_n|_{[\tau,+\infty)\times[-A,0]})_{n\ge\max(A,1)}$ and $(u_n|_{[\tau,+\infty)\times[0,A]})_{n\ge\max(A,1)}$ are bounded in $C^{1,\gamma;2,\gamma}_{t;x}([\tau,+\infty)\times[-A,0])$ and $C^{1,\gamma;2,\gamma}_{t;x}([\tau,+\infty)\times[0,A])$ respectively, by a constant depending only on $\tau$, $A$, $d_{1,2}$, $f_{1,2}$, $\sigma$ and $\|u_0\|_{L^\infty(\R)}$; 2)~the sequence $(u_n)_{n\ge1}$ converges pointwise in $[0,+\infty)\times\R$ to a nonnegative bounded classical solution $u$ of~\eqref{1.1} with initial datum~$u_0$, in the sense of Definition~\ref{def1}, and $u$ satisfies
$$0\le u(t,x)\le\max(K_1,K_2,\|u_0\|_{L^\infty(\R)})\ \hbox{ for all $(t,x)\in[0,+\infty)\times\R$};$$
3)~the solutions $u$ depend continuously on the initial data in the sense of Proposition~\ref{pro1}. Lastly, the monotonicity with respect to the initial data and the uniqueness in Proposition~\ref{pro1} are consequences of the following comparison principle stated in~\cite[Proposition~A.3]{HLZ}.

\begin{proposition}
\label{prop 1.3}
$\!\!\!${\rm{\cite{HLZ}}} For $T\in(0,+\infty]$, let $\overline{u}$ and $\underline{u}$ be, respectively, a super- and a subsolution of~\eqref{1.1} in~$[0,T)\times\R$ in the sense of Definition~$\ref{def2}$, and assume that $\overline{u}(0,\cdot)\ge\underline{u}(0,\cdot)$ in~$\R$. Then,~$\overline u\ge \underline u$ in~$[0,T)\times\R$ and, if $\overline{u}(0,\cdot)\not\equiv\underline{u}(0,\cdot)$ in $\R$, then $\overline{u}>\underline{u}$ in $(0,T)\times\R$.
\end{proposition}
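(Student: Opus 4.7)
The plan is to reduce the comparison to a linear parabolic differential inequality for $w := \overline{u} - \underline{u}$ and to close the argument by combining the strong parabolic maximum principle on each open patch with a Hopf-type lemma at the interface $x = 0$. Two features of~\eqref{1.1} make this non-routine: the equation fails to be uniformly parabolic across $x = 0$ because the diffusion coefficient jumps, and the spatial domain is unbounded so the infimum of $w$ need not a priori be attained. I plan to handle both issues via an exponential-in-time rescaling and a quadratic penalty that is symmetric about the interface.

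Fix an arbitrary $T_0 \in (0, T)$. Since $\overline{u}$ and $\underline{u}$ are bounded on $[0,T_0]\times\R$, pick a common Lipschitz constant $L$ for $f_1$ and $f_2$ on their joint range and write $f_i(\overline{u}) - f_i(\underline{u}) = g_i(t,x)\,w$ with $\|g_i\|_\infty \le L$. Setting $z := e^{-(L+1)t}w$, one checks that
\begin{equation*}
z_t - d_i z_{xx} \ge (g_i - L - 1)\,z, \qquad (t,x)\in(0,T_0]\times I_i,\ i=1,2,
\end{equation*}
with $g_i - L - 1 \le -1 < 0$, together with $z(t,0^-)=z(t,0^+)$, $z_x(t,0^-)\ge\sigma z_x(t,0^+)$, and $z(0,\cdot)\ge 0$. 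To rule out $\inf z < 0$, introduce the penalty $\chi(t,x):=e^{\mu t}(1+x^2)$; for $\mu$ large enough depending on $d_1$, $d_2$ and $L$, a direct computation gives $\chi_t - d_i\chi_{xx} - (g_i - L - 1)\chi > 0$ on each patch, and crucially $\chi_x(t,0^\pm)=0$, so $\chi$ leaves the interface inequality intact.

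For $\varepsilon > 0$ small enough that $z_\varepsilon := z + \varepsilon\chi$ still has negative infimum, the growth of $\chi$ forces this infimum to be attained at some $(t_0,x_0)\in(0,T_0]\times\R$ (the initial time being excluded by $z_\varepsilon(0,\cdot)\ge\varepsilon\chi(0,\cdot)>0$). If $x_0 \ne 0$, the interior minimum conditions yield $(z_\varepsilon)_t - d_i (z_\varepsilon)_{xx} \le 0$, whereas the differential inequality for $z$ combined with the choice of $\chi$ rearranges to $(z_\varepsilon)_t - d_i (z_\varepsilon)_{xx} > (g_i - L - 1)\,z_\varepsilon > 0$, a contradiction. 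If $x_0 = 0$, the parabolic Hopf lemma applied to $z_\varepsilon$ on each half-line (each admitting an interior sphere at $x=0$) yields $(z_\varepsilon)_x(t_0,0^-) < 0 < (z_\varepsilon)_x(t_0,0^+)$; since $\chi_x$ vanishes at $x=0$, this reads $z_x(t_0,0^-) < 0 < z_x(t_0,0^+)$, contradicting $z_x(t_0,0^-) \ge \sigma z_x(t_0,0^+)$ with $\sigma > 0$. Letting $\varepsilon \downarrow 0$ and then $T_0 \uparrow T$ gives $w \ge 0$ on $[0, T) \times \R$.

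For the strict inequality, suppose $\overline{u}(0,\cdot)\not\equiv\underline{u}(0,\cdot)$ yet $w(t_1,x_1)=0$ at some $(t_1,x_1)\in(0,T)\times\R$. If $x_1 \in I_i$, the standard strong parabolic maximum principle applied to $w$ on that patch forces $w \equiv 0$ on $(0,t_1]\times I_i$; by continuity $w(t,0) = 0$ and the one-sided derivative from the $I_i$ side also vanishes for $t \in (0,t_1]$, after which the interface inequality and a Hopf argument on the opposite patch propagate $w \equiv 0$ there as well. If $x_1 = 0$, the minimum conditions give $w_x(t_1,0^-) \le 0 \le w_x(t_1,0^+)$, which combined with $w_x(t_1,0^-) \ge \sigma w_x(t_1,0^+)$ and $\sigma > 0$ forces $w_x(t_1,0^\pm) = 0$; Hopf applied separately on each patch then yields $w \equiv 0$ on $(0,t_1]\times\R$. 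In both cases, continuity gives $w(0,\cdot)\equiv 0$, a contradiction. I expect the main obstacle to be precisely the design of the penalty function: it must simultaneously tame behavior at spatial infinity, produce a strict sign in the parabolic inequality on each patch, and leave the interface inequality untouched, which is exactly why the symmetric choice $\chi = e^{\mu t}(1+x^2)$ with $\chi_x\big|_{x=0}=0$ is natural.
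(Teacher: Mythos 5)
The paper does not actually prove this proposition: it is imported verbatim from \cite{HLZ} (Proposition~A.3 there), so there is no in-paper argument to compare against. Your proof is correct and follows the route one would expect for such a transmission problem: linearize via $w=\overline u-\underline u$ with an exponential tilt making the zeroth-order coefficient strictly negative, add a penalty $\varepsilon e^{\mu t}(1+x^2)$ whose spatial derivative vanishes at $x=0$ so that the interface inequality is preserved, force a negative minimum to be attained, and exclude it by the interior minimum conditions off the interface and by the two one-sided Hopf lemmas at $x=0$. Two points are left implicit but are easily supplied and are not gaps: in the interface case one should record that every minimum point of $z_\varepsilon$ must lie on $\{x=0\}$ (any minimum point with $x_0\neq0$ is already excluded by the interior case), which is precisely what guarantees $z_\varepsilon>\min z_\varepsilon$ in each open patch and legitimizes the Hopf lemma there; and in the strict-comparison step the phrase ``a Hopf argument on the opposite patch propagates $w\equiv0$'' hides the forward propagation of positivity (if $w>0$ somewhere in the opposite patch at some time $s<t\le t_1$, then $w>0$ throughout that patch at time $t$, and the Hopf lemma then contradicts the sign of $w_x(t,0^{\pm})$ forced by $w_x(t,0^-)\ge\sigma w_x(t,0^+)$ together with the vanishing of the derivative on the side where $w\equiv0$).
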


In the sequel, when we speak of the solution $u$ to~\eqref{1.1} with a nonnegative bounded continuous initial datum $u_0$, we always mean the unique nonnegative bounded classical solution $u$ given in Proposition~\ref{pro1}.


\subsection{Propagation in the KPP-KPP case}

We here investigate the spreading properties of the solutions to the Cauchy problem~\eqref{1.1} associated with nonnegative, continuous and compactly supported initial data $u_0$ when $f_i$ ($i=1,2$) in both patches  $I_i$ satisfy, in addition to~\eqref{hypf}, the KPP assumptions, that is,
\be\label{2.1}
f_i(0)=f_i(K_i)=0,\ \ 0<f_i(s)\le f'_i(0)s~\hbox{for all }s\in(0, K_i),~f'_i(K_i)<0,\ \ f_i<0\hbox{ in }(K_i,+\infty).
\ee
We call this configuration the KPP-KPP case. Without loss of generality, we assume that $K_1\le K_2$. In particular, if each function $f_i$ satisfies~\eqref{hypf} and is positive in $(0,K_i)$ and concave in $[0,+\infty)$, then~\eqref{2.1} holds. An archetype is the logistic function $f_i(s)=s(1-s/K_i)$.

We start with a Liouville-type result, which is proved essentially with ODE tools, for the stationary problem associated with~\eqref{1.1}.

\begin{proposition}
\label{prop2.1}
Under the assumption~\eqref{2.1} with $0<K_1\le K_2$, problem~\eqref{1.1} admits a unique positive, bounded and classical stationary solution $V$. Furthermore, $V(-\infty)=K_1$, $V(+\infty)=K_2$, and~$V'>0$ in $(-\infty, 0^-]\cup[0^+,+\infty)$ if $K_1<K_2$,\footnote{The notation $V'>0$ in $(-\infty, 0^-]\cup[0^+,+\infty)$ means that the functions $V|_{(-\infty,0]}$ and $V|_{[0,+\infty)}$ have positive first-order derivatives in $(-\infty,0]$ and $[0,+\infty)$, respectively.} while $V\equiv K_1$ in $\R$ if $K_1=K_2$.
\end{proposition}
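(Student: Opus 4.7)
The plan is to analyze the stationary ODE on each half-line via its Hamiltonian structure and then match the two one-sided trajectories across $x=0$. Setting $F_i(s)=\int_0^s f_i(\tau)\,d\tau$, the energy $E_i(V,V')=\tfrac{d_i}{2}(V')^2+F_i(V)$ is conserved along any $C^2$ solution of $d_iV''+f_i(V)=0$. Under~\eqref{2.1}, $F_i$ is strictly increasing on $(0,K_i)$, strictly decreasing on $(K_i,+\infty)$, and attains its strict maximum $F_i(K_i)$ on $[0,+\infty)$; in the phase plane the origin $(0,0)$ is a linear center while $(K_i,0)$ is a saddle.

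First I classify bounded positive classical solutions $V_1$ on $I_1$. If $V_1$ did not converge as $x\to-\infty$, its phase-plane trajectory would be a bounded non-equilibrium orbit of a one-degree-of-freedom Hamiltonian system, hence a closed orbit enclosing the only center $(0,0)$; any such orbit has energy $c\in(0,F_1(K_1))$ and satisfies $(V_1')^2=\tfrac{2}{d_1}c>0$ at $V_1=0$, so it crosses $\{V_1=0\}$ and violates positivity. Therefore $V_1(x)\to\ell\ge 0$ as $x\to-\infty$; a standard argument using the ODE bound on $V_1''$ then gives $V_1'(x)\to 0$, so $\ell$ is a zero of $f_1$. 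The value $\ell=0$ is ruled out because it would force $c=0$ and hence $(V_1')^2=-\tfrac{2}{d_1}F_1(V_1)\le 0$ near $-\infty$, contradicting $F_1>0$ on $(0,K_1)$. Thus $\ell=K_1$ and $c=F_1(K_1)$, so $(V_1,V_1')$ lies on the saddle's separatrix. The three pieces of this separatrix in $V\ge 0$ give exactly three classes: either $V_1\equiv K_1$, or the increasing branch with $V_1(0^-)\in(K_1,+\infty)$ and
\[
V_1'(0^-)=+\sqrt{\tfrac{2}{d_1}\bigl(F_1(K_1)-F_1(V_1(0^-))\bigr)},
\]
or the decreasing branch with $V_1(0^-)\in(0,K_1)$ and the opposite sign in the formula above. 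A symmetric analysis on $I_2$ yields either $V_2\equiv K_2$, or an increasing branch with $V_2(0^+)\in(0,K_2)$ and $V_2'(0^+)>0$, or a decreasing branch with $V_2(0^+)\in(K_2,+\infty)$ and $V_2'(0^+)<0$.

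Next I match at $x=0$. Setting $\eta:=V(0^-)=V(0^+)$, the flux condition $V_1'(0^-)=\sigma V_2'(0^+)$ with $\sigma>0$ forces the two one-sided derivatives to share a sign. Scanning the sign table from the previous step leaves exactly two admissible pairings: the constant--constant pairing, which requires $\eta=K_1=K_2$, and the increasing--increasing pairing with $\eta\in(K_1,K_2)$, which requires $K_1<K_2$. In the second case, squaring the matched derivatives reduces the flux condition to
\[
\frac{F_1(K_1)-F_1(\eta)}{d_1}=\sigma^2\,\frac{F_2(K_2)-F_2(\eta)}{d_2}.
\]
The left-hand side equals $0$ at $\eta=K_1$ and is strictly positive at $\eta=K_2$, with derivative $-f_1(\eta)/d_1>0$ on $(K_1,K_2)$ by~\eqref{2.1}; the right-hand side is strictly positive at $\eta=K_1$ and equals $0$ at $\eta=K_2$, with derivative $-\sigma^2 f_2(\eta)/d_2<0$ on $(K_1,K_2)$ by~\eqref{2.1}. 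Hence the difference is continuous and strictly increasing from a negative to a positive value on $[K_1,K_2]$, and there is a unique $\eta^\star\in(K_1,K_2)$ solving the matching equation.

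Putting the two cases together, if $K_1<K_2$ the unique bounded positive classical stationary solution is the profile built from the two increasing branches matched at $\eta^\star$, which is strictly increasing on each closed half-line with $V(-\infty)=K_1$ and $V(+\infty)=K_2$; if $K_1=K_2$ only the constant--constant pairing is admissible and $V\equiv K_1$. The $C^2$ regularity on each closed half-line follows from the phase-plane formulas together with $f_i\in C^1(\R)$. I expect the main obstacle to lie in Step~1: rigorously excluding bounded positive trajectories at energies $c\ne F_i(K_i)$ rests on the observation that the unique center $(0,0)$ has energy $0<F_i(K_i)$, so every nontrivial periodic orbit around it must cross $\{V=0\}$.
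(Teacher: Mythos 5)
Your argument is correct, and it reaches the conclusion by a genuinely different route from the paper's. The paper proves existence by monotone parabolic iteration between the ordered constant sub- and supersolutions $K_1$ and $K_2$ (which also yields $K_1\le V\le K_2$, an ordering reused in the proof of Theorem~\ref{thm2.3}), establishes $\inf_{\R}V>0$ and the limits $V(\pm\infty)=K_{1,2}$ by a sliding argument with compactly supported subsolutions and a translation--compactness argument, and only then derives monotonicity and the matching identity from the energy relations~\eqref{3.5}--\eqref{3.6}. You instead classify all bounded positive one-sided profiles directly in the phase plane (they must lie on the unstable, resp.\ stable, separatrix of the saddle $(K_i,0)$) and obtain existence and uniqueness simultaneously from the unique solvability of the matching equation at the interface --- the same scalar equation in $\eta=V(0)$ that the paper solves, and in fact the same shooting-and-gluing strategy the paper itself deploys later for Propositions~\ref{prop 2.6} and~\ref{prop 2.7}. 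Your route is more elementary and self-contained for this proposition; the paper's buys the bound $K_1\le V\le K_2$ and a uniqueness statement in the larger class of nonnegative nontrivial solutions. One phrasing to tighten: ``a bounded non-equilibrium orbit of a one-degree-of-freedom Hamiltonian system is a closed orbit'' is false in general (homoclinic orbits are bounded and not closed); what you actually need, and what is true, is that a bounded orbit whose $\alpha$-limit set contains no equilibrium is periodic. Excluding periodic orbits in $\{V>0\}$ is also quicker without invoking the energy level or the enclosed center: a nonconstant periodic solution of $d_1V''+f_1(V)=0$ attains an interior minimum, where $f_1(V_{\min})=-d_1V''\le0$ forces $V_{\min}\ge K_1$, and an interior maximum, where $f_1(V_{\max})\ge0$ forces $V_{\max}\le K_1$, so the solution would be the constant $K_1$, a contradiction.
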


\begin{figure}[H]
\centering
\includegraphics[scale=0.4]{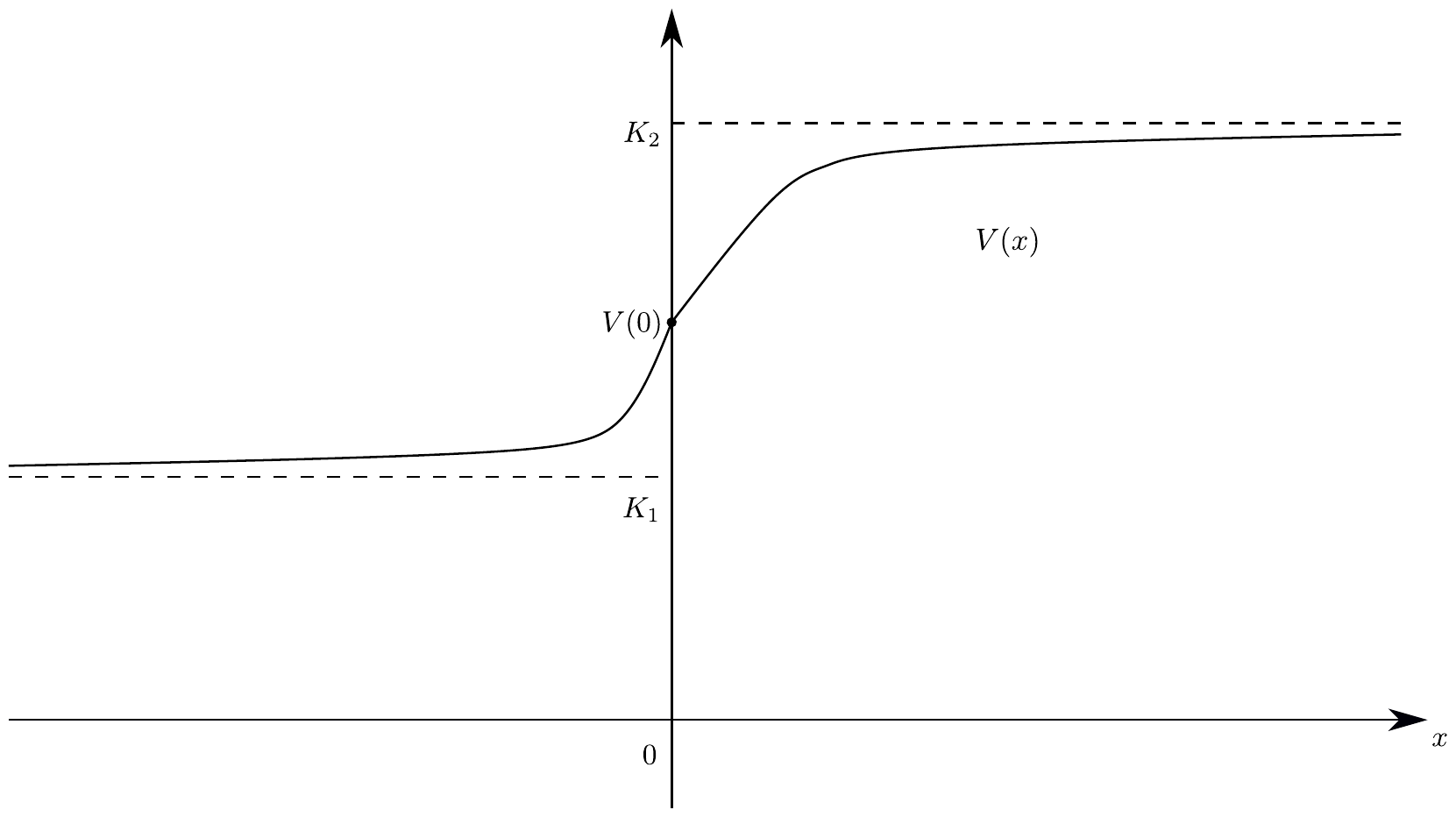}
\caption{The profile of the unique positive bounded stationary solution $V$ in the KPP-KPP case.}
\end{figure}

The assumption~\eqref{2.1} guarantees that the zero state is unstable with respect to any nontrivial perturbation, a phenomenon known from~\cite{AW2} as the hair-trigger effect for the homogeneous equation~\eqref{1.2}. It turns out that the hair-trigger effect holds good for the patch model~\eqref{1.1} in the KPP-KPP case~\eqref{2.1}, and that the solutions to~\eqref{1.1} spread with well defined spreading speeds in both directions, as the following first main result of the paper shows.

\begin{theorem}
\label{thm2.3}
Assume that~\eqref{2.1} holds with $K_1\le K_2$. Then, the solution $u$ of~\eqref{1.1} with a non\-negative bounded and continuous initial datum $u_0\not\equiv 0$ satisfies:
\be\label{convV}
u(t,x)\to V(x) \ \ \text{as}~t\to+\infty,~\text{locally uniformly in}~x\in\mathbb{R},
\ee
where $V$ is the unique positive bounded classical stationary solution given in Proposition~$\ref{prop2.1}$. Furthermore, if $u_0$ is compactly supported, there exist leftward and rightward asymptotic spreading speeds, $c^*_1=2\sqrt{d_1f'_1(0)}>0$ and $c^*_2=2\sqrt{d_2f'_2(0)}>0$, respectively, such that
\be\label{spreadKPP}\left\{\baa{ll}
\displaystyle\lim_{t\to+\infty}\Big(\sup_{x\le -(c^*_1+\varep)t}u(t,x)\Big)= \lim_{t\to+\infty}\Big(\sup_{x\ge (c^*_2+\varep)t}u(t,x)\Big)=0 & \hbox{for all }\varep>0,\vspace{3pt}\\
\displaystyle\lim_{t\to+\infty}\Big(\sup_{(-c^*_1+\varep)t\le x\le (c^*_2-\varep)t} |u(t,x)- V(x)|\Big)=0 & \displaystyle\hbox{for all }0<\varep\le\min(c^*_1,c^*_2).\eaa\right.
\ee	
\end{theorem}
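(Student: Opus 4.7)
The plan decomposes Theorem~\ref{thm2.3} into three pieces that all reduce to Proposition~\ref{prop 1.3}: (a) the locally uniform convergence \eqref{convV}; (b) the exponential decay of $u$ outside $[-(c_1^*+\varepsilon)t,(c_2^*+\varepsilon)t]$; and (c) the matching convergence $u\to V$ uniformly inside the cone. The main ingredients will be Proposition~\ref{prop2.1} (existence and uniqueness of $V$), the comparison and strong maximum principles, and the KPP bound $f_i(s)\le f_i'(0)s$ from \eqref{2.1}. The only new feature compared with the scalar KPP equation is that every sub- or supersolution must satisfy the asymmetric interface condition $\overline u_x(t,0^-)\ge\sigma\overline u_x(t,0^+)$ (or its reverse), which constrains but does not break the standard constructions.

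\textbf{Local convergence.} For the upper envelope, the constant $M:=\max(K_2,\|u_0\|_{L^\infty(\R)})$ is a stationary supersolution since $f_1(M),f_2(M)\le 0$. The solution $\overline U(t,\cdot)$ of \eqref{1.1} from the constant datum $M$ is then nonincreasing in $t$ and, by parabolic estimates and the uniqueness in Proposition~\ref{prop2.1}, converges to $V$; hence $\limsup_t u\le V$ by comparison. For the lower envelope I would use a spectral argument. On $(-L,L)$ with Dirichlet data at $\pm L$ and the interface conditions $\phi(0^-)=\phi(0^+)$, $\phi'(0^-)=\sigma\phi'(0^+)$, the operator $\phi\mapsto-(d_i\phi''+f_i'(0)\phi)$ is self-adjoint in the weighted space $L^2((-L,0),\mathrm{d}x)\oplus L^2((0,L),\sigma\,\mathrm{d}x)$, and a Rayleigh-quotient computation with constant test functions shows that its principal eigenvalue $\lambda_L$ satisfies $\lambda_L\to-\max(f_1'(0),f_2'(0))<0$ as $L\to+\infty$. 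Pick $L$ so that $\lambda_L<0$ and let $\phi_L>0$ be the corresponding principal eigenfunction; the strong maximum principle yields $u(1,\cdot)\ge\varepsilon\phi_L$ on $[-L,L]$ for some small $\varepsilon>0$, and for $\delta>0$ chosen so that $f_i(s)\ge(f_i'(0)+\lambda_L/2)s$ on $[0,\delta]$, the function $\underline u(t,x):=\min\bigl(\delta,\varepsilon e^{-\lambda_L(t-1)/2}\phi_L(x)\bigr)$ (zero-extended outside $[-L,L]$) is a subsolution of \eqref{1.1}. Hence $u(t,\cdot)\ge\underline u(t,\cdot)$, so $\liminf_t u\ge\delta$ on a large interior subinterval. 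Using this as initial datum of a monotone (in $t$, by sub/super structure) comparison and invoking Proposition~\ref{prop2.1} to identify the limit with $V$, one concludes $\liminf_t u\ge V$, which combined with the upper envelope proves \eqref{convV}.

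\textbf{Spreading bounds and main obstacle.} For the rightward upper bound, assume $u_0\le M$ is compactly supported. Given $c=c_2^*+\varepsilon$ pick $\lambda>0$ with $d_2\lambda^2-c\lambda+f_2'(0)=0$, so that $v(t,x):=Ae^{-\lambda(x-ct)}$ is a supersolution of the $I_2$-equation by \eqref{2.1}. Define $\overline u(t,x):=M$ on $I_1$ and $\overline u(t,x):=\min(M,v(t,x))$ on $I_2$; choosing $A$ large enough that the saturation at $M$ occurs strictly inside $I_2$ at each $t\ge0$ makes $\overline u$ locally equal to $M$ near $x=0$, so the interface inequality reduces to $0\ge0$, and $\overline u(0,\cdot)\ge u_0$. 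Proposition~\ref{prop 1.3} then gives $u\le\overline u$, whence $u(t,x)\le Ae^{-\lambda\varepsilon t}$ on $\{x\ge(c_2^*+\varepsilon)t+\lambda^{-1}\ln(A/M)\}$; this proves the first half of \eqref{spreadKPP}, and the left half is symmetric. For the matching lower bound I would combine the local convergence with half-line KPP spreading: since $u(t,0)\to V(0)>0$, for large $T$ one has $u(T,0)\ge V(0)-\eta$ and $u(T,\cdot)$ dominates a compactly supported nontrivial $u_*$ on $I_2$. Comparing $u|_{(T,+\infty)\times I_2}$ from below with the solution of $w_t=d_2w_{xx}+f_2(w)$ on $(0,+\infty)$ emanating from $u_*$ with Dirichlet value $V(0)-\eta$ at $x=0$, the Aronson--Weinberger half-line spreading theorem gives $\liminf_t u(t,x)\ge K_2$ uniformly on $[\delta,(c_2^*-\varepsilon)t]$ as $\eta,\delta\downarrow0$. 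Matching this with an upper bound obtained by sliding horizontal translates of $V$ as supersolutions (using the strict monotonicity of $V$ from Proposition~\ref{prop2.1}) yields $u\to V$ uniformly on $[0,(c_2^*-\varepsilon)t]$, and the left side is symmetric. The hard part of the whole program is the middle paragraph: identifying the correct weighted self-adjoint setting for the coupled eigenvalue problem, proving $\lambda_L<0$ for large $L$, and building sub/supersolutions that respect the asymmetric interface jump is where the patch structure genuinely departs from the homogeneous KPP template; once that spectral information is in hand, the rest follows by essentially standard KPP-type arguments.
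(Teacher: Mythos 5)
Your overall architecture (sandwiching $u$ between monotone-in-time solutions converging to $V$, exponential supersolutions for the outer decay, comparison with whole- or half-line KPP dynamics for the inner matching) is the same as the paper's, and most steps are sound or easily repaired. But the paper's proof of the lower envelope avoids what you call the "hard part" entirely: instead of a principal eigenvalue problem across the interface, it places an explicit compactly supported bump $\varep\cos(\pi(x-x_0)/(2R))$ \emph{inside a single patch} (so both one-sided derivatives at $x=0$ vanish and the interface inequality is trivial), checks $-d_i(\varep\Psi)''\le f_i(\varep\Psi)$ directly for small $\varep$, and lets the solution starting from this datum increase in time to a stationary limit identified with $V$ by the uniqueness in Proposition~\ref{prop2.1}. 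If you do pursue the spectral route, your weighted space is not the right one: self-adjointness of $\phi\mapsto -d_i\phi''$ under $\phi'(0^-)=\sigma\phi'(0^+)$ requires the weight $\sigma d_1/d_2$ on $(0,L)$, not $\sigma$ (the interface boundary term is $\psi(0)\phi'(0^+)(w_2d_2-\sigma d_1)$), and constant test functions are inadmissible for the Dirichlet problem on $(-L,L)$; the natural test function is a bump supported in one patch, which brings you back to the paper's construction.

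The genuine gap is your upper bound inside the cone: ``sliding horizontal translates of $V$ as supersolutions'' fails because \eqref{1.1} is not translation invariant. For $h>0$ the function $V(\cdot-h)$ solves the patch-1 equation $d_1V''+f_1(V)=0$ on $(0,h)\subset I_2$, where a supersolution must satisfy $d_2V''+f_2(V)\le0$, and there is no reason for that to hold; it also need not satisfy the interface inequality at $x=0$. (This loss of translation invariance is precisely the difficulty the paper emphasizes in its introduction.) The bound you need, $\limsup_{t}\sup_{x\ge x_2}u\le K_2+O(\delta)=V+O(\delta)$ on the expanding region, is obtained in the paper either by an ODE supersolution $\xi'=\widetilde g(\xi)$ with $\widetilde g\ge f_{1},f_2$ and $\widetilde g(K_2+\delta)=0$, applied on a half-line with boundary control at a fixed point $x_2$ coming from \eqref{convV}, or by comparison with a minimal-speed front of a modified KPP nonlinearity $\overline g_2\ge f_2$ vanishing at $K_2+2\delta$, fitted above $u(t_1,\cdot)$ via the Gaussian bound of Lemma~\ref{lemma1.3}. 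Two smaller points: in your outer estimate, with $c=c_2^*+\varep$ and $x\ge ct+\lambda^{-1}\ln(A/M)$ you only recover $u\le M$, not decay -- you should run the exponential at a speed strictly between $c_2^*$ and $c_2^*+\varep$; and the barriers $\min(M,v)$ and $\min(\delta,\varep e^{\mu t}\phi_L)$ are only Lipschitz at the switching locus, so Proposition~\ref{prop 1.3}, which requires $C^{1;2}$ regularity on each closed half-patch, does not apply verbatim -- the paper sidesteps this by using smooth barriers on half-lines with explicit boundary control.
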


This theorem says that the positions of the level sets of $u(t,\cdot)$ asymptotically behave as $2\sqrt{d_1f'_1(0)}t$ in patch~$1$ and as $2\sqrt{d_2f'_2(0)}t$ in patch~$2$ at large times. It is an analogue of the standard spreading result for the solutions to homogeneous KPP equations~\eqref{1.2} (see, e.g. \cite{AW2}). This demonstrates that, in the KPP-KPP case, the spreading speeds are essentially determined by the problems obtained at the limits as $x\to\pm\infty$. The proofs actually rely on comparisons with sub- or supersolutions, which solve some approximated problems, in semi-infinite intervals away from the interface, and at large times.

It is easy to see from the proofs given in Section~\ref{Sec 3} that Proposition~$\ref{prop2.1}$ and the convergence result~\eqref{convV} in Theorem~\ref{thm2.3} still hold, while the spreading property~\eqref{spreadKPP} in Theorem~\ref{thm2.3} can be extended (though with non-explicit values of the positive spreading speeds $c^*_i$), when the KPP assumption $f_i(s)\le f'_i(0)s$ is deleted in~\eqref{2.1} (with still keeping the positivity of $f'_i(0)$). Nevertheless, for the clarity of the presentation and in order to reduce the number of hypotheses, we chose to include the KPP assumption in~\eqref{2.1}.


\subsection{Persistence, blocking or propagation in the KPP-bistable case}

In this section, in addition to~\eqref{hypf}, we assume that $f_1$ is of KPP type, whereas $f_2$ is of bistable type, namely:
\begin{align}
\label{f1-kpp}
f_1(0)\!=\!f_1(K_1)\!=\!0,\,0\!<\!f_1(s)\!\le\! f'_1(0)s~\hbox{for }s\!\in\!(0, K_1),\,f'_1(K_1)\!<\!0,\,f_1\!<\!0\hbox{ in }(-\infty,0)\!\cup\!(K_1,+\infty)
\end{align}
and 
\begin{equation}
\label{f2-bistable}
\left\{\baa{l}
f_2(0)=f_2(\theta)=f_2(K_2)=0~\text{for some}~\theta\in(0,K_2),\vspace{3pt}\\
f_2'(0)\!<\!0,~f_2'(\theta)\!>\!0,~f_2'(K_2)\!<\!0,~f_2\!<\!0~\text{in}~(0,\theta)\!\cup\!(K_2,+\infty),~f_2\!>\!0~\text{in}~(-\infty,0)\!\cup\!(\theta,K_2).\eaa\right.
\end{equation}
Let $\phi(x-c_2t)$ be the unique traveling wave solution connecting $K_2$ to $0$ for the equation $u_t=d_2 u_{xx}+f_2(u)$ viewed in the whole line $\R$, that is, $\phi:\R\to(0,K_2)$ obeys:
\begin{equation}
\label{2.5}
\begin{cases}
d_2\phi''+c_2\phi'+f_2(\phi)=0\hbox{ in }\R,\ \ \phi'<0~\text{in}~\mathbb{R},\\
\phi(-\infty)=K_2,~\phi(+\infty)=0,~\phi(0)=\theta,
\end{cases}
\end{equation}
where the speed $c_2$ has the same sign as $\int_{0}^{K_2}\!f_2(s) \mathrm{d}s$~\cite{FM1977}. The normalization condition $\phi(0)=\theta$ uniquely determines $\phi$.  Moreover,
\begin{equation}
\label{2.6}
\left\{\begin{aligned}
a_0 e^{-\alpha s}\le \phi(s)\le a_1 e^{-\alpha s},~~s\ge 0,\\
b_0 e^{\beta s}\le K_2-\phi(s)\le b_1 e^{\beta s},~~s\le0,
\end{aligned}\right.
\end{equation}
where $a_0$, $a_1$, $b_0$ and $b_1$ are positive constants, and $\alpha$ and $\beta$ are given by
$$\alpha=\frac{c_2+\sqrt{(c_2)^2-4d_2f_2'(0)}}{2d_2}>0,~~~\beta=\frac{-c_2+\sqrt{(c_2)^2-4d_2f_2'(K_2)}}{2d_2}>0.$$

For scalar equations of the type $u_t=u_{xx}+f(x,u)$ with bistable reaction terms $f$, solutions may be blocked (especially by the existence of certain steady states) or may propagate (see e.g.~\cite{AMO2005,BBC2016,CS2011,CG2005,DHZ2015,DLL2018,DR2018,DGM2014,E2019,E2020,HFR2010,HZ2021,LK2000,N2015,P1981,XZ1995} for various inhomogeneities and geometric configurations), whereas, for KPP reactions $f$, solutions mostly propagate (see e.g.~\cite{BHN2008,BHN2005,BHR2005,BN2012,GGN2012,HFR2010,HN2021,LZ2007,LZ2010,W2002,Z2012}). For the patch problem~\eqref{1.1} in the mixed KPP-bistable framework, we will give sufficient conditions so that blocking phenomena occur in patch 2, see Theorem~\ref{thm2.8}. We point out that the ordering between~$K_1$ and~$K_2$ is considered here in complete generality. Besides, we also prove propagation and stability results inspired by Fife and McLeod \cite{FM1977}, see Theorems~\ref{thm_propagation-1}--\ref{thm_propagation-2}. A specific ``virtual blocking'' phenomenon is also investigated, see Theorem~\ref{thm_propagation-2}. Before that, we start with the following persistence and propagation result in the KPP patch~$1$, which is the second main result of the paper.

\subsubsection*{Persistence in the KPP patch 1}

\begin{theorem}
\label{thm2.4} 	
Assume that~\eqref{f1-kpp}--\eqref{f2-bistable} hold. Let $u$ be the solution of~\eqref{1.1} with a nonnegative continuous and compactly supported initial datum $u_0\not\equiv 0$. Then, for every $\overline x\in\mathbb{R}$,
$$\inf_{x\le\bar x}\Big(\liminf_{t\to+\infty} u(t,x)\Big)>0.$$
Moreover, $u$ propagates to the left with speed $c^*_1=2\sqrt{d_1f'_1(0)}>0$ in the sense that 
$$\left\{\baa{l}
\displaystyle\forall\,\varep>0,~~\lim_{t\to+\infty}\Big(\sup_{x\le -(c^*_1+\varep)t}u(t,x)\Big)=0,\vspace{3pt}\\
\displaystyle\forall\,\varep\in(0,c^*_1),\ \forall\,\delta>0,\ \exists\,x_1\in\R,~~\limsup_{t\to+\infty}\Big(\sup_{-(c^*_1-\varep)t\le x\le x_1}|u(t,x)-K_1|\Big)<\delta.\eaa\right.$$
In particular, $\sup_{-ct\le x\le -c't}|u(t,x)-K_1|\to0$ as $t\to+\infty$ for every $0<c'\le c<c^*_1$.
\end{theorem}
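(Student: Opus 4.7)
The two displayed lines of the theorem decouple into an upper bound for the leftward propagation speed and a persistence/lower-bound statement, and I would handle them by different comparison arguments. For the upper bound $\sup_{x\le-(c^*_1+\varep)t}u(t,x)\to 0$, my plan is the standard exponential supersolution trick. Set $\overline u(t,x)=Ae^{\lambda(x+ct+x_0)}$ on $(-\infty,0]$, with $c=c^*_1+\varep/2$ and $\lambda>0$ the smaller root of $d_1\lambda^2-c\lambda+f'_1(0)=0$; the KPP inequality $f_1(s)\le f'_1(0)s$ (valid for all $s\ge 0$, using $f_1\le 0$ on $[K_1,+\infty)$) makes $\overline u$ a supersolution of $u_t=d_1u_{xx}+f_1(u)$ on $(-\infty,0)$. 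Choosing $A\ge\max(K_1,K_2,\|u_0\|_{L^\infty(\R)})$ guarantees $\overline u(t,0)\ge u(t,0)$ via the boundedness of $u$ (Proposition~\ref{pro1}), and taking $x_0$ larger than the radius of $\mathrm{supp}(u_0)$ ensures $\overline u(0,\cdot)\ge u_0$ on $(-\infty,0]$. A half-line parabolic comparison then yields $u\le\overline u$ on $(-\infty,0]$, and the decay factor $e^{-\lambda\varep t/2}$ drops out at $x\le -(c^*_1+\varep)t$.

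For the persistence and lower-bound assertions, the interface condition $u_x(t,0^-)=\sigma u_x(t,0^+)$ forbids a direct comparison with the full-line KPP equation, so I would instead compare $u$ with a Dirichlet KPP problem posed only on patch~$1$. By the strong maximum principle inside each patch together with continuity at $x=0$, $u(t,x)>0$ for all $(t,x)\in(0,+\infty)\times\R$, so at some fixed $\tau>0$ I can pick $\eta>0$ and $[a,b]\subset(-\infty,0)$ with $u(\tau,\cdot)\ge \eta\chi_{[a,b]}$. Let $v$ solve
\[v_t=d_1v_{xx}+f_1(v)\ \text{on}\ (-\infty,0),\ \ v(t,0)=0,\ \ v(0,\cdot)=\eta\chi_{[a,b]},\]
extended by $0$ on $[0,+\infty)$. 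Then $v$ is a subsolution of~\eqref{1.1}: the PDE holds in both patches (trivially in patch~$2$, since $f_2(0)=0$), $v$ is continuous at $x=0$, and the transmission inequality $v_x(t,0^-)\le \sigma v_x(t,0^+)=0$ follows from $v\ge 0=v(t,0)$. By Proposition~\ref{prop 1.3}, $u(t+\tau,\cdot)\ge v(t,\cdot)$ on $\R$.

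The heart of the argument is then the asymptotic behavior of $v$: (i)~$v(t,x)\to V_0(x)$ locally uniformly on $(-\infty,0]$, where $V_0$ is the unique bounded positive stationary solution of the Dirichlet problem with $V_0(0)=0$ and $V_0(-\infty)=K_1$, obtained by the classical Lyapunov/monotone-dynamical-system analysis of KPP on a half-line; and (ii)~the leftward spreading speed of $v$ equals $c^*_1$, in the sharp form $\liminf_{t\to+\infty}\inf_{-(c^*_1-\varep)t\le x\le x_1}v(t,x)\ge K_1-\delta$ once $x_1<0$ is chosen with $V_0(x_1)\ge K_1-\delta/2$. Item~(ii) is the main obstacle, because there is no genuine KPP traveling front with speed $<c^*_1$ available as a moving subsolution; I would use the Aronson--Weinberger remedy of perturbing $f_1$ downward to a nonlinearity $f_{1,\kappa}\le f_1$ admitting compactly supported bistable-type subsolutions with speeds converging to $c^*_1$, sliding such profiles to fill the interval $[-(c^*_1-\varep)t,x_1]$, and passing to the limit $\kappa\to 0$. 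Combined with $u(t+\tau,\cdot)\ge v(t,\cdot)$, items~(i)--(ii) deliver the second propagation line of the theorem. The persistence statement $\inf_{x\le\bar x}\liminf u(t,x)>0$ is then deduced as follows: for $\bar x<0$, $\liminf u(t,x)\ge V_0(x)\ge V_0(\bar x)>0$ by the monotonicity of $V_0$; for $\bar x\ge 0$, I would transfer positivity across the interface either by a parabolic Harnack inequality adapted to the transmission condition, or equivalently by building a positive stationary subsolution of~\eqref{1.1} on a bounded interval $[-M,\bar x+1]$ through the principal eigenvalue of the linearized two-patch operator, which becomes negative once $M$ is large enough compared with $\sqrt{d_1/f'_1(0)}$.
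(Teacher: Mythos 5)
Your plan for the first displayed line (the exponential supersolution $Ae^{\lambda(x+ct+x_0)}$ on the half-line $(-\infty,0]$ with boundary control $\overline u(t,0)\ge M\ge u(t,0)$) is correct and in fact more elementary than the paper's argument, which instead compares $u$ on $(-\infty,x_1]$ with a translate of the KPP front $\overline\varphi_{c^*_1}(-x-c^*_1t-A)$ for an enlarged nonlinearity, using the Gaussian bound of Lemma~\ref{lemma1.3}. Your treatment of the lower bounds is also viable, though structured differently from the paper: you decouple patch~$1$ via a Dirichlet subproblem and transfer positivity across the interface by an eigenvalue argument, whereas the paper works with the full two-patch problem from the start (a small cosine bump $\eta\Psi(\cdot-x_0)$ is a subsolution of~\eqref{1.1} itself, the flow from it is increasing in time and converges to a positive stationary solution $p$ of~\eqref{1.1} with $p(-\infty)=K_1$, and the sliding argument plus continuity of $p$ give persistence on every $(-\infty,\overline x\,]$ at once, including $\overline x\ge0$). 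For the spreading lower bound the paper also avoids redoing Aronson--Weinberger on a half-line: it compares $u$ on $(-\infty,x_1]$ with a whole-line KPP problem whose carrying capacity is lowered to $K_1-\delta$, so that the boundary control at $x_1$ is automatic and \cite{AW2} can be cited directly; you may want to adopt that shortcut rather than rebuilding the sliding-bump machinery for the Dirichlet problem.

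The genuine gap is that your proposal proves only half of the second displayed line. That line asserts $\limsup_{t\to+\infty}\sup_{-(c^*_1-\varep)t\le x\le x_1}|u(t,x)-K_1|<\delta$, which requires an upper bound $u\le K_1+\delta$ on the moving region in addition to the lower bound $u\ge K_1-\delta$; your opening sentence recasts this as a ``persistence/lower-bound statement'' and items~(i)--(ii) about the subsolution $v$ indeed deliver only $u\ge K_1-\delta$. The upper bound is not free: nothing prevents $u$ from exceeding $K_1$ in patch~$1$ (if $\|u_0\|_{L^\infty}>K_1$, or if $K_2>K_1$, in which case the large-time limit of $u$ near $x=0^-$ genuinely lies above $K_1$ --- this is exactly why the theorem only controls $x\le x_1$ with $x_1$ depending on $\delta$). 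Your exponential supersolution is useless here, since at $x=-(c^*_1-\varep)t$ it blows up like $e^{3\lambda\varep t/2}$. What is missing is the chain the paper builds: a nonincreasing-in-time upper barrier (the solution started from the constant $M$) converging to a stationary solution $q$ with $q(-\infty)=K_1$, giving $u(t,x_1)\le K_1+\delta$ for $t$ large and $x_1$ negative enough; an ODE supersolution to get $\sup_{x\le x_1}u(t_2,\cdot)\le K_1+3\delta/2$ at some finite time; and then a leftward-moving KPP front supersolution with plateau $K_1+2\delta$ to propagate the bound over the whole region $[-(c^*_1-\varep)t,x_1]$. Without some version of these steps, the second displayed line and the ``in particular'' conclusion $\sup_{-ct\le x\le-c't}|u(t,x)-K_1|\to0$ are not established.
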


An immediate consequence of Theorem~\ref{thm2.4} is that, for each~$\varep\in(0,c^*_1)$ and each map $t\mapsto\zeta(t)$ such that $\zeta(t)\to-\infty$ and $|\zeta(t)|=o(t)$ as $t\to+\infty$, it holds 
$$\lim_{t\to+\infty}\sup_{-(c^*_1-\varep)t\le x \le \zeta(t)}|u(t,x)-K_1|=0.$$
Furthermore, Theorem~\ref{thm2.4}, together with Proposition~\ref{pro1}, provides some informations on the $\omega$-limit set $\omega(u)$ of $u$ in the topology of $C^2_{loc}((-\infty,0])$ and $C^2_{loc}([0,+\infty))$ (more precisely, a function~$w$ belongs to~$\omega(u)$ if and only if  there exists a sequence $(t_k)_{k\in\mathbb{N}}$ diverging to~$+\infty$ such that $\lim_{k\to+\infty}u(t_k,\cdot)|_{[-A,0]}=w|_{[-A,0]}$ in $C^2([-A,0])$ and $\lim_{k\to+\infty}u(t_k,\cdot)|_{[0,A]}=w|_{[0,A]}$ in $C^2([0,A])$, for every $A>0$). Proposition~\ref{pro1} implies that $\omega(u)$ is not empty and Theorem~\ref{thm2.4} yields $w(-\infty)=K_1$ for any $w\in\omega(u)$. 

\subsubsection*{Stationary solutions connecting $K_1$ and $0$, or $K_1$ and $K_2$}

In the KPP-bistable case~\eqref{f1-kpp}--\eqref{f2-bistable}, because of the existence of several possible limit profiles as $x\to+\infty$, the description of the set of positive bounded and classical stationary solutions of~\eqref{1.1} is not as simple as in Proposition~\ref{prop2.1} concerned with the KPP-KPP case~\eqref{2.1}. We start with the following Proposition~\ref{prop 2.5}, which provides some necessary conditions for a stationary solution connecting $K_1$ and~$0$ to exist, whereas Proposition~\ref{prop 2.6} gives some sufficient conditions for such a solution to exist. These solutions will act as blocking barriers in the bistable patch~$2$ for the solutions of~\eqref{1.1} with initial data which are in some sense small (see part~(iv) of Theorem~\ref{thm2.8}). 

\begin{proposition}
\label{prop 2.5}
Assume that~\eqref{f1-kpp}--\eqref{f2-bistable} hold and~\eqref{1.1} admits a nonnegative classical statio\-nary solution $U$ with $U(-\infty)=K_1$ and $U(+\infty)=0$. Then $U>0$ in $\R$ and:
\begin{enumerate}[(i)]
\item if $\int_0^{K_2}f_2(s) \mathrm{d}s< 0$, then $U'<0$ in $(-\infty,0^-]\cup[0^+,+\infty)$, $0<U(0)<K_1$, and
\be\label{eqU0}
\int_{U(0)}^{K_1}f_1(s)\mathrm{d}s=-\frac{d_1\sigma^2}{d_2}\int_0^{U(0)}f_2(s)\mathrm{d}s>0;
\ee
\item if $\int_0^{K_2}f_2(s) \mathrm{d}s= 0$, then $U'<0$ in $(-\infty,0^-]\cup[0^+,+\infty)$, $0<U(0)<\min(K_1,K_2)$, and~\eqref{eqU0}~holds;
\item if $\int_0^{K_2}f_2(s) \mathrm{d}s>0$, with $\theta^*\in (\theta,K_2)$ such that $\int_0^{\theta^*} f_2(s) \mathrm{d}s=0$, then:
\begin{enumerate}
\item either $U'<0$ in $(-\infty,0^-]\cup[0^+,+\infty)$, $0<U(0)<\min(K_1,\theta^*)$, and~\eqref{eqU0} holds;
\item or $U'\ge0$ in $(-\infty,0^-]\cup[0^+,x_0)$ and $U'<0$ in $(x_0,+\infty)$ for some $x_0\ge 0$, with $U(x_0)=\max_\mathbb{R} U=\theta^*$ and $U'(x_0)=0$. Furthermore, either $x_0>0$, $U'>0$ in $(-\infty,0^-]\cup[0^+,x_0)$, $K_1<U(0)<\theta^*$ and~\eqref{eqU0} holds $($$U$ is then bump-like$)$; or $x_0=0$, $K_1=\theta^*$, $U\equiv K_1$ in~$(-\infty,0]$, and both integrals in~\eqref{eqU0} vanish.
\end{enumerate}
\end{enumerate}
\end{proposition}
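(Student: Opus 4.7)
My plan is to reduce the problem to a phase-plane analysis of the autonomous ODE $d_iU''+f_i(U)=0$ on each patch, matched through the interface condition $U'(0^-)=\sigma U'(0^+)$. \emph{Positivity.} First I would show $U>0$ on $\R$: interior zeros on each open patch are ruled out by ODE uniqueness (at a nonnegative minimum, both $U$ and $U'$ vanish, and $f_i(0)=0$ then gives $U''=0$, so the trivial solution is forced, contradicting $U(-\infty)=K_1>0$); a zero at $x=0$ would be excluded because the energy identity on patch~$1$ below would force $U'(0^-)<0$, hence $U'(0^+)<0$ via the interface condition, violating $U\ge 0$ on patch~$2$.

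\emph{Energy identities.} Multiplying the stationary equations by $U'$, integrating, and using that a bounded solution of an autonomous second-order ODE with a limit at infinity has derivative vanishing at infinity, I obtain
\[
\tfrac{d_1}{2}\bigl(U'(x)\bigr)^2=F_1(K_1)-F_1\bigl(U(x)\bigr)\ \hbox{on }I_1,\qquad \tfrac{d_2}{2}\bigl(U'(x)\bigr)^2=-F_2\bigl(U(x)\bigr)\ \hbox{on }I_2,
\]
with $F_i(s)=\int_0^s f_i$. Setting $m:=U(0)$ and squaring the interface condition then yields~\eqref{eqU0}, and the two sides equal $\tfrac{d_1}{2}(U'(0^-))^2$ and $\tfrac{d_1\sigma^2}{2}(U'(0^+))^2$, so both are automatically nonnegative. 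On patch~$1$, the first identity shows that $U'=0$ forces $F_1(U)=F_1(K_1)$, which by the KPP shape of $f_1$ (strictly increasing on $[0,K_1]$, strictly decreasing on $[K_1,+\infty)$) forces $U=K_1$; ODE uniqueness then gives the dichotomy \emph{either} $U\equiv K_1$ on $(-\infty,0]$ \emph{or} $U'$ has constant nonzero sign on $(-\infty,0^-]$, with $m<K_1$ on the decreasing branch out of the saddle $(K_1,0)$ and $m>K_1$ on the increasing branch.

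\emph{Phase-plane on patch~$2$ and case analysis.} On patch~$2$ I follow the stable manifold of the saddle $(0,0)$ (saddle since $f_2'(0)<0$) backward in $x$, using the second identity to locate turning points $U'=0$, which require $F_2(U)=0$. In case~(i), $F_2<0$ on $(0,+\infty)$, so no turning point occurs, $U'<0$ throughout $[0^+,+\infty)$, and combined with the patch-$1$ dichotomy this forces $U'<0$ on $I_1$ too and $0<m<K_1$. In case~(ii), $F_2$ also vanishes at $K_2$ and the trajectory is the heteroclinic to the saddle $(K_2,0)$, so $U'<0$ on $[0^+,+\infty)$ with $m<K_2$, and $m<\min(K_1,K_2)$ follows from the patch-$1$ dichotomy. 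In case~(iii), $F_2$ vanishes at $\theta^*\in(\theta,K_2)$ with $f_2(\theta^*)>0$, so $(\theta^*,0)$ is a regular turning point and the trajectory through $(0,0)$ is a homoclinic loop confined to $\{U\le\theta^*\}$; its restriction to $[0^+,+\infty)$ gives three alternatives according to the sign of $U'(0^+)$: the monotone decreasing branch ($0<m<\min(K_1,\theta^*)$, item~(a)); the rising-then-falling ``bump'' branch with $U'(0^+)>0$, peak value $\theta^*$ at some $x_0>0$, and (via the patch-$1$ dichotomy, with $U'(0^-)>0$) $K_1<m<\theta^*$ (item~(b), generic sub-case); or the degenerate case $U'(0^+)=0$, which forces $U\equiv K_1$ on $(-\infty,0]$ and $K_1=m=\theta^*$ with both sides of~\eqref{eqU0} vanishing (item~(b), limiting sub-case).

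\emph{Main difficulty.} The delicate step is case~(iii): I need to verify cleanly that the orbit of the stable manifold of $(0,0)$ on patch~$2$ is the homoclinic loop rather than an unbounded orbit, that its peak value is exactly $\theta^*$, and that the ``bump'' alternative glues consistently with the increasing patch-$1$ branch via the interface condition. The rest is a routine combination of ODE uniqueness, energy conservation, and matching at $x=0$.
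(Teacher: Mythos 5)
Your proposal is correct and follows essentially the same route as the paper's proof: the energy identities $\tfrac{d_1}{2}(U'(x^-))^2=\int_{U(x)}^{K_1}f_1$ and $\tfrac{d_2}{2}(U'(x^+))^2=-\int_0^{U(x)}f_2$, matched through $U'(0^-)=\sigma U'(0^+)$, with the case analysis driven by the sign of $\int_0^{K_2}f_2$. Your phase-plane vocabulary (homoclinic loop confined to $\{U\le\theta^*\}$) is just a repackaging of the paper's observation that $U(x)\notin(\theta^*,Q)$ combined with continuity and $U(+\infty)=0$, and the "main difficulty" you flag is resolved exactly as you indicate.
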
  

\begin{proposition}
\label{prop 2.6}
Assume that~\eqref{f1-kpp}--\eqref{f2-bistable} hold. Then~\eqref{1.1} admits a positive classical  stationary solution $U$ with $U(-\infty)=K_1$ and $U(+\infty)=0$, provided one of the following conditions holds:
\begin{enumerate}[(i)]
\item $\int_0^{K_2}f_2(s) \mathrm{d}s< 0$;
\item $\int_0^{K_2}f_2(s) \mathrm{d}s=0$ and $K_1<K_2$;
\item $\int_0^{K_2}f_2(s) \mathrm{d}s> 0$ and $K_1\le \theta^*$, with $\theta^*\in (\theta,K_2)$ such that $\int_0^{\theta^*} f_2(s) \mathrm{d}s=0$.
\end{enumerate}
\end{proposition}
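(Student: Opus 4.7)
The plan is to treat \eqref{1.1} as two autonomous boundary-value problems glued at $x=0$, with a single matching parameter $U_0:=U(0)$. Set $F_i(u):=\int_0^u f_i(s)\,\mathrm{d}s$: any classical solution of $d_iU''+f_i(U)=0$ conserves the energy $E_i(U,U'):=(d_i/2)(U')^2+F_i(U)$, and the boundary conditions at $\pm\infty$ fix these energies, namely $E_1\equiv F_1(K_1)$ on the left (since $U\to K_1$ and $U'\to 0$) and $E_2\equiv 0$ on the right.

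For $U_0>0$ with $F_2(U_0)<0$, I build the right piece $U_R$ as the solution of the Cauchy problem on $[0,+\infty)$ with $U(0)=U_0$ and $U'(0^+)=-\sqrt{-2F_2(U_0)/d_2}$; it lies on $\{E_2=0\}$, is strictly decreasing, and converges to $0$ at $+\infty$ because $(0,0)$ is a hyperbolic saddle of the planar Hamiltonian system (using $f_2'(0)<0$ from \eqref{f2-bistable}). For $U_0\in(0,K_1)$, the left piece $U_L$ is built by backward integration from $U(0)=U_0$, $U'(0^-)=-\sqrt{2(F_1(K_1)-F_1(U_0))/d_1}$; it stays on $\{E_1=F_1(K_1)\}$, is strictly decreasing on $(-\infty,0]$, and converges to $K_1$ at $-\infty$ (saddle at $(K_1,0)$ via $f_1'(K_1)<0$). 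If $U_0=K_1$, I take $U_L\equiv K_1$ instead. The interface condition $U'(0^-)=\sigma U'(0^+)$, once squared, reduces to
$$
g(U_0):=\int_{U_0}^{K_1}f_1(s)\,\mathrm{d}s+\frac{\sigma^2 d_1}{d_2}\int_0^{U_0}f_2(s)\,\mathrm{d}s=0.
$$

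Since $f_1>0$ on $(0,K_1)$, we have $g(0)=F_1(K_1)>0$, so it suffices to exhibit $U_0^\star\in(0,K_1]$ with $g(U_0^\star)\le 0$ and $F_2(U_0^\star)\le 0$ in each case. In case~(i), $f_2\le 0$ on $[K_2,+\infty)$ combined with $F_2(K_2)<0$ yields $F_2<0$ on $(0,+\infty)$, hence $g(K_1)<0$ and the intermediate value theorem furnishes $U_0\in(0,K_1)$ with $g(U_0)=0$ and $F_2(U_0)<0$. In case~(ii), the bistable shape of $f_2$ together with $F_2(K_2)=0$ force $F_2<0$ on $(0,K_2)$, and $K_1<K_2$ gives $g(K_1)<0$ so the same argument applies. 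In case~(iii), with $\theta^*\in(\theta,K_2)$ satisfying $F_2(\theta^*)=0$ and $F_2<0$ on $(0,\theta^*)$, the subcase $K_1<\theta^*$ is identical ($g(K_1)<0$); the borderline $K_1=\theta^*$ is handled directly by $U_0=K_1$, for which both sides of the flux condition vanish, $U_L\equiv K_1$, and $U_R$ is the descending separatrix emanating from $(\theta^*,0)$, whose descent is initiated by $f_2(\theta^*)>0$.

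The delicate point is not the matching identity itself but ensuring that $U_R$ genuinely converges to $0$ at $+\infty$ rather than oscillating or being trapped at an intermediate zero of $F_2$. In each case above, the chosen $U_0$ lies in an interval on which $F_2<0$ strictly between $0$ and $U_0$, so $U_R'$ is bounded away from $0$ on every compact subinterval of the trajectory and the saddle structure at $(0,0)$ forces monotone convergence to the equilibrium. Gluing $U_L$ and $U_R$ at $x=0$ then produces the desired positive stationary solution of \eqref{1.1}, $C^2$ on each closed half-line, with $U(-\infty)=K_1$, $U(+\infty)=0$, and satisfying the interface conditions by construction.
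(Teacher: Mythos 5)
Your proposal is correct and follows essentially the same route as the paper: reduce the problem to the scalar matching condition $\int_{U_0}^{K_1}f_1=-\tfrac{d_1\sigma^2}{d_2}\int_0^{U_0}f_2$ with the sign constraint $\int_0^{U_0}f_2\le 0$, solve it by an intermediate value argument using $g(0)>0$ and $g(K_1)<0$ (or the borderline $U_0=K_1=\theta^*$), and then build the two half-line pieces from the energy identities on $\{E_1=F_1(K_1)\}$ and $\{E_2=0\}$, with monotonicity and the fact that the limits must be zeros of the relevant energy forcing $U(-\infty)=K_1$ and $U(+\infty)=0$. The paper's proof is the same construction, merely packaged as an equivalence statement and a graph comparison instead of your explicit function $g$.
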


In the sufficient conditions~(i)-(iii) of Proposition~\ref{prop 2.6} for the existence of a stationary solution~$U$ of~\eqref{1.1} such that $U(-\infty)=K_1$ and $U(+\infty)=0$, the parameters $d_{1,2}$ and $\sigma$ do not play any role (only the functions $f_{1,2}$ are involved). On the other hand, when $\int_0^{K_2}f_2(s)\mathrm{d}s= 0$ and $K_1\ge K_2$, or when~$\int_0^{K_2}f_2(s)\mathrm{d}s> 0$ and $K_1> \theta^*$, it turns out that stationary solutions $U$ of~\eqref{1.1} such that~$U(-\infty)=K_1$ and $U(+\infty)=0$ may not exist and the parameters~$d_{1,2}$ and $\sigma$ play crucial roles in the non-existence of $U$ (see Remark~\ref{rem41} below for further details).

The third proposition, which will be a key step in the large-time dynamics of the spreading solutions in patch~$2$, is the analogue of Proposition~\ref{prop2.1} in the present KPP-bistable framework, namely it is concerned with the stationary solutions of~\eqref{1.1} connecting $K_1$ and $K_2$. 

\begin{proposition}
\label{prop 2.7}
Assume that~\eqref{f1-kpp}--\eqref{f2-bistable} hold and that $\int_0^{K_2}f_2(s)\mathrm{d}s\ge 0$. Then problem~\eqref{1.1} has a positive monotone and classical  stationary solution $V$ such that $V(-\infty)=K_1$ and~$V(+\infty)=K_2$. Moreover, $V$ is unique if $K_1\ge\theta$.
\end{proposition}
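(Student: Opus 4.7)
The plan is to reduce the stationary problem to an ODE matching via a first-integral analysis on each half-line. I would describe the argument in the nontrivial case $K_1 < K_2$; the case $K_1 > K_2$ is entirely symmetric (the profile is strictly decreasing), and $K_1 = K_2$ is settled by the constant solution $V \equiv K_1$. Multiplying $d_i V'' + f_i(V) = 0$ by $V'$ and integrating yields on each patch a first integral $\tfrac{d_i}{2}(V')^2 + F_i(V) = E_i$, where $F_i(s) := \int_0^s f_i(\tau)\,\mathrm{d}\tau$. The required limits $V(\pm\infty) = K_i$ and $V'(\pm\infty) = 0$ force $E_1 = F_1(K_1)$ and $E_2 = F_2(K_2)$. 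Seeking a strictly increasing profile, I take $V' > 0$ and extract
\[
V'(x) = \sqrt{\tfrac{2}{d_1}\bigl(F_1(K_1) - F_1(V(x))\bigr)}~~\text{on}~(-\infty,0],\quad V'(x) = \sqrt{\tfrac{2}{d_2}\bigl(F_2(K_2) - F_2(V(x))\bigr)}~~\text{on}~[0,+\infty).
\]
By~\eqref{f1-kpp}, $F_1$ strictly decreases on $[K_1,+\infty)$, hence for any $V_0 > K_1$ the patch-$1$ branch extends back to $-\infty$, is strictly increasing, and satisfies $V(-\infty) = K_1$, $V(0^-) = V_0$. By~\eqref{f2-bistable} combined with $\int_0^{K_2} f_2 \geq 0$, the function $F_2$ attains its maximum over $[0,K_2]$ only at $K_2$, so for any $V_0 \in [0,K_2)$ the patch-$2$ branch extends forward to $+\infty$, is strictly increasing, and satisfies $V(0^+) = V_0$, $V(+\infty) = K_2$.

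The interface unknown $V_0$ is then fixed by the transmission condition $V'(0^-) = \sigma V'(0^+)$; squared, this becomes $G(V_0) = 0$ where
\[
G(V_0) := d_2\bigl(F_1(K_1) - F_1(V_0)\bigr) - \sigma^2 d_1 \bigl(F_2(K_2) - F_2(V_0)\bigr).
\]
One computes $G(K_1) = -\sigma^2 d_1\bigl(F_2(K_2) - F_2(K_1)\bigr) < 0$, the strict inequality following from $K_1 < K_2$ together with $F_2$ attaining its maximum on $[0,K_2]$ only at $K_2$ (it is precisely here that $\int_0^{K_2} f_2 \geq 0$ is used), while $G(K_2) = d_2\bigl(F_1(K_1) - F_1(K_2)\bigr) > 0$ by strict decrease of $F_1$ past $K_1$. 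The intermediate value theorem then furnishes some $V_0^{*} \in (K_1,K_2)$ with $G(V_0^{*}) = 0$. Gluing the two branches at $V_0^{*}$ produces a function $V$ that is continuous on $\mathbb{R}$, of class $C^2$ on each closed half-line, solves the ODE in each patch by construction, satisfies the interface transmission condition, is strictly increasing on each side, and since $V(0) = V_0^{*} \in (K_1,K_2)$, is strictly increasing on all of $\mathbb{R}$ with the prescribed limits. Positivity is automatic since $V \geq K_1 > 0$.

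For uniqueness under the assumption $K_1 \geq \theta$, I would differentiate: $G'(V_0) = -d_2 f_1(V_0) + \sigma^2 d_1 f_2(V_0)$. The admissible range for $V_0$ in the case $K_1 < K_2$ is $(K_1,K_2)$, which lies inside $(\theta,K_2)$ by the hypothesis; on this range $f_1 < 0$ and $f_2 > 0$, hence $G' > 0$, so $G$ has at most one zero. In the symmetric case $K_1 > K_2$ the range is $(K_2,K_1) \subset (\theta,K_1)$ (recall $K_2 > \theta$ from~\eqref{f2-bistable}), where $f_1 > 0$ and $f_2 < 0$, giving $G' < 0$ and again uniqueness; the case $K_1 = K_2$ is trivial. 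The principal obstacle I anticipate is not any single delicate estimate but rather the case analysis: keeping track of the correct admissible interval for $V_0$, of the sign of $G$ at its endpoints, and of the monotonicity of $G$ needed for uniqueness; the assumption $K_1 \geq \theta$ is exactly what prevents $G$ from losing monotonicity by allowing $V_0$ to dip below $\theta$, where $f_1$ and $f_2$ would have the same sign and no monotonicity conclusion for $G$ can be drawn in general.
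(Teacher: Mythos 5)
Your proposal is correct and follows essentially the same route as the paper: both reduce the problem, via the first integrals $\tfrac{d_i}{2}(V')^2+F_i(V)=F_i(K_i)$, to the interface matching condition $\int_{\xi}^{K_1}f_1(s)\,\mathrm{d}s=\tfrac{d_1\sigma^2}{d_2}\int_{\xi}^{K_2}f_2(s)\,\mathrm{d}s$ (your $G(V_0)=0$ is exactly this), obtain a root $\xi$ strictly between $K_1$ and $K_2$ by the intermediate value theorem, get uniqueness of the root for $K_1\ge\theta$ from strict monotonicity of the matching function, and then glue the two half-line Cauchy problems. The only point you gloss over is the converse direction needed to conclude uniqueness of $V$ itself rather than of the constructed monotone family — namely that \emph{any} positive stationary solution connecting $K_1$ to $K_2$ is necessarily strictly monotone with $V(0)$ strictly between $K_1$ and $K_2$ and $V'(0^{\pm})$ given by the positive (or negative) square roots — but this follows from your own first-integral identities combined with the Cauchy--Lipschitz theorem, exactly as carried out in Step~1 of the paper's proof.
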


Notice from the statements that the functions $U$ and $V$ given in Propositions~\ref{prop 2.6}--\ref{prop 2.7} can coexist.

\subsubsection*{Blocking phenomena if patch 2 has bistable dynamics}

We now turn to the investigation of blocking phenomena. If $U$ is a stationary solution of~\eqref{1.1} with~$U(-\infty)=K_1$ and $U(+\infty)=0$ and if a nonnegative bounded continuous function $u_0$ satisfies~$0\le u_0\le U$ in $\mathbb{R}$, then the comparison principle (Proposition~\ref{prop 1.3}) implies that the solution $u$ of the Cauchy problem~\eqref{1.1} with initial datum $u_0$ satisfies $0\le u(t,x)\le U(x)$ for all $(t,x)\in[0,+\infty)\times\R$, hence it is blocked in patch~$2$, that is,
\begin{equation}\label{blocking}
u(t,x)\to 0~~\text{as}~x\to+\infty, ~\text{uniformly in}~t\ge 0.
\end{equation}

In the following and much less immediate result, which is one of the main results of the paper, we provide various sufficient conditions for the solutions $u$ of~\eqref{1.1} to be blocked in the bistable patch~$2$.

\begin{theorem}
\label{thm2.8}
Assume that~\eqref{f1-kpp}--\eqref{f2-bistable} hold. Let $u$ be the solution to~\eqref{1.1} with a nonnegative continuous and compactly supported initial datum $u_0$. Then,~$u$ is blocked in patch~$2$, that is, it satisfies~\eqref{blocking}, if one of the following conditions is satisfied:
\begin{enumerate}[(i)]
\item either $\int_{0}^{K_2}f_2(s) \mathrm{d}s<0$;
\item or $\int_{0}^{K_2}f_2(s) \mathrm{d}s=0$ and $K_1<K_2$;
\item or $K_1<\theta$ and $u_0<\theta$ in $\R$;
\item or~\eqref{1.1} admits a nonnegative classical  stationary solution $U$ with $U(-\infty)=K_1$ and $U(+\infty)=0$, and $\Vert u_0 \Vert_{L^1(\mathbb{R})}\le\varep$, for some $\varep>0$ depending on $f_{1,2}$, $d_{1,2}$, $U$ and $L$, with {\rm{spt}}$(u_0)\subset[-L,L]$.\footnote{Throughout the paper, for any continuous function $\psi:\R\to\R$, we denote spt$(\psi)$ the support of $\psi$.}
\end{enumerate}
\end{theorem}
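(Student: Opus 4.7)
For each of the four cases we exhibit a stationary supersolution $\overline U$ of \eqref{1.1} with $\overline U\ge u(t_0,\cdot)$ on $\R$ for some $t_0\ge 0$ and $\overline U(x)\to 0$ as $x\to+\infty$; Proposition~\ref{prop 1.3} then yields $u(t,\cdot)\le \overline U$ for all $t\ge t_0$, and hence \eqref{blocking}. For cases (i)--(iii), the supersolution is built from the phase plane of the patch-2 stationary equation $d_2 W''+f_2(W)=0$. The first-integral $E(W,W')=\tfrac{d_2}{2}(W')^2+F_2(W)$, with $F_2:=\int_0^{\cdot}f_2$, shows that for any $M>0$ with $F_2(M)\le 0$ the zero-energy level set contains a strictly decreasing trajectory $W_M$ on $[0,+\infty)$ satisfying $W_M(0)=M$, $W_M'(0)=-\sqrt{-2F_2(M)/d_2}\le 0$ and $W_M(+\infty)=0$ (the stable manifold of the saddle $(0,0)$). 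Pasting the constant $M$ on $I_1$ with $W_M$ on $I_2$ yields a stationary supersolution $\overline U_M$ of \eqref{1.1}: when $M\ge K_1$, \eqref{f1-kpp} gives $f_1(M)\le 0$; continuity at $x=0$ is built in; and the interface inequality $0=\overline U_{M,x}(0^-)\ge \sigma W_M'(0)$ follows from $W_M'(0)\le 0$.

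The admissible heights are: any $M\ge K_1$ in case (i), since $F_2<0$ on $(0,+\infty)$; $M=K_1$ in case (ii), since $F_2\le 0$ on $[0,K_2]$ and $K_1<K_2$; and $M=\theta$ in case (iii). For case (iii) one first observes that the constant $\theta$ is itself a stationary supersolution of \eqref{1.1} (because $K_1<\theta$ yields $f_1(\theta)<0$, $f_2(\theta)=0$, and the interface inequality is trivial), so $u_0<\theta$ implies $u(t,\cdot)<\theta$ on $(0,+\infty)\times\R$, and $F_2(\theta)<0$ makes $M=\theta$ admissible. To verify $u(t_0,\cdot)\le \overline U_M$ we combine: (a)~Theorem~\ref{thm2.4} together with the $L^\infty$ bound of Proposition~\ref{pro1}, which give $\limsup_{t\to+\infty}u(t,\cdot)\le K_1$ on patch 1, so $M$ may be chosen slightly above $K_1$ (equal to $\theta$ in case (iii)); and (b)~standard parabolic kernel estimates, which, using $\mathrm{spt}(u_0)\subset[-L,L]$ and the negative linearization $f_2'(0)<0$ in patch 2, yield Gaussian-in-space decay of $u(t_0,\cdot)$ at $+\infty$, much faster than the exponential rate $\sqrt{-f_2'(0)/d_2}$ of $W_M$. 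Consequently $u(t_0,x)\le W_M(x)$ for $x$ large, and continuity on the remaining bounded interval closes the comparison once $M$ is chosen with enough margin above $K_1$.

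For case (iv), take $\overline U=U$ and, say, $t_0=1$. The crucial tool is a quantitative $L^1$-to-$L^\infty$ smoothing estimate for \eqref{1.1}, obtained by comparing $u$ with the solution of the linearized patch equation $v_t=d_iv_{xx}+\Lambda v$ (with $\Lambda:=\max(f_1'(0),0)$) coupled through the same interface conditions: the explicit modified heat kernel on each half-line gives $\|u(1,\cdot)\|_{L^\infty(\R)}\le C\|u_0\|_{L^1(\R)}$ for some $C=C(L,d_{1,2},f_{1,2},\sigma)$, together with Gaussian decay of $u(1,x)$ as $|x|\to +\infty$ thanks to $\mathrm{spt}(u_0)\subset[-L,L]$. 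Since $U>0$ on $\R$ satisfies $\min_{[-R,R]}U>0$ for every $R>0$, and $U(x)\ge a_0\,e^{-\alpha x}$ near $+\infty$ for some $a_0,\alpha>0$ (a consequence of the phase-plane structure underlying Proposition~\ref{prop 2.5}), choosing $\varepsilon$ small enough in terms of $L$, $d_{1,2}$, $f_{1,2}$, $\sigma$ and $U$ forces $u(1,\cdot)\le U$ on all of $\R$; Proposition~\ref{prop 1.3} concludes.

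The principal technical obstacle lies in cases (i)--(ii), where the supersolution $\overline U_M$ must dominate $u(t_0,\cdot)$ uniformly on $[0,+\infty)$ with $M$ only slightly larger than $K_1$: $W_M$ drops immediately from the height $M$ with negative slope, whereas $u(t_0,\cdot)$ near $x=0^+$ may still exhibit positive slope, making the inequality tight near the interface. We expect to overcome this by iterating the argument, taking $t_0$ arbitrarily large so that $u(t_0,\cdot)$ is smooth with controlled gradient, and by leveraging the Gaussian-in-space decay in patch 2 together with a margin $M>K_1$ that buffers the initial drop of $W_M$.
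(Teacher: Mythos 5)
Your case (iv) is essentially the paper's argument ($L^1$-to-$L^\infty$ smoothing at time $1$ plus Gaussian spatial decay versus the exponential tail of $U$; the paper implements the smoothing by first rescaling so that $d_1=d_2$ and comparing with the free heat semigroup applied to the symmetrized datum $u_0(x)+u_0(-x)$, which avoids having to construct an interface heat kernel). The phase-plane observation behind your supersolutions is also sound: gluing the constant $M$ on patch~1 to the zero-energy trajectory $W_M$ on patch~2 does give a stationary supersolution of~\eqref{1.1} whenever $M\ge K_1$ and $\int_0^M f_2(s)\,\mathrm{d}s\le0$. The genuine gap is that in cases (i)--(iii) you cannot, in general, place such a \emph{stationary} barrier above $u(t_0,\cdot)$ at any finite time $t_0$. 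Two of your auxiliary claims are unjustified. First, Theorem~\ref{thm2.4} does not give $\limsup_{t\to+\infty}\sup_{x\le 0}u(t,x)\le K_1$: it controls $u$ only on $(-\infty,x_1]$ for some $x_1$ that may be very negative, and near the interface the limiting stationary profiles can exceed $K_1$ (the identity $\frac{d_1}{2}(q'(x^-))^2=\int_{q(x)}^{K_1}f_1(s)\,\mathrm{d}s$ is consistent with $q>K_1$ increasing toward $x=0$). So $M$ cannot be taken ``slightly above $K_1$''. Second, and more seriously, in case (ii) every zero-energy trajectory through a height $M<K_2$ is a translate of the \emph{standing} front $\phi$; to dominate $u(t_0,\cdot)$ on patch~2 you must shift it far to the right, which forces its interface value up to $K_2$, and you would then need $\sup_{\R}u(t_0,\cdot)<K_2$ at some finite $t_0$. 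When $\|u_0\|_{L^\infty(\R)}\ge K_2$ this is not available: the best uniform information far out in patch~2 is $u(t,x)\le K_2+\varepsilon/2$ for $t,x$ large, as in~\eqref{utxX}, with no strict inequality. This is exactly why the paper's proof of (ii) abandons stationary barriers and uses a Fife--McLeod type \emph{time-dependent} supersolution $\phi(x-X+\rho e^{-\varepsilon(t-T)}-\rho-B-C)+\varepsilon e^{-\varepsilon(t-T)}$, anchored at a fixed abscissa $X$ where $\sup_{t\ge T}u(t,X)<K_2$ (obtained from the time-monotone upper solution $w\searrow q<K_2$); the additive exponentially decaying term absorbs the possible excess of $u$ above $K_2$. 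Nothing in your plan replaces this mechanism, and the ``principal technical obstacle'' you diagnose (slope matching at the interface) is not the real one.

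Cases (i) and (iii) have related, though more repairable, defects. In (i), $W_M$ has no horizontal asymptote at height $M$, so when $|f_2|$ grows fast at infinity the distance over which $W_M$ drops from $M$ to $\|u_0\|_{L^\infty(\R)}$ can stay bounded as $M\to+\infty$ (the integral $\int^M \mathrm{d}W\big/\sqrt{-2\int_0^W f_2/d_2}$ may converge), so no choice of $M$ makes $W_M$ dominate a wide initial datum, and capping $W_M$ by the constant $M$ inside patch~2 creates a corner that violates the $C^{1;2}$ regularity required of supersolutions in Definition~\ref{def2}. The paper avoids all of this by replacing $f_2$ with a bistable $\overline f_2\ge f_2$ vanishing at a large level $M$ with negative mass: its front profile tends to $M$ at $-\infty$, can be translated freely, and is a stationary supersolution because its speed is negative. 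In (iii), the backward continuation of the trajectory through $\theta$ turns around at $\theta^*$ (it is part of the bump), so it cannot be translated rightward as a decreasing barrier; the paper instead uses $u<\theta$ for all time to replace $f_2$ above $\theta$ by a bistable nonlinearity with negative mass and reduces to case (i). I recommend adopting the modified-nonlinearity fronts in (i) and (iii) and a time-dependent supersolution in (ii).
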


Notice that, in contrast with parts~(i) and~(ii) of Theorem~\ref{thm2.8}, which are concerned with the case~$\int_{0}^{K_2}f_2(s) \mathrm{d}s\le0$ and for which the traveling front solution $\phi(x-c_2t)$ of~\eqref{2.5} serves as a blocking barrier in patch~$2$ independently of the initial datum $u_0$, parts~(iii) and~(iv) show that blocking can also occur when $\int_{0}^{K_2}f_2(s) \mathrm{d}s>0$ provided the initial datum $u_0$ is not too large in $L^\infty$ or $L^1$ (notice also that  the existence of $U$ in part~(iv) is fulfilled when $\int_{0}^{K_2}f_2(s) \mathrm{d}s>0$ and $K_1\le\theta^*$, as follows from Proposition~\ref{prop 2.6}) . These results show some similarities with the standard results of Fife and McLeod~\cite{FM1977} concerned with the homogeneous bistable equation~\eqref{1.2}. However, for our patch problem~\eqref{1.1}, the presence of patch~1 with KPP dynamics introduces new difficulties and, in particular, the solutions~$u$ never converge to $0$ as $t\to+\infty$ even only pointwise in $\R$, thanks to Theorem~\ref{thm2.4}.

\subsubsection*{Propagation with positive or zero speed when patch $2$ has bistable dynamics} 

Finally, we turn to propagation results in patch~$2$. Our first result is motivated by the one-dimensional propagation result of Fife and McLeod \cite{FM1977}, saying that a solution of the homogeneous equation \eqref{1.2} with $f$ of bistable type \eqref{hypbistable} spreads with positive speed in both directions if its initial datum exceeds $\theta+\eta$ (with $\eta>0$) on a large enough set and if $\int_{0}^{K_2}f_2(s) \mathrm{d}s>0$. 

\begin{theorem}
\label{thm_propagation-1}
Assume that~\eqref{f1-kpp}--\eqref{f2-bistable} hold and that $\int_{0}^{K_2}f_2(s) \mathrm{d}s>0$. Let $u$ be the solution of~\eqref{1.1} with a nonnegative continuous and compactly supported initial datum $u_0\not\equiv 0$. Then, for any $\eta>0$, there is $L>0$ such that, if $u_0\ge\theta+\eta$ in an interval of size $L$ included in patch~$2$, then $u$ propagates to the right with speed $c_2$ and, more precisely, there is $\xi\in\mathbb{R}$ such that
\begin{equation}\label{convphi}
\sup_{t\ge A,\,x\ge A}|u(t,x)-\phi(x-c_2t+\xi)|\to0\ \hbox{ as }A\to+\infty,
\end{equation}
where $\phi$ is the traveling front profile given by~\eqref{2.5}.
\end{theorem}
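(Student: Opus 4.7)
The plan is to adapt the Fife-McLeod stability strategy~\cite{FM1977} for the bistable front to the two-patch setting, exploiting that the convergence~\eqref{convphi} is required only on the receding region $\{t\geq A,\,x\geq A\}$, which in the moving frame $y=x-c_2 t$ stays at positive distance from the interface and asymptotically fills $\R$ as $A\to+\infty$. The proof reduces to two main steps: (i)~establishing, at some large time $T^*$, a sandwich of $u(T^*,\cdot)$ between two close shifted fronts on patch~$2$; and (ii)~propagating this sandwich forward in time via the classical bistable stability mechanism.

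For the lower bound at $T^*$, the Aronson-Weinberger/Fife-McLeod hair-trigger result~\cite{AW2,FM1977} for the homogeneous bistable equation on $\R$ provides $L(\eta)>0$ such that the Cauchy solution $v$ of $v_t=d_2 v_{xx}+f_2(v)$ with initial data $(\theta+\eta/2)\chi_{[a,a+L]}$ develops into a rightward-moving bistable front. To transfer this to~\eqref{1.1}, I would compare $u$ on patch~$2$ with the solution $w$ of the Dirichlet problem $w_t=d_2 w_{xx}+f_2(w)$ on $[0,+\infty)$ with $w(t,0)=0$ and $w_0=u_0|_{[0,+\infty)}$: since $u(t,0)\geq 0=w(t,0)$ and both satisfy the same equation on $(0,+\infty)$, the parabolic maximum principle gives $u\geq w$ on patch~$2$. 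A Fife-McLeod-type sub-solution $\underline{v}(t,x)=\phi(x-c_2 t+\xi_1+\xi_\infty(1-e^{-\omega t}))-q_0 e^{-\omega t}$, with $\xi_1$ chosen large enough that $\underline{v}(t,0)\leq 0$ on $[0,T^*]$, is valid as a subsolution of the Dirichlet problem there, yielding $u(T^*,x)\geq\phi(x-c_2 T^*+\xi_-)-\varepsilon$ for $x\geq 0$.

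For the upper bound, I would build a super-solution of~\eqref{1.1} on $\R$ by matching a Fife-McLeod front super-solution $\phi(x-c_2 t-\zeta(t))+q_0 e^{-\omega t}$ on patch~$2$ with a suitable profile on patch~$1$. The patch-$1$ profile can be taken as either (a)~the stationary solution $V$ connecting $K_1$ to $K_2$ (whose existence is guaranteed by Proposition~\ref{prop 2.7} since $\int_0^{K_2}f_2\geq 0$) plus an exponentially decaying correction, or (b)~a constant plateau at height $M=\max(K_1,K_2,\|u_0\|_\infty)$, joined to the moving front by a transition region on patch~$2$ handled via a ``min of super-solutions'' argument. With the plateau/background locally constant near $x=0$, the flux condition $\bar u_x(t,0^-)=\sigma\bar u_x(t,0^+)$ is trivially satisfied (both sides equal zero), and the standard Fife-McLeod tuning of $\zeta'(t)$ and $\omega$ (small) ensures the super-solution inequality in each patch. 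The comparison principle (Proposition~\ref{prop 1.3}) then yields $u(T^*,x)\leq\phi(x-c_2 T^*+\xi_+)+\varepsilon$ on patch~$2$.

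With the resulting sandwich $\phi(\cdot-c_2 T^*+\xi_-)-\varepsilon\leq u(T^*,\cdot)\leq\phi(\cdot-c_2 T^*+\xi_+)+\varepsilon$ on $[0,+\infty)$ at $T^*$ large, and with $\xi_+-\xi_-$ and $\varepsilon$ small, the Fife-McLeod stability argument propagates the sandwich forward: the two shifts $\xi_\pm(t)$ converge to a common limit $\xi\in\R$ and the error decays exponentially. Restricting to $\{t\geq A,\,x\geq A\}$ gives~\eqref{convphi}. The main technical obstacle is the super-solution construction of the third paragraph: a naive whole-line Fife-McLeod super-solution fails because the two patches carry different reactions and the derivative jumps at $x=0$ by the factor $\sigma$, so one must carefully match the profiles on either side of the interface while verifying the super-solution inequality in each patch -- in particular on patch~$1$, where the linearized decay rate of $f_1$ around the chosen background profile must dominate the Fife-McLeod exponential rate $\omega$ -- and control the transition corner where the patch-$1$ background meets the moving front $\phi(\cdot-c_2 t-\zeta(t))$.
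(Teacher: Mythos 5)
Your overall strategy (Fife--McLeod barriers, an initial sandwich between two shifted fronts, then forward propagation) is the right family of ideas, and you correctly flag the interface as the main obstacle, but the proposal leaves exactly that obstacle unresolved and the specific constructions you describe break down. First, the lower barrier. Your subsolution $\underline v(t,x)=\phi(x-c_2t+\xi_1+\xi_\infty(1-e^{-\omega t}))-q_0e^{-\omega t}$ anchored at $x=0$ satisfies $\underline v(t,0)\le 0$ only on a bounded time interval: since $\phi(-\infty)=K_2$, one has $\underline v(t,0)\to K_2>0$, so this barrier cannot be run for all $t\ge T^*$, and the ``classical bistable stability mechanism'' you invoke to propagate the sandwich forward is a whole-line argument that does not apply on the half-line without a boundary control at $x=0$ valid for all times. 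What is actually needed --- and what the paper establishes first --- is that $\liminf_{t\to+\infty}u(t,\cdot)\ge p$ locally uniformly for a stationary $p$ with $p(+\infty)=K_2$, obtained by a sliding argument with the compactly supported stationary subsolution $\psi$ of Lemma~\ref{lemma4.1}; this lets one anchor the lower barrier at a large abscissa $X_2$ where $u(t,X_2)\ge K_2-\varepsilon$ for $t$ large, the barrier carrying an extra spatially decaying term $-\delta e^{-\mu(x-X_2)}$ precisely to absorb that lateral boundary (Lemma~\ref{lemma 3.1}). This persistence step is absent from your proposal, and the Dirichlet comparison $u\ge w$ does not substitute for it, because the convergence of the half-line Dirichlet solution $w$ to a front is itself a statement of the same difficulty as the theorem.

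Second, the upper barrier. The plateau-at-$M$ construction fails whenever $M=\max(K_1,K_2,\|u_0\|_{L^\infty(\R)})>K_2$ (the generic case, e.g.\ $K_1>K_2$): the front piece $\phi(\cdot)+q_0e^{-\omega t}$ is bounded above by $K_2+q_0e^{-\omega t}$, hence eventually strictly below $M$ at and to the left of $x=0$, so the minimum switches to the front piece on patch~1, where $\phi$ solves the wrong equation ($d_2,f_2$ instead of $d_1,f_1$) and is not a supersolution; the variant with $V$ plus a correction has the same matching problem since $V(0)\ne K_2$ in general. The paper avoids constructing any supersolution across the interface: it first proves $\limsup u\le K_2$ in the far field (the argument around~\eqref{claimK2}), works on a quarter-plane $\{t\ge T,\ x\ge X\}$ where $u(t,X)\le K_2+\delta/2$, and again uses the extra term $+\delta e^{-\mu(x-X)}$ to dominate on the lateral boundary. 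Finally, the identification of a single limiting shift $\xi$ is not obtained by tracking $\xi_\pm(t)$ but by compactness in the moving frame along $t_n\to+\infty$ plus the Liouville-type theorem of~\cite{BH2007}, followed by the quantitative stability result of Lemma~\ref{lemma 3.4}; some such argument is needed because the Fife--McLeod squeezing of the shifts is again a whole-line mechanism. In short, the three load-bearing steps --- persistence up to level $K_2$ ahead of the interface, quarter-plane barriers with a lateral correction term, and the compactness/Liouville identification of $\xi$ --- are all missing or replaced by constructions that do not close.
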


Theorem~\ref{thm_propagation-1} assumes some conditions on~$f_2$ and~$u_0$. The following result shows that propagation can also occur independently of $u_0$, provided no  stationary solution connecting $K_1$ and $0$ exists.

\begin{theorem}
\label{thm_propagation-2}
Assume that~\eqref{f1-kpp}--\eqref{f2-bistable} hold, that $\int_0^{K_2}f_2(s)\mathrm{d}s\ge 0$, and that~\eqref{1.1} has no non\-negative classical stationary solution $U$ such that $U(-\infty)=K_1$ and $U(+\infty)=0$ $($then, necessarily, $K_1>\theta$ by Proposition~$\ref{prop 2.6}$$)$. Then the solution $u$ of~\eqref{1.1} with a nonnegative continuous and compactly supported initial datum $u_0\not\equiv 0$ propagates completely, namely,
\begin{equation}
\label{2.7}
u(t,x)\to V(x)~~\text{as}~t\to+\infty,~\text{locally uniformly in}~x\in\mathbb{R},
\end{equation}
where $V$ is the unique positive classical stationary solution of~\eqref{1.1} such that $V(-\infty)=K_1$ and $V(+\infty)=K_2$, given in Proposition~$\ref{prop 2.7}$. Furthermore, 
\begin{enumerate}[(i)]
\item if $\int_0^{K_2}f_2(s)\mathrm{d}s>0$, then $u$ spreads with speed $c_2>0$ in patch~$2$, and~\eqref{convphi} holds for some $\xi\in\R$;
\item if $\int_0^{K_2}f_2(s)\mathrm{d}s=0$, then $u$ propagates to the right with speed zero in patch $2$, in the sense that~\eqref{2.7} holds and $\sup_{x\ge ct}u(t,x)\to0$ as $t\to+\infty$ for every $c>0$.
\end{enumerate}	
\end{theorem}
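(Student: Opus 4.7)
The plan is to combine the persistence in patch~$1$ given by Theorem~\ref{thm2.4} with the no-barrier hypothesis to force $u(t,\cdot)$ to converge locally uniformly to the stationary profile $V$ of Proposition~\ref{prop 2.7}. First, I would observe that the hypotheses of the theorem imply $K_1>\theta$: otherwise $K_1\le\theta<\theta^*$ (if $\int_0^{K_2}f_2(s)\mathrm{d}s>0$) or $K_1\le\theta<K_2$ (if $\int_0^{K_2}f_2(s)\mathrm{d}s=0$), and Proposition~\ref{prop 2.6}(ii)--(iii) would produce a barrier $U$, contradicting the assumption. Theorem~\ref{thm2.4} then gives $u(t,x)\to K_1$ as $t\to+\infty$, locally uniformly on $(-\infty,0]$, and in particular $u(t,0)>\theta+\eta$ for some $\eta>0$ and all $t$ large enough.

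The heart of the argument is a monotone iteration based on a compactly supported stationary subsolution of~\eqref{1.1}. Via phase-plane analysis of $d_iU''+f_i(U)=0$ on each patch and a suitable matching at the interface, I would construct a classical stationary subsolution $\underline U$ of~\eqref{1.1}, supported in a bounded interval, with $\underline U(0)>\theta$, with $\|\underline U\|_\infty<\min(K_1,K_2)$, and satisfying the one-sided interface condition $\underline U'(0^-)\le\sigma\underline U'(0^+)$. By the first paragraph together with parabolic regularity, one can guarantee $u(T_0,\cdot)\ge\underline U$ in $\R$ for some $T_0$ large enough. The subsolution property then makes the parabolic trajectory $v(t,\cdot)$ starting from $\underline U$ non-decreasing in $t$, and a standard monotone-convergence argument yields $v(t,\cdot)\to V^*$ in $C^2_{\mathrm{loc}}$, where $V^*\ge\underline U$ is a positive bounded classical stationary solution of~\eqref{1.1}. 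By Proposition~\ref{prop 2.5} combined with ODE phase-plane analysis in patch~$2$ (where $V^*(0)>\theta$ together with $\int_0^{K_2}f_2(s)\mathrm{d}s\ge0$ rules out oscillatory orbits), $V^*$ must satisfy $V^*(-\infty)=K_1$ and $V^*(+\infty)\in\{0,K_2\}$. The no-barrier hypothesis excludes $V^*(+\infty)=0$, and the uniqueness part of Proposition~\ref{prop 2.7} (applicable since $K_1>\theta$) forces $V^*=V$. By comparison, $\liminf_{t\to+\infty}u(t,\cdot)\ge V$ locally uniformly; the matching upper bound $\limsup_{t\to+\infty}u(t,\cdot)\le V$ is obtained by constructing appropriate super-solutions (for instance a shifted traveling front from~\eqref{2.5} in patch~$2$ and the constant $K_1+\varep$ in patch~$1$), establishing~\eqref{2.7}.

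It remains to address the sharp spreading estimates in cases (i) and (ii). In case (i), since $c_2>0$, the bound $u(T,\cdot)\ge\theta+\eta$ on a long interval $[0,L]\subset$~patch~$2$ is a byproduct of the preceding construction, so Theorem~\ref{thm_propagation-1} applied with $u(T,\cdot)$ as new initial datum directly yields the convergence~\eqref{convphi} to a shifted bistable front at speed~$c_2$. In case (ii) where $c_2=0$, a stationary monotone front $\Phi$ of the homogeneous bistable equation $d_2\Phi''+f_2(\Phi)=0$ on $\R$ connects $K_2$ to $0$; I would combine $V$-type sub-solutions on the left with a super-solution in patch~$2$ of the form $\Phi(x-a(t))$ with sublinear translate $a(t)=o(t)$ (or, equivalently, $\Phi$ slightly perturbed via an $f_{2,\varep}\ge f_2$ with $\int f_{2,\varep}<0$ so that its front moves leftward arbitrarily slowly), and derive $\sup_{x\ge ct}u(t,x)\to 0$ as $t\to+\infty$ for every $c>0$.

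The main obstacle I anticipate is the construction of the compactly supported subsolution $\underline U$ compatible with the $\sigma$-jump at the interface across both diffusivities and regardless of the relative order of $K_1$ and $K_2$, together with the super-solution construction in case~(ii) that delivers the sublinear spreading rate in the absence of a strictly positive-speed front to lean on.
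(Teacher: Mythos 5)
Your overall skeleton (squeeze $u$ between monotone trajectories, classify the stationary limits using the no-barrier hypothesis and the uniqueness of $V$, then invoke Theorem~\ref{thm_propagation-1} for (i) and a slowly moving supersolution front for (ii)) matches the paper's, but the core of your argument for~\eqref{2.7} has two genuine gaps. First, the starting point ``Theorem~\ref{thm2.4} gives $u(t,x)\to K_1$ locally uniformly on $(-\infty,0]$, in particular $u(t,0)>\theta+\eta$'' is not what Theorem~\ref{thm2.4} says: the convergence to $K_1$ there is only guaranteed on $(-\infty,x_1]$ for some $x_1$ depending on $\delta$ (possibly very negative), and near the interface one only gets $\liminf_{t\to+\infty}u(t,0)>0$. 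Knowing that $u(t,0)$ eventually exceeds $\theta$ is essentially part of the conclusion you are trying to prove (it fails in the blocking regimes of Theorem~\ref{thm2.8}), so feeding it into the construction of $\underline U$ is circular. Second, even granting $V^*(0)>\theta$, phase-plane analysis does \emph{not} rule out oscillatory profiles in patch~$2$: the closed orbits of $d_2U''+f_2(U)=0$ around the center $(\theta,0)$ sweep every value in $(\theta,\theta^*)$ (resp.\ $(\theta,K_2)$ in the balanced case), so $V^*$ could a priori be periodic in $[0,+\infty)$, and the alternative $V^*(+\infty)=\theta$ is not excluded either. The paper's way around both problems is a variational \emph{stability} argument: since $p$ (its analogue of your $V^*$) is the increasing limit of a parabolic trajectory, it satisfies $\int_0^{+\infty}d_2|\varphi'|^2-f_2'(p)\varphi^2\ge0$ for all compactly supported test functions $\varphi$, which kills nonconstant periodic solutions and the constant $\theta$; only then do the no-barrier hypothesis and the uniqueness in Proposition~\ref{prop 2.7} yield $p\equiv V$. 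This stability lemma is the missing idea, and with it you do not need $\underline U(0)>\theta$ at all --- the paper simply starts the increasing trajectory from a small bump far inside patch~$1$.

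Two further points. For the upper bound $\limsup_{t\to+\infty}u\le V$, your glued supersolution (a moving front in patch~$2$ matched to a constant in patch~$1$) cannot remain continuous at the interface for all times; the clean route is the nonincreasing trajectory $w$ issued from the constant $M$, whose limit $q\ge p=V$ satisfies $q(-\infty)=K_1$ and $\limsup_{x\to+\infty}q(x)\le K_2$, hence $q\equiv V$ by uniqueness. In case~(ii), your parenthetical is self-contradictory: if $f_{2,\varep}\ge f_2$ and $\int_0^{K_2}f_2(s)\mathrm{d}s=0$, the perturbed mass is necessarily nonnegative, and a leftward-moving supersolution front would block $u$ in patch~$2$, contradicting~\eqref{2.7}. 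The correct perturbation (the paper's) raises the upper equilibrium to $K_2+\varep$ so that the $f_{2,\varep}$-front moves \emph{rightward} with a small positive speed $c_{2,\varep}\to0$, giving $\sup_{x\ge ct}u(t,x)\to0$ for every $c>c_{2,\varep}$ and hence for every $c>0$.
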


Theorem~\ref{thm_propagation-2} leads to several comments. Firstly, we provide in Remark~\ref{rem41} below explicit examples of functions satisfying~\eqref{f1-kpp}--\eqref{f2-bistable} for which $\int_0^{K_2}f_2(s)\mathrm{d}s>0$ and~\eqref{1.1} has no nonnegative classical stationary solution $U$ such that $U(-\infty)=K_1$ and $U(+\infty)=K_2$, whence Theorem~\ref{thm_propagation-2} yields~\eqref{2.7} and implies that all nontrivial solutions $u$ of~\eqref{1.1} spread in patch~$2$ with speed $c_2>0$.

Secondly, in the balanced case
\be\label{hypbalanced}
\int_{0}^{K_2}f_2(s)\mathrm{d}s=0,
\ee
blocking in patch~$2$ can occur, as follows from part~(ii)--(iv) of Theorem~\ref{thm2.8}. However, in contrast to the case~$\int_{0}^{K_2}f_2(s)\mathrm{d}s<0$ (see part~(i) of Theorem~\ref{thm2.8}), blocking is not guaranteed. Indeed, if~\eqref{hypbalanced} holds, Proposition~\ref{prop 2.5}~(ii) and Theorem~\ref{thm_propagation-2}~(ii) provide some sufficient conditions for the solution $u$ of~\eqref{1.1} to propagate to the right with speed zero.\footnote{It is straightforward to see that these conditions are fulfilled, for instance, when $f_2$ is of the type $f_2(s)=\tilde{f}_2(s/\varep)$ for a fixed function $\tilde f_2$ satisfying~\eqref{f2-bistable} (with a parameter $\tilde K_2>0$) and when $\varep>0$ is small enough, while all other parameters are fixed. More precisely, under these assumptions, a nonnegative classical stationary solution $U$ of~\eqref{1.1} satisfying $U(-\infty)=K_1$ and $U(+\infty)=0$ can not exist when $\varep>0$ is small enough: the equality in~\eqref{eqU0} can not be fulfilled when $\varep>0$ is small enough, since otherwise the second integral in~\eqref{eqU0} would converge to $0$ as $\varep\to0$ because $0<U(0)<K_2=\varep\tilde K_2\to0$ as $\varep\to0$ whereas the first integral would converge to the positive constant $\int_0^{K_1}f_1(s)\mathrm{d}s>0$ as $\varep\to0$.} We give a heuristic explanation for this phenomenon. First, it follows from Proposition~\ref{prop 2.6} that $K_1\ge K_2$ under the assumptions of Theorem~\ref{thm_propagation-2}~(ii). Then, since~$u(t,x)$ converges as $t\to+\infty$ locally uniformly in $x\in\R$ to the stationary solution $V$ connecting~$K_1$ and~$K_2$, the KPP patch provides exterior energy through the interface and forces the solution~$u$ to persist in patch~$2$ and then propagate with zero speed. A similar phenomenon, called  ``virtual blocking'' or ``virtual pinning'', was previously investigated in a one-dimensional heterogeneous bistable equation~\cite{M2003} and in the mean curvature equation in two-dimensional sawtooth cylinders~\cite{LMN2013}. It is also well known that for the homogeneous bistable equation~\eqref{1.2} with $f$ satisfying~\eqref{hypbalanced}, the solution $u$ to the Cauchy problem with any nonnegative bounded compactly supported initial datum is blocked at large times and extinction occurs. In contrast, Theorem~\ref{thm_propagation-2} states that, when~\eqref{hypbalanced} is fulfilled, the solution to the patch problem~\eqref{1.1} with a compactly supported initial datum can still propagate into the bistable patch~$2$, but its level sets then move to the right with speed zero.

Thirdly, when the initial datum of the scalar homogeneous bistable equation~\eqref{1.2} is small in the $L^1(\R)$ norm, then $\Vert u(1,\cdot)\Vert_{L^\infty(\mathbb{R})}$ can be bounded from above by a constant less than $\theta$. Hence, extinction occurs and the blocking property~\eqref{blocking} holds if the initial datum is compactly supported. In our work, due to the presence of the KPP patch~$1$ in~\eqref{1.1}, the smallness of the $L^1(\R)$ norm of the initial datum is not sufficient to cause blocking in general, as follows from Theorem~\ref{thm_propagation-2}, since the conclusion of Theorem~\ref{thm_propagation-2} is independent of $u_0$. 


\subsection{Blocking or propagation in the bistable-bistable case}

In this section, we deal with the bistable-bistable case, namely we assume that the functions $f_i$ ($i=1,2$) are of bistable type:
\begin{equation}
\label{fi-bistable}
\left\{\baa{l}
f_i(0)=f_i(\theta_i)=f_i(K_i)=0~\text{for some}~\theta_i\in(0,K_i),\vspace{3pt}\\
f_i'(0)\!<\!0,~f'_i(\theta_i)\!>\!0,~f_i'(K_i)\!<\!0,~f_i<0~\text{in}~(0,\theta_i)\!\cup\!(K_i,+\infty),~f_i>0~\text{in}~(-\infty,0)\!\cup\!(\theta_i,K_i).\eaa\right.
\end{equation}
For each $i\in\{1,2\}$, let $\phi_i(x-c_it)$ $(i=1,2)$ be the unique traveling wave connecting $K_i$ to $0$  for the equation $u_t=d_i u_{xx}+f_i(u)$ viewed in the whole line $\R$, that is, $\phi_i:\R\to(0,K_i)$ satisfies
\begin{equation}
\label{2.5'}
\begin{cases}
d_i\phi''_i+c_i\phi'_i+f_i(\phi_i)=0\hbox{ in }\R,\ \ \phi'_i<0~\text{in}~\mathbb{R},\\
\phi_i(-\infty)=K_i,~\phi_i(+\infty)=0,~\phi_i(0)=\theta_i,
\end{cases}
\end{equation}
where the speed $c_i$ has the sign of $\int_{0}^{K_i}f_i(s) \mathrm{d}s$~\cite{FM1977} (the normalization condition $\phi_i(0)=\theta_i$ uniquely determines $\phi_i$). Moreover, each function $\phi_i$ satisfies similar exponential estimates to~\eqref{2.6}.

The first main result in the bistable-bistable case states that, when the traveling fronts $\phi_i(x-c_it)$ have negative speeds $c_i$, all solutions to~\eqref{1.1} with compactly supported initial data go to extinction:

\begin{theorem}\label{thextinction}
Assume that~\eqref{fi-bistable} holds with $\int_0^{K_i}f_i(s)\mathrm{d}s<0$, that is, $c_i<0$, for $i=1,2$. Then, for any nonnegative continuous and compactly supported initial datum $u_0$, the solution $u$ to~\eqref{1.1} goes to extinction, that is, $\|u(t,\cdot)\|_{L^\infty(\R)}\to0$ as $t\to+\infty$.
\end{theorem}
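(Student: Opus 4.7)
The plan is to construct a global supersolution $\bar u$ of~\eqref{1.1} built from the two bistable traveling fronts $\phi_1$ and $\phi_2$ (which, since $\int_0^{K_i}f_i(s)\mathrm{d}s<0$, have negative speeds $c_i<0$) and to conclude extinction by the comparison principle of Proposition~\ref{prop 1.3}.

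In patch~$1$ I would take $\bar u(t,x):=\phi_1(-x-c_1 t+\xi_1(t))$ for $x\le 0$, a reflected front which is increasing in $x$ on patch~$1$ and tends to $0$ as $x\to-\infty$; in patch~$2$ I would take $\bar u(t,x):=\phi_2(x-c_2 t+\xi_2(t))$ for $x\ge 0$, a front which is decreasing in $x$ on patch~$2$ and tends to $0$ as $x\to+\infty$. A direct computation using the traveling-wave ODE~\eqref{2.5'} gives, in each patch,
\[
\bar u_t-d_i\bar u_{xx}-f_i(\bar u)=\phi_i'(\cdot)\,\xi_i'(t),
\]
which is nonnegative whenever $\xi_i'(t)\le 0$, since $\phi_i'<0$. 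The continuity requirement $\bar u(t,0^-)=\bar u(t,0^+)=:\Phi(t)$ determines $\xi_i(t)=-|c_i|t+\phi_i^{-1}(\Phi(t))$, and the monotonicity $\xi_i'(t)\le 0$ reduces, via the exponential tails $\phi_i(s)\sim a_i e^{-\alpha_i s}$ analogous to~\eqref{2.6}, to a linear condition of the form $-\Phi'(t)/\Phi(t)\le|c_i|\alpha_i$; thus the choice $\Phi(t):=\Phi_0\,e^{-\lambda t}$ with $\lambda>0$ sufficiently small (less than $\min_i|c_i|\alpha_i$) produces a valid supersolution in each patch with $\sup_x\bar u(t,x)=\Phi(t)\to 0$. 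The interface flux inequality $\bar u_x(t,0^-)\ge\sigma\,\bar u_x(t,0^+)$ is automatic: thanks to the reflection in the patch-$1$ formula, $\bar u_x(t,0^-)=-\phi_1'(\cdot)>0$ while $\bar u_x(t,0^+)=\phi_2'(\cdot)<0$, so the left-hand side is positive and the right-hand side is negative.

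For the initial comparison $\bar u(0,\cdot)\ge u_0$ to hold, the minimum of $\bar u(0,\cdot)$ over the support $[-L,L]$ of $u_0$, which in patch~$i$ equals $\phi_i(L+\phi_i^{-1}(\Phi_0))$ and tends to $K_i$ as $\Phi_0\uparrow K_i$, must dominate $\|u_0\|_{L^\infty(\R)}$; this works provided $\|u_0\|_{L^\infty(\R)}<\min(K_1,K_2)$ by taking $\Phi_0$ close to $\min(K_1,K_2)$. The hard part is the reduction to this regime when $\|u_0\|_{L^\infty(\R)}\ge\min(K_1,K_2)$. A first step uses the spatially constant function $m(t)$ solving $m'(t)=\max(f_1,f_2)(m(t))$ with $m(0)=\|u_0\|_\infty$, which is a supersolution of~\eqref{1.1} (the flux condition is trivial for constants) decreasing to $\max(K_1,K_2)$; this already suffices when $K_1=K_2$. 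In the generic case $K_1\ne K_2$, say $K_1>K_2$, one uses an additional supersolution of the form $K_2+c\,e^{-\mu t}$ with $0<\mu<|f_2'(K_2)|$ in patch~$2$, which exploits the strict negativity of $f_2$ on $(K_2,+\infty)$ to lower the sup-norm below $K_2$ in patch~$2$ in finite time, matched with a compatible wave-type barrier in patch~$1$. Once $\|u_0\|_{L^\infty(\R)}<\min(K_1,K_2)$ (after a time-shift), Proposition~\ref{prop 1.3} gives $u(t,x)\le\bar u(t,x)\le\Phi_0\,e^{-\lambda t}\to 0$ uniformly in $x$, which is the claimed extinction.
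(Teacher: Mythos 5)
Your overall strategy --- a supersolution built by gluing a leftward-receding reflected bistable front in patch~1 to a rightward-receding bistable front in patch~2, matched continuously at $x=0$, with the flux inequality $\bar u_x(t,0^-)>0>\sigma\bar u_x(t,0^+)$ holding automatically by sign --- is exactly the mechanism of the paper's proof, and the computation $\bar u_t-d_i\bar u_{xx}-f_i(\bar u)=\phi_i'\,\xi_i'$ is correct (modulo the fact that the admissible $\lambda$ is governed by $\inf_{s\ge\phi_i^{-1}(\Phi_0)}|\phi_i'(s)|/\phi_i(s)$, not by the asymptotic rate $\alpha_i$ alone; that is a harmless imprecision). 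The genuine gap is in the initial comparison and in the reduction to it. Because you use the original profiles $\phi_i$, whose ranges are $(0,K_1)$ and $(0,K_2)$, continuity at the interface forces the common peak $\Phi_0<\min(K_1,K_2)$; but then in the patch with the larger carrying capacity, say $K_2>K_1$, the shift satisfies $\phi_2^{-1}(\Phi_0)>\phi_2^{-1}(K_1)>-\infty$ uniformly in $\Phi_0$, so the value of the barrier at $x=L$ tends to $0$ as $L\to+\infty$. Hence if $u_0$ is close to $\min(K_1,K_2)$ on a long interval, no admissible $\Phi_0$ gives $\bar u(0,\cdot)\ge u_0$: the condition $\|u_0\|_{L^\infty(\R)}<\min(K_1,K_2)$ that you reduce to is not sufficient. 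The reduction itself is also incomplete: the ODE supersolution $m(t)$ decreases to $\max(K_1,K_2)$ from above but never crosses it, so even when $K_1=K_2$ you never actually reach the regime $\|u(t,\cdot)\|_{L^\infty(\R)}<K_1$, and the ``compatible wave-type barrier in patch~1'' for the case $K_1\ne K_2$ is not constructed.

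The paper's fix is to modify the reactions rather than the solution: replace $f_i$ by $\overline f_i\ge f_i$ with $\overline f_i(M)=0$ at the \emph{common} level $M=\max(K_1,K_2,\|u_0\|_{L^\infty(\R)})+1$ and still $\int_0^M\overline f_i(s)\,\mathrm{d}s<0$. Both modified fronts then connect the same value $M>\|u_0\|_{L^\infty(\R)}$ to $0$, so they can be translated far enough to dominate $u_0$ on each half-line and matched at $x=0$ at a common height, in one step and with a time-independent supersolution; no preliminary reduction of the sup-norm is needed. Extinction then follows by showing that the time-monotone solution of the modified problem below this barrier converges uniformly to a stationary solution vanishing at $\pm\infty$, which the first-integral identity $\int_0^{p(x_0)}\overline f_2(s)\,\mathrm{d}s=0$ forces to be identically $0$. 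If you adopt this enlargement of the fronts, your gluing and sign computations go through essentially verbatim.
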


In other words, for propagation to occur, at least one of the reaction terms $f_i$ must have a nonnegative mass. By analogy with the KPP-bistable case~\eqref{f1-kpp}--\eqref{f2-bistable} and without loss of generality, we then assume in some statements that $\int_0^{K_1}\!f_1(s)\mathrm{d}s\ge0$.

In the spirit of Propositions~\ref{prop 2.5}--\ref{prop 2.7}, we then provide some necessary and/or sufficient conditions for a stationary solution connecting~$K_1$ and~$0$ (or~$K_2$) exists. Namely, the following result holds.

\begin{proposition}
\label{prop 2.5'}
Assume that~\eqref{fi-bistable} holds with $\int_0^{K_1}\!f_1(s)\mathrm{d}s\ge 0$.
\begin{enumerate}[(i)]
\item If~\eqref{1.1} admits a nonnegative classical stationary solution $U$  such that $U(-\infty)=K_1$ and $U(+\infty)=0$, then the conclusion of Proposition~$\ref{prop 2.5}$ holds true, in which $\theta^*$ is replaced by $\theta^*_2\in(\theta_2,K_2)$ such that $\int_0^{\theta^*_2}f_2(s)\mathrm{d}s=0$ when $\int_0^{K_2}f_2(s)\mathrm{d}s>0$;
\item if $\int_0^{K_1}f_1(s)\mathrm{d}s>0$, then the conclusion of Proposition~$\ref{prop 2.6}$ holds true, in which $\theta^*$ is replaced by $\theta^*_2\in(\theta_2,K_2)$ such that $\int_0^{\theta^*_2}f_2(s)\mathrm{d}s=0$ when $\int_0^{K_2}f_2(s)\mathrm{d}s>0$;
\item if $\int_0^{K_2}f_2(s)\mathrm{d}s\ge 0$, then problem~\eqref{1.1} has a positive classical stationary solution $V$ such that $V(-\infty)=K_1$ and $V(+\infty)=K_2$. Moreover, all solutions $V$ are monotone, and $V$ is unique if $K_2\ge K_1\ge\theta_2$ or $K_1\ge K_2\ge\theta_1$.
\end{enumerate}
\end{proposition}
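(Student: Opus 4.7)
The plan is to adapt to the bistable-bistable setting the phase-plane arguments that drive Propositions~\ref{prop 2.5}--\ref{prop 2.7}. In each patch the stationary problem is an autonomous ODE $d_i U'' + f_i(U) = 0$, whose orbits are level sets of the energy $(U')^2/2 + F_i(U)/d_i$, with $F_i(s)=\int_0^s f_i(\tau)\,\mathrm{d}\tau$; the matching conditions $U(0^-)=U(0^+)$ and $U'(0^-)=\sigma U'(0^+)$ then reduce the construction to a single scalar equation in $U(0)$.

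For part~(i), a strong-maximum-principle argument together with the flux condition first yields $U>0$ in $\R$. In patch~$1$, $K_1$ is a saddle of the ODE since $f_1'(K_1)<0$, and $U(-\infty)=K_1$ places the trajectory on the unstable manifold of $K_1$, so $\frac{d_1}{2}(U'(x))^2 + F_1(U(x)) = F_1(K_1)$ on $(-\infty,0]$. Analogously, $0$ is a saddle in patch~$2$ since $f_2'(0)<0$, and $U(+\infty)=0$ places the orbit on the stable manifold of $0$, giving $\frac{d_2}{2}(U'(x))^2 + F_2(U(x)) = 0$ on $[0,+\infty)$. Eliminating $U'(0^\pm)$ through the interface condition reproduces relation~\eqref{eqU0}, and the case analysis on the sign of $\int_0^{K_2}f_2$ (the monotone vs.\ bump alternatives, including the $\theta_2^*$-replacement when $\int_0^{K_2}f_2>0$) is then read off the corresponding level curves exactly as in Proposition~\ref{prop 2.5}: only the saddle structure at $K_1$ together with $f_1\le 0$ on $[K_1,+\infty)$ and $F_1(K_1)\ge 0$ are used in patch~$1$, so the KPP-to-bistable switch for $f_1$ introduces no new obstruction.

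For part~(ii), existence of $U$ is obtained by a shooting/matching argument on the value $U(0)$, exactly as in Proposition~\ref{prop 2.6}: the two energy identities determine $U'(0^\pm)$ up to sign and hence a unique trajectory in each patch with the correct limit at infinity, and~\eqref{eqU0} is solved via the intermediate value theorem after a case discussion on the sign of $\int_0^{K_2}f_2$. The strict positivity $\int_0^{K_1}f_1>0$ (which replaces the KPP assumption $f_1>0$ on $(0,K_1)$ from the original Proposition~\ref{prop 2.6}) guarantees that the unstable manifold of $K_1$ in patch~$1$ extends down to the relevant range of interface values, so that the range of admissible $U(0)$'s is nonempty.

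For part~(iii), the construction of $V$ with $V(-\infty)=K_1$ and $V(+\infty)=K_2$ uses the same shooting, but the patch-$2$ trajectory now lies on the unstable manifold of the saddle $K_2$, so the patch-$2$ energy equals $F_2(K_2)$ (well defined thanks to $\int_0^{K_2}f_2\ge 0$), and matching yields
\[
\int_{V(0)}^{K_1} f_1(s)\,\mathrm{d}s \,=\, \frac{d_1\sigma^2}{d_2}\int_{V(0)}^{K_2} f_2(s)\,\mathrm{d}s,
\]
to which the intermediate value theorem applies. Monotonicity of any such $V$ follows because the corresponding orbit in each patch is a saddle-to-saddle connection remaining in a fixed half-plane $\{V'>0\}$ or $\{V'<0\}$: a turning point would force the orbit to leave the stable or unstable manifold of its target saddle at infinity. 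The main obstacle is uniqueness: unlike in the KPP-bistable setting (Proposition~\ref{prop 2.7}), both sides of the matching equation come from bistable nonlinearities and are not globally monotone in $V(0)$. Under $K_2\ge K_1\ge\theta_2$, however, one restricts $V(0)$ to $(K_1,K_2)$, on which the left-hand side is strictly increasing (as $f_1<0$ on $(K_1,K_2)$) while the right-hand side is strictly decreasing (as $f_2>0$ on $(\theta_2,K_2)\supset(V(0),K_2)$); these opposite monotonicities, together with the opposite boundary values at $V(0)=K_1$ and $V(0)=K_2$, force a unique root. The symmetric hypothesis $K_1\ge K_2\ge\theta_1$ is handled by exchanging the roles of the two patches.
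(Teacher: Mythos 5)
Your proposal is correct and follows essentially the same route as the paper: the energy identities $\tfrac{d_i}{2}(U')^2=\int_{U}^{K_i}f_i$ (equivalently your level-set/saddle-manifold formulation), the reduction via the interface condition to a scalar matching equation in $U(0)$ or $V(0)$, the intermediate value theorem for existence, and the observation that under $\int_0^{K_1}f_1>0$ the map $\nu\mapsto\int_\nu^{K_1}f_1$ stays positive on $[0,K_1)$ (though no longer monotone), which is exactly the paper's one substantive modification to the proofs of Propositions~\ref{prop 2.5}--\ref{prop 2.7}. Your uniqueness argument in (iii) via the opposite strict monotonicities of the two sides of the matching equation on the interval between $K_1$ and $K_2$ when $K_2\ge K_1\ge\theta_2$ (and by symmetry when $K_1\ge K_2\ge\theta_1$) is also the paper's argument.
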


Finally, as in Theorems~\ref{thm2.8}--\ref{thm_propagation-2} in the KPP-bistable case, the last two main theorems are concerned with blocking or propagation with positive or zero speed.\footnote{We state the blocking phenomena and propagation with zero speed only in patch~$2$, but similar statements hold true in patch~$1$ with suitable assumptions.}

\begin{theorem}
\label{thm2.8'}
Under the assumption~\eqref{fi-bistable}, the conclusion of Theorem~$\ref{thm2.8}$ holds, with $\theta$ replaced by~$\theta_2$ in~(ii).
\end{theorem}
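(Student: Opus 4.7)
The plan is to adapt the four-part proof of Theorem~\ref{thm2.8} to the bistable--bistable setting, case by case, using the bistable tools available in this regime: the bistable travelling front $\phi_2$ from~\eqref{2.5'}, the constant $\theta_2$, and the stationary solutions $U$ and $V$ supplied by Proposition~\ref{prop 2.5'}. An important simplification relative to the KPP--bistable setting is that, with $f_1$ now bistable, any constant in $[K_1,+\infty)$ is a stationary supersolution of the patch~$1$ equation (since $f_1\le 0$ there), which will be used repeatedly to close the supersolution constructions at the interface.

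For case~(i), since $c_2<0$ the travelling front $\phi_2(x-c_2t+\eta)$ drifts to the left and tends to $0$ as $x\to+\infty$ uniformly in $t\ge 0$. I would use it as a supersolution in patch~$2$ and extend it to patch~$1$ by a piece matching the value $\phi_2(-c_2t+\eta)$ at $x=0$ and lying in $[K_1,+\infty)$; the flux inequality $\overline{u}_x(t,0^-)\ge\sigma\overline{u}_x(t,0^+)$ then holds because $\phi_2'<0$ on the right while the left-hand derivative is nonpositive. A preliminary comparison against a sufficiently large constant (admissible in patch~$1$ by the observation above, and a supersolution in patch~$2$ once $\ge K_2$) reduces us to the case $\|u_0\|_{L^\infty}\le K_2$, so that $u_0\le\overline{u}(0,\cdot)$ after a suitable choice of $\eta$. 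Case~(ii) is handled by the same construction specialised to $c_2=0$, yielding a stationary blocking barrier; this is where the assumption $K_1<K_2$ enters, since one needs the matching value $\phi_2(\eta)$ to be taken in $[K_1,K_2)$.

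For case~(iii), the constant $\theta_2$ is itself a stationary supersolution of the patch problem: $f_2(\theta_2)=0$, $f_1(\theta_2)<0$ since $\theta_2>K_1$ and $f_1<0$ on $(K_1,+\infty)$, and the interface condition is trivial. Comparison with $u_0<\theta_2$ gives $u\le\theta_2$ globally; combining with $f_2\le 0$ on $[0,\theta_2]$ and using that any element of $\omega(u)$ restricted to patch~$2$ is a nonnegative stationary solution bounded by $\theta_2$, a Liouville-type argument modelled on Proposition~\ref{prop 2.5} rules out positive limits at $+\infty$ and yields blocking. Case~(iv) is the most direct: with $U$ from Proposition~\ref{prop 2.5'}~(ii) as a blocking barrier, a parabolic kernel / $L^1$--$L^\infty$ estimate shows that if $\|u_0\|_{L^1}$ is sufficiently small (depending on $U$, $L$ and the coefficients), then $u(T_0,\cdot)\le U$ for some finite $T_0$, after which Proposition~\ref{prop 1.3} preserves the inequality for all later times.

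The main obstacle lies in cases~(i)--(ii), namely producing a supersolution on the whole line that is continuous at $x=0$, respects the one-sided flux inequality with parameter $\sigma$, dominates $u_0$ and decays as $x\to+\infty$ uniformly in $t\ge 0$; all four conditions must be checked simultaneously, and the time-dependent matching value at the interface has to be reconciled with a supersolution on patch~$1$. However, the bistable--bistable setting is gentler than the KPP--bistable one treated in Theorem~\ref{thm2.8} because constants above $K_1$ can serve as stationary supersolutions on patch~$1$ (an option forbidden in the KPP regime, where any constant below $K_1$ is a strict subsolution), so that the constructions from the proof of Theorem~\ref{thm2.8} transfer with only minor technical modifications.
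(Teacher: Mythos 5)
Your cases (iii)--(iv) are broadly in the spirit of the paper (which simply re-runs the proofs of parts (iii)--(iv) of Theorem~\ref{thm2.8}, noting they never used the KPP structure of $f_1$), but your constructions for cases (i) and (ii) have genuine gaps. In case (i), the barrier $\phi_2(x-c_2t+\eta)$ with $c_2<0$ has interface value $\phi_2(-c_2t+\eta)\to 0$ as $t\to+\infty$, so it cannot be matched continuously at $x=0$ with a patch-$1$ piece ``lying in $[K_1,+\infty)$''; and if you instead extend by the spatially constant function equal to that interface value, the supersolution inequality $\overline u_t\ge f_1(\overline u)$ fails as soon as the value enters $(\theta_1,K_1)$, where $f_1>0$ while $\overline u_t<0$. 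Moreover $\phi_2<K_2$ everywhere, so it cannot dominate $u_0$ when $\|u_0\|_{L^\infty}>K_2$, and your ``preliminary reduction to $\|u_0\|_{L^\infty}\le K_2$'' is not available: comparison with constants only gives $u\le K_2+\varepsilon$ asymptotically, never $u(t_0,\cdot)\le K_2$ at a finite time in general. The paper avoids all of this by taking a \emph{static} barrier: a modified reaction $\overline f_2\ge f_2$ whose front $\overline\phi$ connects $M>\|u_0\|_{L^\infty}$ to $0$ and still has negative speed $\overline c_2$, so that the time-independent profile $\overline\phi(\cdot-A)$ satisfies $d_2\overline\phi''+f_2(\overline\phi)\le-\overline c_2\,\overline\phi'<0$ and its interface value is the \emph{fixed} constant $\overline\phi(-A)\ge K_1$, which extends to patch~$1$ with no issue.

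In case (ii) the same obstruction is fatal to the static barrier $\phi_2(\cdot+\eta)$: since $u(T,\cdot)$ can only be shown to lie below $K_2+\varepsilon$ (never below $K_2$), no shift of $\phi_2<K_2$ dominates it. This is exactly why the paper's proof of part (ii) of Theorem~\ref{thm2.8} uses the time-dependent supersolution $\phi(\zeta(t,x))+\varepsilon e^{-\varepsilon(t-T)}$ with an exponentially decaying additive term and shift, concluding $\limsup_{x\to+\infty}\sup_{t\ge T}u(t,x)\le\varepsilon$ for every $\varepsilon$; the only genuinely new ingredient in the bistable--bistable version is checking that the limit $q$ of the monotone upper solution satisfies $q<K_2$, which uses $f_1<0$ on $(K_1,+\infty)\supset(K_2,+\infty)$ when $K_1<K_2$. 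Two further remarks: your case (iii) argument via the $\omega$-limit set only controls $u$ as $t\to+\infty$, whereas blocking is a statement uniform in $t\ge0$ --- the paper instead modifies $f_2$ above $\theta_2$ into a negative-mass bistable nonlinearity and reduces to case (i); and the ``simplification'' you invoke (constants $\ge K_1$ as patch-$1$ supersolutions) is equally available under~\eqref{f1-kpp} (which also imposes $f_1<0$ on $(K_1,+\infty)$) and is already the device used in the proof of Theorem~\ref{thm2.8}~(i), so it is not what distinguishes the two settings.
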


\begin{theorem}
\label{thm_propagation-1'}
Assume that~\eqref{fi-bistable} holds.
\begin{enumerate}[(i)]
\item If $\int_0^{K_2}f_2(s)\mathrm{d}s>0$, then the conclusion of Theorem~$\ref{thm_propagation-1}$ holds with $\theta$ and $\phi$ replaced by $\theta_2$ and~$\phi_2$;
\item if $\int_0^{K_1}f_1(s)\mathrm{d}s> 0$ and $\int_0^{K_2}f_2(s)\mathrm{d}s\ge 0$, and if~\eqref{1.1} has no nonnegative classical stationary solution $U$ such that $U(-\infty)=K_1$ and $U(+\infty)=0$, then, for any $\eta>0$, there is $L>0$ such that the following holds: for any nonnegative continuous and compactly supported initial datum satisfying $u_0\ge\theta_1+\eta$ on an interval of size $L$ included in patch~$1$, the solution $u$ of~\eqref{1.1} with initial datum $u_0$ propagates completely, more precisely,
\begin{equation}
\label{2.7'}
\liminf_{t\to+\infty}u(t,x)\ge p(x),~\text{locally uniformly in}~x\in\mathbb{R},
\end{equation}
where $p$ is a positive classical stationary solution of~\eqref{1.1} such that $p(-\infty)=K_1$ and $p(+\infty)=K_2$. Moreover, if $K_2\ge K_1\ge\theta_2$ or $K_1\ge K_2\ge\theta_1$, then $u(t,x)\to V(x)$ as $t\to+\infty$ locally uniformly in $x\in\R$, where $V$ is the unique positive classical stationary solution of~\eqref{1.1} such that $V(-\infty)=K_1$ and $V(+\infty)=K_2$, given in Proposition~$\ref{prop 2.5'}$~(iii). Lastly, $u$ also propagates in patch~$1$ with speed $c_1$ and
\be\label{convphi1}
\sup_{t\ge A,\,x\le-A}|u(t,x)-\phi_1(-x-c_1t+\xi_1)|\to 0~~\text{as}~A\to+\infty,
\ee
for some $\xi_1\in\R$, while it propagates with positive or zero speed in patch~$2$ as in the conclusions~(i) and~(ii) of Theorem~$\ref{thm_propagation-2}$, with $\phi$ replaced by $\phi_2$ in~(i). 
\end{enumerate}	
\end{theorem}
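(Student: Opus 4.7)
The conclusion of Theorem~\ref{thm_propagation-1} transfers almost verbatim to the bistable-bistable case, because its proof in patch~$2$ relied only on the bistable nature of~$f_2$ and the condition $\int_0^{K_2}f_2>0$; the KPP nature of $f_1$ entered only through the persistence property in patch~$1$, which plays no role in the statement~\eqref{convphi}. I would therefore rerun the same argument with $\theta$ and $\phi$ replaced by $\theta_2$ and $\phi_2$: construct Fife--McLeod type sub- and supersolutions of~\eqref{1.1} derived from $\phi_2(x-c_2 t+\xi)\pm q\,e^{-\omega t}$ on $[0,+\infty)$ with appropriate control at the interface (either by a Dirichlet comparison on $[0,+\infty)$ with boundary value~$0$, or by matching a monotone profile in patch~$1$ so as to respect the jump condition at $x=0$), and then squeeze $u$ between them.

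\textbf{Part~(ii).} The plan is to produce a bistable analogue of Theorem~\ref{thm2.4} by an ignition argument, and then to run an $\omega$-limit analysis akin to that of Theorem~\ref{thm_propagation-2}. Given $\eta>0$, I would first build a compactly supported continuous ``ground-state'' subsolution $\underline u_0$ with support strictly inside $I_1$, satisfying $\underline u_0\le u_0$ after a translation of $u_0$ that is permitted by the size-$L$ hypothesis, with $\underline u_0\ge\theta_1+\eta/2$ on a large interval, and such that $\underline u_0$ is a classical stationary subsolution of~\eqref{1.1}. An Aronson--Weinberger-type bump built from $\phi_1$ and centered well inside $I_1$ does the job; the jump condition at $x=0$ is trivial because $\underline u_0\equiv 0$ near $0^-$. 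Let $\underline u$ denote the solution of~\eqref{1.1} with initial datum $\underline u_0$; by Proposition~\ref{prop 1.3}, $\underline u\le u$ and the map $t\mapsto\underline u(t,\cdot)$ is nondecreasing since $\underline u_0$ is a stationary subsolution. Combined with the uniform bound $\underline u\le\max(K_1,K_2)$ and parabolic regularity, this gives convergence $\underline u(t,\cdot)\to p$ in $C^2_{loc}(\overline{I_1})\cap C^2_{loc}(\overline{I_2})$ to a bounded classical stationary solution $p$ of~\eqref{1.1} with $p\ge\underline u_0$.

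The next step is to classify $p$. Since $p>\theta_1$ on a long interval in~$I_1$ and no stationary solution connecting $K_1$ to $0$ exists by hypothesis, a phase-plane analysis on each patch in the spirit of Propositions~\ref{prop 2.5} and~\ref{prop 2.5'}, combined with Proposition~\ref{prop 2.5'}~(iii), forces $p(-\infty)=K_1$ and $p(+\infty)=K_2$. This yields $\liminf_{t\to+\infty}u(t,x)\ge p(x)$ locally uniformly in $\R$, which is~\eqref{2.7'}. In the uniqueness regimes $K_2\ge K_1\ge\theta_2$ or $K_1\ge K_2\ge\theta_1$, Proposition~\ref{prop 2.5'}~(iii) gives $p=V$; a matching upper bound $\limsup_{t\to+\infty}u(t,x)\le V(x)$ is obtained by starting a supersolution from an envelope above $V$ that is nonincreasing in $t$ and by using uniqueness of~$V$ to identify the limit. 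The traveling-wave convergence~\eqref{convphi1} in patch~$1$ then follows from the Fife--McLeod stability theorem applied in patch~$1$, once we know that $u(t,x)\to V(x)\to K_1$ as $x\to-\infty$; the propagation dichotomy in patch~$2$ (positive speed $c_2$ or zero speed according to the sign of $\int_0^{K_2}f_2$) is handled exactly as in the proofs of Theorems~\ref{thm_propagation-1} and~\ref{thm_propagation-2}~(ii), using the known behavior of $u$ near the interface as an effective boundary condition on $[0,+\infty)$.

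The main obstacle will be the classification of $p$: without the hair-trigger instability of $0$ afforded by a KPP patch~$1$, the ODE $d_1 v''+f_1(v)=0$ in patch~$1$ admits additional stationary profiles (periodic oscillations around $K_1$, bumps decaying to $0$ or to~$\theta_1$) that must be eliminated by combining the quantitative lower bound $p\ge\underline u_0$ from the ignition step with the nonexistence hypothesis for $K_1$-to-$0$ connections. A secondary difficulty is the construction of the matching supersolution in the uniqueness case, since the global attractor from above is no longer automatic as it was in the KPP setting.
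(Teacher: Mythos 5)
Your overall architecture for part~(ii) — ignite a stationary bump supported well inside patch~$1$, let the resulting solution increase in time to a stationary limit $p$, sandwich $u$ between $p$ and the decreasing limit $q$ of the solution started from a large constant, classify $p$ and $q$, and then invoke the Fife--McLeod machinery in each patch — is the same as the paper's, and your part~(i) is likewise the paper's route (rerun the proof of Theorem~\ref{thm_propagation-1} via the analogue of Theorem~\ref{thm4.2}, whose patch-$2$ arguments never used the KPP structure of $f_1$). Your sliding of the patch-$1$ bump under $p$ correctly yields $p>\theta_1$ on a half-line and hence $p(-\infty)=K_1$.

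There is, however, a genuine gap in your classification of $p$ at $+\infty$. You propose to get $p(+\infty)=K_2$ from ``a phase-plane analysis on each patch'' combined with the nonexistence of a stationary connection from $K_1$ to $0$. The nonexistence hypothesis only excludes the alternative $p(+\infty)=0$ (once $p(-\infty)=K_1$ is known), and the first-integral analysis of $d_2p''+f_2(p)=0$ on $[0,+\infty)$ does \emph{not} exclude the remaining alternatives: $p|_{[0,+\infty)}$ could a priori be a positive non-constant periodic orbit encircling the center $\theta_2$, or the constant $\theta_2$. Your quantitative lower bound $p\ge\underline u_0$ lives entirely in patch~$1$ and gives no information on $[0,+\infty)$, so it cannot eliminate these. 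The missing ingredient — which is exactly what the paper imports from the proof of Theorem~\ref{thm_propagation-2} — is the \emph{stability} of $p$ in $(0,+\infty)$: since $p$ is the increasing-in-time limit of $v$, one multiplies $0\le v_t=d_2(v-p)_{xx}+f_2(v)-f_2(p)$ by $\varphi^2/(p-v(t,\cdot))$ and passes to the limit to obtain $\int_0^{+\infty}d_2|\varphi'|^2-f_2'(p)\varphi^2\ge0$ for all compactly supported test functions $\varphi$; this rules out non-constant periodic stationary profiles and the constant $\theta_2$ (as $f_2'(\theta_2)>0$), leaving only $p(+\infty)\in\{0,K_2\}$, and then the nonexistence hypothesis finishes the job. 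Without this (or an equivalent dynamical argument), your proof of~\eqref{2.7'} is incomplete; note also that the identification $q(+\infty)=K_2$, and hence the uniqueness conclusion $p\equiv q\equiv V$, rests on first knowing $p(+\infty)=K_2$ via $q\ge p$. A minor further remark: the bump extended by zero is only a generalized (not classical) subsolution because of the corners at the endpoints of its support; the time-monotonicity of $\underline u$ should be obtained, as in the proof of Theorem~\ref{thm2.3}, by comparing $\underline u(h,\cdot)$ with $\underline u(0,\cdot)$ via the strong maximum principle on the support interval rather than by citing Definition~\ref{def2} directly.
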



\subsection{Biological interpretation and explanation}
\label{sec:biology}

We briefly discuss our results from an ecological point of view here. We envision a landscape of two different characteristics, say a large wooded area and an adjacent open grassland area. We assume that the movement rates of individuals are small relative to landscape scale so that we can essentially consider each landscape type as infinitely large. In the first scenario (KPP--KPP), the population has its highest growth rate at low density in both patches. While the low-density growth rates and carrying capacities may differ between the two landscape types, the population will grow in each type from low densities to the carrying capacity. When introduced locally, the population will spread in both directions, and the speed of spread will approach the famous Fisher speed $2\sqrt{d_i f_i'(0)}$ in each patch. The interface will not stop the population advance unless it is completely impermeable. This would be the special case (that we excluded from our analysis) where an individual at the interface will choose one of the two habitat types with probability one, i.e., $\alpha=0$ or $\alpha=1$. 

The second scenario (KPP--bistable) is more interesting. This time, the population dynamics change qualitatively from the highest growth rate being at low density to being at intermediate density. In ecological terms, this corresponds to a strong Allee effect and the threshold value $\theta$ is known as the Allee threshold. In this case, the interface can prevent a population that is spreading in the one habitat type (without Allee dynamic) from continuing to spread in the other type (with Allee dynamics). At first glance, it seems surprising that the conditions for propagation failure do not include parameter $\sigma$ that reflects the movement behavior at the interface. To understand the reasons, we need to understand the scaling that led to system (\ref{1.1}). The scaled reaction function $f_2$ and its unscaled counterpart, say~$\tilde f_2$, are related via
$$f_2(s) = k \tilde f_2\Big(\frac{s}{k}\Big), \qquad k = \frac{\alpha}{1-\alpha}\,\frac{d_2}{d_1},$$
see \cite{HLZ}. In particular, if $\tilde K_2$ and $\tilde{\theta}$ are the unscaled carrying capacity and Allee threshold, then $K_2 = k\tilde K_2$ and $\theta = k\tilde\theta$ are the corresponding scaled quantities. The sign of the integral that determines the sign of the speed of propagation in the homogeneous bistable equation does not change under this scaling. Hence, by choosing $k$ large enough, one can satisfy the condition $K_1<\theta$ in part~(iii) of Theorem~\ref{thm2.8}. A population that starts on a bounded set inside the KPP patch will be bounded by $K_1$ and therefore unable to spread in the Allee patch. Large values of $k$ arise when the preference for patch 1 is high ($\alpha\approx 1$) or when the diffusion rate in the Allee patch is much larger than in the KPP patch. The mechanisms in this last scenario is similar to that when a population spreads from a narrow into a wide region in two or higher dimensions \cite{CG2005,HZ2021}. As individuals diffuse broadly, their density drops below the Allee threshold and the population cannot reproduce and spread. 

A change in population dynamics from KPP to Allee effect need not be triggered by landscape properties, it can also be induced by management measures. For example, when male sterile insects are released in large enough densities, the probability of a female insect to meet a non-sterile male decreases substantially so that a mate-finding Allee effect may arise. The use of this technique to create barrier zones for insect pest spread has recently been explored by related but different means \cite{AEV2020}.

\subsubsection*{Outline of this paper} 

The rest of this paper is organized as follows. In Section~\ref{Sec 3}, we consider~\eqref{1.1} with  KPP-KPP reactions and prove Proposition~\ref{prop2.1} and Theorem~\ref{thm2.3}. Section~\ref{Sec 4} is devoted to the KPP-bistable case. We begin by proving the semi-persistence result Theorem~\ref{thm2.4} in Section~\ref{Sec 4.1}. Then, in Section~\ref{Sec 4.2}, we present the proofs of  Propositions~\ref{prop 2.5}--\ref{prop 2.7}. In Sections~\ref{Sec 4.3} and~\ref{Sec 4.4}, we collect the proofs of the main results on blocking, virtual blocking and propagation in patch~$2$, namely,  Theorems~\ref{thm2.8}--\ref{thm_propagation-2}. In Section~\ref{Sec 5}, we sketch the essential parts of the proofs in the bistable-bistable case which are different from those in the KPP-bistable case.


\section{The KPP-KPP case}\label{Sec 3}

This section is devoted to the analysis of~\eqref{1.1} with KPP-KPP reactions satisfying~\eqref{2.1}. We start with proving Proposition~\ref{prop2.1} for the stationary problem associated with~\eqref{1.1}.

\begin{proof}[Proof of Proposition~$\ref{prop2.1}$]
The existence of the stationary solution follows immediately from the existence of a pair of ordered sub- and supersolutions. Indeed, from~\eqref{2.1} and the condition $K_1\le K_2$, one sees that the functions equal to the constants $K_1$ and $K_2$ are, respectively, a sub- and a supersolution for~\eqref{1.1}, in the sense of Definition~\ref{def2}. Thus, from Proposition~\ref{prop 1.3}, the solution $\underline{u}$ of~\eqref{1.1} with initial datum $K_1$ satisfies $K_1\le\underline{u}(t,x)\le K_2$ for all $(t,x)\in[0,+\infty)\times\R$, hence $\underline{u}(t',x)\le\underline{u}(t'+t,x)$ for all~$t\ge0$ and for all $(t',x)\in[0,+\infty)\times\R$, that is, $\underline{u}(t,x)$ is nondecreasing with respect to~$t$ in $[0,+\infty)\times\R$. Together with Proposition~\ref{pro1}, it follows that the function $V$ defined by $V(x):=\lim_{t\to+\infty}\underline{u}(t,x)$ is a positive bounded classical stationary solution to~\eqref{1.1} such that $K_1\le V(x) \le K_2$ for all $x\in\mathbb{R}$. 
	
Next, let us turn to the uniqueness, which actually holds in the class of nonnegative nontrivial bounded classical solutions. So, consider any nonnegative bounded classical stationary solution $V$ of~\eqref{1.1}. If there is $x_0\in(-\infty,0)$ such that $V(x_0)=0$, then $V\equiv 0$ in $(-\infty,0)$ from the elliptic strong maximum principle (or the Cauchy-Lipschitz theorem), and then in $(-\infty,0]$ by continuity of~$V$. If $V>0$ in $(-\infty,0)$ and $V(0)=0$, then it follows from the Hopf lemma (or the Cauchy-Lipschitz theorem) that $V'(0^-)<0$. Similarly, if there is $x_0\in(0,+\infty)$ such that $V(x_0)=0$, then $V\equiv 0$ in~$[0,+\infty)$. If $V>0$ in $(0,+\infty)$ and $V(0)=0$, then $V'(0^+)>0$. From these observations and the fact that $V(0^-)=\sigma V'(0^+)$ with $\sigma>0$,  it follows that either $V\equiv0$ in $\R$, or $V>0$ in $\R$. 

In the sequel, we assume that $V>0$ in $\mathbb{R}$. We then claim that $\inf_{\mathbb{R}}V>0$ and
\begin{equation}
\label{3.1}
\begin{aligned}
V(-\infty)= K_1,\ \ V(+\infty)= K_2.
\end{aligned}
\end{equation}
As a matter of fact, since $f'_i(0)>0$ $(i=1,2)$, one can choose $R>0$ so large that 
\begin{align}
\label{R}
0<\frac{\pi}{2R}\le \sqrt{\frac{\min(f'_1(0),f'_2(0))}{2\max(d_1,d_2)}}.
\end{align}
Set
\begin{equation}
\label{w}
\begin{aligned}
\Psi(x)=\begin{cases}
\ \displaystyle\cos\Big(\frac{\pi}{2R}x\Big)&\text{for }x\in[-R,R],\cr
\ 0&\text{for }x\in\R\setminus[-R,R].
\end{cases}
\end{aligned}
\end{equation}
Then there exists $\widetilde\varep>0$ such that $-d_i(\varep \Psi)''<f_i(\varep \Psi)$ in $(-R,R)$ for $i=1,2$ and for all $\varep\in(0,\widetilde\varep]$, since~$f_i(0)=0$ and $f'_i(0)>0$ $(i=1,2)$. Fixing $x_0=-R-1$, one can choose $\varep_0\in(0,\widetilde \varep]$ such that
$$V>\varep_0\,\Psi(\cdot-x_0)~~\text{in}~\mathbb{R}.$$
Then, by continuity of $V$ and $\varep_0 \Psi$, there is $s_0>1$ such that
$$V>\varep_0 \Psi(\cdot-s x_0)~\text{in}~[s x_0-R,s x_0+R]~\text{for all}~s\in[1,s_0].$$
Define 
$$s^*=\sup\big\{\widetilde s>1: V>\varep_0 \Psi(\cdot-s x_0)~\text{in}~[s x_0-R,s x_0+R] ~\text{for all}~ s\in[1,\widetilde s]\big\}\ \in[s_0,+\infty].$$
We wish to prove that $s^*=+\infty$. Assume not. By the definition of $s^*$, one has $V\ge \varep_0\Psi(\cdot-s^* x_0)$ in~$[s^*x_0-R,s^*x_0+R]$ and there is $\hat x\in[s^*x_0-R,s^*x_0+R]$ such that $V(\hat x)=\varep_0 \Psi(\hat x-s^*x_0)$. Since~$V>0$ in $\mathbb{R}$ and $\Psi(\cdot-s^* x_0)=0$ at $x=s^*x_0\pm R$, one derives that $\hat x\in (s^*x_0-R,s^*x_0+R)$. The elliptic strong maximum principle then yields $V\equiv \varep_0\Psi(\cdot-s^*x_0)$ in~$(s^*x_0-R,s^*x_0+R)$ and then in~$[s^*x_0-R,s^*x_0+R]$ by continuity. This is impossible at $s^*x_0\pm R$. Consequently, $s^*=+\infty$, hence 
\begin{align*}
V> \varep_0\Psi(\cdot-s x_0)~\text{in}~[s x_0-R,s x_0+R]~\text{for all}~s\ge 1
\end{align*}
and $\inf_{x\le -R-1} V(x)\ge\varep_0$.  Similarly, one can also show that $\inf_{x\ge R+1}V(x)\ge\varep_1$ for some $\varep_1\in(0,\tilde \varep]$. Together with the continuity and positivity of $V$ in $\R$, we get $\inf_{\mathbb{R}}V>0$.
	
In order to show~\eqref{3.1}, consider now an arbitrary sequence $(x_n)_{n\in\mathbb{N}}$ in $\mathbb{R}$ diverging to  $-\infty$ as $n\to+\infty$ and define $V_n:=V(\cdot+x_n)$ in $\R$ for each $n\in\mathbb{N}$. Then, by standard elliptic estimates, the sequence $(V_n)_{n\in\mathbb{N}}$ converges as $n\to+\infty$, up to extraction of some subsequence, in $C^2_{loc}(\mathbb{R})$ to a bounded function $V_\infty$ which solves $d_1V_\infty''+f_1(V_\infty)=0$ in $\mathbb{R}$. Moreover, $\inf_{\R}V_\infty>0$. It follows that~$V_\infty\equiv K_1$ in $\R$, thanks to the hypothesis that $f_1>0$ in $(0,K_1)$ and $f_1<0$ in $(K_1,+\infty)$. That is,~$V_n\to K_1$ as~$n\to+\infty$ in $C^2_{loc}(\R)$. Since the limit does not depend on the particular sequence~$(x_n)_{n\in\mathbb{N}}$, it follows that  $V(x)\to K_1$ as $x\to-\infty$ and $V'(x)\to0$ as $x\to-\infty$. By the same argument as above and by the assumption that $f_2>0$ in $(0,K_2)$ and $f_2<0$ in $(K_2,+\infty)$, one can also derive $V(x)\to K_2$ and~$V'(x)\to0$ as $x\to+\infty$. Thus,~\eqref{3.1} is achieved.

We prove now that $V$ is monotone in $\R$. Assume first that $V$ is not monotone in $(-\infty,0)$. Then there is $x_0\in(-\infty,0)$ such that $V(x_0)$ reaches a local minimum or maximum with $V\not\equiv V(x_0)$ in~$(-\infty,0)$. On the one hand, $V'(x_0)=0$. On the other hand, by multiplying $d_1V''+f_1(V)=0$ by $V'$ and integrating over $(-\infty,x]$ for any $x\le0$, one gets that
\begin{equation}
\label{3.5}
\frac{d_1}{2}(V'(x^-))^2=\int_{V(x)}^{K_1}f_1(s) \mathrm{d}s.\footnote{The notation $x^-$ in $V'(x^-)$ is used in order to cover the case $x=0$, where $V$ is not differentiable in general. The same type of notation is used in~\eqref{3.6}, as well as in further subsequent proofs.}
\end{equation}
Remember that $f_1>0$ in $(0,K_1)$ and $f_1<0$ in $(K_1,+\infty)$, while $V>0$ in $\R$. Hence,~\eqref{3.5} yields~$V(x_0)=K_1$. By the Cauchy-Lipschitz theorem, one then has $V\equiv K_1$ in $(-\infty,0]$, a contradiction. Similarly, integrating $d_2V''+f_2(V)=0$ against $V'$ over $[x,+\infty)$ for any $x\ge0$ implies
\begin{equation}
\label{3.6}
\frac{d_2}{2}(V'(x^+))^2=\int^{K_2}_{V(x)}f_2(s)\mathrm{d}s.
\end{equation} 
One can use the same procedure to show that $V$ is monotone in $(0,+\infty)$. Consequently, $V$ is monotone in $(-\infty,0)$ and in $(0,+\infty)$. Together with the continuity of $V$ in $\R$ and the interface condition $V'(0^-)=\sigma V'(0^+)$ with $\sigma>0$, one then deduces that $V$ is nondecreasing in $\R$ if $V'(0^\pm)>0$ (and then $K_1<K_2$ in this case). Furthermore, if $V'(0^\pm)=0$, then necessarily $V(0)=K_1=K_2$ by~\eqref{3.5}--\eqref{3.6}, hence $V\equiv K_1=K_2$ in $(-\infty,0]$ and $V\equiv K_2=K_1$ in $[0,+\infty)$ by the Cauchy-Lipschitz theorem. Notice that the case $V'(0^\pm)<0$ is impossible since it would imply that $V$ is nonincreasing and not constant in $\R$, and then $K_1>K_2$, which is ruled out by assumption. Therefore, in all cases, $V$ is monotone in $\R$, and $V\equiv K_1=K_2$ in $\R$ if $K_1=K_2$.
	
Consider now the case $K_1<K_2$. Then, $V'\ge0$ in $(-\infty,0^-]\cup[0^+,+\infty)$ and $V'(0^\pm)>0$, from the previous paragraph. If there is $x_0\in\R\setminus\{0\}$ such that $V'(x_0)=0$, then~\eqref{3.5}--\eqref{3.6} and the Cauchy-Lipschitz theorem imply that $V(x_0)=K_1$ and~$V\equiv K_1$ in $(-\infty,0]$ (if $x_0<0$), or $V(x_0)=K_2$ and~$V\equiv K_2$ in $[0,+\infty)$ (if $x_0>0$), hence $V'(0^-)=0$ or $V'(0^+)=0$, a contradiction. Therefore,~$V'>0$ in $\R\setminus\{0\}$ and then in $(-\infty,0^-]\cup[0^+,+\infty)$, yielding in particular $K_1<V<K_2$ in $\R$. Moreover, by~\eqref{3.5}--\eqref{3.6} and by the interface condition $V'(0^-)=\sigma V'(0^+)$, one has 
$$\int_{V(0)}^{K_1}f_1(s) \mathrm{d}s=\frac{d_1 \sigma^2}{d_2}\int^{K_2}_{V(0)}f_2(s)\mathrm{d}s.$$
Notice that the function $\nu\mapsto \int_{\nu}^{K_1}f_1(s) \mathrm{d}s$ is continuous increasing in $[K_1,K_2]$ and vanishes at $K_1$, while the function $\nu\mapsto \frac{d_1 \sigma^2}{d_2}\int^{K_2}_{\nu}f_2(s)\mathrm{d}s$  is continuous  decreasing in $[K_1,K_2]$ and vanishes at $K_2$. Therefore, there exists a unique $\nu_0\in (K_1,K_2)$ such that 
$$\int_{\nu_0}^{K_1}f_1(s) \mathrm{d}s=\frac{d_1 \sigma^2}{d_2}\int^{K_2}_{\nu_0}f_2(s)\mathrm{d}s,$$
and necessarily $V(0)=\nu_0$. Hence, $V(0)$ is unique, and $V'(0^-)$ and $V'(0^+)$ are uniquely determined by 
$$V'(0^-)=\sqrt{\frac{2}{d_1}\int_{V(0)}^{K_1}f_1(s) \mathrm{d}s},~~~~V'(0^+)=\sqrt{\frac{2}{d_2}\int_{V(0)}^{K_2}f_2(s) \mathrm{d}s},$$
whence the uniqueness of $V$ follows from the Cauchy-Lipschitz theorem. This completes the proof of Proposition~\ref{prop2.1}.
\end{proof}

\begin{proof}[Proof of Theorem~$\ref{thm2.3}$]
Let $u$ be the solution  to~\eqref{1.1} with a nonnegative bounded and continuous initial datum $u_0\not\equiv 0$. The comparison principle (Proposition~\ref{prop 1.3}) yields $0<u(t,x)\le M:=\max(K_2,\Vert u_0\Vert_{L^\infty(\mathbb{R})})$ for all $(t,x)\in(0,+\infty)\times\R$.
	 
Choosing $R>0$ and $\Psi$ as in~\eqref{R}--\eqref{w}, there is $\varep>0$ small enough such that $-d_2\varep \Psi''<f_2(\varep \Psi)$ in~$(-R,R)$ and $\varep \Psi(\cdot-R-1)<u(1,\cdot)\le M$ in $\mathbb{R}$. Let $\underline u$ and $\overline u$ be, respectively, the solutions to~\eqref{1.1} with initial data $\varep \Psi(\cdot-R-1)$ and $M$. It follows in particular from Proposition~\ref{prop 1.3} that $\underline u$ is nonnegative in $[0,+\infty)\times\R$ (and even positive in $(0,+\infty)\times\R$). The standard parabolic maximum principle applied in $[0,+\infty)\times[1,2R+1]$ then implies that $\underline{u}(t,x)>\Psi(x-R-1)$ for all $(t,x)\in(0,+\infty)\times[1,2R+1]$. Therefore, $\underline{u}(h,\cdot)>\Psi(\cdot-R-1)=\underline{u}(0,\cdot)$ in $\R$, for every $h>0$. Proposition~\ref{prop 1.3} again then implies that $\underline{u}(t+h,\cdot)>\underline{u}(t,\cdot)$ in $\R$ for every $h>0$ and $t\ge0$, that is, $\underline{u}$ is increasing with respect to~$t$ in $[0,+\infty)\times\R$. Similarly,~$\overline u$ is nonincreasing with respect to~$t$ in $[0,+\infty)\times\R$. Since $0<\underline u(t,x)<\overline u(t,x)\le M$ for all $(t,x)\in(0,+\infty)\times\R$ (the strict inequalities come from Proposition~\ref{prop 1.3}), the Schauder estimates of Proposition~\ref{pro1} imply that~$\underline u(t,\cdot)$ and $\overline u(t,\cdot)$ converge as $t\to+\infty$, locally uniformly in $\mathbb{R}$, to positive bounded classical stationary solutions $p$ and $q$  of~\eqref{1.1}, respectively. Moreover,
\begin{align*}
0<p=\lim_{t\to+\infty} \underline u(t,\cdot)\le \liminf_{t\to+\infty} u(t,\cdot)\le \limsup_{t\to+\infty} u(t,\cdot)\le\lim_{t\to+\infty}\overline u(t,\cdot)=q\le M,
\end{align*}
locally uniformly in $\mathbb{R}$. From Proposition~\ref{prop2.1} and the uniqueness of the positive bounded classical stationary solution $V$ to problem~\eqref{1.1}, one gets $p=q=V$ in $\mathbb{R}$, and the desired property~\eqref{convV} of Theorem~\ref{thm2.3} is thereby proved.

Assume now that $u_0$ is compactly supported. Since $V(-\infty)=K_1$, $V(+\infty)=K_2$ and $K_1\le V(x)\le K_2$ for all $x\in\R$, it follows that, for any $\delta\in(0,K_1)$, there exist $x_1<0$ negative enough and $x_2>0$ positive enough such that 
\begin{align}
\label{3.8}
K_1\le V(x)\le K_1+\frac\delta2\ \ \text{for all}\ x\le x_1, \ \hbox{ and }\ K_2-\frac\delta2 \le V(x)\le K_2 \ \ \text{for all}\   x\ge x_2.
\end{align}
By~\eqref{convV}, one can pick $t_0>0$ sufficiently large so that
\begin{align}
\label{3.9}
|u(t,x)-V(x)|\le\frac{\delta}{2}~~\text{for all }t\ge t_0\text{ and }x\in[x_1,x_2].
\end{align}
Thanks to~\eqref{3.8}--\eqref{3.9}, it is easily seen that, for all $t\ge t_0$,
\begin{equation}
\label{3.10}
K_1-\frac{\delta}{2}\le u(t,x_1)\le K_1+\delta,
\end{equation}
and
$$K_2-\delta\le  u(t,x_2)\le K_2+\frac{\delta}{2}.$$
	 
We first look at the spreading of $u$ in patch~$1$. Let $z_0\not\equiv 0$ be a nonnegative bounded continuous and  compactly supported function in $\mathbb{R}$ such that spt$(z_0)\subset[x_1-2,x_1-1]$ and $0\le z_0(x)<\min\big(\Vert u_0\Vert_{L^\infty(\mathbb{R})},K_1-\delta,u(t_0,x)\big)$ for all $x\in\mathbb{R}$. Consider the Cauchy problem
\begin{align}
\label{3.12}
\begin{cases}
z_t=d_1z_{xx}+g_1(z),\ \ &t>0,~x\in\mathbb{R},\\
z(0,x)=z_0,\ \ &x\in\R,
\end{cases}
\end{align}
where $g_1$ is of class $C^1([0,+\infty))$ and satisfies $g_1(0)=g_1(K_1-\delta)=0$, $0<g_1(s)\le g'_1(0)s$ for all~$s\in(0,K_1-\delta)$, $g'_1(K_1-\delta)<0$, and $g_1<0$ in $(K_1-\delta,+\infty)$. Moreover, $g_1$ can be chosen so that~$g'_1(0)=f'_1(0)$ and  $g_1\le f_1$ in $[0,K_1-\delta]$. From the maximum principle, it immediately follows that $0\le z(t,x)<K_1-\delta$ for all $t\ge0$ and $x\in\mathbb{R}$. This implies that $z(t-t_0,x_1)<K_1-\delta<u(t,x_1)$ for all $t\ge t_0$, thanks to~\eqref{3.10}. Notice also that $z_0(x)<u(t_0,x)$ for $x\in(-\infty,x_1]$. By the comparison principle, it turns out that $z(t-t_0,x)<u(t,x)$ for all $t>t_0$ and $x\le x_1$. Furthermore, it is known that the solution $z$ of~\eqref{3.12} spreads in both directions with the spreading speed $c^*_1=2\sqrt{d_1g'_1(0)}=2\sqrt{d_1f'_1(0)}$ (see \cite{AW2}), hence
\begin{align*}
\inf_{|x|\le (c^*_1-\varep)t}z(t,x)\to K_1-\delta \ \ \text{as}~t\to+\infty,\ \ \text{for all}~0< \varep<c^*_1.
\end{align*} 
By virtue of~\eqref{3.8}, we then obtain that, for any $0<\varep<c^*_1$, there is $t'_0>t_0$ such that, for all $t>t'_0$ and $x\le x_1$, 
\begin{align}
\label{3.13}
V(x)-3\delta<K_1-2\delta\le\inf_{(-c^*_1+\varep/2)(t-t_0)\le y\le x_1} z(t-t_0,y)\!\le\! \inf_{(-c^*_1+\varep)t\le y\le x_1} u(t,y).
\end{align}
	 
Next, set $M_1:=\max\big(\Vert u_0\Vert_{L^\infty(\mathbb{R})}, K_1+\delta,K_2\big)$. Let $\widetilde g_1$ be a $C^1([0,+\infty))$ function such that $\widetilde g_1(0)=\widetilde g_1(K_1+\delta)=0$, $\widetilde g_1>0$ in $(0,K_1+\delta)$, $\widetilde g'_1(0)>0$, $\widetilde g'_1(K_1+\delta)<0$, and $\widetilde g<0$ in $(K_1+\delta,+\infty)$. We can also choose $\widetilde g_1$ so that $f_1\le \widetilde g_1$ in $[0,+\infty)$. Then, the solution to the ODE $\xi'(t)=\widetilde g_1(\xi(t))$ for~$t\ge t_0$ with $\xi(t_0)=M_1$ is nonincreasing for $t\ge t_0$ and satisfies $\xi(t)\to K_1+\delta$ as $t\to+\infty$. One has~$0<u(t,x)\le M_1$ for all $(t,x)\in(0,+\infty)\times \mathbb{R}$ thanks to Proposition~\ref{prop 1.3}, hence $u(t_0,x)\le M_1=\xi(t_0)$ for all $x\le x_1$. Moreover,  $u(t,x_1)\le K_1+\delta\le \xi(t)$  for all $t\ge t_0$ by~\eqref{3.10}. Applying a comparison argument yields $u(t,x)\le \xi(t)$ for all $t\ge t_0$ and $x\le x_1$. Therefore, we can choose $t_1> t_0$ such that 
\begin{align}
\label{3.14}
\sup_{x\le x_1} u(t_1, x)\le K_1+\frac{3\delta}{2}.
\end{align}

Let now $\overline g_1$ be of class $C^1([0,+\infty))$ satisfying $\overline g_1(0)=\overline g_1(K_1+2\delta)=0$,  $0<\overline g_1(s)\le \overline g'_1(0)s$ for~$s\in(0,K_1+2\delta)$, $\overline g'_1(0)=f'_1(0)$ and $f_1\le \overline g_1$ in $[0,+\infty)$. Then, it is well-known that the KPP equation $v_t=d_1 v_{xx}+\overline g_1(v)$ admits standard traveling wave solutions of the type $v(t,x)=\overline\varphi_c(\pm x-ct)$ with (decreasing) $\overline\varphi_c:\R\to(0,K_1+2\delta)$ if and only if $c\ge c^*_1=2\sqrt{d_1\overline{g}'_1(0)}=2\sqrt{d_1f'_1(0)}$. For each~$c\ge c^*_1$, the function $\overline\varphi_c$ satisfies
\be\label{overphic}
d_1\overline\varphi_c''+c\overline\varphi_c'+\overline g_1(\overline\varphi_c)=0 \ \text{in}~\mathbb{R}, \ \ \overline\varphi_c'<0 \ \text{in}~\R, \ \ \overline\varphi_c(-\infty)=K_1+2\delta,\ \ \overline\varphi_c(+\infty)=0,
\ee
and $\overline\varphi_c$ is unique up to translations. In particular, for $c=c^*_1$, by choosing $A>0$ sufficiently large, there holds
\begin{align}
\label{3.17}
K_1+\frac{3\delta}{2}\le\overline\varphi_{c^*_1}(-x_1-c^*_1 t-A)< K_1+2\delta \ \ \text{for all} \ t\ge t_1.
\end{align}

Due to the exponential decay of $\overline\varphi_{c^*_1}(s)$ as $s\to+\infty$ (as in the second case of~\eqref{phiexp}) and the Gaussian upper bound of $u(t_1,x)$ for all $x\le x_1$ by Lemma~\ref{lemma1.3}, together with~\eqref{3.14}, it can be derived that (up to increasing $A$ if needed)
$$u(t_1,x)\le \overline\varphi_{c^*_1}(-x-c^*_1 t_1 -A)~~\text{for}~x\le x_1.$$
We also notice from~\eqref{3.10} and~\eqref{3.17} that $u(t,x_1)\le K_1+\delta<\overline\varphi_{c^*_1}(-x_1-c^*_1 t -A)$ for all $t\ge t_1$. The comparison principle  gives 
\begin{equation}
\label{3.18}
u(t,x)\le\overline\varphi_{c^*_1}(-x-c^*_1 t -A)~~\text{for all}~t\ge t_1~\text{and}~x\le x_1.
\end{equation}
Therefore, for all $0<\varep<c^*_1$ and for all $t\ge t_1$ and $x\le x_1$, there holds 
\begin{align}
\label{3.19}
\sup_{-(c^*_1-\varep)t\le y \le x_1} u(t,y)\!\le\! \sup_{-(c^*_1-\varep)t\le y \le x_1}\overline\varphi_{c^*_1}(-y-c^*_1 t -A)\!\le \! K_1+2\delta \!\le \! V(x)+2\delta.
\end{align}
Combining~\eqref{3.13} with~\eqref{3.19}, we obtain
\begin{align*}
\limsup_{t\to+\infty}\Big(\sup_{-(c^*_1-\varep)t\le x \le x_1}|u(t,x)-V(x)|\Big)\le3\delta\ \hbox{ for all }0< \varep<c^*_1.
\end{align*}
Together with~\eqref{convV} and the arbitrariness of $\delta>0$ small enough, one gets that $u$ spreads to the left at least with speed $c^*_1$, that is, for every $0< \varep\le c^*_1$,
$$\sup_{-(c^*_1-\varep)t\le x \le0}|u(t,x)-V(x)|\to0\hbox{ as }t\to+\infty.$$
	  
On the other hand,~\eqref{3.18} also implies that, for all $\varep>0$,
\begin{align*}
\limsup_{t\to+\infty}\Big(\sup_{x\le -(c^*_1+\varep)t} u(t,x)\Big)\le \lim_{t\to+\infty}\Big(\sup_{x\le -(c^*_1+\varep)t}\overline\varphi_{c^*_1}(-x-c^*_1 t -A)\Big)=0,
\end{align*}
hence the limsup is a limit and $u$ spreads to the left at most with speed $c^*_1$.

Therefore, the leftward spreading result of $u$ is proved. Similarly, one can also show that $u$ spreads in patch~$2$ with speed $c^*_2=2\sqrt{d_2f'_2(0)}$. Hence,~\eqref{spreadKPP} follows, and the proof of Theorem~\ref{thm2.3} is complete.
\end{proof}


\section{The KPP-bistable case}\label{Sec 4}

In this section, we investigate~\eqref{1.1} with KPP-bistable reactions. We assume that patch 1 is of KPP type, whereas patch 2 is of bistable type, that is, we assume~\eqref{f1-kpp}--\eqref{f2-bistable}. We consider in complete generality the sign of the mass $\int_{0}^{K_2}f_2(s)\mathrm{d}s$ and the relation between $K_1$ and $\theta$ or $K_2$ (or possibly $\theta^*$ where $\theta^*\in(\theta,K_2)$ is such that $\int_0^{\theta^*}f_2(s)\mathrm{d}s=0$ when $\int_0^{K_2}f_2(s)\mathrm{d}s>0$).


\subsection{Semi-persistence result: proof of Theorem~\ref{thm2.4}}
\label{Sec 4.1}

To begin with, we prove the semi-persistence result and the spreading result in patch 1, thanks to the KPP assumption on $f_1$. The technique here is similar to that of Theorem~\ref{thm2.3}.

\begin{proof}[Proof of Theorem~$\ref{thm2.4}$]
Let $u$ be the solution to~\eqref{1.1} with a nonnegative continuous and compactly supported initial datum $u_0\not\equiv 0$. By Proposition~\ref{prop 1.3}, we have $0\!<\!u(t,x)\!<\!M\!:=\!\max\big(K_1,K_2,\Vert u_0\Vert_{L^\infty(\mathbb{R})}\big)$ for all $t>0$ and $x\in\mathbb{R}$.
	
Take $R>0$ large enough  such that
\begin{equation}
\label{4.1}
\frac{\pi}{2R}<\sqrt{\frac{f'_1(0)}{2d_1}},
\end{equation}
and then define $\Psi:\R\to\R$ as in~\eqref{w}, that is,
\begin{equation}
\label{4.2}
\begin{aligned}
\Psi(x)=\begin{cases}
\ \displaystyle\cos\Big(\frac{\pi}{2R}x\Big)&\text{for }x\in[-R,R],\cr
\ 0&\text{for }x\in\R\setminus[-R,R].
\end{cases}
\end{aligned}
\end{equation}
Then there exists $\eta_0>0$ such that $-d_1(\eta\Psi)''\le f_1(\eta\Psi)$ in $(-R,R)$ for all $0<\eta\le \eta_0$. Choose now any~$x_0\le -R$ and pick $\eta\in(0,\eta_0]$ such that $\eta\Psi(\cdot-x_0)<u(1,\cdot)$ in $\mathbb{R}$. Let $v$ and $w$ be  solutions to~\eqref{1.1} with initial data $v(0,\cdot)=\eta\Psi(\cdot-x_0)$ in $\mathbb{R}$ and $w(0,\cdot)\equiv \max(K_1,K_2,\Vert u_0\Vert_{L^\infty(\mathbb{R})})$ in $\mathbb{R}$. Then, as in the proof of the first part of Theorem~\ref{thm2.3}, $v$ is increasing with respect to~$t$ and $w$ is nonincreasing with respect to~$t$. Moreover,~$0<v(t,x)< u(t+1,x)<w(t+1,x)\le M$ for all $t>0$ and $x\in\mathbb{R}$. By the Schauder estimates of Proposition~\ref{pro1}, it follows that $v(t,\cdot)$ and $w(t,\cdot)$ converge as $t\to+\infty$, locally uniformly in $\mathbb{R}$,  to  positive bounded stationary solutions $p$ and $q$ of~\eqref{1.1}, respectively. Furthermore,
\begin{equation}\label{4.3}
0<p\le \liminf_{t\to+\infty} u(t,\cdot)\le \limsup_{t\to+\infty} u(t,\cdot)\le q\le M,~~\text{locally uniformly in}~\mathbb{R}.
\end{equation} 

Notice also that $p> v_0$ in $\mathbb{R}$. We observe  from the continuity of $p$ and $v_0$ that there is $\hat\kappa>1$ such that $p>\eta\Psi(\cdot-\kappa x_0)$ in $[\kappa x_0-R,\kappa x_0+R]$ for all $\kappa\in[1,\hat \kappa]$. Define
$$\kappa^*:=\sup\big\{\kappa\ge 1: p>\eta\Psi(\cdot-\widetilde\kappa x_0)~\text{in}~[\widetilde\kappa x_0-R,\widetilde\kappa x_0+R]~\text{for all}~\widetilde\kappa\in [1,\kappa]\big\}.$$
It follows that $\kappa^*\ge\hat\kappa>1$. We are going to prove that $\kappa^*=+\infty$. Assuming by contradiction that~$\kappa^*<+\infty$, we see from the  definition of $\kappa^*$ that $p\ge \eta\Psi(\cdot-\kappa^*x_0)$ in $[\kappa^*x_0-R,\kappa^*x_0+R]$ and there is $x^*\in[\kappa^*x_0-R,\kappa^*x_0+R]$ such that $p(x^*)= \eta\Psi(x^*-\kappa^*x_0)$. Since $p>0$ in $\mathbb{R}$ and $\Psi(\cdot-\kappa^*x_0)=0$ at $\kappa^*x_0\pm R$, one has $x^*\in(\kappa^*x_0-R,\kappa^*x_0+R)$. Then the strong elliptic maximum principle implies that $p\equiv \eta\Psi(\cdot-\kappa^*x_0)$ in $(\kappa^*x_0-R,\kappa^*x_0+R)$ and then in $[\kappa^*x_0-R,\kappa^*x_0+R]$ by continuity, which is impossible at $\kappa^*x_0\pm R$. Thus, $\kappa^*=+\infty$ and $p>\eta\Psi(\cdot-\kappa x_0)$ in $[\kappa x_0-R,\kappa x_0+R]$ for all $\kappa\ge 1$. This implies, in particular, that $p(x)>\eta\Psi(0)=\eta$ for all $x\le x_0$. Thus, 
\begin{equation}
\label{4.4}
\liminf_{t\to+\infty}u(t,x)\ge p(x)>\eta~~\text{for all}~x\le x_0.
\end{equation}
On the other hand, since $p$ is continuous and positive in $\mathbb{R}$, one gets from~\eqref{4.3} that, for any given~$\overline x>x_0$,
\begin{equation}
\label{4.5}
\liminf_{t\to+\infty}u(t,x)\ge \min_{[x_0,\overline x]}p>0~~\text{for all}~ x\in[x_0, \overline x].
\end{equation}
Combining~\eqref{4.4} with~\eqref{4.5}, one reaches the semi-persistence result, that is, for any $\overline x\in\mathbb{R}$,
$$\inf_{x\le\overline{x}}\Big(\liminf_{t\to+\infty} u(t,x)\Big)>0.$$

In what follows, we turn to the proof of the spreading result in patch~$1$. First of all, as for $V$ in the proof of Proposition~\ref{prop2.1}, one sees that the functions $p$ and $q$ given in~\eqref{4.3} satisfy $p(x)\to K_1$ and $q(x)\to K_1$ as $x\to-\infty$. Fix now any $\delta\in(0,K_1/2)$. From the previous observations together with~\eqref{4.3}, there exist $t_1>0$ and $x_1<0$ such that
\begin{equation}\label{4.6}
K_1-\frac{\delta}{2}\le u(t,x_1)\le K_1+\delta~~\text{for all}~t\ge t_1.
\end{equation} 
The rest of the proof is similar to that of Theorem~\ref{thm2.3}. We just sketch main steps. With $z_0$ and $g_1$ as in~\eqref{3.12} in the proof of Theorem~\ref{thm2.3}, and using especially the left inequality in~\eqref{4.6}, it follows as in~\eqref{3.13} that, for any $\varep\in(0,c^*_1)$, there is $t'_1>t_1$ such that, for all $t>t'_1$,
\begin{align}
\label{4.8}
K_1-2\delta\le\inf_{-(c^*_1-\varep/2)(t-t_1)\le y\le x_1} z(t-t_1,y)\le \inf_{-(c^*_1-\varep)t\le y\le x_1} u(t,y).
\end{align}
Similarly, as in~\eqref{3.14}, using especially the right inequality in~\eqref{4.6}, there is $t_2>t'_1$ such that
$$\sup_{x\le x_1} u(t_2, x)\le K_1+\frac{3\delta}{2}.$$

Next, let $\overline{g}_1$ and $\overline{\varphi}_{c^*_1}$ be as in~\eqref{overphic} with $c^*=2\sqrt{d_1f'_1(0)}$. Then there is $A>0$ such that, for each $\varep\in(0,c^*_1)$,~\eqref{3.19} holds without any reference to $V$, that is, there is $t_3>0$ such that
\begin{align}
\label{4.13}
\sup_{-(c^*_1-\varep)t\le x \le x_1} u(t,x)\le \sup_{-(c^*_1-\varep)t\le x \le x_1}\overline\varphi_{c^*_1}(-x-c^*_1t-A)\le K_1+2\delta\ \hbox{ for all }t\ge t_3.
\end{align}
Owing to~\eqref{4.8} and~\eqref{4.13}, it follows that 
$$\forall\,\varep\in(0,c^*_1), \ \forall\,\delta\in\Big(0,\frac{K_1}{2}\Big),~\exists\,x_1\in\R,~~\limsup_{t\to+\infty}\Big(  \sup_{-(c^*_1-\varep)t\le x \le x_1} |u(t,x)-K_1|\Big)\le 2\delta,$$
namely, $u$ spreads to the left at least with speed $c^*_1$. Moreover, we can also deduce as in the proof of Theorem~\ref{thm2.3} that, for every $\varep>0$,
$$\limsup_{t\to+\infty}\Big(\sup_{x\le -(c^*_1+\varep)t} u(t,x)\Big)\le\lim_{t\to+\infty}\Big(\sup_{x\le -(c^*_1+\varep)t}\overline\varphi_{c^*_1}(-x-c^*_1t-A)\Big)=0,$$
hence $\sup_{x\le -(c^*_1+\varep)t} u(t,x)\to0$ as $t\to+\infty$, for all $\varep>0$. That is, $u$ spreads at most with speed $c^*_1$ in the negative direction. This finishes the proof of Theorem~\ref{thm2.4}.
\end{proof}


\subsection{Preliminaries on the stationary problem: proofs of Propositions~\ref{prop 2.5}--\ref{prop 2.7}}
\label{Sec 4.2}

This section is devoted to the study of the stationary problem associated with~\eqref{1.1} in the KPP-bistable case~\eqref{f1-kpp}--\eqref{f2-bistable}, and we give the proofs of Propositions~\ref{prop 2.5}--\ref{prop 2.7}.

\begin{proof}[Proof of Proposition~$\ref{prop 2.5}$]
Suppose that $U$ is a nonnegative classical stationary solution of~\eqref{1.1} such that $U(-\infty)=K_1$ and $U(+\infty)=0$ (hence, $U'(\pm\infty)=0$ from standard elliptic estimates). As in the proof of Proposition~\ref{prop2.1}, it follows that $U>0$ in $\R$, that $U$ is monotone in $(-\infty,0]$, and that $U'<0$ (resp. $U'>0$, resp. $U'\equiv0$) in $(-\infty,0^-]$ if $U(0)<K_1$ (resp. if $U(0)>K_1$, resp. if $U(0)=K_1$). Furthermore, multiplying $d_2 U''+f_2(U)=0$ by $U'$ and  integrating the resulting equation over $[x,+\infty)$ for any $x\ge0$ yields 
\begin{equation}
\label{4.14}
\frac{d_2}{2}(U'(x^+))^2=-\int_{0}^{U(x)}f_2(s)\mathrm{d}s\ \hbox{ for all }x\ge0.
\end{equation}
To discuss the behavior of $U$ in $[0,+\infty)$, we distinguish three cases, according to the sign of $\int_0^{K_2}\!f_2(s)\mathrm{d}s$.

\vskip 1mm
\textit{Case 1: $\int_0^{K_2}\!f_2(s) \mathrm{d}s<0$}. Then, $\int_0^\nu f_2(s)\mathrm{d}s<0$ for all $\nu>0$ and one infers from~\eqref{4.14} that $U'$ has a strict constant sign in $[0^+,+\infty)$, whence $U'<0$ in $[0^+,+\infty)$ since $U(0)>0$ and  $U(+\infty)=0$. This implies that $U'(0^-)<0$ by using the interface condition in~\eqref{1.1}, hence $U(0)<K_1$ and $U'<0$ in~$(-\infty,0^-]$ from the previous paragraph. Lastly, formulas~\eqref{4.14} and~\eqref{3.5} (at $x=0$ and with $U$ instead of $V$), together with the interface condition $U'(0^-)=\sigma U'(0^+)$, lead to~\eqref{eqU0}.
	
\vskip 1mm
\textit{Case 2: $\int_0^{K_2}\!f_2(s) \mathrm{d}s=0$}. Suppose that there is a point $x_0\in[0,+\infty)$ such that $U(x_0)= K_2$. By~\eqref{4.14}, one deduces that $U'(x_0^+)=0$, and then $U\equiv K_2$ in $[0,+\infty)$ by the Cauchy-Lipschitz theorem. This contradicts the limit $U(+\infty)=0$. Thus, $0<U<K_2$ in $[0,+\infty)$ and therefore $U'$ has a strict constant sign in $[0^+,+\infty)$ by~\eqref{4.14}, hence $U'<0$ in $[0^+,+\infty)$. Consequently, as in Case~1, $U'(0^-)<0$, $U'<0$ in $(-\infty,0^-]$ and $U(0)<K_1$ (see Fig.~\ref{balanced}). Notice also that $\int_0^{U(0)}f_2(s)\mathrm{d}s<0$ and that~\eqref{eqU0} holds as in Case~1.
\begin{figure}[htbp]
\centering
\subfigure[$K_1>K_2$]{\includegraphics[width=1.9in]{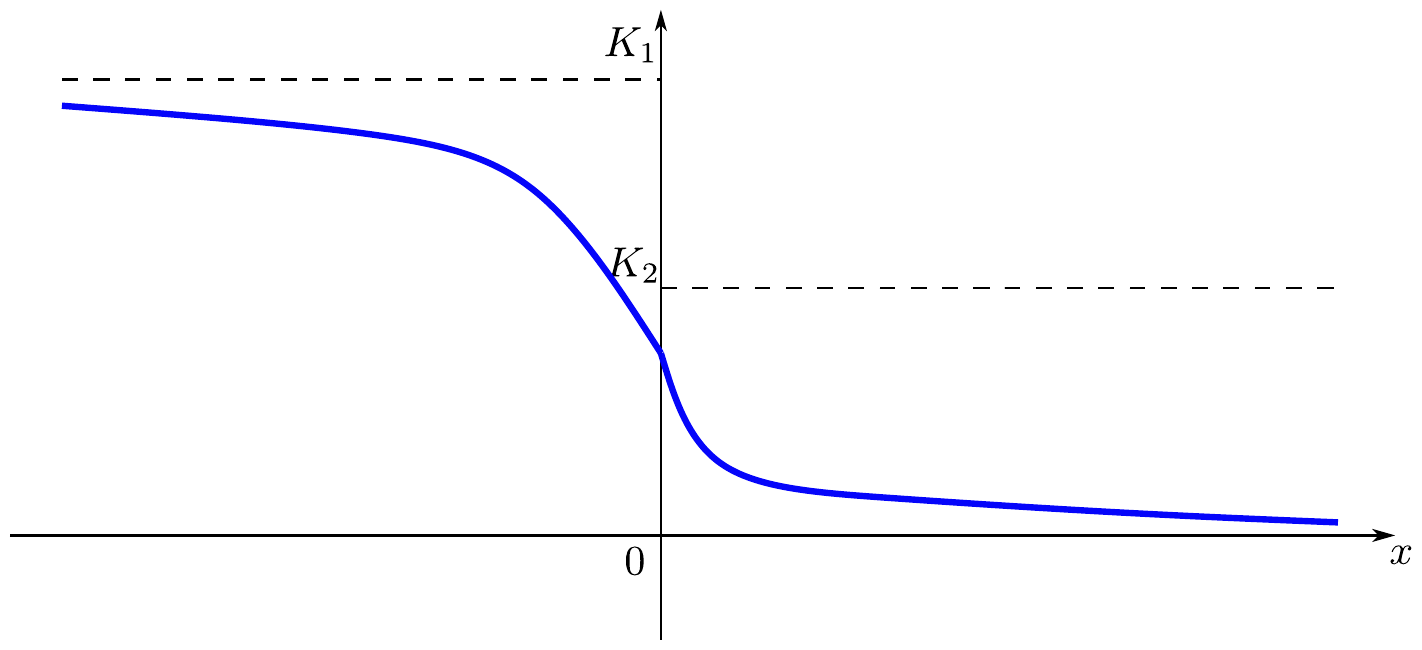}} ~ \subfigure[$K_1=K_2$]{\includegraphics[width=1.9in]{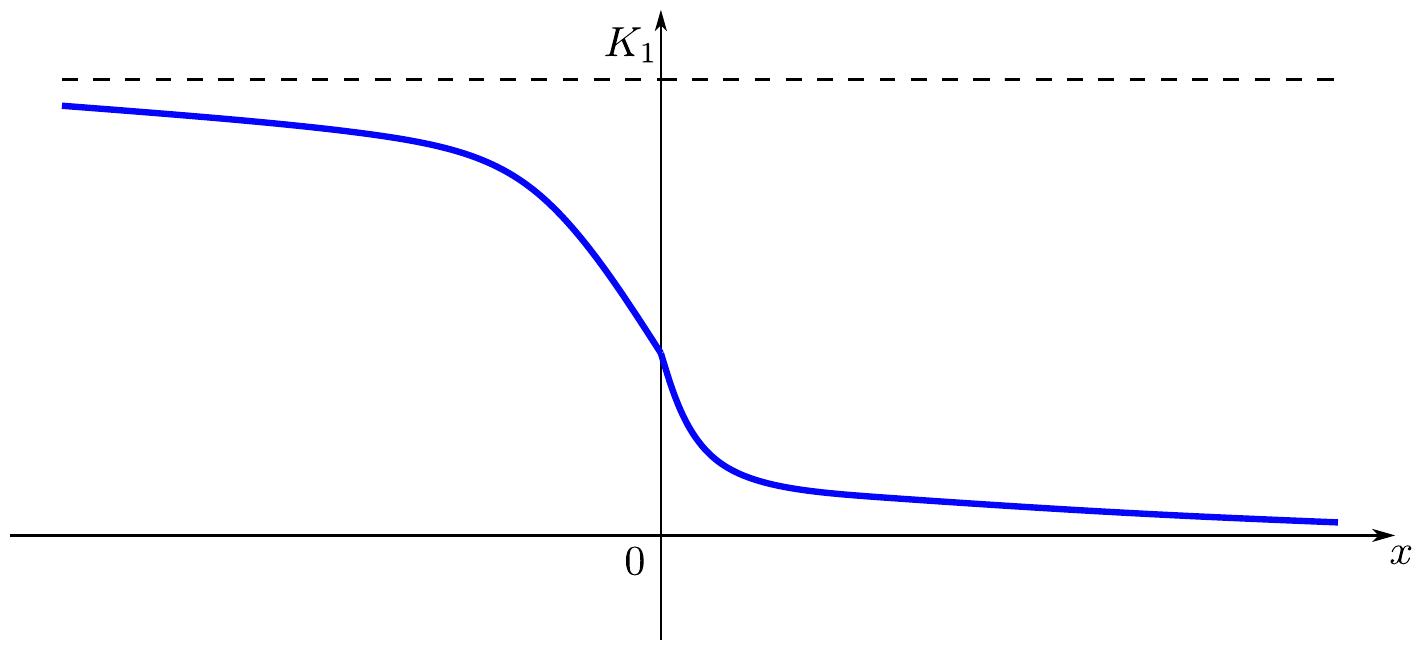}} ~ \subfigure[$K_1<K_2$]{\includegraphics[width=1.9in]{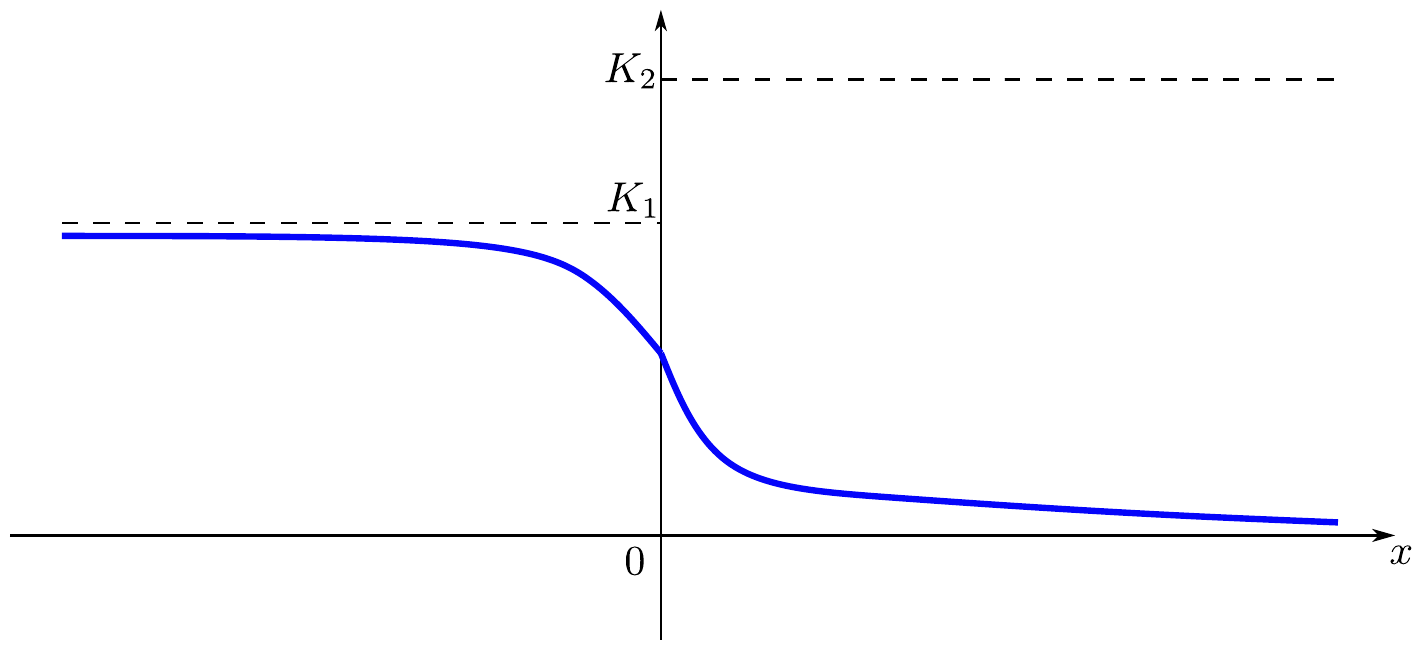}}
\caption{Profile of a steady solution $U$ with $U(-\infty)\!=\!K_1$ and $U(+\infty)\!=\!0$, if $\int_0^{K_2}f_2(s) \mathrm{d}s\!=\!0$.}
\label{balanced}
\end{figure}

\vskip 1mm
\textit{Case 3: $\int_0^{K_2}\!f_2(s) \mathrm{d}s>0$}. Let $\theta^*\in(\theta,K_2)$  be such that $\int_0^{\theta^*}\!f_2(s)\mathrm{d}s=0$, and denote
\be\label{defQ}
Q=\sup\Big\{\nu>\theta^*:\int_0^{\nu'}\!\!f_2(s)\mathrm{d}s>0\hbox{ for all }\nu'\in(\theta^*,\nu)\Big\}\ \in(\theta^*,+\infty].
\ee
We first observe from~\eqref{4.14} that $U(x)\notin(\theta^*,Q)$ for all $x\ge0$. By  continuity of $U$ and $U(+\infty)=0$, one then derives that  $0<U\le \theta^*$ in $[0,+\infty)$. Suppose in this paragraph that the set $\{x\ge0:U'(x)=0\}$ is not empty.\footnote{Notice that, if $U'(0^+)=0$, then $U'(0^-)=0$ as well, hence $U$ is differentiable at $0$ with $U'(0)=0$.} From~\eqref{4.14} and the inequality $U\le\theta^*$ in $[0,+\infty)$, this set is included in $\{x\ge0:U(x)=\theta^*\}$ and, since $U(+\infty)=0$, one can then define $x_0:=\max\{x\ge 0:U'(x)=0\}\,\in[0,+\infty)$. One then has~$U(x_0)=\theta^*$ and $U'<0$ in $(x_0,+\infty)$ by definition of $x_0$. The Cauchy-Lipschitz theorem then implies that $U(x)=U(2x_0-x)$ for all $x\in[0,x_0]$, hence $U'>0$ in $[0^+,x_0)$ if $x_0>0$. From the general observations at the beginning of the proof of the present proposition, one then gets that, if $x_0>0$, then $U'>0$ in $(-\infty,0^-]\cup[0^+,x_0)$, $K_1<U(0)<\theta^*$, and $U'<0$ in $(x_0,+\infty)$ (see the black curve in Fig.~\ref{bump-sol} (a)), whereas $U\equiv K_1=\theta^*$ in $(-\infty,0]$ and $U'<0$ in $(0,+\infty)$ if $x_0=0$ (see the black curve in Fig.~\ref{bump-sol} (b)). To sum up, under the assumption $\{x\ge0:U'(x)=0\}\neq\emptyset$, one has $K_1\le\theta^*$ and~\eqref{eqU0} holds good if $x_0>0$, while the two integrals in~\eqref{eqU0} vanish if $x_0=0$.
\begin{figure}[htbp]
\centering
\subfigure[$K_1<\theta^*$]{\includegraphics[width=1.9in]{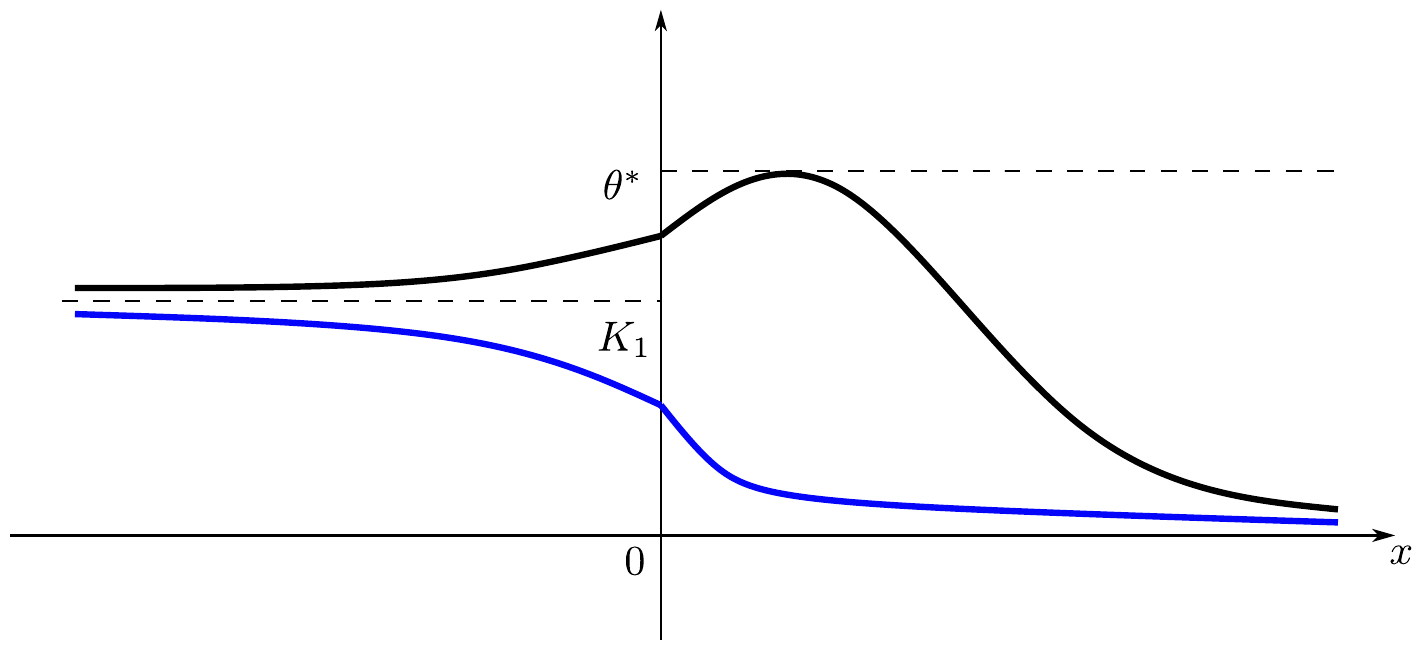}} ~ \subfigure[$K_1=\theta^*$]{\includegraphics[width=1.9in]{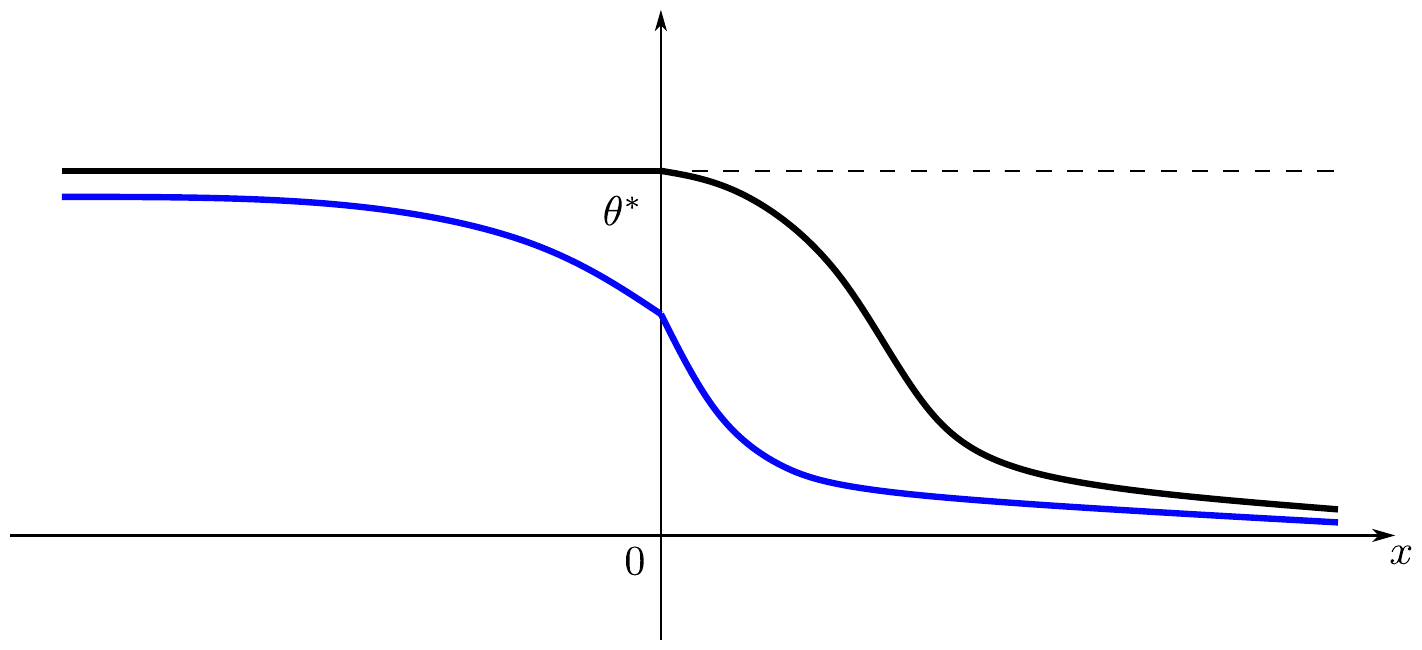}} ~ \subfigure[$K_1>\theta^*$]{\includegraphics[width=1.9in]{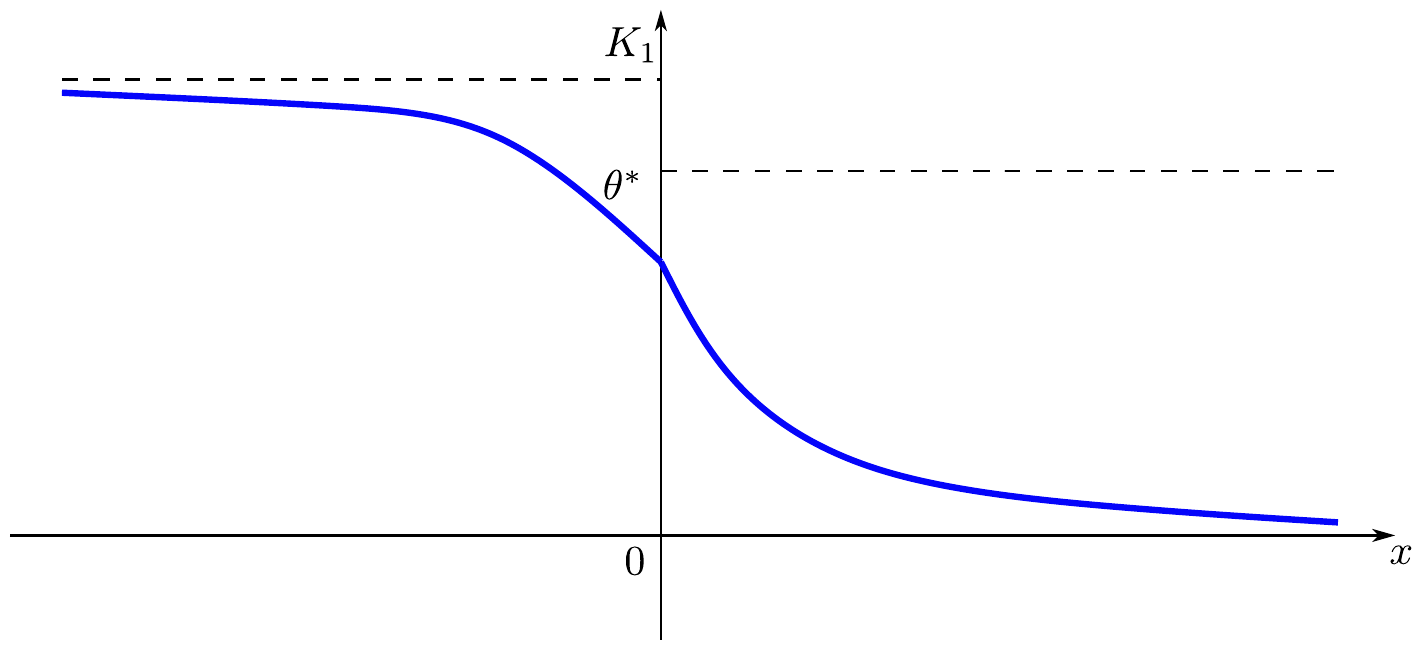}}
\caption{Profile of a steady solution $U$ with $U(-\infty)\!=\!K_1$ and $U(+\infty)\!=\!0$, if $\int_0^{K_2}f_2(s) \mathrm{d}s\!>\!0$.}
\label{bump-sol}
\end{figure}
	
Now suppose that $U'$ has a strict constant sign in $[0^+,+\infty)$, which implies necessarily $U'<0$ in~$[0^+,+\infty)$ since $U(0)>0$ and $U(+\infty)=0$. Then, $U'<0$ in $(-\infty,0^-]$, $U(0)<K_1$ and~\eqref{eqU0} holds as before, while the inequality $U(0)<\theta^*$ holds too from~\eqref{4.14} since $
U(0)\le\theta^*$ and $U'(0^+)<0$ (see the blue curves in Fig.~\ref{bump-sol}). The proof of Proposition~\ref{prop 2.5} is complete.
\end{proof}

\begin{proof}[Proof of Proposition~$\ref{prop 2.6}$]
We first claim that the existence of a positive classical stationary solution~$U$ of~\eqref{1.1} such that $U(-\infty)=K_1$ and $U(+\infty)=0$ is equivalent to the existence of $\xi>0$ such that
\begin{equation}
\label{4.15}
\begin{aligned}
\int^{\xi}_0f_2(s)\mathrm{d}s\le 0,~~\int^{K_1}_{\xi}\!f_1(s)\mathrm{d}s=- \frac{d_1\sigma^2}{d_2}\int^{\xi}_0f_2(s)\mathrm{d}s
\end{aligned}
\end{equation}
and 
\begin{equation}
\label{4.16}
\begin{aligned}
\begin{cases}
0<\xi<K_1 ~~& \displaystyle\text{if}~\int_{0}^{K_2}f_2(s)\mathrm{d}s<0,\cr
0<\xi<\min(K_1,K_2) ~~& \displaystyle\text{if}~\int_{0}^{K_2}f_2(s)\mathrm{d}s=0,\cr
0<\xi\le \theta^* ~~& \displaystyle\text{if}~\int_{0}^{K_2}f_2(s)\mathrm{d}s>0,
\end{cases}
\end{aligned}
\end{equation}
where $\theta^*\in(\theta,K_2)$  is such that $\int_0^{\theta^*}\!f_2(s)\mathrm{d}s=0$ when $\int_{0}^{K_2}\!f_2(s)\mathrm{d}s>0$. Assume this claim for the moment. Under the assumptions of Proposition~\ref{prop 2.6}, it is straightforward to see that such a $\xi>0$ satisfying~\eqref{4.15}--\eqref{4.16} exists by qualitative comparisons of the graphs of the integrals in~\eqref{4.15}, namely:
\begin{itemize}
\item[(i)] in the case $\int_0^{K_2}f_2(s)\mathrm{d}s<0$, since the function $\nu\mapsto\int_\nu^{K_1} f_1(s)\mathrm{d}s$ is continuous decreasing in~$[0,K_1]$ and vanishes at $K_1$, whereas the function $\nu\mapsto-(d_1\sigma^2/d_2)\int_0^\nu f_2(s)\mathrm{d}s$ is continuous in~$[0,K_1]$, positive in $(0,K_1]$ and vanishes at $0$, it follows that there is $\xi\in(0,K_1)$ satisfying~\eqref{4.15};
\item[(ii)] in the case $\int_0^{K_2}f_2(s)\mathrm{d}s=0$ with $K_1<K_2$, since the function $\nu\mapsto\int_\nu^{K_1} f_1(s)\mathrm{d}s$ is continuous decreasing in $[0,K_1]$ and vanishes at $K_1$, whereas the function $\nu\mapsto-(d_1\sigma^2/d_2)\int_0^\nu f_2(s)\mathrm{d}s$ is continuous and positive in $(0,K_2)\supseteq(0,K_1]$ and vanishes at $0$, then there is $\xi\in(0,K_1)$ such that~\eqref{4.15} holds true;
\item[(iii)] in the case $\int_0^{K_2}f_2(s)\mathrm{d}s>0$ with $K_1\le \theta^*$, we consider two subcases. Assume first that $K_1<\theta^*$. Since the function $\nu\mapsto\int_\nu^{K_1} f_1(s)\mathrm{d}s$ is continuous decreasing in $[0,K_1]$ and vanishes at $K_1$, whereas the function $\nu\mapsto-(d_1\sigma^2/d_2)\int_0^\nu f_2(s)\mathrm{d}s$ is continuous and positive in $(0,\theta^*)\supseteq(0,K_1]$ and vanishes at $0$, there exists $\xi\in(0,K_1)$ such that~\eqref{4.15} holds. Lastly, if $K_1=\theta^*$, then $\xi=K_1=\theta^*$ satisfies~\eqref{4.15}.
\end{itemize}  

The conclusion of Proposition~\ref{prop 2.6} will therefore be achieved once the claim is proved. For the proof of the claim, observe first that, if $U$ is a positive classical stationary solution of~\eqref{1.1} such that $U(-\infty)=K_1$ and $U(+\infty)=0$, then the quantity $\xi:=U(0)>0$ necessarily satisfies~\eqref{4.15}--\eqref{4.16} by Proposition~\ref{prop 2.5}. Therefore, we only have to show that the conditions~\eqref{4.15}--\eqref{4.16} yield the existence of such a solution $U$. So let $\xi>0$ satisfy~\eqref{4.15}--\eqref{4.16}. We wish to show that~\eqref{1.1} admits a positive classical stationary solution $U$  such that $U(-\infty)=K_1$ and $U(+\infty)=0$. Set
\begin{equation}\label{defU0pm}\left\{\baa{rcl}
U(0) & = & \xi,\vspace{3pt}\\
U'(0^+) & = & \displaystyle\text{sgn}(U(0)-K_1)\sqrt{-\frac{2}{d_2}\int^{U(0)}_0f_2(s)\mathrm{d}s},\vspace{3pt}\\
U'(0^-) & = & \displaystyle\text{sgn}(U(0)-K_1)\sqrt{\frac{2}{d_1}\int^{K_1}_{U(0)} f_1(s)\mathrm{d}s},\eaa\right.
\end{equation}
where sgn$(t)=t/|t|$ if $t\in\R^*$ and sgn$(0)=0$. Observe that $U'(0^-)=\sigma U'(0^+)$, thanks to~\eqref{4.15} and~\eqref{defU0pm}. Given these values at $0^\pm$, we will now solve the two Cauchy problems in $(-\infty,0]$ and $[0,+\infty)$ and show that these two solutions, glued together, give rise to a solution $U$ of~\eqref{1.1} such that~$U(-\infty)=K_1$ and $U(+\infty)=0$.
	
\vskip 2mm
\noindent{\it Step 1}. Consider first the Cauchy problem in $(-\infty,0]$:
\begin{equation}
\label{4.19}
\begin{cases}
\ d_1 U''+f_1(U)=0,~~x\le0,\cr
\ U(0)=\xi>0,~ \displaystyle U'(0^-)=\text{sgn}(U(0)-K_1)\sqrt{\frac{2}{d_1}\int^{K_1}_{U(0)} f_1(s)\mathrm{d}s}.
\end{cases}
\end{equation}
By the Cauchy-Lipschitz theorem,~\eqref{4.19} has a unique solution $U$ of class $C^2$ and defined in a maximal interval $(\overline x, 0]$ for some $\overline x\in[-\infty,0)$. Multiplying the equation in~\eqref{4.19} by $U'$ and then integrating over $[x,0]$ for any $x\in (\overline x,0]$, and using the definition of $U'(0^-)$, yields
\begin{equation}
\label{4.20}
\frac{d_1}{2}(U'(x^-))^2=\int^{K_1}_{U(x)} f_1(s)\mathrm{d}s~~\text{for all }x\in (\overline x,0].
\end{equation}
We claim that 
\begin{equation}\label{eitherU}
\begin{aligned}
\begin{cases}
\text{ either} & \!\!U>K_1\hbox{ in $(\overline{x},0]$ and }U'>0 ~\text{in}~(\overline x,0^-],\cr
\text{ or} & \!\!U<K_1\hbox{ in $(\overline{x},0]$ and }U'<0 ~\text{in}~(\overline x,0^-],\cr
\text{ or} & \!\!U\equiv K_1\text{ in}~(\overline x,0].\cr
\end{cases}
\end{aligned}
\end{equation}
For this purpose, we first prove that either $U-K_1$ has a strict constant sign in $(\overline x,0]$ or $U\equiv K_1$ in~$(\overline x,0]$. Indeed, assume that there is $x_0\in(\overline x,0]$ such that $U(x_0)=K_1$, then~\eqref{4.20} implies $U'(x_0^-)=0$, hence~$U\equiv K_1$ in $(\overline x,0]$ by the Cauchy-Lipschitz theorem. Assume now that $U-K_1$ has a strict constant sign in $(\overline x,0]$. Then~\eqref{4.20} implies that $U'$ has a strict constant sign in $(\overline x,0^-]$. Together with the definition of $U'(0^-)$ in~\eqref{4.19}, one concludes that, if $U(0)>K_1$ (respectively $U(0)<K_1$), then~$U>K_1$ in $(\overline{x},0]$ and $U'>0$ in $(\overline x,0^-]$ (respectively $U<K_1$ in $(\overline{x},0]$ and $U'<0$ in $(\overline x,0^-]$). Our claim~\eqref{eitherU} is achieved.

From the above observation, we derive that $U$ is monotone and bounded in $(\overline x,0]$ and, from the Cauchy-Lipschitz theorem, that the solution $U$ of~\eqref{4.19} is defined on $(-\infty,0]$, i.e. $\overline x=-\infty$. Let us finally show that $U(-\infty)=K_1$. Let $a:=U(-\infty)$. Then, by~\eqref{eitherU}, $a=K_1$ if $U(0)=K_1$, $K_1\le a<U(0)$ if $U(0)>K_1$, or $U(0)<a\le K_1$ if $0<U(0)<K_1$. Using~\eqref{4.20}, one has
\begin{equation*}
\frac{d_1}{2}(U'(x))^2\to \int^{K_1}_a\!\!f_1(s) \mathrm{d}s~~~\text{as}~x\to -\infty,
\end{equation*}
whence $U'(-\infty)=0$ and $U(-\infty)=a=K_1$, from the assumption~\eqref{f1-kpp} on $f_1$.

\vskip 2mm
\noindent{\it Step 2}. Consider now the Cauchy problem in $[0,+\infty)$:
\begin{equation}
\label{4.21}
\begin{cases}
d_2 U''+f_2(U)=0,~~x\ge0,\cr
\displaystyle U(0)=\xi>0,~U'(0^+)=\text{sgn}(U(0)-K_1)\sqrt{-\frac{2}{d_2}\int^{U(0)}_0\!\!f_2(s)\mathrm{d}s}.
\end{cases}
\end{equation}
The solution of~\eqref{4.21} exists, is of class $C^2$ and is unique in a maximal interval $[0,x^*)$, for some ${x^*}\in(0,+\infty]$. Integrating the equation in~\eqref{4.21} against $U'$ over $[0,x]$ for any $x\in[0,{x^*})$, and using the expression of $U'(0^+)$, yields
\begin{equation}
\label{4.22}
\frac{d_2}{2}(U'(x^+))^2=\left\{\baa{lll}
\displaystyle-\int_0^{U(x)}f_2(s)\mathrm{d}s & \hbox{for all $x\in [0,{x^*})$} & \hbox{if $U(0)\neq K_1$},\vspace{3pt}\\
\displaystyle-\int_{U(0)}^{U(x)}f_2(s)\mathrm{d}s & \hbox{for all $x\in [0,{x^*})$} & \hbox{if $U(0)=K_1$}.\eaa\right.
\end{equation}
Notice from~\eqref{4.16}--\eqref{defU0pm} that the case $U(0)=K_1$ can only occur when $\int_0^{K_2}f_2(s)\mathrm{d}s>0$, and then $\xi=U(0)=K_1=\theta^*$ by~\eqref{4.15}, while $U'(0^+)=0$ by~\eqref{defU0pm}. In that case, by uniqueness, $U$ is equal in $[0,+\infty)$ to the half-bump associated to the reaction $f_2$, that is, $x^*=+\infty$, $U'<0$ in $(0,+\infty)$, $U(+\infty)=0$, $U(0)=\theta^*$ and $U'(0^+)=0$.

Therefore, one can assume in the sequel that $U(0)\neq K_1$. We observe that $U>0$ in $[0,{x^*})$. Indeed, otherwise, there is $x_0\in(0,{x^*})$ such that $U(x_0)=0$, hence $U'(x_0)=0$ by~\eqref{4.22} and $U\equiv0$ in $[0,x^*)$ by the Cauchy-Lipschitz theorem. This would contradict $U(0)>0$. Thus, $U>0$ in $[0,{x^*})$. Next, we solve~\eqref{4.21} by dividing into three cases according to the sign of the mass $\int_0^{K_2}f_2(s)\mathrm{d}s$.
		
\vskip 2mm
\noindent{\it Case 1: $\int_0^{K_2}f_2(s)\mathrm{d}s< 0$}. One infers from~\eqref{4.16} that $U(0)=\xi<K_1$ and thus $U'(0^+)<0$ by~\eqref{4.21}. Moreover, one deduces from~\eqref{4.22} that  $U'$ does not change sign in $[0^+,{x^*})$. Therefore, $U'<0$ in~$[0^+,{x^*})$.  Since $U>0$ in $[0,{x^*})$, one has $0<U<U(0)<K_1$ in $(0,{x^*})$, whence ${x^*}=+\infty$.	Define~$b:=U(+\infty)\ge 0$. From~\eqref{4.22}, it follows that
\begin{equation*}
\frac{d_2}{2}(U'(x))^2 \to-\int_0^b f_2(s)\mathrm{d}s~~~\text{as}~x\to+\infty, 
\end{equation*}
hence, $U'(+\infty)=0$ and $U(+\infty)=b=0$.  
			
\vskip 2mm
\noindent{\it Case 2: $\int_0^{K_2}f_2(s)\mathrm{d}s= 0$}. It follows from~\eqref{4.16} that $0<U(0)=\xi<\min(K_1,K_2)$ and thus $U'(0^+)<0$ by~\eqref{4.21}. We now show that $U'<0$ in $(0,{x^*})$. Assume by contradiction that there is $x_0=\min\{x\in (0,{x^*}): U'(x)=0\}\ \in(0,x^*)$. Then $0<U(x_0)<U<U(0)<\min(K_1,K_2)$ in $(0,x_0)$. On the other hand, taking $x=x_0$ in~\eqref{4.22} and using $U(x_0)>0$ yields $U(x_0)=K_2$, a contradiction. Thus,~$U'<0$ in $[0^+,{x^*})$, whence $0<U<U(0)$ in $(0,{x^*})$  and ${x^*}=+\infty$. As in Case 1, one concludes that $U(+\infty)=0$.
		
\vskip 2mm
\noindent{\it Case 3: $\int_0^{K_2}f_2(s)\mathrm{d}s>0$}. We let $\theta^*\in(\theta,K_2)$ be such that $\int_0^{\theta^*}\!f_2(s)\mathrm{d}s=0$ and $Q\in(\theta^*,+\infty]$ be as in~\eqref{defQ}. From~\eqref{4.16}, it is seen that $0<U(0)=\xi\le \theta^*$. Moreover, we observe from~\eqref{4.22} that~$U(x)\notin (\theta^*,Q)$ for every $x\in[0,{x^*})$, hence $0<U\le\theta^*$ in $[0,x^*)$ and $x^*=+\infty$. We recall that the bistable equation $d_2 u''+f_2(u)=0$ in $\mathbb{R}$ admits an even bump-like solution $u$, satisfying
$$u(0)=\theta^*,~ u'(0)=0, ~u'<0 ~\text{in}~(0,+\infty),~u(\pm\infty)=0.$$	
\begin{enumerate}[(i)]
\item Suppose first that $K_1<U(0)=\xi\ (\le \theta^*)$, whence $U'(0^+)>0$ and $K_1<U(0)=\xi<\theta^*$ by~\eqref{4.15} and~\eqref{4.21}. If $U'>0$ in $[0^+,+\infty)$, then $U(+\infty)$ exists and belongs to $(0,\theta^*]$, and $U'(+\infty)=U''(+\infty)=0$ from standard elliptic estimates. Together with~\eqref{4.22}, one gets that~$U(+\infty)=\theta^*$, hence $U''(x)=-f_2(U(x))/d_2\to-f_2(\theta^*)/d_2<0$ as $x\to+\infty$, a contradiction. Therefore, $U$ has a critical point in $(0,+\infty)$, that is, $x_0=\min\{x>0: U'(x)=0\}$ is a well defined positive real number, and one has $U'>0$ in $(0,x_0)$ and $U'(x_0)=0$. Combining~\eqref{4.22} with the fact that $0<U\le \theta^*$ in $[0,+\infty)$, one infers that $U(x_0)=\theta^*$. Therefore, by the uniqueness of the solution to the Cauchy problem, $U$ has to be the bump-like solution $u(\cdot-x_0)$ in $[0,+\infty)$. Namely,~$U(x_0)=\theta^*$, $U'(x_0)=0$, $U'<0$ in $(x_0,+\infty)$ and $U(+\infty)=0$.
\item Finally, let us assume $U(0)=\xi<K_1$. Then, $U'(0^+)<0$ by~\eqref{4.15} and~\eqref{4.21}. Remember also that $0<U(0)=\xi\le \theta^*$. We now show that $U'<0$ in $(0,+\infty)$. If not, then there is $x_0=\min\{x>0: U'(x)=0\}>0$ such that $U'(x_0)=0$ and $0<U(x_0)<U<U(0)\le\theta^*$ in~$(0,x_0)$. It follows from~\eqref{4.22} that  $0=(d_2/2)(U'(x_0))^2=-\int_0^{U(x_0)}f_2(s)\mathrm{d}s>0$, a contradiction. Consequently, $U'<0$ in $(0,+\infty)$ and the argument used in Case~1 yields $U(+\infty)=0$.
\end{enumerate}		
	
Gluing the solutions of~\eqref{4.19} and~\eqref{4.21} proves the existence of the desired stationary solution $U$ of~\eqref{1.1} such that $U(-\infty)=K_1$ and $U(+\infty)=0$. Therefore, our claim at the beginning of the proof is achieved and the proof of Proposition~\ref{prop 2.6} is thereby complete. 
\end{proof}

\begin{remark}\label{rem41}{\it Based on the above proof, it is easy to find examples of functions $f_{1,2}$ satisfying~\eqref{f1-kpp}--\eqref{f2-bistable} and $\int_0^{K_2}f_2(s)\mathrm{d}s>0$ such that~\eqref{1.1} has no stationary solution $U$ connecting $K_1$ and $0$. For instance, let us take $d_1=d_2=\sigma=1$, and set
\begin{equation*}
f_1(u)=u(K_1-u),~~f_2(u)=u(K_2-u)(u-\theta)
\end{equation*}	
with $K_1=K_2=4$ and $\theta=1$. It is straightforward to check that~\eqref{4.15} $($with $\xi>0$$)$ yields $\xi>4$, contradicting the condition $\xi\le \theta^*<4$ implied by~\eqref{4.16}. Therefore,~\eqref{4.15} and~\eqref{4.16} can not be fulfilled simultaneously, and there is no positive classical stationary solution $U$ of~\eqref{1.1} such that~$U(-\infty)=K_1$ and $U(+\infty)=0$.}
\end{remark}

\begin{proof}[Proof of Proposition~$\ref{prop 2.7}$]
The strategy is very similar to that of Proposition~\ref{prop 2.6}. For completeness, we sketch the proof. Here, in addition to~\eqref{f1-kpp}--\eqref{f2-bistable}, we assume that $\int_0^{K_2}f_2(s)\mathrm{d}s\ge0$. We first claim that the existence (respectively the existence and uniqueness) of a positive classical stationary solution~$V$ of~\eqref{1.1} satisfying $V(-\infty)=K_1$ and $V(+\infty)=K_2$ is equivalent to the existence (respectively the existence and uniqueness) of $\xi>0$ such that 
\begin{equation}
\label{4.23}
\begin{aligned}
\begin{cases}
\ \xi=K_1=K_2&\text{if}~K_1=K_2,\cr
\ \min(K_1,K_2)<\xi<\max(K_1,K_2)&\text{if}~K_1\neq K_2,
\end{cases}
\end{aligned}
\end{equation}
and 
\begin{equation}
\label{4.24}
\int_\xi^{K_1}\!f_1(s)\mathrm{d}s=\frac{d_1\sigma^2}{d_2}\int_\xi^{K_2}\!f_2(s)\mathrm{d}s.
\end{equation}

In this paragraph, we observe that such $\xi>0$ satisfying~\eqref{4.23}--\eqref{4.24} always exists, and is unique if $K_1\ge\theta$. To check this, it is sufficient to consider the case of $K_1\neq K_2$. Suppose $K_1<K_2$. Observe that the function $\nu\mapsto\int_\nu^{K_1}f_1(s)\mathrm{d}s$ is continuous increasing in $[K_1,K_2]$ and vanishes at $K_1$, whereas the function $\nu\mapsto(d_1\sigma^2/d_2)\int_\nu^{K_2} f_2(s)\mathrm{d}s$ is continuous positive in $[K_1,K_2)$, vanishes at $K_2$, and is either increasing in $[K_1,\theta]$ and decreasing in $[\theta,K_2]$ (if $K_1<\theta$), or decreasing in $[K_1,K_2]$ (if $K_1\ge \theta$). Therefore, there is $\xi\in(K_1,K_2)$ such that~\eqref{4.24} is satisfied, and $\xi$ is unique if $K_1\ge\theta$. Consider now the case of $K_2<K_1$. Since the function $\nu\mapsto\int_\nu^{K_1}f_1(s)\mathrm{d}s$ is continuous decreasing in $[K_2,K_1]$ and vanishes at $K_1$, whereas the function $\nu\mapsto(d_1\sigma^2/d_2)\int_\nu^{K_2} f_2(s)\mathrm{d}s$ is continuous increasing in $[K_2,K_1]$ and vanishes at $K_2$, it follows that there is a unique $\xi\in (K_2,K_1)$ satisfying~\eqref{4.24}.

Therefore, it remains to prove our claim, whose proof is divided into two steps, each corresponding to one implication of the equivalence. 
	
\vskip 2mm
\noindent{\it Step 1: necessary condition for the existence of $V$}. Suppose $V$ is a positive classical  stationary solution of~\eqref{1.1} satisfying $V(-\infty)=K_1$ and $V(+\infty)=K_2$. Multiplying $d_1 V''+f_1(V)=0$ by $V'$ and  integrating the resulting equation over $(-\infty,x]$ for any $x\in (-\infty,0]$ yields 
\begin{equation}
\label{4.25} 
\frac{d_1}{2}(V'(x^-))^2=\int_{V(x)}^{K_1}f_1(s)\mathrm{d}s\ \hbox{ for all }x\le0.
\end{equation}
Similarly, one also derives that
\begin{equation}
\label{4.26}
\frac{d_2}{2}(V'(x^+))^2=\int^{K_2}_{V(x)}f_2(s)\mathrm{d}s\ge 0\ \hbox{ for all }x\ge0.
\end{equation}
Following the same argument as for~\eqref{4.20}-\eqref{eitherU}, one derives from~\eqref{4.25} that $V$ is monotone in patch~$1$ and, more precisely,
$$\begin{aligned}
\begin{cases}
\text{ either}&\!\!V>K_1\hbox{ in $(-\infty,0]$ and }V'>0 ~\text{in}~(-\infty,0^-],\cr
\text{ or}&\!\!V<K_1\hbox{ in $(-\infty,0]$ and }V'<0 ~\text{in}~(-\infty,0^-],\cr
\text{ or}&\!\!V\equiv K_1 ~\text{in}~(-\infty,0].\cr
\end{cases}
\end{aligned}$$
Similarly, since $\int_\nu^{K_2}f_2(s)\mathrm{d}s>0$ for all $\nu\in(0,K_2)\cup(K_2,+\infty)$, it follows from~\eqref{4.26} that $V$ is also monotone in patch 2 and, more precisely,
$$\begin{aligned}
\begin{cases}
\text{ either}&\!\!V>K_2\hbox{ in $[0,+\infty)$ and }\ V'<0~\text{in}~[0^+,+\infty),\cr
\text{ or}&\!\!V<K_2\hbox{ in $[0,+\infty)$ and }\ V'>0~\text{in}~[0^+,+\infty),\cr
\text{ or}&\!\!V\equiv K_2 ~\text{in}~[0,+\infty).\cr
\end{cases}
\end{aligned}$$
Using $V'(0^-)=\sigma V'(0^+)$, one then infers that $V$ is monotone in $\mathbb{R}$ and, more precisely,
$$\begin{aligned}
\begin{cases}
\ V\equiv K_1=K_2~~&\text{if}~~K_1=K_2,\cr
\ \min(K_1,K_2)<V<\max(K_1,K_2)\text{ and sgn}(V')=\text{sgn}(V(0)-K_1)~~&\text{if}~~K_1\neq K_2.
\end{cases}
\end{aligned}$$
Moreover, thanks to~\eqref{4.25}--\eqref{4.26}, $V(0)$ satisfies
$$\int_{V(0)}^{K_1}f_1(s)\mathrm{d}s=\frac{d_1\sigma^2}{d_2}\int_{V(0)}^{K_2}f_2(s)\mathrm{d}s.$$
Hence, the quantity $\xi=V(0)$ satisfies~\eqref{4.23}--\eqref{4.24}.
	
\vskip 2mm
\noindent{\it Step 2: sufficient condition for the existence of $V$}. Assume that there is $\xi>0$ satisfying~\eqref{4.23}--\eqref{4.24}. If $K_1=K_2$, then $\xi=K_1=K_2$ and the function $V\equiv K_1=K_2$ obviously satisfies~\eqref{1.1} with $V(-\infty)=K_1=K_2=V(+\infty)$. One can then assume in the sequel that $K_1\neq K_2$. Let us set $V(0)=\xi\in(\min(K_1,K_2),\max(K_1,K_2))$ and define
$$V'(0^-)=\text{sgn}(V(0)-K_1)\sqrt{\frac{2}{d_1}\int^{K_1}_{V(0)} f_1(s)\mathrm{d}s},$$
and 
$$V'(0^+)=\text{sgn}(V(0)-K_1)\sqrt{\frac{2}{d_2}\int_{V(0)}^{K_2}f_2(s)\mathrm{d}s}.$$
It is obvious to see that $V'(0^-)=\sigma V'(0^+)$, thanks to~\eqref{4.24}. Notice also that $V(0)=\xi\neq K_1$, here.
	
\vskip 2mm
\noindent{\it Step 2.1}. As for~\eqref{4.19}, the solution $V$ of the Cauchy problem
\begin{equation}
\label{4.27}
\begin{cases}
\ d_1 V''+f_1(V)=0,~~x\le0, \cr
\ V(0)=\xi>0,~\displaystyle V'(0^-)=\text{sgn}(V(0)-K_1)\sqrt{\frac{2}{d_1}\int^{K_1}_{V(0)} f_1(s)\mathrm{d}s}.
\end{cases}
\end{equation}
is defined in $(-\infty,0]$ and satisfies~\eqref{eitherU} with $V$ instead of $U$ and $\overline{x}=-\infty$, that is,
$$\begin{aligned}
\begin{cases}
\text{ either}&\!\!\!V>K_1\hbox{ in $(-\infty,0]$ and }V'>0~\text{in}~(-\infty,0^-],\cr
\text{ or}&\!\!\!V<K_1\hbox{ in $(-\infty,0]$ and }V'<0~\text{in}~(-\infty,0^-],\cr
\end{cases}
\end{aligned}$$
and $V(-\infty)=K_1$.
	
\vskip 2mm
\noindent{\it Step 2.2}. Let $V$ denote the solution of
\begin{equation}
\label{4.28}
\begin{cases}
d_2 V''+f_2(V)=0,~~x\ge0,\cr
V(0)=\xi>0,~\displaystyle V'(0^+)=\text{sgn}(V(0)-K_1)\sqrt{\frac{2}{d_2}\int_{V(0)}^{K_2} f_2(s)\mathrm{d}s}.
\end{cases}
\end{equation}
Notice that, here, $\min(K_1,K_2)<V(0)<\max(K_1,K_2)$, hence $V'(0^+)\neq0$ since $\int_\nu^{K_2}f_2(s)\mathrm{d}s>0$ for all $\nu\in\R\setminus\{0,K_2\}$. The Cauchy-Lipschitz theorem implies that there is a unique solution of~\eqref{4.28} defined in a maximal interval $[0,x^*)$ for some $x^*\in(0,+\infty]$.  Multiplying the equation in~\eqref{4.28} by $V'$ and then integrating over $[0,x]$ for any $x\in[0,x^*)$, and using the formula of $V'(0^+)$, yields
\begin{equation}
\label{4.29}
\frac{d_2}{2}(V'(x^+))^2=\int^{K_2}_{V(x)}f_2(s)\mathrm{d}s\ \hbox{ for all }x\in [0,x^*).
\end{equation}	
Moreover, we claim that $V'$ has a strict constant sign in $[0^+,x^*)$. Indeed, otherwise, there is $x_0\in[0^+,x^*)$ such that $V'(x_0)=0$, and~\eqref{4.29} implies that 
$$\begin{aligned}
\begin{cases}
\ V(x_0)=K_2~~&\displaystyle\text{if}~~\int_0^{K_2} f_2(s)\mathrm{d}s>0,\cr
\ V(x_0)=K_2~\text{or}~0~~&\displaystyle\text{if}~~\int_0^{K_2} f_2(s)\mathrm{d}s=0.
\end{cases}
\end{aligned}$$
Thus, one would derive $V\equiv K_2$ or $V\equiv 0$ in $[0,x^*)$ by the Cauchy-Lipschitz theorem, contradicting $\min(K_1,K_2)<V(0)=\xi<\max (K_1,K_2)$. Thus, $V'$ has a constant strict sign in $[0^+,x^*)$. Hence,~$V(x)\neq K_2$ for every $x\in[0,x^*)$, by~\eqref{4.29}. Therefore, we conclude that
$$\begin{aligned}
\begin{cases}
\text{ if}~K_1<V(0)<K_2,\hbox{ then }V'>0~\text{in}~[0^+,x^*)\hbox{ and }K_1<V<K_2~\text{in}~[0,x^*),\cr
\text{ if}~K_2<V(0)<K_1,\hbox{ then }V'<0~\text{in}~[0^+,x^*)\hbox{ and }K_2<V<K_1~\text{in}~[0,x^*).
\end{cases}
\end{aligned}$$
Both cases imply that $x^*=+\infty$. Defining $V(+\infty)=a$, one has $K_1\le a\le K_2$ and~\eqref{4.29} implies
$$\frac{d_2}{2}(V'(x))^2\to \int^{K_2}_{a}f_2(s)\mathrm{d}s~~~\text{as}~x\to+\infty,$$
hence $V'(+\infty)=0$ and $V(+\infty)=a=K_2$.

\vskip 2mm
\noindent{\it Step 2.3: conclusion}. By gluing the solutions of the above two Cauchy problems~\eqref{4.27} and~\eqref{4.28}, one obtains the existence of a monotone positive classical stationary solution $V$ of~\eqref{1.1} such that $V(-\infty)=K_1$ and $V(+\infty)=K_2$. Lastly, if $K_1\ge\theta$, then we have already seen that $\xi>0$ solving~\eqref{4.23}--\eqref{4.24} is unique, hence the above proof shows that $V(0)=\xi$ is unique and the positive classical stationary solution $V$ of~\eqref{1.1} such that $V(-\infty)=K_1$ and $V(+\infty)=K_2$ is itself unique. The proof of Proposition~\ref{prop 2.7} is thereby complete.
\end{proof}


\subsection{Blocking in the bistable patch 2: proof of Theorem~\ref{thm2.8}}
\label{Sec 4.3}

In this section, we study the qualitative behavior of the solution $u$ to~\eqref{1.1} under the KPP-bistable assumptions~\eqref{f1-kpp}--\eqref{f2-bistable}. We carry out the proof of Theorem~\ref{thm2.8} on various sufficient conditions for blocking in the bistable patch~2. The proof is based, among other things, on a comparison with some barriers, such as a traveling front with negative or zero speed (up to some exponentially small terms, when $\int_0^{K_2}\!f_2(s)\mathrm{d}s\le0$), or a stationary solution connecting $K_1$ to $0$ (when $\|u_0\|_{L^1(\R)}$ is small enough).

\begin{proof}[Proof of Theorem~$\ref{thm2.8}$]
(i) We first assume that
$$\int_0^{K_2}f_2(s)\mathrm{d}s<0.$$
Let $u$ be the solution to the Cauchy problem~\eqref{1.1} with a nonnegative continuous and compactly supported initial datum $u_0\not\equiv 0$. The strategy of the proof consists in constructing a supersolution which blocks the solution $u(t,x)$ for all times $t\ge0$ as $x\to +\infty$. Set $M:=\max\big(K_1,K_2,\|u_0\|_{L^\infty(\R)}\big)+1$. Since the function~$f_2$ satisfies~\eqref{f2-bistable} with $\int_0^{K_2}f_2(s)\mathrm{d}s<0$, there is a $C^1(\R)$ function $\overline{f}_2$ such that $\overline{f}_2\ge f_2$ in~$\R$, $\overline{f}_2(0)=\overline{f}_2(\theta)=\overline{f}_2(M)=0$, $\overline{f}_2'(0)<0$, $\overline{f}_2'(M)<0$, $\overline{f}_2>0$ in~$(-\infty,0)\cup(\theta,M)$, $\overline{f}_2<0$ in~$(0,\theta)\cup(M,+\infty)$, and $\int_0^M\overline{f}_2(s)\mathrm{d}s<0$ (it is even possible to choose $\overline{f}_2$ so that $\overline{f}_2=f_2$ in $(-\infty,K_2-\delta]$ for some small $\delta>0$). There is then a decreasing front profile $\overline{\phi}$ solving~\eqref{2.5} with $\overline{f}_2$ and $M$ instead of $f_2$ and $K_2$, and with negative speed $\overline{c}_2$ instead of $c_2$. Since $\overline{\phi}(-\infty)=M>\max(K_1,\|u_0\|_{L^\infty(\R)})$ and $u_0$ is compactly supported, one can then choose $A>0$ large enough so that $u_0(x)\le\overline{\phi}(x-A)$ for all~$x\ge0$, $u_0(x)\le\overline{\phi}(-A)$ for all $x\le0$, and $K_1\le\overline{\phi}(-A)$. Set, for $(t,x)\in[0,+\infty)\times\R$,
$$\overline{u}(t,x)=\left\{\baa{ll} \overline{\phi}(x-A) & \hbox{if }x\ge0,\vspace{3pt}\\ \overline{\phi}(-A) & \hbox{if }x<0.\eaa\right.$$
Since $f_1(\overline{\phi}(-A))\le0$ by~\eqref{f1-kpp}, since $d_2\overline{\phi}''+f_2(\overline{\phi})\le d_2\overline{\phi}''+\overline{f}_2(\overline{\phi})=-\overline{c}_2\overline{\phi}'<0$ in $\R$ and since $\overline{\phi}'(-A)<0$, it follows that $\overline{u}$ is a supersolution of~\eqref{1.1} in the sense of Definition~\ref{def2}, while $u_0\le\overline{u}(0,\cdot)$ in $\R$. Therefore, Proposition~\ref{prop 1.3} implies that $u(t,x)\le\overline{u}(t,x)$ for all $(t,x)\in[0,+\infty)\times\R$. Since $u$ is nonnegative and $\overline{\phi}(+\infty)=0$, this immediately yields the blocking property~\eqref{blocking}. 
  
\vskip 0.2cm
\noindent{}(ii) We then assume that $\int_0^{K_2}f_2(s)\mathrm{d}s=0$ and $K_1<K_2$. First, it is convenient to  introduce some parameters.  Let $\varep>0$ be such that
\begin{equation}
\label{4.37}
0<\varep<\min\left(\frac{|f_2'(0)|}{2},\frac{ |f_2'(K_2)|}{2}\right),\ \ f_2'\le \frac{f_2'(0)}{2}~\text{in}~[0,2\varep],~~f_2'\le \frac{f_2'(K_2)}{2}~\text{in}~[K_2-\varep,K_2+\varep].
\end{equation}
Choose $C>0$  large enough such that 
\begin{equation}
\label{4.38}
\phi\ge K_2-\varep~\text{in}~(-\infty,-C]\ \hbox{ and }\ \phi\le \varep~\text{in}~[C,+\infty).
\end{equation}
As the front profile $\phi$ solving~\eqref{2.5} is such that $\phi'$ is negative and continuous, there is $\kappa>0$ such that 
\begin{equation}
\label{4.39}
-\phi'\ge \kappa>0~~\text{in}~[-C,C].
\end{equation}
Finally, pick $\rho>0$ be such that 
\begin{equation}
\label{4.40}
\kappa\rho \ge \varep+\max_{[0,K_2+\varep]}|f_2'|.
\end{equation}
 
Let $u$ be the solution to the Cauchy problem~\eqref{1.1} with a nonnegative continuous and compactly supported initial datum $u_0\not\equiv 0$ and let $V$ be a positive monotone classical stationary solution of~\eqref{1.1} such that $V(-\infty)=K_1$ and  $V(+\infty)=K_2$, given in Proposition~\ref{prop 2.7}. Denote $w$ the solution to~\eqref{1.1} with initial datum $w(0,\cdot)=M:=\max\big(K_2,\Vert u_0\Vert_{L^\infty(\mathbb{R})}\big)$. As in the proof of the first part of Theorem~\ref{thm2.3}, Proposition~\ref{prop 1.3} implies that $w$ is nonincreasing in time, and that $0<u(t,x)< w(t,x)\le M$ and  $V(x)<w(t,x)$ for all $t> 0$ and $x\in\mathbb{R}$. From the Schauder estimates of Proposition~\ref{pro1}, it follows that $w(t,x)$ converges as $t\to+\infty$, locally uniformly in $x\in\mathbb{R}$, to a stationary solution $q$ of~\eqref{1.1}, such that $M\ge q(x)\ge V(x)\ge K_1$ for all $x\in\R$ and
\begin{equation}
\label{4.41}
\limsup_{t\to+\infty} u(t,\cdot)\le q~~\text{locally uniformly in}~\mathbb{R}.
\end{equation}
As shown in Theorem~\ref{thm2.4}, one also has $q(-\infty)=K_1$. On the other hand, since $f_2<0$ in $(K_2,+\infty)$ and $q$ is bounded, one easily infers that $\limsup_{x\to+\infty}q(x)\le K_2$. Furthermore, as in the proof of Propositions~\ref{prop2.1} and~\ref{prop 2.5}, the function $q$ is monotone in $(-\infty,0]$, and $q'$ has a constant strict sign in~$(-\infty,0^-]$ unless $q\equiv K_1$ in $(-\infty,0]$. Thus, if one would assume that $\sup_\R q>K_2\,(>K_1)$, there would exist $x_0\in(0,+\infty)$ such that $q(x_0)=\sup_\R q>K_2$, which is impossible since $f_2<0$ in $(K_2,+\infty)$. Therefore, $q\le K_2$ in $\R$ and even $q<K_2$ in $\R$ since the constant $K_2$ is a supersolution of~\eqref{1.1} and the stationary solution $q$ can not be identically equal to~$K_2$.
 
Similarly, we claim that
\be\label{claimK2}
\limsup_{A\to+\infty}\Big(\sup_{t\ge A,\,x\ge A}u(t,x)\Big)\le K_2.
\ee
Indeed, otherwise, since $u$ is bounded in $[0,+\infty)\times\R$, there are $\overline{K}_2\in(K_2,+\infty)$ (with~$\overline{K}_2$ the above limsup) and two sequences $(t_n)_{n\in\N}$ and $(x_n)_{n\in\N}$ diverging to $+\infty$ such that $u(t_n,x_n)\to\overline{K}_2$ as $n\to+\infty$ and $\limsup_{n\to+\infty}u(t_n+t,x_n+x)\le\overline{K}_2$ for all $(t,x)\in\R\times\R$. From parabolic estimates, the functions $(t,x)\mapsto u(t_n+t,x_n+x)$ converge in $C^{1;2}_{t;x;loc}(\R\times\R)$, up to extraction of a subsequence, to a bounded classical solution $u_\infty$ of $(u_\infty)_t=d_2(u_\infty)_{xx}+f_2(u_\infty)$ in $\R\times\R$ with $u_\infty\le u_\infty(0,0)=\overline{K}_2$ in $\R\times\R$. The negativity of $f_2(\overline{K}_2)$ leads to a contradiction. Therefore,~\eqref{claimK2} holds.

Let then $X>0$ be large enough so that
\be\label{utxX}
u(t,x)\le K_2+\frac{\varep}{2}\ \hbox{ for all $t\ge X$ and $x\ge X$}.
\ee
Thanks to~\eqref{4.41} and $q(X)<K_2$, there is $T\ge X$ so large that
\begin{equation}
\label{4.43}
\sup_{t\ge T}\,u(t,X)<K_2.
\end{equation}
Remember that the front profile $\phi$ associated with the reaction $f_2$, given in~\eqref{2.5} with speed $c_2=0$ (since $\int_0^{K_2}f_2(s)\mathrm{d}s=0$), satisfies $\phi(-\infty)=K_2$. Due to~\eqref{utxX}--\eqref{4.43} and the Gaussian upper bound of $u(t,x)$ for $|x|$ large at each time $t>0$ derived in Lemma~\ref{lemma1.3}, together with the exponential lower bound of $\phi(s)$ as $s\to+\infty$ in~\eqref{2.6}, there exists $B>0$ large enough such that  
\begin{equation}
\label{4.44}
u(T,x)\le \phi(x-X-B-C)+\varep~\text{for all}~x\ge X,\ \hbox{ and }\ \sup_{t\ge T}\,u(t,X)\le\phi(-B-C).
\end{equation}

Define 
$$\overline u(t,x)=\phi(\zeta(t,x))+\varep e^{-\varep(t-T)}~~\text{for}~t\ge T~\text{and}~ x\ge X,$$
where $\zeta(t,x)=x-X+\rho e^{-\varep(t-T)}-\rho-B-C$. We wish to show that $\overline u(t,x)$ is a supersolution of the equation $u_t=d_2u_{xx}+f_2(u)$ for $t\ge T$ and $x\ge X$. First of all, at time $t=T$, one has
$$\overline u(T,x)=\phi(x-X-B-C)+\varep \ge u(T,x)$$
for $x\ge X$, thanks to~\eqref{4.44}. Furthermore, for $t\ge T$, $\overline u(t,X)=\phi(\rho e^{-\varep(t-T)}-\rho-B-C)+\varep e^{-\varep(t-T)}\ge\phi(-B-C)\ge u(t,X)$ by~\eqref{4.44} again. It then remains to check that
$$\mathcal{N}\overline u(t,x):=\overline u_t(t,x)-d_2 \overline u_{xx}(t,x)-f_2(\overline u(t,x))\ge 0$$
for $t\ge T$ and $x\ge X$. A direct computation leads to
\begin{align*}
\mathcal{N}\overline u(t,x)=f_2(\phi(\zeta(t,x)))-f_2(\overline u(t,x))-\phi'(\zeta(t,x))\rho\varep e^{-\varep(t-T)}-\varep^2 e^{-\varep(t-T)}.
\end{align*}
We divide the proof into three cases:
\begin{itemize}
\item if $\zeta(t,x)\le -C$, one has $K_2+\varep\ge\overline u(t,x)\ge \phi(\zeta(t,x))\ge K_2-\varep$ by~\eqref{4.38};  one then derives from~\eqref{4.37} and the negativity of $\phi'$ that
\begin{align*}
\mathcal{N}\overline u(t,x)\ge -\frac{f_2'(K_2)}{2}\varep e^{-\varep(t-T)}-\varep^2 e^{-\varep(t-T)}= \Big(-\frac{f_2'(K_2)}{2}-\varep\Big)\varep e^{-\varep(t-T)}\ge 0;
\end{align*}
\item if $\zeta(t,x)\ge C$, then $0<\phi(\zeta(t,x))\le \varep$ by~\eqref{4.38} and $0<\overline u(t,x)\le 2\varep$; it follows from~\eqref{4.37} and the negativity of $\phi'$ that
\begin{align*}
\mathcal{N}\overline u(t,x)\ge -\frac{f_2'(0)}{2}\varep e^{-\varep(t-T)}-\varep^2 e^{-\varep(t-T)}= \Big(-\frac{f_2'(0)}{2}-\varep\Big)\varep e^{-\varep(t-T)}\ge 0;
\end{align*}
\item eventually, if $-C\le \zeta(t,x)\le C$, then $-\phi'(\zeta(t,x))\ge \kappa>0$ by~\eqref{4.39}, and~\eqref{4.40} then yields
\end{itemize}
\vskip -4mm
\begin{align*}
\mathcal{N}\overline u(t,x)\ge -\!\max_{[0,K_2+\varep]}\!|f_2'|\,\varep e^{-\varep(t-T)}\!+\!\kappa\rho\varep e^{-\varep(t-T)}\!-\!\varep^2 e^{-\varep(t-T)}\ge \Big(\kappa\rho-\!\max_{[0,K_2+\varep]}\!|f_2'|\!-\!\varep\Big)\varep e^{-\varep(t-T)}\ge 0.
\end{align*}

In conclusion, the function $\overline u$ is a supersolution of $u_t=d_2 u_{xx}+f_2(u)$ for $t\ge T$ and $x\ge X$. The maximum principle implies that 
$$u(t,x)\le\overline{u}(t,x)=\phi(x-X+\rho e^{-\varep(t-T)}-\rho-B-C)+\varep e^{-\varep(t-T)}~~\text{for all}~t\ge T~\text{and}~x\ge X.$$
Consequently, $\limsup_{x\to+\infty}\!\!\big(\sup_{t\ge T}u(t,x)\big)\!\le\!\varep$. On the other hand,   Lemma~\ref{lemma1.3} implies that $u(t,x)\!\to\!0$ as $x\to +\infty$ locally uniformly in $t\ge 0$. Since $\varep>0$ can be chosen arbitrarily small, one gets that $u$ is blocked in patch 2 and satisfies~\eqref{blocking}. This completes the proof of part~(ii) of Theorem~\ref{thm2.8}.

\vskip 0.2cm
\noindent{}(iii) We here assume that $K_1<\theta$. Let $u$ be the solution to~\eqref{1.1} with a nonnegative continuous and compactly supported initial datum $u_0\not\equiv 0$ satisfying $u_0<\theta$ in $\R$. The constant function equal to~$M:=\max\big(K_1,\|u_0\|_{L^\infty(\R)}\big)$ is a supersolution of~\eqref{1.1} in the sense of Definition~\ref{def2} (since $f_1(M)\le0$ and $f_2(M)<0$), and Proposition~\ref{prop 1.3} then implies that
\be\label{uM}
0<u(t,x)<\max\big(K_1,\Vert u_0\Vert_{L^\infty(\mathbb{R})}\big)<\theta\ \hbox{ for all $t\ge 0$ and $x\in\mathbb{R}$}.
\ee
Choose $\varep\in(0,K_2-\theta)$ and let $g_2$ be a $C^1(\R)$ function such that $g_2=f_2$ in $(-\infty,\theta]$,  $g_2>0$ in $(\theta,\theta+\varep)$, $g_2(\theta+\varep)=0$, $g_2'(\theta+\varep)<0$, $g_2<0$ in $(\theta+\varep,+\infty)$, and $\int_0^{\theta+\varep}g_2(s)\mathrm{d}s<0$. Let $z$ be the solution to~\eqref{1.1} in which $f_2$ is replaced by $g_2$, starting from the initial datum $u_0$. By comparison, and using~\eqref{uM}, one has $u(t,x)=z(t,x)$ for all $t\ge 0$ and $x\in\mathbb{R}$. Thanks to part~(i) of Theorem~\ref{thm2.8} applied to the solution $z$ with the nonlinearities $f_1$ and $g_2$, it follows that $z$ is blocked in patch 2 and $z$ satisfies~\eqref{blocking}, which is then also true for $u$. The conclusion is therefore achieved.

\vskip 0.2cm
\noindent{}(iv) We finally assume that~\eqref{1.1}  admits a nonnegative classical  stationary solution $U$ such that $U(-\infty)=K_1$ and $U(+\infty)=0$ (actually, $U$ is then positive in $\R$ as a consequence of the Cauchy-Lipschitz theorem for instance, as in the second paragraph of the proof of Proposition~\ref{prop2.1}). Fix then any $L>0$. Let $u$ be the solution to the Cauchy problem~\eqref{1.1} with any nonnegative continuous and compactly supported initial datum $u_0$ such that $\text{spt}(u_0)\subset [-L,L]$. Notice that, if $u_0\le U$ in $\mathbb{R}$, the conclusion of part~(iv) of Theorem~\ref{thm2.8} immediately follows. Let us now discuss the general case. 
	
By a rescaling of space in patch 2, namely, by setting
\begin{align*}
\widetilde u(t,x)=\begin{cases}
u(t,x),~&\text{for}~t\ge 0\hbox{ and }x< 0,\\ 
u(t,\sqrt{d_2/d_1}x) &\text{for}~t\ge 0\hbox{ and }x\ge 0,
\end{cases}
\end{align*}
we see that the function $\widetilde u$ satisfies
$$\begin{aligned}
\begin{cases}
\widetilde u_t=d_1 \widetilde u_{xx}+f_1( \widetilde u),~~&t>0,\ x< 0,\\
\widetilde u_t=d_1  \widetilde u_{xx}+f_2( \widetilde u),~~&t>0,\ x> 0,\\
\widetilde u(t,0^-)= \widetilde u(t,0^+),~~&t>0,\\
\widetilde u_x(t,0^-)=\sigma\sqrt{d_1/d_2}\, \widetilde u_x(t,0^+),~~&t> 0,
\end{cases}
\end{aligned}$$
while the rescaled function $\widetilde{U}$, defined by $\widetilde{U}(x)=U(x)$ for $x<0$ and $\widetilde{U}(x)=U(\sqrt{d_2/d_1}x)$ for $x\ge0$, is a positive classical stationary solution of the above problem, satisfying $\widetilde{U}(-\infty)=K_1$ and $\widetilde{U}(+\infty)=0$. When $\Vert u_0\Vert_{L^1(\mathbb{R})}$ is small, it is seen that $\Vert \widetilde u(0,\cdot)\Vert_{L^1(\mathbb{R})}=\Vert u_0\Vert_{L^1(-\infty,0)}+\sqrt{d_1/d_2}\Vert u_0\Vert_{L^1(0,+\infty)}$ remains small, while spt($\widetilde{u}(0,\cdot))\subset[-L,\sqrt{d_1/d_2}L]$. Therefore, for the proof of part~(iv) of Theorem~\ref{thm2.8}, it is not restrictive to assume that $d_1=d_2=:d$ in~\eqref{1.1}, which we do in the sequel. 

By the assumptions~\eqref{f1-kpp}--\eqref{f2-bistable} on $f_1$ and $f_2$, and their $C^1$ smoothness, there is $K>0$ such that~$f_i(s)\le Ks$ for all $s\ge0$ and $i\in\{1,2\}$. Let $v$ be the solution of the initial value problem
\begin{align*}
\begin{cases}
v_t=d v_{xx}+Kv,~~&t>0,~x\in\mathbb{R},\\
v_0(x)=u_0(x)+u_0(-x), &x\in\mathbb{R}.
\end{cases}
\end{align*}
Since $u_0\ge 0$ satisfies $\text{spt}(u_0)\subset[-L,L]$, so does $v_0$. By uniqueness, we see that $v$ is even with respect to $x$ and smooth with respect to $x$ in $(0,+\infty)\times\R$, whence $v_x(t,0)=0$ for all $t>0$. Proposition~\ref{prop 1.3} implies that $u(t,x)\le v(t,x)$ for all $t\ge 0$ and $x\in\mathbb{R}$.
	
We now claim that $v(1,\cdot)\le U$ in $\mathbb{R}$ provided $\Vert u_0 \Vert_{L^1(\mathbb{R})}$ is small enough. Indeed, by choosing $\varep>0$ such that
$$0<\varep\le \frac{\sqrt{\pi d}}{e^K}\min_{(-\infty,2L]}U,$$
we get that
$$v(1,x)\le\frac{e^K}{\sqrt{4\pi d}}\int_{\mathbb{R}}e^{-\frac{|x-y|^2}{4d}}v_0(y) \mathrm{d} y\le \frac{e^K}{\sqrt{4\pi d}}\Vert v_0\Vert_{L^1(\mathbb{R})}= \frac{e^K}{\sqrt{\pi d}}\Vert u_0\Vert_{L^1(\mathbb{R})}\le \min_{(-\infty,2L]}U$$
for all $x\in\R$, provided $\Vert u_0 \Vert_{L^1(\mathbb{R})}\le \varep$. Furthermore, for all $x\ge 2L$, there holds
\begin{align*}
v(1,x)\le \frac{e^{K}}{\sqrt{4\pi d}}\int_{-L}^L e^{-\frac{|x-y|^2}{4d}}v_0(y) \mathrm{d}y \le \frac{e^{K-\frac{x^2}{16d}}}{\sqrt{4\pi d}} \int_{-L}^L v_0(y) \mathrm{d}y = \frac{e^K}{\sqrt{\pi d}}\Vert u_0\Vert_{L^1(\mathbb{R})} e^{-\frac{x^2}{16 d}},
\end{align*}
since $\text{spt}(v_0)\subset[-L,L]$ and since $x-y\ge x/2>0$ for all $x\ge2L$ and $-L\le y\le L$. Observe also that~$U$ is positive continuous in $\R$ and that $U(x)\sim Ae^{-\sqrt{-f_2'(0)/d_2}\,x}$ as $x\to +\infty$, for some $A>0$. Thus, 
$$v(1,x)\le \frac{e^K}{\sqrt{\pi d}}\Vert u_0\Vert_{L^1(\mathbb{R})} e^{-\frac{x^2}{16d}}\le U(x)~~\text{for all}~x\ge 2L,$$
provided $\Vert u_0 \Vert_{L^1(\mathbb{R})}\le \varep$, up to decreasing $\varep>0$ if needed. 
	
Consequently,  $v(1,\cdot)\le U$ in $\mathbb{R}$ provided $\Vert u_0 \Vert_{L^1(\mathbb{R})}$ is small enough, and then $u(1,\cdot)\le v(1,\cdot)\le U$ in $\mathbb{R}$ and $u(t,x)\le U(x)$ for all $t\ge 1$ and $x\in\mathbb{R}$ by Proposition~\ref{prop 1.3}. Hence, $u(t,x)\to 0$ as $x\to+\infty$ uniformly in $t\ge 1$. Together with Lemma~\ref{lemma1.3} stating that $u(t,x)\to 0$ as $x\to+\infty$ locally uniformly in $t\ge 0$, we conclude that $u$ is blocked in patch 2 and satisfies~\eqref{blocking}. The proof of Theorem~\ref{thm2.8} is therefore complete.
\end{proof}


\subsection{Propagation in the bistable patch 2: proofs of Theorems~\ref{thm_propagation-1}--\ref{thm_propagation-2}}
\label{Sec 4.4}

This section is devoted to the proofs of Theorems~\ref{thm_propagation-1}--\ref{thm_propagation-2} on propagation phenomena with positive speed or speed zero in the bistable patch~$2$. The proof of the propagation with positive speed in Theorems~\ref{thm_propagation-1}--\ref{thm_propagation-2} uses some tools inspired by~\cite{FM1977} on solutions developing into two spreading fronts for the homogeneous equation~\eqref{1.2}. Here, for our patch problem~\eqref{1.1}, new difficulties arise due to the presence of the interface between the two different media, and we have to show further estimates on the local behavior of the solutions at large time.

We start with the following auxiliary lemma that gives the existence of solutions to elliptic equations in large intervals. The proof is based on variational methods, see for instance \cite[Theorem A]{BL1980} and \cite[Problem (2.25)]{GHS2020}. We omit it here.  

\begin{lemma}
\label{lemma4.1} Assume that~\eqref{f2-bistable} holds and  $\int_{0}^{K_2}f_2(s) \mathrm{d}s>0$. Then there exist $R>0$ and a function $\psi$ of class $ C^2([-R,R])$ such that
\begin{align}
\label{4.53}
\begin{cases}
\ d_2\psi''+f_2(\psi)=0~~&\text{in}~[-R,R],\cr
\ 0\le \psi<K_2~~&\text{in}~[-R,R],\cr
\ \psi(\pm R)=0,~~&\cr
\ \displaystyle\mathop{\max}_{[-R,R]}\psi=\psi(0)>\theta.
\end{cases}
\end{align}
\end{lemma}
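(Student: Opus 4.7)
The plan is to construct $\psi$ explicitly by a phase-plane/quadrature analysis of the autonomous ODE $d_2\psi''+f_2(\psi)=0$, rather than by the variational route. Introduce the potential
$$F_2(u):=\int_0^u f_2(s)\,\mathrm{d}s.$$
From~\eqref{f2-bistable}, $F_2$ is strictly decreasing on $[0,\theta]$ and strictly increasing on $[\theta,K_2]$, with $F_2(0)=0$, $F_2(\theta)<0$, and $F_2(K_2)>0$ by the standing assumption $\int_0^{K_2}\!f_2(s)\,\mathrm{d}s>0$. Hence there is a unique $\theta^*\in(\theta,K_2)$ with $F_2(\theta^*)=0$, and $F_2>0$ on $(\theta^*,K_2]$.

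First I would fix any $M\in(\theta^*,K_2)$; then $F_2(M)>0$ and the monotonicity of $F_2$ on $[0,\theta]$ and $[\theta,K_2]$ gives $F_2(M)-F_2(\eta)>0$ for every $\eta\in[0,M)$. Near $\eta=M$ one has the expansion $F_2(M)-F_2(\eta)=f_2(M)(M-\eta)+O((M-\eta)^2)$ with $f_2(M)>0$, so the improper integral
$$R:=\int_0^M\frac{\mathrm{d}\eta}{\sqrt{(2/d_2)\bigl(F_2(M)-F_2(\eta)\bigr)}}$$
converges and is positive. Next I would solve, via Cauchy-Lipschitz, the initial value problem $d_2\psi''+f_2(\psi)=0$, $\psi(0)=M$, $\psi'(0)=0$, obtaining a $C^2$ solution on a maximal interval containing $0$. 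Multiplying by $\psi'$ and integrating from $0$ produces the first integral
$$\tfrac{d_2}{2}(\psi'(x))^2+F_2(\psi(x))=F_2(M).$$
Since $\psi''(0)=-f_2(M)/d_2<0$, $\psi$ decreases strictly to the right of $0$, and the first integral together with separation of variables gives $x=\int_{\psi(x)}^M d\eta/\sqrt{(2/d_2)(F_2(M)-F_2(\eta))}$ as long as $\psi(x)\in(0,M)$. By the definition of $R$, this forces $\psi(R)=0$ with $\psi'(R)=-\sqrt{(2/d_2)F_2(M)}<0$, and $0<\psi<M$ on $(0,R)$.

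Finally, extending $\psi$ to $[-R,R]$ by the reflection $\psi(-x):=\psi(x)$ yields a function that is $C^2$ across $0$ (because $\psi'(0)=0$, so the even extension glues smoothly, and $\psi''$ is continuous at $0$ by the ODE itself), satisfies the ODE on all of $[-R,R]$, vanishes at $\pm R$, attains its maximum $M\in(\theta^*,K_2)\subset(\theta,K_2)$ at $x=0$, and obeys $0\le\psi<K_2$ throughout; this gives all four properties of~\eqref{4.53}. The only delicate technical point is the convergence of the improper integral defining $R$, which is handled by the linear vanishing of $F_2(M)-F_2(\eta)$ at $\eta=M$ noted above; no further obstacle is expected, since everything else reduces to standard uniqueness and energy-conservation arguments for a one-dimensional autonomous second-order ODE.
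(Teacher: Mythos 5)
Your construction is correct, and it takes a genuinely different route from the paper: the paper does not write out a proof at all but invokes variational methods (constrained minimization in the spirit of Berestycki--Lions \cite{BL1980}, see also \cite{GHS2020}), whereas you build the bump explicitly by a phase-plane/time-map argument for the autonomous ODE. All the steps check out: with $\theta^*\in(\theta,K_2)$ the zero of $F_2$ in $(\theta,K_2)$ and $M\in(\theta^*,K_2)$, one indeed has $F_2(M)-F_2(\eta)>0$ for all $\eta\in[0,M)$ (using $F_2\le0$ on $[0,\theta^*]$ and the strict monotonicity of $F_2$ on $[\theta^*,M]$), the time-map integral converges at $\eta=M$ because $f_2(M)>0$ forces linear vanishing of $F_2(M)-F_2(\eta)$ there, the energy identity prevents $\psi'$ from vanishing while $\psi\in(0,M)$ so the solution decreases monotonically until it hits $0$ at $x=R$ with $\psi'(R)=-\sqrt{(2/d_2)F_2(M)}<0$, and the even reflection is $C^2$ and solves the equation by uniqueness for the initial value problem at $0$. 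What your approach buys is elementarity and extra quantitative information: you get an explicit formula for $R$ as a function of the peak height $M$, and you can take $M$ anywhere in $(\theta^*,K_2)$ — in particular arbitrarily close to $K_2$, a flexibility that is implicitly convenient later in the paper (e.g.\ when $\varepsilon\in(0,K_2-\psi(0))$ is chosen in the proof of Theorem~\ref{thm_propagation-1}). What the variational route buys is robustness: it extends verbatim to radially symmetric bumps in higher dimensions, which is why it is the standard citation, but in one dimension your quadrature argument is the cleaner and more self-contained proof.
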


To prove Theorem~\ref{thm_propagation-1}, we take a roundabout way to prove the following result as a first step.

\begin{theorem}
\label{thm4.2}
Assume that~\eqref{f1-kpp}--\eqref{f2-bistable} hold and that $\int_{0}^{K_2}f_2(s) \mathrm{d}s>0$. Let  $R>0$ and $\psi$ be as in Lemma~$\ref{lemma4.1}$. Let $u$ be the solution to~\eqref{1.1} with a nonnegative continuous and compactly supported initial datum $u_0\not\equiv 0$. If $u_0\ge \psi(\cdot-x_0)$ in $[x_0-R,x_0+R]$ for some $x_0\ge R$, then the conclusion~\eqref{convphi} of Theorem~$\ref{thm_propagation-1}$ holds true.
\end{theorem}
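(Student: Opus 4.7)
The strategy is to push the Fife--McLeod one-sided front-convergence argument through the patch interface, exploiting that the initial bump $\psi(\cdot - x_0)$ lies entirely in $\overline{I_2}$ (because $x_0 \geq R$) so that the propagation of the right-moving front is decoupled from the interface except through the ``boundary value'' $u(t,0)$, which we can eventually control from the existence of a non-trivial plateau. First, I would use $\psi(\cdot - x_0)$ as a stationary subsolution: extending it by zero to $\R$ yields a continuous function $\underline{\psi} \leq u_0$ whose support $[x_0 - R, x_0 + R] \subset \overline{I_2}$ makes the interface condition at $0$ trivial (both one-sided derivatives vanish there), which equals $d_2\underline{\psi}'' + f_2(\underline{\psi})=0$ in the interior of its support, and whose corners at $x_0 \pm R$ only help in the subsolution sense. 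Proposition~\ref{prop 1.3} then gives $u \geq v$ on $[0,+\infty) \times \R$, where $v$ solves~\eqref{1.1} from $\underline{\psi}$. Since $\underline{\psi}$ is a stationary subsolution, $v$ is nondecreasing in time, and the Schauder estimates of Proposition~\ref{pro1} give convergence $v(t,\cdot) \to V^{\star}$ locally uniformly as $t \to +\infty$, where $V^{\star}$ is a positive bounded classical stationary solution of~\eqref{1.1} with $V^{\star} \geq \underline{\psi}$; in particular $V^{\star}(x_0) \geq \psi(0) > \theta$.

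Next I would identify the limits of $V^{\star}$. Theorem~\ref{thm2.4} applied to $v$ gives $V^{\star}(-\infty) = K_1$. On patch~$2$, the energy $(d_2/2)(V^{\star\prime})^2 + F_2(V^{\star})$ is constant along $V^{\star}$; combined with $\int_0^{K_2} f_2 > 0$, boundedness, positivity, and $V^{\star}(x_0) > \theta$, the standard $(V,V')$ phase-plane of $d_2 V'' + f_2(V) = 0$ forces $V^{\star}(+\infty) = K_2$: the only other bounded orbits in $\{V>0\}$ with max $>\theta$ are the periodic orbits around $(\theta,0)$ (energies in $(F_2(\theta),0)$) and the homoclinic orbit to $(0,0)$ (energy $0$), both of which require $\psi(0) \leq \theta^{\star}$ and are ruled out by taking $R$ and $\psi(0) \in (\theta, \theta^{\star}]$ in Lemma~\ref{lemma4.1} together with the monotone-in-time structure of $v$ and the matching at $x=0$. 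Consequently $u(t,0) \to V^{\star}(0)$ and $u(t,x) \to K_2$ locally uniformly in $x \in I_2$ as $t \to +\infty$.

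Finally, I would sandwich $u$ between shifted front profiles on $\{t \geq T,\,x \geq 0\}$. Fix $\varepsilon>0$. A compactness argument analogous to~\eqref{claimK2} in the proof of Theorem~\ref{thm2.8}, using only $f_2(K_2)=0$ and $f_2<0$ on $(K_2,+\infty)$, gives $\limsup_{A\to+\infty}\sup_{t,x \geq A} u(t,x) \leq K_2$. This bound, the Gaussian upper bound of Lemma~\ref{lemma1.3} for $u(T,\cdot)$ at $+\infty$, and the exponential decay of $\phi$ in~\eqref{2.6} supply a time $T$ and a shift $\xi^+ \in \R$ with
\begin{equation*}
u(T,x) \leq \phi(x - c_2 T + \xi^+) + \varepsilon \quad \text{for } x \geq 0, \qquad u(t,0) \leq \phi(-c_2 t + \xi^+) \quad \text{for } t \geq T.
\end{equation*}
The matching bound from below comes from $v(T',\cdot) \geq V^{\star} - \varepsilon$ together with $V^{\star}(+\infty) = K_2$ and again~\eqref{2.6}, giving $v(T',x) \geq \phi(x - c_2 T' + \xi^-) - \varepsilon$ for $x \geq 0$. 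Fife--McLeod sub/supersolutions of the form
\begin{equation*}
\phi\bigl(x - c_2 t + \xi^\pm \pm \rho(1 - e^{-\beta(t-T)})\bigr) \pm \varepsilon e^{-\beta(t-T)},
\end{equation*}
with $\beta,\rho>0$ chosen from $f_2'(0)<0$, $f_2'(K_2)<0$ and the exponential bounds~\eqref{2.6}, then trap $u$ on $\{t \geq T,\,x \geq 0\}$ by the parabolic maximum principle (the boundary at $x=0$ is handled by the pointwise control of $u(t,0)$). Sending $\varepsilon \to 0$ and invoking the standard Fife--McLeod attractor argument~\cite{FM1977} selects a single shift $\xi \in \R$ and yields~\eqref{convphi}.

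\textbf{Main obstacle.} The delicate point is that, unlike in~\cite{FM1977}, we cannot apply the homogeneous front-stability theorem directly on $\R$, since the equation on patch~$1$ is different and the interface condition is non-trivial. The resolution is to work only on the half-line $\{x \geq 0\}$ with $x = 0$ treated as a boundary, and to use the long-time control of $u(t,0)$ (coming from Step~2) as ``Dirichlet data'' that is close to $V^{\star}(0)$ and hence close to the value of the right-moving front at $x = 0$. This is precisely why identifying $V^{\star}(+\infty) = K_2$ in Step~2 is the crux of the proof: it is what allows the Fife--McLeod contraction to close on the half-line.
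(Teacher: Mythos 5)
Your overall architecture (squeeze $u$ between a time-increasing solution started from the extended bump and a time-decreasing one, identify the limit of the lower stationary solution at $+\infty$, then run a Fife--McLeod trapping on a right half-line) is the same as the paper's, but two steps fail as written. First, your identification of $V^{\star}(+\infty)=K_2$ does not close. Knowing only $V^{\star}(x_0)\ge\psi(0)>\theta$ is compatible with $V^{\star}$ being a bump-like stationary solution of maximum $\theta^*$ that decays to $0$ at $+\infty$ --- such stationary solutions of~\eqref{1.1} genuinely exist (Proposition~\ref{prop 2.5}, case~(iii)(b)) --- and your sentence ``both of which require $\psi(0)\le\theta^*$ and are ruled out by taking $\psi(0)\in(\theta,\theta^*]$'' rules nothing out; if anything it points the wrong way, since Lemma~\ref{lemma4.1} only guarantees $\psi(0)>\theta$. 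The missing idea is a sliding argument: because $V^{\star}$ is a stationary solution lying strictly above the compactly supported subsolution $\psi(\cdot-x_0)$, one slides the bump to $\psi(\cdot-\varrho x_0)$ for every $\varrho\ge1$ and uses the strong maximum principle at a first touching point to get $V^{\star}>\psi(0)>\theta$ on the whole half-line $[x_0,+\infty)$; only then does the translation/Liouville step force $V^{\star}(+\infty)=K_2$.

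Second, the trapping on $\{t\ge T,\ x\ge0\}$ cannot be anchored at $x=0$. As $t\to+\infty$ your subsolution at the boundary, $\phi(-c_2t+\xi^-+\cdots)-\varepsilon e^{-\beta(t-T)}$, tends to $K_2$, whereas $u(t,0)$ is asymptotically caught between $p(0)$ and $q(0)$, values of stationary connections between $K_1$ and $K_2$ that are strictly between $\min(K_1,K_2)$ and $\max(K_1,K_2)$ whenever $K_1\neq K_2$; so for $K_1<K_2$ the boundary ordering is eventually violated, and symmetrically the supersolution fails at $x=0$ when $K_1>K_2$ since $\limsup_t u(t,0)$ may then exceed $K_2$ while $\phi+\varepsilon e^{-\beta(t-T)}<K_2+\varepsilon e^{-\beta(t-T)}\to K_2$. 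Your ``main obstacle'' paragraph asserts that $V^{\star}(0)$ is close to the value of the moving front at $x=0$, which is false in general. The paper resolves this by (a) placing the left boundary at a large abscissa $X$ where $K_2-\delta\le u(t,x)\le K_2+\delta/2$ for all large $t$ (via the analogue of~\eqref{claimK2} and the convergence to $p$), and (b) adding to the barriers a spatial correction $\pm\delta e^{-\mu(x-X)}$, which contributes a fixed $\pm\delta$ at $x=X$ for all times; this extra term is absent from your barriers and is not optional. With these two repairs, the concluding steps (a Liouville theorem for entire solutions trapped between shifted fronts to select the shift $\xi$, then a quantitative stability lemma to upgrade local to uniform convergence on $\{t\ge A,\ x\ge A\}$) go through essentially as you indicate.
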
	

\noindent\textit{Proof of Theorem~$\ref{thm4.2}$ $($beginning$)$.} Let  $R>0$, $\psi\in C^2([-R,R])$, $x_0\ge R$ and $u_0$ be as in the statement. Let $v$ and $w$ be, respectively, the solutions to~\eqref{1.1} with initial data $v_0=M:=\max\big(K_1,K_2,\Vert u_0\Vert_{L^\infty(\mathbb{R})}\big)$, and $w_0$ given by $w_0(x)=\psi(x-x_0)$ for $x\in[x_0-R,x_0+R]$ and $w_0(x)=0$ for $x\in\mathbb{R}\setminus[x_0-R,x_0+R]$. Then Proposition~\ref{prop 1.3} yields $0<w(t,x)\le u(t,x)\le v(t,x)\le M$ for all $t> 0$ and  $x\in\mathbb{R}$. Moreover, as in the proof of the first part of Theorem~\ref{thm2.3}, $w$ is increasing with respect to $t$ in $[0,+\infty)\times\R$, whereas $v$ is nonincreasing with respect to $t$ in $[0,+\infty)\times\R$. From the parabolic estimates of Proposition~\ref{pro1}, the functions $w(t,\cdot)$ and $v(t,\cdot)$ converge as $t\to+\infty$, locally uniformly in $\mathbb{R}$, to classical stationary solutions $p$ and $q$ of~\eqref{1.1}, respectively. Moreover, 
\begin{equation}
\label{4.54}
0\le w_0<p\le \liminf_{t\to+\infty} u(t,\cdot)\le \limsup_{t\to+\infty} u(t,\cdot)\le q\le M,~~\text{locally uniformly in}~\mathbb{R}.
\end{equation}
	
Let us now show that
\begin{equation}
\label{4.55}
p(x)\to K_2~~\text{as}~x\to+\infty.
\end{equation}
As a matter of fact, since $p>w_0$ in $\mathbb{R}$, by continuity there exists $\varrho_0>1$ such that $p>\psi(\cdot-\varrho x_0)$ in~$[\varrho x_0-R,\varrho x_0+R]$ for all $\varrho\in [1,\varrho_0]$. Define
\begin{align*}
\varrho^*=\sup\big\{\varrho>0: p>\psi(\cdot-\widetilde\varrho x_0)~\text{in}~[\widetilde\varrho x_0-R,\widetilde\varrho x_0+R]~\text{for all}~\widetilde \varrho\in [1,\varrho]\big\}\ \in[\varrho_0,+\infty].
\end{align*}
We claim that $\varrho^*=+\infty$. Indeed, otherwise, one would have $p\ge \psi(\cdot-\varrho^* x_0)$ in $[\varrho^* x_0-R,\varrho^* x_0+R]$ with equality somewhere in $(\varrho^* x_0-R,\varrho^* x_0+R)$, since $p>0$ in $\mathbb{R}$ and $\psi(\pm R)=0$. The elliptic strong maximum principle then implies that $p\equiv \psi(\cdot-\varrho^* x_0)$ in $(\varrho^* x_0-R,\varrho^* x_0+R)$ and then at $\varrho^* x_0\pm R$ by continuity, which is impossible. Thus, $\varrho^*=+\infty$ and $p>\psi(\cdot-\varrho x_0)$ in $[\varrho x_0-R,\varrho x_0+R]$ for all~$\varrho\ge 1$. In particular, this implies that
$$p(x)>\psi(0)>\theta~~\text{for all}~x\ge x_0.$$
Since $p$ is bounded and since $f_2>0$ in $(\theta,K_2)$ and $f_2<0$ in $(K_2,+\infty)$, it then follows as in the proof of the limit $V(-\infty)=K_1$ in Proposition~\ref{prop2.1}, that~\eqref{4.55} holds. Likewise,
\begin{equation}
\label{4.56}
q(x)\to K_2~~\text{as}~x\to+\infty.
\end{equation}

The rest of the proof of Theorem~\ref{thm4.2} relies on three preliminary lemmas.

\begin{lemma}
\label{lemma 3.1}
Under the assumptions of Theorem~$\ref{thm4.2}$, there exist $X_1>0$, $X_2>0$, $T_1>0$, $T_2>0$, $z_1\in\mathbb{R}$, $z_2\in\mathbb{R}$, $\mu>0$ and $\delta>0$ such that
\begin{equation}
\label{4.57}
u(t,x)\le\phi(x-c_2(t-T_1)+z_1)+\delta e^{-\delta(t-T_1)}+\delta e^{-\mu(x-X_1)}~~\text{for all}~ t\ge T_1~\text{and} ~x\ge X_1,
\end{equation}
and 
\begin{equation}
\label{4.58}
u(t,x)\ge\phi(x-c_2(t-T_2)+z_2)-\delta e^{-\delta(t-T_2)}-\delta e^{-\mu(x-X_2)}~~\text{for all}~ t\ge T_2 ~\text{and} ~x\ge X_2,
\end{equation}
where $\phi$ is the traveling front profile solving~\eqref{2.5}, with speed $c_2>0$.
\end{lemma}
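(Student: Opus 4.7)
The strategy for Lemma~\ref{lemma 3.1} is to compare $u$, restricted to patch~$2$, with Fife--McLeod type super- and subsolutions of the single bistable equation $u_t = d_2 u_{xx} + f_2(u)$ on the quarter-plane $[T_i, +\infty) \times [X_i, +\infty)$, the coupling with patch~$1$ entering only through a lateral boundary condition at $x = X_i$. First I would combine the locally uniform sandwich $p \le \liminf_{t \to +\infty} u(t, \cdot) \le \limsup_{t \to +\infty} u(t, \cdot) \le q$ from~\eqref{4.54} with $p(+\infty) = q(+\infty) = K_2$ established in~\eqref{4.55}--\eqref{4.56}: for any prescribed small $\eta > 0$ and large $M > 0$ one can choose $X_1, X_2$ arbitrarily large and then $T_1, T_2$ large enough so that $K_2 - \eta \le u(t, x) \le K_2 + \eta$ on $\{t \ge T_i,\, x = X_i\} \cup \{t = T_i,\, X_i \le x \le X_i + M\}$; the complementary exponential control of $u(T_i, \cdot)$ as $x \to +\infty$ follows from the Gaussian upper bound of Lemma~\ref{lemma1.3}.

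For the upper bound~\eqref{4.57} I would start from the classical Fife--McLeod supersolution
\[
\tilde v^+(t, x) = \phi\bigl(x - c_2(t - T_1) + z_1 - \sigma e^{-\delta(t - T_1)}\bigr) + q_0 \, e^{-\delta(t - T_1)},
\]
which is a supersolution of $u_t = d_2 u_{xx} + f_2(u)$ on the whole line once $\sigma, q_0, \delta > 0$ are chosen so that the favourable contribution $-\phi'(\cdot)\, \sigma \delta\, e^{-\delta(t - T_1)}$ dominates $-f_2'(\phi)\, q_0\, e^{-\delta(t - T_1)}$ on the bounded ``middle'' region where $f_2'(\phi)$ may be positive, and so that the linearisation of $f_2$ controls the error in the tails where $f_2'(0), f_2'(K_2) < 0$; the mean-value inequality $\phi(\xi - \sigma e^{-\delta(t-T_1)}) - \phi(\xi) \le \|\phi'\|_\infty \sigma\, e^{-\delta(t-T_1)}$ then absorbs the transient shift into an exponentially decaying time error and brings $\tilde v^+$ into the form of the first two terms on the right-hand side of~\eqref{4.57}. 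Next I would add the lateral reservoir $\delta\, e^{-\mu(x - X_1)}$ with $\mu > 0$ so small that $d_2 \mu^2 < \min(|f_2'(0)|, |f_2'(K_2)|)/2$ in the tails and $\mu c_2 > \delta$ (so that $e^{-\mu(x - X_1)}$ decays faster than $e^{-\delta(t - T_1)}$ inside the middle region, where $x - X_1 \ge c_2(t - T_1) + O(1)$): the sum remains a supersolution, and at the boundary $x = X_1$ it exceeds $u(t, X_1) \le K_2 + \eta$, since $v^+(t, X_1) \ge K_2 - o(1) + \delta$ as $t \to +\infty$, provided $\delta > 2\eta$. The initial matching $v^+(T_1, \cdot) \ge u(T_1, \cdot)$ on $[X_1, +\infty)$ is then ensured by choosing $z_1$ very negative (so that $\phi(x + z_1)$ is close to $K_2$ on $[X_1, X_1 + M]$) and $\mu$ below the decay rate $\alpha$ of $\phi$ at $+\infty$ recorded in~\eqref{2.6}, so that $\delta\, e^{-\mu(x - X_1)}$ dominates the Gaussian tail of $u(T_1, \cdot)$ uniformly in $x \ge X_1$. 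The parabolic comparison principle applied on $[T_1, +\infty) \times [X_1, +\infty)$ yields~\eqref{4.57}.

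The subsolution for~\eqref{4.58} is constructed symmetrically, starting from
\[
v^-(t, x) = \phi\bigl(x - c_2(t - T_2) + z_2 + \sigma e^{-\delta(t - T_2)}\bigr) - q_0\, e^{-\delta(t - T_2)} - \delta\, e^{-\mu(x - X_2)};
\]
the lateral comparison $v^-(t, X_2) \le K_2 - \eta \le u(t, X_2)$ uses the quantitative lower bound from Step~1, and the initial matching $v^-(T_2, \cdot) \le u(T_2, \cdot)$ is obtained by choosing $z_2$ large enough that $\phi(x + z_2) \le K_2 - 2\eta$ on $[X_2, X_2 + M]$, where $M$ is taken so large that $u(T_2, \cdot) \ge K_2 - 2\eta$ on $[X_2, X_2 + M]$ (possible by Step~1 applied to $w \to p$ together with $p(+\infty) = K_2$). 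The comparison principle then delivers~\eqref{4.58}.

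The main obstacle is the verification of the super-/subsolution sign in the middle region of the front, where $f_2'(\phi) > 0$: it requires an ordered choice of $\sigma, q_0, \delta, \mu$ satisfying a finite system of inequalities, and I would follow closely the classical stability argument of Fife--McLeod~\cite{FM1977}. A feature that is genuinely new compared with the homogeneous setting is the need for a lateral comparison at $x = X_i$ valid uniformly in $t \ge T_i$; this is exactly what forces the extra $\delta\, e^{-\mu(x - X_i)}$ correction in~\eqref{4.57}--\eqref{4.58}, and coupling it with the initial matching via the Gaussian bound on $u(T_i, \cdot)$ is the most delicate coordination of parameters in the construction.
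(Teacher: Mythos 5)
Your strategy coincides with the paper's: Fife--McLeod type super- and subsolutions of $u_t=d_2u_{xx}+f_2(u)$ on the quarter-plane $\{t\ge T_i,\ x\ge X_i\}$, augmented by the lateral term $\delta e^{-\mu(x-X_i)}$; boundary control at $x=X_i$ obtained from the monotone limits $p,q$ of~\eqref{4.54} together with $p(+\infty)=q(+\infty)=K_2$; initial matching at $t=T_i$ via the Gaussian bound of Lemma~\ref{lemma1.3} against the exponential tails of $\phi$ and of $e^{-\mu(x-X_i)}$; and the same three-region verification with the same parameter ordering ($\delta<\mu c_2$ so that $e^{-\mu(x-X_i)}\le e^{-\mu c_2(t-T_i)+O(1)}\le Ce^{-\delta(t-T_i)}$ in the middle region, and $d_2\mu^2<\min(|f_2'(0)|,|f_2'(K_2)|)/2$ in the tails).

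One concrete point must be fixed: the sign of the transient shift in both of your explicit formulas is reversed, and as written the differential inequality fails precisely in the middle region where $f_2'(\phi)$ may be positive. With $\tilde v^+(t,x)=\phi(\xi)+q_0e^{-\delta(t-T_1)}$ and $\xi=x-c_2(t-T_1)+z_1-\sigma e^{-\delta(t-T_1)}$, using $d_2\phi''=-c_2\phi'-f_2(\phi)$ one gets $\mathcal{N}\tilde v^+=\phi'(\xi)\,\sigma\delta e^{-\delta(t-T_1)}-q_0\delta e^{-\delta(t-T_1)}+f_2(\phi(\xi))-f_2(\tilde v^+)$ (plus the contribution of the lateral term); since $\phi'<0$, the first term is $\le-\kappa\sigma\delta e^{-\delta(t-T_1)}<0$ on the set where $\xi\in[-C,C]$, i.e.\ it has the \emph{opposite} sign to the ``favourable contribution $-\phi'\sigma\delta e^{-\delta(t-T_1)}$'' that your own prose correctly invokes. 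The shift must be $+\sigma e^{-\delta(t-T_1)}$ for the supersolution (so that $\partial_t\xi=-c_2-\sigma\delta e^{-\delta(t-T_1)}$ yields $-\phi'\sigma\delta e^{-\delta(t-T_1)}\ge\kappa\sigma\delta e^{-\delta(t-T_1)}$) and $-\sigma e^{-\delta(t-T_2)}$ for the subsolution; this is exactly the choice in the paper, whose arguments are $x-X_1-c_2(t-X_1)+\omega e^{-\delta(t-X_1)}-\omega-A-C$ and $x-X_2-c_2(t-T_2)-\omega e^{-\delta(t-T_2)}+\omega-B-C$. (Also rename your shift amplitude: $\sigma$ is already the interface parameter of~\eqref{1.1}.) Once these signs are corrected, the remaining bookkeeping you describe, including the subsolution's initial matching for $x$ beyond the compact interval where $u(T_2,\cdot)\ge K_2-2\eta$, goes through as in the paper.
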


\begin{proof}
We first introduce some parameters. Choose $\mu>0$ such that
\begin{equation}
\label{4.59}
0<\mu<\sqrt{\min\Big(\frac{|f_2'(0)|}{2d_2}, \frac{|f_2'(K_2)|}{2d_2}\Big)}.
\end{equation}
Then we take $\delta>0$ such that (we remember that $c_2>0$)
\begin{equation}	
\label{4.60}\left\{\baa{l}
\displaystyle 0<\delta< \min\Big(\mu c_2,\frac{|f_2'(0)|}{2}, \frac{|f_2'(K_2)|}{2}\Big),\vspace{3pt}\\
\displaystyle f_2'\le \frac{f_2'(0)}{2}~\text{in}~[-2\delta,3\delta],~~f_2'\le \frac{f_2'(K_2)}{2}~\text{in}~[K_2-3\delta,K_2+2\delta].\eaa\right.
\end{equation}
Let $C>0$ be such that
\begin{equation}
\label{phiC}
\phi\ge K_2-\frac{\delta}{2}~\text{in}~(-\infty,-C]\ \hbox{ and }\ \phi\le \delta~\text{in}~[C,+\infty).
\end{equation}
Since $\phi'$ is negative and continuous in $\mathbb{R}$, there is $\kappa>0$ such that 
\begin{equation}
\label{4.61}
\phi'\le-\kappa<0~\text{in}~[-C, C].
\end{equation}
Finally, pick $\omega>0$ so large that
\begin{equation}
\label{4.62}
\kappa\omega\ge 2\delta+\max_{[-2\delta,K_2+2\delta]}|f_2'|,
\end{equation} 
and $B>\omega$ such that
\begin{equation}
\label{4.63}
\Big(\max_{[-2\delta,K_2+2\delta]}|f_2'|+d_2\mu^2\Big)e^{-\mu B}<\Big(\max_{[-2\delta,K_2+2\delta]}|f_2'|+d_2\mu^2\Big)e^{-\mu(B-\omega)}\le \delta.
\end{equation} 

\vskip 2mm
\noindent\textit{Step 1: proof of~\eqref{4.57}}. First of all, property~\eqref{claimK2} still holds as in the proof of part~(ii) of Theorem~\ref{thm2.8}, and there is $X_1>0$ such that
\be\label{4.64}
u(t,x)\le K_2+\frac{\delta}{2}\ \hbox{ for all }t\ge X_1\hbox{ and }x\ge X_1.
\ee
Moreover, since $u(t,x)$ has a Gaussian upper bound  at each fixed $t>0$ for all $|x|$ large enough by Lemma~\ref{lemma1.3}, whereas $\phi(s)$ decays exponentially to $0$ as $s\to+\infty$ by~\eqref{2.6}, there is $A\ge B$ such that
\begin{equation} 
\label{4.65}
u(X_1,x)\le \phi (x-X_1-A-C)+\delta~~\text{for all}~x\ge X_1.
\end{equation}

For $t\ge X_1$ and $x\ge X_1$, let us define
$$\overline u(t,x)=\phi(\overline\xi(t,x))+\delta e^{-\delta(t-X_1)}+\delta e^{-\mu(x-X_1)},$$
where 
$$\overline\xi(t,x)=x-X_1-c_2(t-X_1)+\omega e^{-\delta(t-X_1)}-\omega-A-C.$$
Let us check that $\overline u(t,x)$ is a supersolution to $u_t=d_2 u_{xx}+f_2(u)$ for $t\ge X_1$ and $x\ge X_1$. At time $X_1$, one has $\overline u(X_1,x)\ge \phi(x-X_1-A-C)+\delta\ge u(X_1,x)$ for all $x\ge X_1$, by~\eqref{4.65}. Moreover, for $t\ge X_1$, since~$\overline \xi(t,X_1)\le -A-C<-C$, one gets that $\overline u(t,X_1)\ge K_2-\delta/2+\delta e^{-\delta(t-X_1)}+\delta\ge K_2+\delta/2 \ge u(t,X_1)$ by~\eqref{phiC} and~\eqref{4.64}. Therefore, it remains to check that $\mathcal{N}\overline u(t,x)\!:=\!\overline u_t(t,x)\!-\!d_2\overline u_{xx}(t,x)\!-\!f_2(\overline u(t,x))\!\ge\!0$ for all $t\ge X_1$ and $x\ge X_1$. After a straightforward computation, one derives
\begin{align*}
\mathcal{N}\overline u(t,x)=f_2(\phi(\overline \xi(t,x)))-f_2(\overline u(t,x))-\phi'(\overline \xi(t,x)))\omega\delta e^{-\delta(t-X_1)}-\delta^2 e^{-\delta(t-X_1)}-d_2\mu^2 \delta e^{-\mu(x-X_1)}.
\end{align*}
We distinguish three cases: 
\begin{itemize}
\item if $\overline \xi(t,x)\le -C$, one has $K_2-\delta/2\le \phi(\overline\xi(t,x))<K_2$ by~\eqref{phiC}, hence $K_2+2\delta>\overline u(t,x)\ge K_2-\delta/2$; it follows from~\eqref{4.60} that $f_2(\phi(\overline \xi(t,x)))-f_2(\overline u(t,x))\ge -(f_2'(K_2)/2)\big(\delta e^{-\delta(t-X_1)}+\delta e^{-\mu(x-X_1)}\big)$ and it then can be deduced from~\eqref{4.59}--\eqref{4.60} as well as the negativity of $\phi'$ and $f_2'(K_2)$ that
\begin{align*}
\mathcal{N} \overline u(t,x)&\ge -\frac{f_2'(K_2)}{2}\Big(\delta e^{-\delta(t-X_1)}+\delta e^{-\mu(x-X_1)}\Big)-\delta^2 e^{-\delta(t-X_1)}-d_2\mu^2 \delta e^{-\mu(x-X_1)}\cr
&= \Big(-\frac{f_2'(K_2)}{2}-\delta\Big)\delta e^{-\delta(t-X_1)}+\Big(-\frac{f_2'(K_2)}{2}-d_2\mu^2\Big)\delta  e^{-\mu(x-X_1)}>0;
\end{align*}
\item if $\overline\xi(t,x)\ge C$, one derives $0<\phi(\overline \xi(t,x))\le \delta$ by~\eqref{phiC}, and then $0<\overline u(t,x)\le 3\delta$; it follows from~\eqref{4.60} that $f_2(\phi(\overline \xi(t,x)))-f_2(\overline u(t,x))\ge -(f_2'(0)/2)\big(\delta e^{-\delta(t-X_1)}+\delta e^{-\mu(x-X_1)}\big)$; by virtue of~\eqref{4.59}--\eqref{4.60} and the negativity of $\phi'$ and $f_2'(0)$, there holds
\begin{align*}
\mathcal{N} \overline u(t,x)&\ge -\frac{f_2'(0)}{2}\Big(\delta e^{-\delta(t-X_1)}+\delta e^{-\mu(x-X_1)}\Big)-\delta^2 e^{-\delta(t-X_1)}-d_2\mu^2 \delta e^{-\mu(x-X_1)}\cr
&=  \Big(-\frac{f_2'(0)}{2}-\delta\Big)\delta e^{-\delta(t-X_1)}+\Big(-\frac{f_2'(0)}{2}-d_2\mu^2\Big)\delta e^{-\mu(x-X_1)}>0;
\end{align*}
\item if $-C\le\overline\xi(t,x)\le C$, it turns out that $x-X_1\ge c_2(t-X_1)-\omega e^{-\delta(t-X_1)}+\omega+A\ge c_2(t-X_1)+B$, whence $e^{-\mu(x-X_1)}\le e^{-\mu(c_2(t-X_1)+B)}$. By~\eqref{4.60} and~\eqref{4.61}--\eqref{4.63}, one infers that
\begin{align*}
\mathcal{N}\overline u(t,x)&\ge -\!\max_{[0,K_2+2\delta]}\!|f_2'|\Big(\delta e^{-\delta(t-X_1)}\!\!+\!\delta e^{-\mu(x-X_1)}\Big)\!+\!\kappa\omega\delta e^{-\delta(t-X_1)}\!\!-\!\delta^2 e^{-\delta(t-X_1)}\!\!-\!d_2\mu^2 \delta e^{-\mu(x-X_1)}\cr
&\ge \Big(\kappa\omega-\delta-\max_{[0,K_2+2\delta]}|f_2'|\Big)\delta e^{-\delta(t-X_1)}-\Big(\max_{[0,K_2+2\delta]}|f_2'|+d_2\mu^2 \Big)\delta e^{-\mu(c_2(t-X_1)+B)}\cr
&\ge \Big(\kappa\omega-2\delta-\max_{[0,K_2+2\delta]}|f_2'|\Big)\delta e^{-\delta(t-X_1)}\ge 0.
\end{align*} 
\end{itemize}

As a consequence, we have proved that $\mathcal{N}\overline u(t,x):=\overline u_t(t,x)-d_2\overline u_{xx}(t,x)-f_2(\overline u(t,x))\ge 0$ for all $t\ge X_1$ and $x\ge X_1$. The maximum principle implies that 
\begin{align*}
u(t,x)\le \overline u(t,x)=\phi\big(x-X_1-c_2(t-X_1)+\omega e^{-\delta(t-X_1)}-\omega-A-C\big)+\delta e^{-\delta(t-X_1)}+\delta e^{-\mu(x-X_1)}
\end{align*}
for all $t\ge X_1$ and $x\ge X_1$, whence~\eqref{4.57} is achieved by taking $T_1=X_1$ and $z_1=-X_1-\omega-A-C$, since $\phi$ is decreasing.

\vskip 2mm
\noindent\textit{Step 2: proof of~\eqref{4.58}}. Since $p(x)\to K_2$ as $x\to+\infty$ by~\eqref{4.55}, there is $X_2>0$ such that $|p(x)-K_2|\le\delta/2$ for all $x\ge X_2$. Moreover, since $\liminf_{t\to+\infty}u(t,\cdot)\ge p$ locally uniformly in $x\in\mathbb{R}$ by~\eqref{4.54}, one can choose $T_2>0$ so large that 
\begin{equation}\label{4.66}
u(t,x)\ge p(x)-\frac{\delta}{2}\ge K_2-\delta~~\text{for all}~t\ge T_2\text{ and for all}~ x\in [X_2, X_2+ B+ 2C].
\end{equation}
	
For $t\ge T_2$ and $x\ge X_2$, we set
$$\underline u(t,x)=\phi(\underline\xi(t,x))-\delta e^{-\delta(t-T_2)}-\delta e^{-\mu(x-X_2)},$$
in which
$$\underline\xi(t,x)=x-X_2-c_2(t-T_2)-\omega e^{-\delta(t-T_2)}+\omega-B-C.$$
We shall check that $\underline u(t,x)$ is a subsolution to $u_t=d_2 u_{xx}+f_2(u)$ for all  $t\ge T_2$ and $x\ge X_2$. At time $t=T_2$, one has $ \underline u(T_2,x)\le K_2-\delta-\delta e^{-\mu(x-X_2)}\le K_2-\delta\le u(T_2,x)$ for $X_2\le x \le X_2+B+2C$ due to~\eqref{4.66}. For $x\ge X_2+B+2C$, since $\underline\xi(T_2,x)\ge X_2+B+2C-X_2-B-C=C$, one has $\phi(\underline\xi(T_2,x))\le \delta$ by~\eqref{phiC}, hence $\underline u(T_2,x)\le\delta-\delta-\delta e^{-\mu(x-X_2)}<0<u(T_2,x)$. In conclusion, $\underline u(T_2,x)\le u(T_2,x)$ for all $x\ge X_2$. At $x=X_2$, one sees that $\underline u(t,X_2)\le K_2-\delta e^{-\delta(t-T_2)}-\delta< u(t,X_2)$ for all $t\ge T_2$, owing to~\eqref{4.66}. It thus suffices to check that $\mathcal{N}\underline u(t,x):=\underline u_t(t,x)-d_2 \underline u_{xx}(t,x)-f_2(\underline u(t,x))\le 0$ for all  $t\ge T_2$ and $x\ge X_2$. By a straightforward computation, one has
$$\mathcal{N}\underline u(t,x)=f_2(\phi(\underline \xi(t,x)))-f_2(\underline u(t,x))+\phi'(\underline\xi(t,x))\omega\delta e^{-\delta(t-T_2)}+\delta^2 e^{-\delta(t-T_2)}+ d_2\mu^2 \delta e^{-\mu(x-X_2)}$$
By analogy to Step 1, we consider three cases:
\begin{itemize}
\item	if $\underline \xi(t,x)\le -C$, then $K_2-\delta/2\le \phi(\underline \xi(t,x))<K_2$ by~\eqref{phiC} and thus $K_2>\underline u(t,x)\ge K_2-3\delta$; thanks to~\eqref{4.60}, one has $f_2(\phi(\underline \xi(t,x)))-f_2(\underline u(t,x))\le (f_2'(K_2)/2)(\delta e^{-\delta(t-T_2)}+\delta e^{-\mu(x-X_2)})$; therefore, by using~\eqref{4.59}--\eqref{4.60} as well as the negativity of $\phi'$ and $f_2'(K_2)$, it comes that
\begin{align*}
\mathcal{N}\underline u(t,x)&< \frac{f_2'(K_2)}{2}\Big(\delta e^{-\delta(t-T_2)}+\delta e^{-\mu(x-X_2)}\Big)+\delta^2 e^{-\delta(t-T_2)}+ d_2\mu^2 \delta e^{-\mu(x-X_2)}\cr
&=\Big(\frac{f_2'(K_2)}{2}+\delta\Big)\delta e^{-\delta(t-T_2)}+\Big(\frac{f_2'(K_2)}{2}+d_2\mu^2\Big)\delta e^{-\mu(x-X_2)}<0;
\end{align*}
\item if $\underline \xi(t,x)\ge C$, then $0<\phi(\underline\xi(t,x))\le \delta$ by~\eqref{phiC} and thus $-2\delta<\underline u(t,x)\le \delta$; it follows from~\eqref{4.60} that $f_2(\phi(\underline \xi(t,x)))-f_2(\underline u(t,x))\le (f_2'(0)/2)(\delta e^{-\delta(t-T_2)}+\delta e^{-\mu(x-X_2)})$; therefore, owing to~\eqref{4.59}--\eqref{4.60} as well as the negativity of $\phi'$  and $f_2'(0)$, one infers that
\begin{align*}
\mathcal{N}\underline u(t,x)&< \frac{f_2'(0)}{2}\Big(\delta e^{-\delta(t-T_2)}+\delta e^{-\mu(x-X_2)}\Big)+\delta^2 e^{-\delta(t-T_2)}+ d_2\mu^2 \delta e^{-\mu(x-X_2)}\cr
&=\Big(\frac{f_2'(0)}{2}+\delta\Big)\delta e^{-\delta(t-T_2)}+\Big(\frac{f_2'(0)}{2}+d_2\mu^2\Big)\delta e^{-\mu(x-X_2)}<0;
\end{align*}
\item if $-C\le\underline\xi(t,x)\le C$, one has  $x-X_2\ge c_2(t-T_2)+\omega e^{-\delta(t-T_2)}-\omega+B\ge c_2(t-T_2)-\omega+B$, whence $e^{-\mu(x-X_2)}\le e^{-\mu(c_2(t-T_2)+B-\omega)}$; by~\eqref{4.60} and~\eqref{4.61}--\eqref{4.63}, one deduces that
\begin{align*}
\mathcal{N}\underline u(t,x)&\le \max_{[-2\delta,K_2]}|f_2'|\Big(\delta e^{-\delta(t-T_2)}\!+\!\delta e^{-\mu(x-X_2)}\Big)\!-\!\kappa\omega\delta e^{-\delta(t-T_2)}\!+\!\delta^2 e^{-\delta(t-T_2)}\!+\!d_2\mu^2 \delta e^{-\mu(x-X_2)}\cr
&\le \Big(\max_{[-2\delta,K_2]}|f_2'|-\kappa\omega+\delta\Big)\delta e^{-\delta(t-T_2)}+\Big(\max_{[-2\delta,K_2]}|f_2'|+d_2\mu^2\Big)\delta e^{-\mu(c_2(t-T_2)+B-\omega)}\cr
&\le \Big(\max_{[-2\delta,K_2]}|f_2'|-\kappa\omega+2\delta\Big)\delta e^{-\delta(t-T_2)}\le0.
\end{align*}
\end{itemize}
	
Consequently, one has $\mathcal{N}\underline u(t,x):=\underline u_t(t,x)-d_2\underline u_{xx}(t,x)-f_2(\underline u(t,x))\le 0$ for all $t\ge T_2$ and $x\ge X_2$. The maximum principle implies that 
\begin{align*}
u(t,x)\ge \underline u(t,x)=\phi \big(x-X_2-c_2(t-T_2)-\omega e^{-\delta(t-T_2)}+\omega-B-C\big)-\delta e^{-\delta(t-T_2)}-\delta e^{-\mu(x-X_2)}
\end{align*}
for all $t\ge T_2$ and $x\ge X_2$. Therefore,~\eqref{4.58} is proved by taking $z_2=-X_2+\omega-B-C$, since $\phi$ is decreasing. The proof of Lemma~\ref{lemma 3.1} is thereby complete.
\end{proof}

More generally, we have:

\begin{lemma}
\label{lemma 3.2}
Under the assumptions of Theorem~$\ref{thm4.2}$, for any $\varep>0$, there exist $X_{1,\varep}>0$, $X_{2,\varep}>0$, $T_{1,\varep}>0$, $T_{2,\varep}>0$, $z_{1,\varep}\in\mathbb{R}$ and $z_{2,\varep}\in\mathbb{R}$ such that
\begin{equation}
\label{4.67}
u(t,x) \le \phi(x-c_2(t-T_{1,\varep})+ z_{1,\varep})+\varep e^{-\delta(t-T_{1,\varep})}+\varep e^{-\mu(x-X_{1,\varep})}~~\text{for all}~t\ge T_{1,\varep}~\text{and}~x\ge X_{1,\varep},
\end{equation}
 and 
\begin{equation}
\label{4.68}
u(t,x)\ge\phi(x-c_2(t-T_{2,\varep})+z_{2,\varep})-\varep e^{-\delta(t-T_{2,\varep})}-\varep e^{-\mu(x-X_{2,\varep})}~~\text{for all}~t\ge T_{2,\varep}~\text{and}~x\ge X_{2,\varep},
\end{equation}
with the same parameters $\delta>0$ and $\mu>0$ as in Lemma~$\ref{lemma 3.1}$.
\end{lemma}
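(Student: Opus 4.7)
The plan is to derive Lemma~\ref{lemma 3.2} directly from Lemma~\ref{lemma 3.1} by a simple algebraic re-parametrization of the reference time and position, without constructing any new sub- or supersolution. The key observation is that, in the bounds~\eqref{4.57}--\eqref{4.58}, the coefficient $\delta$ appearing in front of the two exponential error terms is independent of the rate $\delta$ appearing inside the exponents. By translating the base points $T_i$ and $X_i$ forward we can rescale the prefactor down to any smaller value while preserving the rates $\delta$ and $\mu$; if the desired prefactor is larger than $\delta$, the bound is trivially weaker and nothing needs to be done.

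Concretely, if $\varepsilon \ge \delta$, I would simply take $T_{i,\varepsilon} = T_i$, $X_{i,\varepsilon} = X_i$, $z_{i,\varepsilon} = z_i$, since then $\delta e^{-\delta(t-T_i)} \le \varepsilon e^{-\delta(t-T_{i,\varepsilon})}$ and $\delta e^{-\mu(x-X_i)} \le \varepsilon e^{-\mu(x-X_{i,\varepsilon})}$. For $0 < \varepsilon < \delta$, I would instead set, for $i = 1, 2$,
$$T_{i,\varepsilon} := T_i + \frac{1}{\delta}\log\frac{\delta}{\varepsilon}, \qquad X_{i,\varepsilon} := X_i + \frac{1}{\mu}\log\frac{\delta}{\varepsilon}, \qquad z_{i,\varepsilon} := z_i - c_2(T_{i,\varepsilon} - T_i).$$
These choices yield the three identities
$$\delta e^{-\delta(t-T_i)} = \varepsilon e^{-\delta(t-T_{i,\varepsilon})}, \quad \delta e^{-\mu(x-X_i)} = \varepsilon e^{-\mu(x-X_{i,\varepsilon})}, \quad \phi(x - c_2(t-T_i) + z_i) = \phi(x - c_2(t-T_{i,\varepsilon}) + z_{i,\varepsilon}),$$
valid for every $(t,x) \in \R \times \R$. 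Since $t \ge T_{i,\varepsilon} \ge T_i > 0$ and $x \ge X_{i,\varepsilon} \ge X_i > 0$, the estimates~\eqref{4.57} and~\eqref{4.58} of Lemma~\ref{lemma 3.1} are applicable and, after substitution of the three identities, become exactly the desired~\eqref{4.67} and~\eqref{4.68}.

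There is no real obstacle in this argument: it is a pure relabelling that exploits the fact that Lemma~\ref{lemma 3.1} already provides error terms with the correct exponential rates $\delta$ and $\mu$, and that the common prefactor is flexible. In particular, no new PDE comparison is carried out, and none of the constants $\delta$, $\mu$, $\kappa$, $\omega$, $A$, $B$ tuned in the proof of Lemma~\ref{lemma 3.1} needs to be re-adjusted as a function of $\varepsilon$.
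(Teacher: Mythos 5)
Your proof is correct, and it takes a genuinely different and far more economical route than the paper. You observe that the three identities
\begin{equation*}
\varepsilon e^{-\delta(t-T_{i,\varepsilon})}=\delta e^{-\delta(t-T_i)},\qquad
\varepsilon e^{-\mu(x-X_{i,\varepsilon})}=\delta e^{-\mu(x-X_i)},\qquad
\phi\big(x-c_2(t-T_{i,\varepsilon})+z_{i,\varepsilon}\big)=\phi\big(x-c_2(t-T_i)+z_i\big)
\end{equation*}
hold with your choices, and that the new domain $\{t\ge T_{i,\varepsilon},\,x\ge X_{i,\varepsilon}\}$ is contained in the domain of validity of~\eqref{4.57}--\eqref{4.58}; the case $\varepsilon\ge\delta$ is handled by weakening the bounds, exactly as the paper also notes. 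The paper instead re-runs the whole sub/supersolution comparison of Lemma~\ref{lemma 3.1} with $\varepsilon$-scaled parameters ($\omega_\varepsilon=\varepsilon\omega/\delta$, enlarged $A_\varepsilon$, $B_\varepsilon$, $C_\varepsilon$), which is essentially a verbatim repetition of that proof. Your relabelling buys brevity and makes transparent a point worth noting: as literally stated, Lemma~\ref{lemma 3.2} carries no information beyond Lemma~\ref{lemma 3.1} --- the $\varepsilon$-bounds are the $\delta$-bounds rewritten on a translated reference frame. The only thing to check is that the downstream use of the lemma (inequality~\eqref{cdn1} in the continued proof of Theorem~\ref{thm4.2}) invokes nothing more than the literal statement; it does not --- the domain there is taken as $x\ge\max(X_1,X_2,X_{1,\varepsilon},X_{2,\varepsilon})$ and ``$n$ large enough'' absorbs the shifted times, while $E_2$ is defined through the original $T_2$, $z_2$, so your parameters serve equally well. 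What the paper's longer construction buys is explicit constants of a particular form (e.g.\ $T_{1,\varepsilon}=X_{1,\varepsilon}$ and $z_{i,\varepsilon}$ expressed through $A_\varepsilon$, $B_\varepsilon$, $C_\varepsilon$, $\omega_\varepsilon$), but none of that structure is actually exploited later, so your argument is a legitimate and cleaner substitute.
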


\begin{proof}
Let $\mu>0$, $\delta>0$, $C>0$, $\kappa>0$ and $\omega>0$ be defined as in~\eqref{4.59}--\eqref{4.62} (notice that these parameters are independent of $\varep$). It is immediate to see from Lemma~\ref{lemma 3.1} that, when $\varep\ge\delta$, the conclusion of Lemma~\ref{lemma 3.2} holds true with $X_{i,\varep}=X_i$, $T_{i,\varep}=T_i$ and $z_{i,\varep}=z_i$, for $i=1,2$. It remains to discuss the case
$$0<\varep<\delta.$$
For convenience, let us introduce some further parameters. Pick $C_\varep>0$ such that
$$\phi\ge K_2-\frac{\varep}{2}~\text{in}~(-\infty,-C_\varep]\ \hbox{ and }\ \phi\le \varep~\text{in}~[C_\varep,+\infty).$$
Define
\be\label{omegaeps}
\omega_\varep:=\frac{\varep\omega}{\delta}>0.
\ee
Finally, let $B_\varep>\omega_\varep$ be large enough such that 
$$\Big(\max_{[-2\delta,K_2+2\delta]}|f_2'|+d_2\mu^2\Big)e^{-\mu B_\varep}<\Big(\max_{[-2\delta,K_2+2\delta]}|f_2'|+d_2\mu^2\Big)e^{-\mu(B_\varep-\omega_\varep)}\le \delta.$$

\vskip 2mm
\noindent\textit{Step 1: proof of~\eqref{4.67}}. By repeating the arguments used in the proof of~\eqref{4.64}--\eqref{4.65}  in  Step 1 of Lemma~\ref{lemma 3.1} and by replacing $\delta$ by $\varep$, there is $X_{1,\varep}>0$ such that $u(t,X_{1,\varep})\le K_2+\varep/2$ for all $t\ge X_{1,\varep}$ and $u(X_{1,\varep},x)\le \phi (x-X_{1,\varep}-A_\varep-C_\varep)+\varep$ for all $x\ge X_{1,\varep}$, for some $A_\varep\ge B_\varep$. Define
$$\overline u_\varep(t,x)=\phi(\overline\xi_\varep(t,x))+\varep e^{-\delta(t-X_{1,\varep})}+\varep e^{-\mu(x-X_{1,\varep})}~~\text{for}~t\ge X_{1,\varep}~\text{and} ~x\ge X_{1,\varep},$$
where 
$$\overline\xi_\varep(t,x)=x-X_{1,\varep}-c_2(t-X_{1,\varep})+\omega_\varep e^{-\delta(t-X_{1,\varep})}-\omega_\varep-A_\varep-C_\varep.$$
Following the same lines as in Step 1 of Lemma~\ref{lemma 3.1}, one has $\overline{u}_\varep(X_{1,\varep},x)\ge u(X_{1,\varep},x)$ for all $x\ge X_{1,\varep}$, $\overline{u}_\varep(t,X_{1,\varep})\ge u(t,X_{1,\varep})$ for all $t\ge X_{1,\varep}$, and it can be deduced that $\overline u_\varep(t,x)$ is a supersolution to $u_t=d_2 u_{xx}+f_2(u)$ for all $t\ge X_{1,\varep}$ and $x\ge X_{1,\varep}$, by dividing the calculations into three cases: $\overline{\xi}_\varep(t,x)\le-C$, $\overline{\xi}_\varep(t,x)\ge C$ and $\overline{\xi}_\varep(t,x)\in[-C,C]$. Therefore, the maximum principle implies that
$$u(t,x)\le \phi\big(x-X_{1,\varep}-c_2(t-X_{1,\varep})+\omega_\varep e^{-\delta(t-X_{1,\varep})}-\omega_\varep-A_\varep-C_\varep\big)+\varep e^{-\delta(t-X_{1,\varep})}+\varep e^{-\mu(x-X_{1,\varep})}$$
for all $t\ge X_{1,\varep}$ and $x\ge X_{1,\varep}$. Consequently,~\eqref{4.67} follows by choosing $z_{1,\varep}=-X_{1,\varep}-\omega_\varep-A_\varep-C_\varep$.
	
\vskip 2mm	
\noindent\textit{Step 2: proof of~\eqref{4.68}}. Using the same argument as for the proof of~\eqref{4.66} with $\delta$ replaced by $\varep$, one infers that there exist $X_{2,\varep}>0$ and $T_{2,\varep}>0$ such that
$$u(t,x)\ge  K_2-\varep~~\text{for all}~t\ge T_{2,\varep}~\text{and}~ x\in[X_{2,\varep},X_{2,\varep}+ B_\varep+ 2C_\varep].$$
Then we set
$$\underline u_\varep(t,x)=\phi(\underline\xi_\varep(t,x))-\varep e^{-\delta(t-T_{2,\varep})}-\varep e^{-\mu(x-X_{2,\varep})}~~~\text{for}~t\ge T_{2,\varep}~\text{and}~x\ge X_{2,\varep},$$
in which
$$\underline\xi_\varep(t,x)=x-X_{2,\varep}-c_2(t-T_{2,\varep})-\omega_\varep e^{-\delta(t-T_{2,\varep})}+\omega_\varep-B_\varep-C_\varep.$$
As in the proof of~\eqref{4.58}, one can show that $\underline{u}_\varep(T_{2,\varep},x)\le u(T_{2,\varep},x)$ for all $x\ge X_{2,\varep}$, that $\underline{u}_\varep(t,X_{2,\varep})\le u(t,X_{2,\varep})$ for all $t\ge T_{2,\varep}$, and that $\underline u_\varep(t,x)$ is a subsolution of $u_t=d_2 u_{xx}+f_2(u)$ for all $t\ge T_{2,\varep}$ and $x\ge X_{2,\varep}$. By the maximum principle, one derives that
$$u(t,x)\ge \phi\big( x-X_{2,\varep}-c_2(t-T_{2,\varep})-\omega_\varep e^{-\delta(t-T_{2,\varep})}+\omega_\varep-B_\varep-C_\varep\big)-\varep e^{-\delta(t-T_{2,\varep})}-\varep e^{-\mu(x-X_{2,\varep})}$$
for all $t\ge T_{2,\varep}$ and $x\ge X_{2,\varep}$. Then~\eqref{4.68} follows by taking $z_{2,\varep}=-X_{2,\varep}+\omega_\varep-B_\varep-C_\varep$, since $\phi'<0$. The proof of Lemma~\ref{lemma 3.2} is thereby complete.
\end{proof}

Based on Lemmas~\ref{lemma 3.1} and~\ref{lemma 3.2}, we now provide the stability result of the bistable traveling front in patch~2.

\begin{lemma}
\label{lemma 3.4}
Assume that~\eqref{f1-kpp}--\eqref{f2-bistable} hold and that $\int_{0}^{K_2}f_2(s)\mathrm{d}s>0$. Let $\mu>0$, $\delta>0$, $C>0$, $\kappa>0$ and $\omega>0$ be as in~\eqref{4.59}--\eqref{4.62} in the proof of Lemma~$\ref{lemma 3.1}$. Then there exists $\widetilde M>0$ such that the following holds. If there are $\varep\in(0,\delta]$, $t_0>0$, $x_0>0$ and $\xi\in\mathbb{R}$ such that
\begin{equation}
\label{4.70}
\sup_{x\ge x_0}\big|u(t_0,x)-\phi(x-c_2t_0+\xi)\big|\le \varep,
\end{equation}
\begin{equation}\label{4.70'}
K_2-\varep\le u(t,x_0)\le K_2+\frac{\varep}{2}\hbox{ for all $t\ge t_0$},
\ee
$$\phi(x_0-c_2t_0+\xi)\ge K_2-\frac{\varep}{2},$$ 
and 
\begin{equation}
\label{4.71}
\Big(\max_{[-2\delta,K_2+2\delta]}|f_2'|+d_2\mu^2\Big)e^{-\mu(c_2t_0-x_0-\omega_\varep-\xi-C)}\le\delta
\end{equation}
with $\omega_\varep=\varep\omega/\delta$, then
$$\sup_{x\ge x_0}\big|u(t,x)-\phi(x-c_2t+\xi)\big|\le \widetilde M\varep~~\text{for all}~t\ge t_0.$$
\end{lemma}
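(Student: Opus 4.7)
\medskip\noindent\textbf{Proof proposal for Lemma~\ref{lemma 3.4}.}

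The plan is to adapt the sub/supersolution construction from Lemmas~\ref{lemma 3.1}--\ref{lemma 3.2} to the present setting, where the reference front is fixed at the shift~$\xi$ coming from the hypothesis~\eqref{4.70}, and then to use the Lipschitz continuity of $\phi$ to absorb the auxiliary cushion $\omega_\varep$ into a multiplicative constant $\widetilde M$. Specifically, I would set $\omega_\varep=\varep\omega/\delta$ and define, for $t\ge t_0$ and $x\ge x_0$,
\begin{equation*}
\overline u(t,x)=\phi\bigl(x-c_2t+\xi-\omega_\varep(1-e^{-\delta(t-t_0)})\bigr)+\varep e^{-\delta(t-t_0)}+\varep e^{-\mu(x-x_0)}
\end{equation*}
and symmetrically
\begin{equation*}
\underline u(t,x)=\phi\bigl(x-c_2t+\xi+\omega_\varep(1-e^{-\delta(t-t_0)})\bigr)-\varep e^{-\delta(t-t_0)}-\varep e^{-\mu(x-x_0)}.
\end{equation*}
The point of the drift $\mp\omega_\varep(1-e^{-\delta(t-t_0)})$ is that its time derivative produces a positive term of size $\delta\omega_\varep\,(-\phi')\,e^{-\delta(t-t_0)}=\varep\omega\kappa\,e^{-\delta(t-t_0)}$ whenever the argument of~$\phi$ lies in $[-C,C]$, exactly replacing the role of~$\delta\omega\,e^{-\delta(t-X_1)}$ in Lemma~\ref{lemma 3.1}.

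I would then verify that $\overline u$ is a supersolution of $u_t=d_2u_{xx}+f_2(u)$ and dominates $u$ on the parabolic boundary of the domain $\{t\ge t_0,\,x\ge x_0\}$. The initial comparison $\overline u(t_0,\cdot)\ge u(t_0,\cdot)$ follows directly from~\eqref{4.70}. The lateral comparison $\overline u(t,x_0)\ge u(t,x_0)$ follows from~\eqref{4.70'} and the monotonicity of $\phi$: since the argument $x_0-c_2t+\xi-\omega_\varep(1-e^{-\delta(t-t_0)})$ is nonincreasing in $t$ and equals $x_0-c_2t_0+\xi$ at $t=t_0$, we have $\phi(\cdot)\ge\phi(x_0-c_2t_0+\xi)\ge K_2-\varep/2$, whence $\overline u(t,x_0)\ge K_2+\varep/2\ge u(t,x_0)$. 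The parabolic inequality $\mathcal{N}\overline u\ge0$ is then checked by the same three-case split as in Lemmas~\ref{lemma 3.1}--\ref{lemma 3.2}, using~\eqref{4.59}--\eqref{4.62}: in the regions where the $\phi$-argument lies in $(-\infty,-C]$ or $[C,+\infty)$ both $\phi(\cdot)$ and $\overline u$ sit in a range where $f_2'<0$ is suitably bounded, so the exponentials in $t$ and $x$ each give rise to separately positive contributions. The treatment of $\underline u$ is entirely parallel, using the trivial lateral bound $\underline u(t,x_0)\le K_2-\varep\le u(t,x_0)$.

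The main obstacle is the middle case $-C\le x-c_2t+\xi-\omega_\varep(1-e^{-\delta(t-t_0)})\le C$ for $\overline u$, where the $e^{-\mu(x-x_0)}$ term must be dominated by the favorable $e^{-\delta(t-t_0)}$ term coming from the drift. In this range one has
\begin{equation*}
x-x_0\ge c_2(t-t_0)+c_2t_0-x_0-\xi-C+\omega_\varep(1-e^{-\delta(t-t_0)})\ge c_2(t-t_0)+(c_2t_0-x_0-\omega_\varep-\xi-C),
\end{equation*}
so that $e^{-\mu(x-x_0)}\le e^{-\mu c_2(t-t_0)}\,e^{-\mu(c_2t_0-x_0-\omega_\varep-\xi-C)}$. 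Hypothesis~\eqref{4.71} then precisely ensures $(\max_{[-2\delta,K_2+2\delta]}|f_2'|+d_2\mu^2)e^{-\mu(x-x_0)}\le\delta\,e^{-\mu c_2(t-t_0)}\le\delta\,e^{-\delta(t-t_0)}$, since $\delta<\mu c_2$ by~\eqref{4.60}. Combined with~\eqref{4.62}, this yields $\mathcal{N}\overline u\ge0$ in the middle case as well; the same estimate handles the symmetric term for $\underline u$.

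Once the parabolic comparison principle gives $\underline u\le u\le\overline u$ on $\{t\ge t_0,\,x\ge x_0\}$, the conclusion follows by triangle inequality: letting $L:=\sup_\R|\phi'|$, which is finite by standard bistable estimates, we have
\begin{equation*}
\bigl|\phi\bigl(x-c_2t+\xi\mp\omega_\varep(1-e^{-\delta(t-t_0)})\bigr)-\phi(x-c_2t+\xi)\bigr|\le L\omega_\varep=\frac{L\omega}{\delta}\,\varep,
\end{equation*}
so that $|u(t,x)-\phi(x-c_2t+\xi)|\le(L\omega/\delta+2)\varep$ for all $t\ge t_0$ and $x\ge x_0$, which is the desired estimate with $\widetilde M:=L\omega/\delta+2$, a constant depending only on $f_2$, $d_2$, $\mu$, $\delta$, $\omega$ and hence independent of $\varep$, $t_0$, $x_0$, $\xi$.
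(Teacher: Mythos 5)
Your proof is correct and takes essentially the same route as the paper's: your barriers $\phi\bigl(x-c_2t+\xi\mp\omega_\varep(1-e^{-\delta(t-t_0)})\bigr)\pm\varep e^{-\delta(t-t_0)}\pm\varep e^{-\mu(x-x_0)}$ are exactly the paper's super- and subsolutions rewritten, and the boundary comparisons, the three-case verification using~\eqref{4.59}--\eqref{4.62} together with~\eqref{4.71} and $\delta<\mu c_2$, and the final constant $\widetilde M=\omega\|\phi'\|_{L^\infty(\R)}/\delta+2$ all coincide with the paper's argument.
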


\begin{proof}
Let $\mu>0$, $\delta>0$, $C>0$, $\kappa>0$ and $\omega>0$ be as in~\eqref{4.59}--\eqref{4.62}, and let $\varep\in(0,\delta]$, $t_0>0$, $x_0>0$ and $\xi\in\R$ be as in the statement, with $\omega_\varep=\varep\omega/\delta$, as in~\eqref{omegaeps}. We claim that 
$$\overline u(t,x)=\phi(x-c_2t+\omega_\varep e^{-\delta(t-t_0)}-\omega_\varep+\xi)+\varep e^{-\delta(t-t_0)}+\varep e^{-\mu(x-x_0)}$$
and 
$$\underline u(t,x)=\phi(x-c_2t-\omega_\varep e^{-\delta(t-t_0)}+\omega_\varep+\xi)-\varep e^{-\delta(t-t_0)}-\varep e^{-\mu(x-x_0)}$$
are, respectively, a super- and a subsolution of $u_t=d_2 u_{xx}+f_2(u)$ for $t\ge t_0$ and $x\ge x_0$. We just check that $\underline u(t,x)$ is a subsolution in detail (the supersolution can be handled in a similar way). 

At time $t=t_0$, one has $\underline u(t_0,x)=\phi(x-c_2t_0+\xi)-\varep-\varep e^{-\mu(x-x_0)}\le u(t_0,x)$ for all $x\ge x_0$ thanks to~\eqref{4.70}. Moreover, $\underline u(t,x_0)=\phi(x_0-c_2 t-\omega_\varep e^{-\delta(t-t_0)}+\omega_\varep+\xi)-\varep e^{-\delta(t-t_0)}-\varep\le K_2-\varep\le u(t,x_0)$ for all $t\ge t_0$, owing to~\eqref{4.70'}. It then remains to show that $\mathcal{N}\underline u(t,x):=\underline u_t(t,x)-d_2\underline u_{xx}(t,x)-f_2(\underline u(t,x))\le 0$ for all $t\ge t_0$ and $x\ge x_0$. For convenience, we set
$$\underline \xi(t,x):=x-c_2t-\omega_\varep e^{-\delta(t-t_0)}+\omega_\varep+\xi.$$
By a straightforward computation, one has
\begin{equation*}
\mathcal{N}\underline u(t,x)=f_2(\phi(\underline \xi(t,x)))- f_2(\underline u(t,x))+\phi'(\underline\xi(t,x))\omega_\varep\delta e^{-\delta(t-t_0)}+\varep\delta e^{-\delta(t-t_0)}+ d_2\mu^2 \varep e^{-\mu(x-x_0)}.
\end{equation*}
There are three cases:
\begin{itemize}
\item if $\underline \xi(t,x)\le -C$, then $K_2-\delta/2\le \phi(\underline \xi(t,x))<K_2$ by~\eqref{phiC}, hence $K_2>\underline u(t,x)\ge K_2-\delta/2-2\varep\ge K_2-3\delta$; therefore, by using~\eqref{4.59}--\eqref{4.60} and the negativity of $\phi'$ and $f_2'(K_2)$, it follows that
\begin{align*}
\mathcal{N}\underline u(t,x)&\le  \frac{f_2'(K_2)}{2}\Big(\varep e^{-\delta(t-t_0)}+\varep e^{-\mu(x-x_0)}\Big)+\varep\delta e^{-\delta(t-t_0)}+ d_2\mu^2 \varep e^{-\mu(x-x_0)}\cr
&=\Big(\frac{f_2'(K_2)}{2}+\delta\Big)\varep e^{-\delta(t-t_0)}+\Big(\frac{f_2'(K_2)}{2}+d_2\mu^2\Big)\varep e^{-\mu(x-x_0)}\le 0;
\end{align*}
\item if $\underline \xi(t,x)\ge C$, then $0<\phi(\underline\xi(t,x))\le \delta$ by~\eqref{phiC} and thus $-2\delta\le-2\varep\le\underline u(t,x)\le \delta$; therefore, owing to~\eqref{4.59}--\eqref{4.60} as well as the negativity of $\phi'$  and $f_2'(0)$, it follows that
\begin{align*}
\mathcal{N}\underline u(t,x)&\le  \frac{f_2'(0)}{2}\Big(\varep e^{-\delta(t-t_0)}+\varep e^{-\mu(x-x_0)}\Big)+\varep\delta e^{-\delta(t-t_0)}+ d_2\mu^2 \varep e^{-\mu(x-x_0)}\cr
&=\Big(\frac{f_2'(0)}{2}+\delta\Big)\varep e^{-\delta(t-t_0)}+\Big(\frac{f_2'(0)}{2}+d_2\mu^2\Big)\varep e^{-\mu(x-x_0)}\le 0;
\end{align*}
\item if $-C\le\underline\xi(t,x)\le C$, one has $x-x_0\ge c_2(t-t_0)+c_2t_0-x_0+\omega_\varep e^{-\delta(t-t_0)}-\omega_\varep-\xi-C\ge c_2(t-t_0)+c_2t_0-x_0-\omega_\varep-\xi-C$, hence $e^{-\mu(x-x_0)}\le e^{-\mu(c_2(t-t_0)+c_2t_0-x_0-\omega_\varep-\xi-C)}$; since $\omega_\varep=\varep\omega/\delta$, one infers from~\eqref{4.60},~\eqref{4.61}--\eqref{4.62}, and~\eqref{4.71}, that
\begin{align*}
\mathcal{N}\underline u(t,x)\!&\le \max_{[-2\delta,K_2+2\delta]}|f_2'|\Big(\varep e^{-\delta(t-t_0)}\!+\!\varep e^{-\mu(x-x_0)}\Big)\!-\!\kappa\omega_\varep\delta e^{-\delta(t-t_0)}\!+\!\varep\delta e^{-\delta(t-t_0)}\!+\!d_2\mu^2 \varep e^{-\mu(x-x_0)}\cr
&\le\!\Big(\!\max_{[-2\delta,K_2\!+\!2\delta]}\!|f_2'|\!-\!\kappa\omega\!+\!\delta\!\Big)\varep e^{-\delta(t-t_0)}\!\!+\!\!\Big(\!\max_{[-2\delta,K_2\!+\!2\delta]}\!|f_2'|\!+\!d_2\mu^2\!\Big)\varep e^{-\mu(c_2(t-t_0)+c_2t_0-x_0-\omega_\varep-\xi-C)}\cr
&\le \Big(\max_{[-2\delta,K_2+2\delta]}|f_2'|-\kappa\omega+2\delta\Big)\varep e^{-\delta(t-t_0)}\le  0.
\end{align*}
\end{itemize}

Eventually, one concludes that $\mathcal{N}\underline u(t,x):=\underline u_t(t,x)-d_2\underline u_{xx}(t,x)-f_2(\underline u(t,x))\le 0$ for all $t\ge t_0$ and~$x\ge x_0$. The maximum principle implies that 
\begin{align*}
u(t,x)\ge \phi\big(x-c_2t-\omega_\varep e^{-\delta(t-t_0)}+\omega_\varep+\xi\big)-\varep e^{-\delta(t-t_0)}-\varep e^{-\mu(x-x_0)}
\end{align*}
for all $t\ge t_0$ and $x\ge x_0$. For these $t$ and $x$, since $\phi'<0$, one derives that
\begin{align*}
u(t,x)\ge \phi(x-c_2t+\omega_\varep+\xi)-2\varep\ge \phi(x-c_2t+\xi)-\omega_\varep\Vert\phi'\Vert_{L^\infty(\mathbb{R})}-2\varep.
\end{align*}
Similarly, using especially that
$$\Big(\max_{[-2\delta,K_2+2\delta]}|f_2'|+d_2\mu^2\Big)e^{-\mu(c_2t_0-x_0-\xi-C)}\le\Big(\max_{[-2\delta,K_2+2\delta]}|f_2'|+d_2\mu^2\Big)e^{-\mu(c_2t_0-x_0-\omega_\varep-\xi-C)}\le\delta$$
by~\eqref{4.71}, one can also derive that $u(t,x)\le\overline{u}(t,x)=\phi\big(x-c_2t+\omega_\varep e^{-\delta(t-t_0)}-\omega_\varep+\xi\big)+\varep e^{-\delta(t-t_0)}+\varep e^{-\mu(x-x_0)}$ for all $t\ge t_0$ and $x\ge x_0$, hence
\begin{align*}
u(t,x)\le\phi(x-c_2t-\omega_\varep+\xi)+2\varep\le \phi(x-c_2t+\xi)+\omega_\varep\Vert\phi'\Vert_{L^\infty(\mathbb{R})}+2\varep.
\end{align*}
In conclusion, one has 
$$\sup_{x\ge x_0}\big|u(t,x)-\phi(x-ct+\xi)\big|\le\omega_\varep\Vert\phi' \Vert_{L^\infty(\mathbb{R})}+2\varep=\widetilde{M}\varep~\text{for all}~t\ge t_0,$$
where $\widetilde{M}:=\omega_\varep\Vert\phi' \Vert_{L^\infty(\mathbb{R})}/\varep+2=\omega\Vert\phi' \Vert_{L^\infty(\mathbb{R})}/\delta+2$ is independent of $\varep$, $t_0$, $x_0$ and $\xi$.	The proof of Lemma~\ref{lemma 3.4} is thereby complete.
\end{proof}

Now we are in a position to complete the proof of Theorem~\ref{thm4.2}.

\begin{proof}[Proof of Theorem~$\ref{thm4.2}$ $($continued$)$]
Let $X_1>0$, $X_2>0$, $T_1>0$, $T_2>0$, $z_1\in\mathbb{R}$, $z_2\in\mathbb{R}$, $\mu>0$ and $\delta>0$ be as in Lemma~\ref{lemma 3.1}, and let also $C>0$ be as in~\eqref{phiC} in the proof of Lemma~\ref{lemma 3.1}. For $t\ge\max(T_1,T_2)$ and $x\ge\max(X_1,X_2)$, there holds
\be\label{4.72}\baa{l}
\phi(x-c_2(t-T_2)+z_2)-\delta e^{-\delta(t-T_2)}-\delta e^{-\mu(x-X_2)}\vspace{3pt}\\
\qquad\qquad\qquad\qquad\qquad\qquad\le u(t,x)\le \phi(x-c_2(t-T_1)+z_1)+\delta e^{-\delta(t-T_1)}+\delta e^{-\mu(x-X_1)}.\eaa
\ee
Consider any given sequence $(t_n)_{n\in\mathbb{N}}$ such that $t_n\to+\infty$ as $n\to +\infty$. By standard parabolic estimates, the functions
$$(t,y)\mapsto u_n(t,y):=u(t+t_n,y+c_2t_n)$$
converge as $n\to+\infty$ up to extraction of a subsequence, locally uniformly in $(t,y)\in\mathbb{R}\times\mathbb{R}$, to a classical solution $u_\infty$ of $(u_\infty)_t=d_2(u_\infty)_{yy}+f_2(u_\infty)$ in $\mathbb{R}\times\mathbb{R}$. From~\eqref{4.72} applied at $(t+t_n,y+c_2t_n)$, the passage to the limit as $n\to+\infty$ gives
\begin{align*}
\phi(y-c_2(t-T_2)+z_2)\le u_\infty(t,y)\le\phi(y-c_2(t-T_1)+z_1)~~\text{for all}~(t,y)\in\mathbb{R}\times\mathbb{R}.
\end{align*}
Then, \cite[Theorem 3.1]{BH2007} implies that there exists $\xi\in\mathbb{R}$ such that $u_\infty(t,y)=\phi(y-c_2t+\xi)$ for all $(t,y)\in\mathbb{R}\times\mathbb{R}$, whence 
\begin{equation}
\label{4.73}
u_n(t,y)\to \phi(y-c_2 t+\xi)~\text{as}~n\to+\infty,~~\text{locally uniformly in}~(t,y)\in\mathbb{R}\times\mathbb{R}.
\end{equation}

Consider now any $\varep\in(0,\delta/3]$. Let $A_\varep>0$ be such that
\be\label{phieps2}
\phi\ge K_2-\frac{\varep}{2}\hbox{ in $(-\infty,- A_\varep]\ $ and }\ \phi\le\frac\varep2\hbox{ in $[A_\varep,+\infty)$}.
\ee
Set $E_1:=\max\big(A_\varep-c_2T_1-z_1, A_\varep-\xi\big)$ and $E_2:=\min(-A_\varep-c_2T_2-z_2,-A_\varep-\xi\big)<E_1$. Then, it can be deduced from~\eqref{4.73} that 
\begin{equation}
\label{4.74}
\sup_{E_2\le y \le E_1}\big|u_n(0,y)-\phi(y+\xi)\big|\le \varep~~\text{for all}~n~\text{large enough}.
\end{equation}
Since $t_n\to+\infty$ as $n\to+\infty$,~\eqref{4.72} and~\eqref{phieps2} imply that, for all $n$ large enough,
\begin{equation}
\label{4.75}
\begin{aligned}
\begin{cases}
0<u_n(0,y)\le\varep~~~&\text{for all}~y\ge E_1,\cr
K_2-\varep\le u_n(0,y)\le K_2+\varep&\displaystyle\text{for all}~E_2-\frac{c_2}{2}t_n \le y\le E_2.
\end{cases}
\end{aligned}
\end{equation}
Furthermore, since $E_1\ge A_\varep-\xi$ and $E_2\le -A_\varep-\xi$, one has
\begin{equation}
\label{4.76}
\begin{aligned}
\begin{cases}
\displaystyle 0<\phi(y+\xi)\le\frac{\varep}{2}<\varep&\text{for all}~y\ge E_1,\vspace{3pt}\cr
\displaystyle K_2-\varep<K_2-\frac{\varep}{2}\le \phi(y+\xi)<K_2&\text{for all}~y\le E_2.
\end{cases}
\end{aligned}
\end{equation}
Then~\eqref{4.75}--\eqref{4.76} imply that, for all $n$ large enough,
$$\big|u_n(0,y)-\phi(y+\xi)\big|\le 2\varep~~\text{for all}~y\in\Big[E_2-\frac{c_2}{2}t_n,E_2\Big]\cup[E_1,+\infty).$$
Together with~\eqref{4.74} and the definition of $u_n(t,y)$, one has, for all $n$ large enough,
\begin{equation}
\label{4.77}
\big|u(t_n,x)-\phi(x-c_2t_n+\xi)\big|\le 2\varep~~\text{for all}~x\ge E_2+\frac{c_2}{2}t_n.
\end{equation}

On the other hand, one infers from Lemma~\ref{lemma 3.2} that, for all $n$ large enough, 
\begin{align}
\label{cdn1}
K_2-3\varep\le  \phi(x-c_2(t_n-T_{2,\varep})+z_{2,\varep})-\varep e^{-\delta(t_n-T_{2,\varep})}-\varep e^{-\mu(x-X_{2,\varep})}\le u(t_n,x)~~~~~~~~~~~~\cr
\le\phi(x-c_2(t_n-T_{1,\varep})+ z_{1,\varep})+\varep e^{-\delta(t_n-T_{1,\varep})}+\varep e^{-\mu(x-X_{1,\varep})}\le K_2+2\varep,
\end{align}
for all $\max(X_1,X_2,X_{1,\varep},X_{2,\varep})\le x\le E_2+c_2t_n/2$, where $X_{1,\varep}>0$, $X_{2,\varep}>0$, $T_{1,\varep}>0$, $T_{2,\varep}>0$, $z_{1,\varep}\in\mathbb{R}$ and $z_{2,\varep}\in\mathbb{R}$ were given in Lemma~\ref{lemma 3.2}. Notice also that, for all $n$ large enough,
\begin{equation}
\label{cdn2}
K_2-\varep\le\phi(x-c_2t_n+\xi)<K_2~~\text{for all}~\max(X_1,X_2,X_{1,\varep},X_{2,\varep})\le x \le E_2+\frac{c_2}{2}t_n.
\end{equation} 
From~\eqref{cdn1}--\eqref{cdn2} one deduces that, for all $n$ large enough, 
$$\big|u(t_n,x)-\phi(x-c_2t_n+\xi)\big|\le 3\varep~~\text{for all}~\max(X_1,X_2,X_{1,\varep},X_{2,\varep})\le x\le E_2+\frac{c_2}{2}t_n.$$
Together with~\eqref{4.77}, one derives that, for all $n$ large enough,
$$\big|u(t_n,x)-\phi(x-c_2t_n+\xi)\big|\le 3\varep~~\text{for all}~x\ge\max(X_1,X_2,X_{1,\varep},X_{2,\varep}).$$
Furthermore, due to~\eqref{4.54}--\eqref{4.56}, there is $x_\varep\ge\max(X_1,X_2,X_{1,\varep},X_{2,\varep})$ such that, for all $n$ large enough,
$$K_2-3\varep\le u(t,x_\varep)\le K_2+\frac{3\varep}{2}~~\text{for all}~t\ge t_n,$$
and
$$\phi(x_\varep-c_2 t_n+\xi)\ge K_2-\frac{3\varep}{2},~~\Big(\max_{[-2\delta,K_2+2\delta]}|f_2'|+d_2\mu^2\Big)e^{-\mu(c_2t_n-x_\varep-3\varep\omega/\delta-\xi-C)}\le\delta.$$
It then follows from Lemma~\ref{lemma 3.4} (applied with $t_0=t_n$, $x_0=x_\varep$ and $3\varep$ instead of $\varep$) that, for all $n$ large enough,
$$\big|u(t,x)-\phi(x-c_2t+\xi)\big|\le 3\widetilde{M}\varep~~\text{for all}~t\ge t_n~\text{and}~ x\ge x_\varep,$$
with $\widetilde{M}$ given in Lemma~\ref{lemma 3.4}. Since $\varep\in(0,\delta/3]$ was arbitrary, one finally infers that 
$$\sup_{t\ge A,\,x\ge A}|u(t,x)-\phi(x-c_2t+\xi)|\to0\ \hbox{ as }A\to+\infty.$$
This completes the proof of Theorem~\ref{thm4.2}.
\end{proof}

Finally, we are in a position to prove Theorem~\ref{thm_propagation-1}.

\begin{proof}[Proof of Theorem~$\ref{thm_propagation-1}$]
Fix any $\eta>0$ throughout the proof. For some $L\ge2$ (which will be fixed later),  let $x_L\ge L/2>0$ and denote by $u_L$ the solution of the Cauchy problem~\eqref{1.1} with initial datum 
\begin{align*}
u_L(0,\cdot)=\begin{cases}
\ \theta+\eta~&\text{in}~[x_L-L/2+1,x_L+L/2-1],\cr
\ 0 &\text{in }\R\setminus(x_L-L/2,x_L+L/2),
\end{cases}
\end{align*}	
and $u_L(0,\cdot)$ is affine in $[x_L-L/2,x_L-L/2+1]$ and in $[x_L+L/2-1,x_L+L/2]$. It follows from local parabolic estimates that, for any $A>0$,
\begin{equation}
\label{local convergence}
u_L(t,x)\to\zeta(t)~\text{as}~L\to+\infty~~\text{locally in}~t\ge 0,~\text{uniformly in}~x\in[x_L-A,x_L+A],
\end{equation}
where $\zeta$ is the solution of the ODE $\zeta'(t)=f_2(\zeta(t))$ for $t\ge0$ with initial datum $\zeta(0)=\theta+\eta$. Let~$R>0$ and $\psi\in C^2([-R,R])$ be as in Lemma~\ref{lemma4.1}, and pick $\varep\in(0,K_2-\psi(0))$. Since $\zeta(t)\to K_2$ as~$t\to+\infty$ by~\eqref{f2-bistable}, it follows that there is $T>0$ such that $\zeta(T)\ge \psi(0)+\varep$. By~\eqref{4.53} and~\eqref{local convergence}, one can then choose $L\in(\max(2R,2),+\infty)$ sufficiently large such that, for every $x_L\ge L/2$,
$$u_L(T,\cdot)>\zeta(T)-\varep\ge \psi(0)\ge\psi(\cdot-x_L)~~\text{in}~[x_L-R,x_L+R].$$

Let now $u$ be the solution to~\eqref{1.1} with a nonnegative continuous and compactly supported initial datum $u_0\not\equiv 0$ satisfying $u_0\ge \theta+\eta$ in an interval of size $L$ included in patch $2$, say $(x_L-L/2,x_L+L/2)$ for some $x_L\ge L/2$ (thus, $x_L\ge R$). The comparison principle then gives that 
$$u(T,\cdot)\ge u_{L}(T,\cdot)>\psi(0)\ge\psi(\cdot-x_L)~~\text{in}~[x_L-R,x_L+R].$$
The conclusion of Theorem~\ref{thm_propagation-1} then follows from Proposition~\ref{prop 1.3} and from Theorem~\ref{thm4.2} applied with initial datum $\psi(\cdot-x_L)$ (extended by $0$ outside $[x_L-R,x_L+R]$).
\end{proof}

We finally turn to the proof of Theorem~\ref{thm_propagation-2}. For the proof of the propagation with speed zero when $f_2$ has zero mass over $[0,K_2]$, in order to get the property~\eqref{2.7}, we especially show and use the stability of the large-time limit of solutions of some auxiliary problems.

\begin{proof}[Proof of Theorem~$\ref{thm_propagation-2}$]
Assume that~\eqref{f1-kpp}--\eqref{f2-bistable} hold with $\int_0^{K_2}f_2(s)\mathrm{d}s\ge0$, and that there is no nonnegative classical stationary solution $U$ of~\eqref{1.1} such that $U(-\infty)=K_1$ and $U(+\infty)=0$. Proposition~\ref{prop 2.6} implies in particular that $K_1>\theta$, and Proposition~\ref{prop 2.7} yields the existence and the uniqueness of a positive classical stationary solution $V$ of~\eqref{1.1} such that $V(-\infty)=K_1$ and $V(+\infty)=K_2$. Furthermore, $V$ is monotone (and even strictly monotone if $K_1\neq K_2$, from the proof of Proposition~\ref{prop 2.7}). 

Let $u$ be the solution to the Cauchy problem~\eqref{1.1} with a nonnegative continuous and compactly supported initial datum $u_0\not\equiv 0$. Proposition~\ref{prop 1.3} implies that $0<u(t,x)<M:=\max(K_1,K_2,\Vert u_0 \Vert_{L^\infty(\mathbb{R})})$ for all $t>0$ and $x\in\mathbb{R}$.

Let $v$ and $w$ be as in the beginning of the proof of Theorem~\ref{thm2.4}, namely:~1)~$v$~is the solution to the Cauchy problem~\eqref{1.1} with initial datum $v(0,\cdot)=\eta\Psi(\cdot-x_0)<u(1,\cdot)$ in $\mathbb{R}$ for $\eta>0$ small enough and for any arbitrary $x_0\le -R$, where $R>0$ and $\Psi$ are given as in~\eqref{4.1}--\eqref{4.2}; and~2)~$w$~denotes the solution to~\eqref{1.1} with initial condition $w(0,\cdot)=M$ in $\mathbb{R}$. Proposition~\ref{prop 1.3} implies that $0<v(t,x)<u(t+1,x)<w(t+1,x)\le M$ for all $t>0$ and $x\in\mathbb{R}$. Moreover, as in the proof of the first part of Theorem~\ref{thm2.3},~$v$~is increasing with respect to $t$ and $w$ is nonincreasing with respect to $t$ in $[0,+\infty)\times\R$. From the parabolic estimates of Proposition~\ref{pro1}, $v(t,\cdot)$ and $w(t,\cdot)$ converge as $t\to+\infty$, locally uniformly in $\mathbb{R}$, to classical stationary solutions $p$ and $q$  of~\eqref{1.1}, respectively. Moreover, there holds
\begin{equation}
\label{4.30}
0<p\le \liminf_{t\to+\infty} u(t,\cdot)\le \limsup_{t\to+\infty} u(t,\cdot)\le q\le M,
\end{equation} 
locally uniformly in $\mathbb{R}$. From the proofs of Proposition~\ref{prop2.1} and Theorem~\ref{thm2.4}, it is seen that
\be\label{pq-infty}
p(-\infty)=q(-\infty)=K_1.
\ee
	
In the following, we wish to show that $p\equiv q\equiv V$ in $\mathbb{R}$ and $p(+\infty)=q(+\infty)=K_2$. First of all, since $p$ and $q$ are bounded and $f_2$ satisfies~\eqref{f2-bistable}, one infers that
\be\label{limsuppq}
\limsup_{x\to+\infty}\,p(x)\le K_2\ \hbox{ and }\ \limsup_{x\to+\infty}\,q(x)\le K_2.
\ee
Let us now prove that $p$ is stable in $(0,+\infty)$ in the sense that 
\begin{equation}
\label{4.30'}
\int_0^{+\infty} d_2|\varphi'|^2-f_2'(p)\varphi^2 \ge 0,
\end{equation}
for every $\varphi\in C^1((0,+\infty))$ with compact support included in $(0,+\infty)$. In fact, we first notice that the function $v$ satisfies
$$0\le v_t=d_2(v-p)_{xx}+f_2(v)-f_2(p)~~\text{for all}~t>0~\text{and}~x>0.$$
For any given $\varphi\in C^1((0,+\infty))$ with compact support included in $(0,+\infty)$, multiplying the above equation by the nonnegative function $\varphi^2/(p-v(t,\cdot))$ at a fixed time $t>0$ and integrating over~$(0,+\infty)$ yields
\begin{align*}
0\le& \int_{0}^{+\infty}d_2(p-v(t,\cdot))_x\left(\frac{\varphi^2}{(p-v(t,\cdot))}\right)_x-\frac{f_2(v(t,\cdot))-f_2(p)}{v(t,\cdot)-p}\varphi^2\cr
=&\int_{0}^{+\infty}d_2\left(2\frac{(p-v(t,\cdot))_x\varphi\varphi' }{p-v(t,\cdot)}-\frac{|(p-v(t,\cdot))_x|^2\varphi^2}{(p-v(t,\cdot))^2}\right)-\frac{f_2(v(t,\cdot))-f_2(p)}{v(t,\cdot)-p}\varphi^2\cr
\le & \int_{0}^{+\infty}d_2|\varphi'|^2 -\frac{f_2(v(t,\cdot))-f_2(p)}{v(t,\cdot)-p}\varphi^2.
\end{align*}
Since $v(t,\cdot)\to p$ as $t\to+\infty$ locally uniformly in $\mathbb{R}$, passing to the limit $t\to+\infty$ yields~\eqref{4.30'}.

Next, we show that $p(+\infty)=K_2$. Assume first that $p$ has two critical points $a<b\in[0,+\infty)$, that is, $p'(a)=p'(b)=0$. By reflection, the function $z_1:=p(2b-\cdot)$ satisfies $d_2 z''_1+f_2(z_1)=0$ in~$[b,2b-a]$, with $z_1(b)=p(b)$ and $z_1'(b)=p'(b)=0$. The Cauchy-Lipschitz theorem implies that~$z_1=p$ in~$[b,2b-a]$. Thus, $p(2b-a)=p(a)$ and $p'(2b-a)=0$. With an immediate induction, one infers that $p$ is periodic in $[a,+\infty)$. Together with~\eqref{4.30} and~\eqref{limsuppq}, one gets $0<p\le K_2$ in $[a,+\infty)$. But the nonconstant stationary periodic solutions of~\eqref{1.1} in $(0,+\infty)$ are known to be unstable. Hence, $p$ is constant in~$[a,+\infty)$. However, since $f'_2(\theta)>0$, the constant solution $\theta$ is unstable as well. Finally, $p\equiv K_2$ in~$[a,+\infty)$ and then in $[0,+\infty)$ by the Cauchy-Lipschitz theorem, and thus $K_1=K_2$ and $p\equiv K_1=K_2$ in $\R$ (indeed, as in the proof of Proposition~\ref{prop2.1}, $p'$ is either of a constant strict sign in $(-\infty,0^-]$, or identically equal to $0$ in $(-\infty,0^-]$). Therefore, either $p$ is constant (and $p\equiv K_1=K_2$ in $\R$), or $p$ has at most one critical point in $[0,+\infty)$. The later case implies that $p$ is strictly monotone in, say, $[B,+\infty)$ for some $B>0$ large. Hence, $p(+\infty)$ exists, with $p(+\infty)\in\{0,\theta,K_2\}$. Since $p(+\infty)\neq\theta$ (because $p$ is stable) and since there is no stationary solution $U$ of~\eqref{1.1} connecting $K_1$ and $0$, it follows that
$$p(+\infty)=K_2.$$
Together with~\eqref{pq-infty}, one concludes that $p\equiv V$ in $\R$ in all cases. As a consequence,~\eqref{limsuppq} and the inequality~$p\le q$ given by~\eqref{4.30} imply that $q(+\infty)=K_2$ and then
$$q\equiv V\equiv p\ \hbox{ in $\R$}.$$
The desired conclusion~\eqref{2.7} is therefore achieved, due to~\eqref{4.30}.
	
By using~\eqref{2.7} and the fact that $V(+\infty)=K_2>\theta$, the property~(i) of Theorem~\ref{thm_propagation-2} (in the case $\int_0^{K_2}f_2(s)\mathrm{d}s>0$) can be derived from Theorem~\ref{thm_propagation-1} and a comparison argument.
	
It now remains to prove property (ii), that is, we assume now that $\int_0^{K_2}f_2(s)\mathrm{d}s=0$. Our goal is to show that $\sup_{x\ge ct}u(t,x)\to0$ as $t\to+\infty$ for every $c>0$. So let us fix $c>0$ in the sequel. For $\varep\in(0,(K_2-\theta)/2)$, let $f_{2,\varep}$ be a $C^1(\mathbb{R})$ function such that 
$$\left\{\baa{l}
f_{2,\varep}(0)=f_{2,\varep}(\theta)=f_{2,\varep}(K_2+\varep)=0,~~f_{2,\varep}'(0)<0,~~f_{2,\varep}'(K_2+\varep)<0,\vspace{3pt}\\
f_{2,\varep}=f_2~\text{in}~(-\infty,K_2-\varep),~~f_{2,\varep}>0~\text{in}~(\theta,K_2+\varep),~~f_{2,\varep}<0~\text{in}~(K_2+\varep,+\infty).\eaa\right.$$
We can also choose $f_{2,\varep}$ so that $f_{2,\varep}\ge f_2$ in $\R$, so that $f_{2,\varep}$ is decreasing in $[K_2-\varep,K_2+\varep]$, and so that the family $(\|f_{2,\varep}\|_{C^1([0,K_2+\varep])})_{0<\varep<(K_2-\theta)/2}$ is bounded. Notice that, necessarily, $\int_0^{K_2+\varep}f_2(s)\mathrm{d}s>0$. For each $\varep\in(0,(K_2-\theta)/2))$, let $\phi_\varep$ be the unique traveling front profile of $u_t=d_2 u_{xx}+f_{2,\varep}(u)$ such that 
$$d_2\phi_\varep''+c_{2,\varep}\phi_\varep'+f_{2,\varep}(\phi_\varep)=0\hbox{ in }\R,~~\phi_\varep'<0~\text{in}~\mathbb{R},~~\phi_\varep(0)=\theta,~~\phi_\varep(-\infty)=K_2+\varep,~~\phi_\varep(+\infty)=0,$$
with speed $c_{2,\varep}>0$. It is standard to see that $\phi_\varep\to\phi$ in $C^2_{loc}(\R)$ and $c_{2,\varep}\to 0$ as $\varep\to0$. We can then fix $\varep\in(0,(K_2-\theta)/2)$ small enough such that $0<c_{2,\varep}<c$. As in the proof of~\eqref{claimK2}--\eqref{utxX} in Theorem~\ref{thm2.8}, there is then $X>0$ such that $u(t,x)\le K_2+\varep/2$ for all $t\ge X$ and $x\ge X$. Since~$u(t,x)$ has a Gaussian upper bound as $x\to+\infty$ at each fixed $t>0$ by Lemma~\ref{lemma1.3}, whereas~$\phi_\varep(s)$ has an exponential decay (similar to~\eqref{2.6}) as $s\to+\infty$, it follows that there is $A>0$ such that $u(X,x)\le \phi_\varep (x-c_{2,\varep}X-A)$ for all $x\ge X$, and $u(t,X)\le\phi_\varep (X-c_{2,\varep}t-A)$ for all $t\ge X$ (we also here use the fact $c_{2,\varep}>0$ and $\phi_\varep(-\infty)=K_2+\varep$). Since $f_{2,\varep}\ge f_2$ in $\R$, the maximum principle implies that~$0<u(t,x)\le\phi_\varep (x-c_{2,\varep}t-A)$ for all $t\ge X$ and $x\ge X$, hence $\sup_{x\ge ct}u(t,x)\to0$ as $t\to+\infty$, since $c_{2,\varep}<c$ and $\phi_\varep(+\infty)=0$. This completes the proof of Theorem~\ref{thm_propagation-2}.
\end{proof}
	

\section{The bistable-bistable case}
\label{Sec 5}

In this section, we only outline the proofs in the bistable-bistable case~\eqref{fi-bistable}, since most of the arguments are similar to those of the preceding section. However, the main novelty is the extinction result in the case of reaction terms $f_i$ having negative masses over $[0,K_i]$. We start with this case.


\subsubsection*{Extinction in the case of reactions with negative masses}

\begin{proof}[Proof of Theorem~$\ref{thextinction}$]
We here assume that $\int_0^{K_i}f_i(s)\mathrm{d}s<0$ for $i=1,2$. Let $u$ be the solution to the Cauchy problem~\eqref{1.1} with a nonnegative continuous and compactly supported initial datum~$u_0\not\equiv 0$. Set $M:=\max\big(K_1,K_2,\|u_0\|_{L^\infty(\R)}\big)+1$. As in the proof of part~(i) of Theorem~\ref{thm2.8}, for each~$i\in\{1,2\}$, since $f_i$ satisfies~\eqref{f2-bistable} with $\int_0^{K_i}f_i(s)\mathrm{d}s<0$, there is a $C^1(\R)$ function $\overline{f}_i$ such that $\overline{f}_i\ge f_i$ in $\R$, $\overline{f}_i(0)=\overline{f}_i(\theta_i)=\overline{f}_i(M)=0$, $\overline{f}_i'(0)<0$, $\overline{f}_i'(M)<0$, $\overline{f}_i>0$ in $(-\infty,0)\cup(\theta_i,M)$, $\overline{f}_i<0$ in~$(0,\theta_i)\cup(M,+\infty)$, and~$\int_0^M\overline{f}_i(s)\mathrm{d}s<0$ (it is even possible to choose $\overline{f}_i$ so that $\overline{f}_i=f_i$ in $(-\infty,K_i-\delta]$ for some small~$\delta>0$). There is then a decreasing front profile $\overline{\phi}_i$ solving~\eqref{2.5} with $\overline{f}_i$ and $M$ instead of~$f_2$ and~$K_2$, and with negative speed $\overline{c}_i$ instead of $c_2$. Since $\overline{\phi}_i(-\infty)=M>\max(K_1,K_2,\|u_0\|_{L^\infty(\R)})$ and~$u_0$ is compactly supported, one can then choose two positive real numbers $A_1$ and $A_2$ so large that
$$u_0(x)\le\overline{\phi}_1(-x-A_1)\hbox{ for all $x\le0$},\ \ u_0(x)\le\overline{\phi}_2(x-A_2)\hbox{ for all $x\ge0$},\ \hbox{ and }\ \overline{\phi}_1(-A_1)=\overline{\phi}_2(-A_2).$$
Let $\overline{u}$ be the solution to~\eqref{1.1} with reactions $\overline{f}_i$ instead of $f_i$ and with initial datum $\overline{u}_0$ given by
$$\overline{u}_0(x):=\left\{\baa{ll}\overline{\phi}_1(-x-A_1) & \hbox{if }x\le0,\vspace{3pt}\\
\overline{\phi}_2(x-A_2) & \hbox{if }x>0.\eaa\right.$$
The comparison principle of Proposition~\ref{prop 1.3} implies that
\be\label{inequoveru}
0<u(t,x)\le\overline{u}(t,x)\ \hbox{ for all $t>0$ and $x\in\R$}.
\ee
Furthermore, since $\overline{c}_i<0$ and $\overline{\phi}_i'<0$ in $\R$ for each $i=1,2$, it follows that the time-independent function $v$ equal to $v(t,x):=\overline{u}_0(x)$ in $[0,+\infty)\times\R$ is a supersolution of~\eqref{1.1} (with reactions $\overline{f}_i$ instead of $f_i$) in the sense of Definition~\ref{def2}. Then, as in the proof of the first part of Theorem~\ref{thm2.3}, one has
\be\label{inequ0}
\overline{u}(t,x)\le\overline{u}_0(x)\hbox{ for all $(t,x)\in[0,+\infty)\times\R$}
\ee
and $\overline{u}$ is nonincreasing with respect to $t$ in $[0,+\infty)\times\R$. Together with the parabolic estimates of Proposition~\ref{pro1}, there is then a nonnegative classical bounded stationary solution $p$ of~\eqref{1.1} (with reactions $\overline{f}_i$ instead of $f_i$) such that $\overline{u}(t,x)\to p(x)$ as $t\to+\infty$ locally uniformly in $x\in\R$. The inequalities~\eqref{inequoveru}--\eqref{inequ0} also imply that $p(\pm\infty)=0$ and that $\overline{u}(t,\cdot)\to p$ as $t\to+\infty$ uniformly in $\R$. 

Let us finally show that $p\equiv 0$ in $\R$, which will lead to the desired extinction result. Assume by contradiction that $p\not\equiv0$. Since $p$ is nonnegative continuous and converges to $0$ at $\pm\infty$, there is then $x_0\in\R$ such that $p(x_0)=\max_\R p>0$. If $x_0>0$, then the integration of the equation $d_2p''+\overline{f}_2(p)=0$ against $p'$ over the interval $[x_0,+\infty)$ yields $\int_0^{p(x_0)}\overline{f}_2(s)\mathrm{d}s=0$, which is impossible from the choice of~$\overline{f}_2$. The case $x_0<0$ is similarly ruled out. Therefore, $x_0=0$ and the interface conditions at $0$ then imply that $p'(0^\pm)=0$ and the integration of the equation $d_2p''+\overline{f}_2(p)=0$ against $p'$ over the interval~$[0,+\infty)$ leads to the same impossibility. As a conclusion $p\equiv 0$ in $\R$ and the inequalities~\eqref{inequoveru} and the uniform convergence of $\overline{u}(t,\cdot)$ to $p\equiv0$ as $t\to+\infty$ imply that $\|u(t,\cdot)\|_{L^\infty(\R)}\to0$ as $t\to+\infty$. The proof of Theorem~\ref{thextinction} is thereby complete.
\end{proof}


\subsubsection*{Stationary solutions connecting $K_1$ to $0$, and $K_1$ to $K_2$}

\begin{proof}[Proof of Proposition~$\ref{prop 2.5'}$]
(i) Suppose that $U$ is a positive classical stationary solution  of~\eqref{1.1} such that $U(-\infty)=K_1$ and $U(+\infty)=0$. From the strong maximum principle and the Hopf lemma (or the Cauchy-Lipschitz theorem), it follows that $U>0$ in $\mathbb{R}$. Multiplying $d_1U''+f_1(U)=0$ by $U'$ and integrating by parts over $(-\infty,x]$ for any $x\le0$ yields
\begin{equation}
\label{*}
\frac{d_1}{2}(U'(x^-))^2=\int_{U(x)}^{K_1} f_1(s)\mathrm{d}s\ge 0.
\end{equation}
Then, we claim that 
\begin{equation}
\label{claim}
\begin{aligned}
\begin{cases}
\text{ either}&\!\!U>K_1\hbox{ in $(-\infty,0]$ and }U'>0~\text{in}~(-\infty,0^-],\cr
\text{ or}&\!\!U<K_1\hbox{ in $(-\infty,0]$ and }U'<0~\text{in}~(-\infty,0^-],\cr
\text{ or}&\!\!U\equiv K_1~\text{in}~(-\infty,0].\cr
\end{cases}
\end{aligned}
\end{equation}
To prove~\eqref{claim}, we first show that either $U-K_1$ has a strict constant sign in $(-\infty,0]$ or $U\equiv K_1$ in $(-\infty,0]$. Indeed, if there is $x_0\le0$ such that $U(x_0)=K_1$, then~\eqref{*} implies $U'(x_0^-)=0$ and the Cauchy-Lipschitz theorem then yields $U\equiv K_1$ in $(-\infty,0]$. Assume now  that $U-K_1$ has a strict constant sign in $(-\infty,0]$. Then in~\eqref{*} the integral is positive from the assumption on $f_1$, hence $U'$ has a strict constant sign in $(-\infty,0^-]$. Our claim~\eqref{claim} follows, since $U(-\infty)=K_1$. The argument in patch 2 is exactly the same as the one in the proof of Proposition~\ref{prop 2.5}, thus completing the proof of part~(i) of Proposition~\ref{prop 2.5'}.

(ii) The proof of (ii) is an adaptation of the proof of Proposition~\ref{prop 2.6}, with the fact that the function $\nu\mapsto\int_\nu^{K_1}f_1(s)\mathrm{d}s$ is continuous in $[0,K_1]$, vanishes at $K_1$, is positive in $[0,K_1)$, due to the positivity of $\int_0^{K_1}f_1(s)\mathrm{d}s$, here. The rest of the proof is identical to that of Proposition~\ref{prop 2.6}.

(iii) The proof of (iii) follows the same lines as the proof of Proposition~\ref{prop 2.7}.
\end{proof}


\subsubsection*{Blocking phenomena}

\begin{proof}[Proof of Theorem~$\ref{thm2.8'}$] 
It is exactly as that of part~(i) of Theorem~\ref{thm2.8} if $\int_0^{K_2}f_2(s)\mathrm{d}s<0$. In the case $\int_0^{K_2}f_2(s)\mathrm{d}s=0$ and $K_1<K_2$, let $w$ be the solution of~\eqref{1.1} with initial datum $w_0=M:=\max(K_2,\|u_0\|_{L^\infty(\R)})$. As in the proof of the first part of Theorem~\ref{thm2.3}, the function $w$ is nonincreasing with respect to $t$ in $[0,+\infty)\times\R$ and there is a nonnegative classical stationary solution $q$ of~\eqref{1.1} such that $w(t,\cdot)\to q$ as $t\to+\infty$ locally uniformly in $\R$, with $0\le q\le M$ in $\R$. Since $f_{1,2}<0$ in $(K_{1,2},+\infty)$, one gets that $\limsup_{x\to-\infty}q(x)\le K_1$ and $\limsup_{x\to+\infty}q(x)\le K_2$ and, since $f_1<0$ in $(K_1,+\infty)\supset(K_2,+\infty)$, one easily infers that $\sup_\R q\le K_2$ and even $q<K_2$ in $\R$. Next, properties~\eqref{4.41}--\eqref{claimK2} and~\eqref{4.43} hold and the rest of the proof is identical to that of part~(ii) of Theorem~\ref{thm2.8}. The other cases can be handled as in the proofs of parts~(iii) and~(iv) of Theorem~\ref{thm2.8} (since those proofs did not use the specific KPP assumption in patch~$1$).
\end{proof}


\subsubsection*{Propagation with positive or zero speed}

Parallel to Lemma~\ref{lemma4.1} and Theorem~\ref{thm4.2}, which lead to the proof of Theorem~\ref{thm_propagation-1}, we have the following results.

\begin{lemma}
\label{lemma4.1'}
Assume that~\eqref{fi-bistable} holds and there is $i\in\{1,2\}$ such that  $\int_{0}^{K_i}f_i(s) \mathrm{d}s>0$. Then there exist $R_i>0$ and a function $\psi_i$ of class $ C^2([-R_i,R_i])$ such that
\begin{align*}
\begin{cases}
\ d_i\psi_i''+f_2(\psi_i)=0~~&\text{in}~[-R_i,R_i],\cr
\ 0\le \psi_i<K_i~~&\text{in}~[-R_i,R_i],\cr
\ \psi_i(\pm R_i)=0,~~&\cr
\ \displaystyle\max_{[-R_i,R_i]}\psi_i=\psi_i(0)>\theta_i.
\end{cases}
\end{align*}
\end{lemma}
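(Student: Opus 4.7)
The plan is to reduce the statement to the argument already sketched for Lemma~\ref{lemma4.1}, which treats exactly the same type of problem for the bistable reaction $f_2$. Since the hypotheses placed on $f_i$ by~\eqref{fi-bistable} are formally identical to those placed on $f_2$ by~\eqref{f2-bistable}---bistability with an intermediate unstable zero $\theta_i$ and positive carrying capacity $K_i$---and since the additional mass condition $\int_0^{K_i}f_i(s)\mathrm{d}s>0$ corresponds exactly to $\int_0^{K_2}f_2(s)\mathrm{d}s>0$ in Lemma~\ref{lemma4.1}, the variational construction of~\cite{BL1980,GHS2020} invoked there transfers verbatim after the relabeling $(d_2,f_2,K_2,\theta)\leftrightarrow(d_i,f_i,K_i,\theta_i)$. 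My write-up would simply appeal to Lemma~\ref{lemma4.1}, but it seems cleaner in one space dimension to include a short ODE/phase-plane construction, which I sketch below.

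Setting $F_i(s):=\int_0^s f_i(\tau)\mathrm{d}\tau$, the assumption~\eqref{fi-bistable} yields $F_i(0)=0$, $F_i<0$ on $(0,\theta_i]$, $F_i$ strictly increasing on $[\theta_i,K_i]$, and by hypothesis $F_i(K_i)>0$; hence there exists a unique $\theta_i^*\in(\theta_i,K_i)$ with $F_i(\theta_i^*)=0$ and $F_i>0$ on $(\theta_i^*,K_i]$. I would pick any $a\in(\theta_i^*,K_i)$ and solve the initial value problem $d_i\psi_i''+f_i(\psi_i)=0$ with $\psi_i(0)=a$, $\psi_i'(0)=0$. Multiplying by $\psi_i'$ and integrating produces the first integral $(d_i/2)(\psi_i'(x))^2+F_i(\psi_i(x))=F_i(a)$, and since $F_i(a)>F_i(\psi)$ for every $\psi\in[0,a)$ the trajectory is strictly decreasing for $x>0$ and first hits $0$ at the finite value
\begin{equation*}
R_i=\int_0^{a}\sqrt{\frac{d_i}{2\bigl(F_i(a)-F_i(\psi)\bigr)}}\,\mathrm{d}\psi.
\end{equation*}
Extending $\psi_i$ to $[-R_i,R_i]$ by even reflection yields a $C^2$ function, the equation holding on each side of $0$ by construction and across $0$ by symmetry ($\psi_i'(0^\pm)=0$). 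All four conclusions then read off directly: the ODE on $[-R_i,R_i]$, $0\le\psi_i\le a<K_i$ by monotonicity, $\psi_i(\pm R_i)=0$ by definition of $R_i$, and $\max\psi_i=\psi_i(0)=a>\theta_i^*>\theta_i$.

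The main obstacle is essentially nil, since the argument is cosmetically identical to that of Lemma~\ref{lemma4.1}. The only quantitative point requiring a line of checking is the finiteness of $R_i$, which is a standard square-root integrability: near $\psi=a$ one has $F_i(a)-F_i(\psi)\sim f_i(a)(a-\psi)$ with $f_i(a)>0$, producing an integrable $(a-\psi)^{-1/2}$ singularity, while near $\psi=0$ the integrand is uniformly bounded because $F_i(a)-F_i(0)=F_i(a)>0$.
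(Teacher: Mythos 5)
Your proof is correct. Note that the paper gives no argument at all for this lemma (it is stated as ``parallel'' to Lemma~\ref{lemma4.1}, whose own proof is omitted with a pointer to the variational constructions of~\cite{BL1980,GHS2020}), so your explicit write-up actually supplies more than the paper does. Your route is genuinely different from the one the paper alludes to: instead of minimizing an energy functional on a large interval, you shoot from the maximum point $a\in(\theta_i^*,K_i)$ of the would-be bump, use the first integral $(d_i/2)(\psi_i')^2+F_i(\psi_i)=F_i(a)$ to see that the trajectory decreases monotonically to $0$, and read off the half-width $R_i$ as an explicit convergent integral (the only delicate point, the integrable $(a-\psi)^{-1/2}$ singularity at $\psi=a$ coming from $f_i(a)>0$, is exactly the one you check; away from $\psi=a$ the denominator is bounded below by $\min\bigl(F_i(a),F_i(a)-F_i(a-\varep)\bigr)>0$ since $F_i\le0$ on $[0,\theta_i^*]$ and $F_i$ is increasing on $[\theta_i^*,a]$). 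The phase-plane argument is more elementary, is self-contained in one space dimension, and yields $R_i$ explicitly; the variational argument cited by the paper trades that explicitness for robustness (it extends to higher dimensions and to less structured nonlinearities). One cosmetic remark: the displayed equation in the lemma as printed reads $d_i\psi_i''+f_2(\psi_i)=0$, which is evidently a typo for $f_i$; you have correctly proved the intended statement.
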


\begin{theorem}
\label{thm4.2'}
Assume that~\eqref{fi-bistable} holds and there is $i\in\{1,2\}$ such that  $\int_{0}^{K_i}f_i(s) \mathrm{d}s>0$.  Let  $R_i>0$ and $\psi_i\in C^2([-R_i,R_i])$ be as in Lemma~$\ref{lemma4.1'}$. Let $u$ be the solution to~\eqref{1.1} with a nonnegative continuous and compactly supported initial datum $u_0\not\equiv 0$. If $u_0\ge \psi_i(\cdot-x_i)$ in $[x_i-R_i,x_i+R_i]$ for some $|x_i|\ge R_i$ with the interval $(x_i-R_i,x_i+R_i)$ included in patch~$i$, then the conclusion of part~(i) of Theorem~$\ref{thm_propagation-1'}$ holds true if $i=2$ and property~\eqref{convphi1} of part~(ii) of Theorem~$\ref{thm_propagation-1'}$ holds true if $i=1$.
\end{theorem}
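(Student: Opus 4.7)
The plan is to mimic, step by step, the proof of Theorem~\ref{thm4.2}, with patch~$2$ there replaced by patch~$i$ here, and then to read off the two conclusions according to whether $i=2$ (rightward propagation with speed $c_2$, giving the statement of Theorem~\ref{thm_propagation-1'}(i)) or $i=1$ (leftward propagation with speed $c_1$, giving~\eqref{convphi1}). I treat the case $i=2$ first and then indicate the modifications needed for $i=1$.

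Let $w$ and $v$ denote the solutions to~\eqref{1.1} with initial data $\psi_2(\cdot-x_2)$ extended by~$0$ outside $[x_2-R_2,x_2+R_2]$ and $M:=\max(K_1,K_2,\|u_0\|_{L^\infty(\R)})$, respectively. By Proposition~\ref{prop 1.3} one has $0<w\le u\le v\le M$ on $(0,+\infty)\times\R$, and, as in the beginning of the proof of Theorem~\ref{thm4.2}, $w$ is nondecreasing in $t$ and $v$ is nonincreasing in $t$. By the parabolic estimates of Proposition~\ref{pro1} both $w(t,\cdot)$ and $v(t,\cdot)$ converge as $t\to+\infty$, locally uniformly in~$\R$, to nonnegative bounded classical stationary solutions $p$ and $q$ of~\eqref{1.1} with $0<w_0\le p\le\liminf_{t\to+\infty}u\le\limsup_{t\to+\infty}u\le q\le M$. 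The key first step is to show $p(+\infty)=K_2$. For this, the sliding argument used in the proof of~\eqref{4.55} applies verbatim: because $p>\psi_2(\cdot-x_2)$ in $[x_2-R_2,x_2+R_2]$ and $p>0$ in $\R$, an elliptic strong-maximum-principle continuation in the parameter $\varrho$ gives $p(x)>\psi_2(\cdot-\varrho x_2)\ge0$ on $[\varrho x_2-R_2,\varrho x_2+R_2]$ for all $\varrho\ge 1$, so in particular $p(x)>\psi_2(0)>\theta_2$ for all $x\ge x_2$; since $p$ is bounded and $f_2>0$ in $(\theta_2,K_2)$, $f_2<0$ in $(K_2,+\infty)$, one concludes $p(+\infty)=K_2$ (and similarly $q(+\infty)=K_2$) exactly as in the proof of Proposition~\ref{prop2.1}.

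Once these two ingredients are in hand, the remainder of the argument is independent of the sign of $f_1$ and only uses the bistable nature of $f_2$ together with $\int_0^{K_2}f_2(s)\mathrm{d}s>0$. Hence Lemmas~\ref{lemma 3.1}, \ref{lemma 3.2} and~\ref{lemma 3.4}, whose constructions of exponentially-decaying super- and subsolutions take place only in the half-line $\{x\ge X\}$ for large $X>0$ and rest on the bistable profile $\phi_2$ with speed $c_2>0$, the Gaussian upper bound of Lemma~\ref{lemma1.3}, and a barrier at $x_\varepsilon$ where $u(t,x_\varepsilon)$ is close to $K_2$, apply here unchanged, with~$\phi$ replaced by~$\phi_2$. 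The end-of-proof limit argument of Theorem~\ref{thm4.2}, based on standard parabolic compactness and the Liouville-type classification of entire solutions in~\cite{BH2007}, then delivers some $\xi\in\R$ such that $\sup_{t\ge A,\,x\ge A}|u(t,x)-\phi_2(x-c_2 t+\xi)|\to 0$ as $A\to+\infty$, which is the conclusion of Theorem~\ref{thm_propagation-1'}(i).

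For $i=1$ the same scheme works after reflection $x\mapsto-x$ in patch~$1$. Namely, the analogous sliding argument shows that the limit $p$ of the monotone-in-$t$ approximating solution satisfies $p(-\infty)=K_1$; the super/sub-solution constructions of Lemmas~\ref{lemma 3.1}--\ref{lemma 3.4} are carried out in $\{x\le -X\}$ using the profile $\phi_1$ with positive speed $c_1>0$ (traveling in the direction $e=-1$), giving front-like bounds of the form $\phi_1(-x-c_1(t-T)+z)$; the Liouville step then yields some $\xi_1\in\R$ with $\sup_{t\ge A,\,x\le-A}|u(t,x)-\phi_1(-x-c_1 t+\xi_1)|\to 0$ as $A\to+\infty$, which is~\eqref{convphi1}. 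The main obstacle, both for $i=1$ and for $i=2$, is to guarantee the existence of the barrier $x_\varepsilon$ (in the bistable-bistable setting patch~$1$ no longer automatically pushes $u$ up to $K_1$ as in Theorem~\ref{thm2.4}), and this is precisely what the sliding argument above provides: $p$ takes values arbitrarily close to $K_i$ on compact sets far inside patch~$i$, from which the required estimate~\eqref{4.66} (with $\delta,\varepsilon$ as in the proof of Lemma~\ref{lemma 3.1}) follows, and the rest of the proof proceeds as before.
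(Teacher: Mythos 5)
Your proposal is correct and follows essentially the route the paper intends: the paper states Theorem~\ref{thm4.2'} without an explicit proof, referring only to the similarity with Theorem~\ref{thm4.2}, and your sketch supplies exactly that adaptation (the sliding argument giving $p(\pm\infty)=K_i$, the observation that Lemmas~\ref{lemma 3.1}--\ref{lemma 3.4} and the Liouville step of \cite{BH2007} live entirely in patch~$i$ and use only the bistable profile $\phi_i$ with $c_i>0$, and the reflection $x\mapsto-x$ for $i=1$). You also correctly single out the one point that genuinely needs care in the bistable-bistable setting, namely that the barrier where $u$ stays near $K_i$ must now come from the sliding argument rather than from the KPP persistence of Theorem~\ref{thm2.4}.
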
	

\begin{proof}[Proof of Theorem~$\ref{thm_propagation-1'}$]
Since part~(i) of Theorem~\ref{thm_propagation-1'} and property~\eqref{convphi1} of part~(ii) follow from Theorem~\ref{thm4.2'}, while the last two statements of part~(ii) of Theorem~\ref{thm_propagation-1'} (about the propagation in patch~$2$) follow exactly as in the proofs of Theorems~\ref{thm_propagation-1}--\ref{thm_propagation-2} once~\eqref{2.7'} is known, it remains to show the large-time behavior~\eqref{2.7'} of $u$ in part~(ii).

So, let us assume that $\int_0^{K_1}f_1(s)\mathrm{d}s>0$ and $\int_0^{K_2}f_2(s)\mathrm{d}s\ge0$, and let $u$ be the solution of~\eqref{1.1} with a nonnegative continuous and compactly supported initial datum $u_0\not\equiv 0$. By Proposition~\ref{prop 1.3}, one has $0<u(t,x)<M:=(K_1,K_2,\Vert u_0\Vert_{L^\infty(\mathbb{R})})$ for all $t>0$ and $x\in\mathbb{R}$. The arguments of the proof of Theorem~\ref{thm_propagation-1} also imply that, if $\eta>0$ is fixed and if $u_0\ge\theta_1+\eta$ in a large enough interval in patch~$1$, then $u(T,\cdot)>\psi_1(0)\ge\psi_1(\cdot-x_1)$ in $[x_1-R_1,x_1+R_1]$ for some $T>0$ and  $x_1\le -R_1$, where~$R_1>0$ and~$\psi_1\in C^2([-R_1,R_1])$ are given as in Lemma~\ref{lemma4.1'}. Let now $v$ and $w$ be, respectively, the solutions of~\eqref{1.1} with initial data $v(0,\cdot)=\psi_1(\cdot-x_1)$ (extended by $0$ in $\R\setminus[x_1-R_1,x_1+R_1]$) and $w(0,\cdot)=M$ in $\mathbb{R}$. Proposition~\ref{prop 1.3} implies that
$$0<v(t,x)<u(t+T,x)<w(t+T,x)\le M~~\text{for all}~ t>0~\text{and}~x\in\mathbb{R}.$$
Moreover, as in the proof of the first part of Theorem~\ref{thm2.3}, $v$ is increasing with respect to $t$ and $w$ is nonincreasing with respect to $t$, in $[0,+\infty)\times\R$. By the Schauder estimates of Proposition~\ref{pro1}, $v(t,\cdot)$ and $w(t,\cdot)$ converge as $t\to+\infty$, locally uniformly in $\mathbb{R}$, to classical stationary solutions~$p$ and~$q$ of~\eqref{1.1}, respectively. Therefore, 
\begin{equation}
\label{5.4}
0<p\le \liminf_{t\to+\infty} u(t,\cdot)\le \limsup_{t\to+\infty}u(t,\cdot)\le q\le M~~\text{locally uniformly in}~\mathbb{R}.
\end{equation}
Theorem~\ref{thm4.2'} then implies that $v$ propagates in patch~$1$ with speed $c_1$ and~\eqref{convphi1} holds with $v$ for some $\xi'_1\in\R$ instead of $\xi_1$. Hence, $p(-\infty)=K_1$. Therefore, $\liminf_{x\to-\infty}q(x)\ge K_1$ and, since $f_1<0$ in $(K_1,+\infty)$, one infers as before that $\limsup_{x\to-\infty}q(x)\le K_1$, hence $q(-\infty)=K_1$. On the other hand, one can show as in the proof of Theorem~\ref{thm_propagation-2} that $p$ is stable in $(0,+\infty)$, whence $p(+\infty)=K_2$ from the bistable profile of~$f_2$ and the nonexistence of a stationary solution $U$ of~\eqref{1.1} such that $U(-\infty)=K_1$ and $U(+\infty)=0$. As a consequence,~\eqref{2.7'} follows from~\eqref{5.4}, with $p$ being a positive classical stationary solution of~\eqref{1.1} such that $p(-\infty)=K_1$ and $p(+\infty)=K_2$. Moreover, $\liminf_{x\to+\infty}q(x)\ge K_2$ and, as before, $q(+\infty)=K_2$. Finally, if $K_2\ge K_1\ge\theta_2$ or $K_1\ge K_2\ge\theta_1$, then $p\equiv q\equiv V$, where $V$ is the unique positive classical stationary solution of~\eqref{1.1} satisfying $V(-\infty)=K_1$ and $V(+\infty)=K_2$, given by part~(iii) of Proposition~\ref{prop 2.5'}. The proof of Theorem~\ref{thm_propagation-1'} is thereby complete.
\end{proof}


\appendix

\section{Appendix}

In this appendix, we show Gaussian upper bounds for the solutions to the Cauchy problem~\eqref{1.1} with nonnegative continuous compactly supported initial data. We recall that the $C^1(\R)$ functions $f_i$ satisfy~\eqref{hypf}, and we call $K$ any nonnegative real number such that
\be\label{hypK}
f_1(s)\le Ks\hbox{ and }f_2(s)\le Ks\hbox{ for all }s\ge0.
\ee

\begin{lemma}
\label{lemma1.3}
Let $L_1>0$, $L_2>0$, and let $u$ be the solution to the Cauchy problem~\eqref{1.1} with a nonnegative continuous and compactly supported initial datum $u_0$ satisfying {\rm{spt}}$(u_0)\subset[-L_1,L_2]$. Then, with $M:=\max(K_1,K_2,\|u_0\|_{L^\infty(\R)})$ and $K\ge0$ as in~\eqref{hypK}, there holds, for all $t>0$,
$$u(t,x)\le M e^{K t}  e^{-\frac{(x+L_1)^2}{4d_1t}}\hbox{ for all }x\le-L_1,\ \hbox{ and }\ u(t,x)\le M e^{K t} e^{-\frac{(x-L_2)^2}{4d_2t}}\hbox{ for all }x\ge L_2.$$
\end{lemma}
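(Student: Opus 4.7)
The plan is to linearize the reaction via an exponential-in-time change of variable, and then to compare the resulting subsolution of the heat equation with an explicit Gaussian barrier on each semi-infinite patch. I will describe only the bound for $x\le-L_1$; the estimate for $x\ge L_2$ follows by the symmetric argument with $d_2$ in place of $d_1$. By Proposition~\ref{pro1} one already has $0\le u(t,x)\le M$ for all $(t,x)\in[0,+\infty)\times\R$.

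Setting $\tilde u(t,x):=e^{-Kt}u(t,x)$, a direct computation using $f_i(s)\le Ks$ from~\eqref{hypK} gives
\[
\tilde u_t-d_i\tilde u_{xx}=e^{-Kt}\bigl(f_i(u)-Ku\bigr)\le 0\quad\text{for }t>0,\ x\in I_i\ (i=1,2),
\]
so $\tilde u$ is a bounded classical subsolution of the pure diffusion equation in each patch, with $\tilde u(0,\cdot)=u_0$ still supported in $[-L_1,L_2]$ and bounded by $M$. On the left half-strip $(0,+\infty)\times(-\infty,-L_1]$ I would then introduce the Gaussian barrier
\[
\overline v(t,x):=M\,e^{-\frac{(x+L_1)^2}{4d_1 t}},
\]
for which an elementary computation yields $\overline v_t-d_1\overline v_{xx}=\overline v/(2t)>0$; hence $\overline v$ is a strict supersolution of $v_t=d_1 v_{xx}$ on this half-strip, with $\overline v(t,-L_1)=M\ge\tilde u(t,-L_1)$ on the lateral boundary and $\overline v(t,x)\to0$ as $t\to0^+$ for each fixed $x<-L_1$, matching $\tilde u(0,x)=u_0(x)=0$ on account of $\mathrm{spt}(u_0)\subset[-L_1,L_2]$.

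The heart of the argument will be a parabolic comparison applied to the bounded difference $w:=\overline v-\tilde u$ on the unbounded half-strip $(0,T]\times(-\infty,-L_1]$, with $T>0$ arbitrary: it satisfies $w_t-d_1 w_{xx}\ge 0$, $w(t,-L_1)\ge0$, $\liminf_{t\to0^+}w(t,x)\ge0$ for every $x<-L_1$, and $|w|\le 2M$. A parabolic Phragm\'en--Lindel\"of maximum principle then forces $w\ge 0$, i.e.\ $\tilde u\le\overline v$ on the half-strip, and multiplying by $e^{Kt}$ gives the announced bound $u(t,x)\le M e^{Kt}\,e^{-(x+L_1)^2/(4d_1 t)}$ for $t>0$ and $x\le -L_1$.

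The main obstacle I anticipate is the rigorous justification of this comparison on the unbounded domain: $\overline v$ does not take classical values at $t=0$ (it concentrates as a Dirac mass at $x=-L_1$), and the only a priori control on $\tilde u$ as $x\to-\infty$ is its uniform boundedness. Concretely, this can be handled by adding to $w$ a perturbation of the form $\varepsilon\,e^{(1+d_1)t-x}$, which is itself a strict supersolution of $v_t=d_1 v_{xx}$ and diverges as $x\to-\infty$, so that the perturbed difference becomes strictly positive on the whole parabolic boundary as well as at $x=-\infty$; a classical interior maximum-principle argument then rules out any negative interior infimum, and letting $\varepsilon\to0^+$ yields the required nonnegativity of $w$.
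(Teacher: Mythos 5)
Your proof is correct, but it takes a genuinely different route from the paper's. The paper does not build an explicit Gaussian barrier: it compares $u$ on the whole half-line $x\le 0$ with $Me^{Kt}v(t,x)$, where $v$ solves the heat equation $v_t=d_1v_{xx}$ on $(-\infty,0]$ with lateral datum $v(t,0)=1$ and initial datum $\chi_{[-L_1,0]}$; it then computes $v$ explicitly by an odd reflection and the heat kernel, and converts the resulting complementary error function into the stated Gaussian via the elementary inequality $(2/\sqrt{\pi})\int_A^{+\infty}e^{-z^2}\,\mathrm{d}z\le e^{-A^2}$ for $A\ge0$. You instead verify directly that $\overline v(t,x)=Me^{-(x+L_1)^2/(4d_1t)}$ satisfies $\overline v_t-d_1\overline v_{xx}=\overline v/(2t)>0$ (this computation is right: dropping the $t^{-1/2}$ factor from the heat kernel turns an exact solution into a strict supersolution) and compare on the half-strip $x\le-L_1$ only. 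This buys a shorter, representation-free argument with no erfc estimate, at the price of a comparison on an unbounded domain with a barrier that is discontinuous at the corner $(0,-L_1)$; you correctly identify both difficulties and your fix works: the perturbation $\varepsilon e^{(1+d_1)t-x}$ is a strict supersolution blowing up as $x\to-\infty$, so a negative infimum of the perturbed difference would be attained at a finite point, which is excluded on the lateral boundary (where $e^{-Kt}u\le M$), along sequences with $t\to0^+$ (since $u_0\equiv0$ on $(-\infty,-L_1]$ by continuity and $\overline v\ge0$, the liminf of $w$ is nonnegative even approaching the corner), and at interior or top-time minima (strict supersolution). One cosmetic remark: $\overline v$ does not concentrate as a Dirac mass at $(0,-L_1)$ --- that would require the $t^{-1/2}$ normalization --- it merely has a discontinuous pointwise limit there ($0$ for $x\ne-L_1$, $M$ at $x=-L_1$); this does not affect your argument.
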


\begin{proof}
It is based on the comparison between $u$ and the solution of certain initial--boundary value problem defined in a half-line. We only do the proof of the first inequality, as the second one can be handled analogously. By Proposition~\ref{prop 1.3}, one has $0<u(t,x)<M$ for all $t>0$ and $x\in\mathbb{R}$. Let $v$ be the solution of the following initial-boundary value problem
\begin{align}
\label{1.4}
\begin{cases}
\ v_t=d_1 v_{xx},& t>0,\ x\le0,\\
\ v(0,x)=\chi_{[-L_1, 0]}(x),&x\le0,\\
\ v(t,0)=1,&t> 0,
\end{cases}
\end{align}
where $\chi$ denotes the indicator function. With~\eqref{hypK}, the maximum principle yields
$$u(t,x)\le Me^{Kt}v(t,x)\ \hbox{ for all $t\ge 0$ and $x\le 0$}.$$
To solve~\eqref{1.4}, we define $w(t,x):=v(t,x)-1$ for $t\ge 0$ and $x\le 0$. Then $w$ satisfies
$$\begin{aligned}
\begin{cases}
\ w_t=d_1 w_{xx},&t>0,\ x\le0,\\
\ w(0,x)=-\chi_{(-\infty,-L_1)}(x),&x\le0,\\
\ w(t,0)=0,&t>0.	 
\end{cases}
\end{aligned}$$
For each $t\ge0$, $w(t,\cdot)$ is then the restriction to $(-\infty,0]$ of the function $W(t,\cdot)$, where $W$ solves the heat equation $W_t=d_1W_{xx}$ in $(0,+\infty)\times\R$ with initial condition $W(0,\cdot)$ given as the odd extension of~$w(0,\cdot)$, that is $W(0,x)=w(0,x)=-\chi_{(-\infty,-L_1)}(x)$ if $x\le0$ and $W(0,x)=-w(0,-x)=\chi_{(L_1,+\infty)}(x)$ if $x>0$. Denote by $S_1$ the standard heat kernel, namely $S_1(t,x)=(4\pi d_1t)^{-1/2}e^{-x^2/(4d_1t)}$ for $t>0$ and $x\in\R$. Then, for every $t>0$ and $x\in\mathbb{R}$,
$$W(t,x)=\int_{-\infty}^{+\infty}S_1(t,x-y)W(0,y)\mathrm{d}y=\int_{-\infty}^0( S_1(t,x-y)- S_1(t,x+y))w(0,y)\mathrm{d}y.$$
It follows that, for every $t>0$ and $x\le0$,
$$w(t,x)=W(t,x)=-\frac{1}{\sqrt{4\pi d_1t}}\int_{-\infty}^{-L_1}\left(e^{-\frac{(x-y)^2}{4d_1t}}-e^{-\frac{(x+y)^2}{4d_1t}}\right)\mathrm{d}y,$$
hence
\begin{align*}
v(t,x)=1+w(t,x)&=1-\frac{1}{\sqrt{4\pi d_1t}}\int_{-\infty}^{-L_1}\left(e^{-\frac{(x-y)^2}{4d_1t}}-e^{-\frac{(x+y)^2}{4d_1t}}\right)\mathrm{d}y\cr
&=1-\frac{1}{\sqrt{\pi}}\int_{-\infty}^{\frac{-x-L_1}{\sqrt{4d_1t}}}e^{-z^2}\mathrm{d}z+\frac{1}{\sqrt{\pi}}\int_{-\infty}^{\frac{x-L_1}{\sqrt{4d_1t}}}e^{-z^2}\mathrm{d}z\le\frac{2}{\sqrt{\pi}}\int_{\frac{-x-L_1}{\sqrt{4d_1t}}}^{+\infty}e^{-z^2}\mathrm{d}z.
\end{align*}
Finally, for every $t>0$ and $x\le-L_1$, there holds
$$u(t,x)\le M e^{K t}v(t,x)\le\frac{2Me^{Kt}}{\sqrt{\pi}}\int_{\frac{-x-L_1}{\sqrt{4d_1t}}}^{+\infty}e^{-z^2}\mathrm{d}z\le Me^{K t-\frac{(x+L_1)^2}{4d_1t}},$$
since $(2/\sqrt{\pi})\int_A^{+\infty}e^{-z^2}\mathrm{d}z\le e^{-A^2}$ for all $A\ge0$. This completes the proof.
\end{proof}


\end{document}